\documentclass[reqno, 11pt, a4paper]{amsart}
\usepackage{amsfonts, amsmath, amssymb, amsthm}
\usepackage[hmargin=2.75cm, vmargin=2.9cm]{geometry}
\usepackage[dvipsnames]{xcolor}
\usepackage{graphicx, setspace}
\usepackage[colorlinks=true]{hyperref}

\newtheorem{thm}{Theorem}[section]
\newtheorem{lemma}[thm]{Lemma}
\newtheorem{prop}[thm]{Proposition}
\newtheorem{cor}[thm]{Corollary}

\theoremstyle{definition}
\newtheorem{defn}[thm]{Definition}
\newtheorem{rmk}[thm]{Remark}

\newcommand{\ep}{\epsilon}
\newcommand{\ga}{\gamma}
\newcommand{\ls}{{\lambda,\sigma}}
\newcommand{\vep}{\varepsilon}
\newcommand{\vrh}{\varrho}
\newcommand{\vth}{\vartheta}
\newcommand{\vp}{\varphi}
\newcommand{\pa}{\partial}

\newcommand{\mn}{\mathbb{N}}
\newcommand{\mr}{\mathbb{R}}
\newcommand{\ms}{\mathbb{S}}
\newcommand{\mbp}{\mathbf{p}}
\newcommand{\mbx}{\mathbf{x}}
\newcommand{\mca}{\mathcal{A}}
\newcommand{\mcb}{\mathcal{B}}
\newcommand{\mcc}{\mathcal{C}}
\newcommand{\mce}{\mathcal{E}}
\newcommand{\mcf}{\mathcal{F}}

\newcommand{\mch}{\mathcal{H}}
\newcommand{\mci}{\mathcal{I}}
\newcommand{\mcm}{\mathcal{M}}
\newcommand{\mcn}{\mathcal{N}}
\newcommand{\mcp}{\mathcal{P}}
\newcommand{\mcq}{\mathcal{Q}}

\newcommand{\mcu}{\mathcal{U}}
\newcommand{\mcw}{\mathcal{W}}
\newcommand{\mfb}{\mathfrak{B}}
\newcommand{\mfl}{\mathfrak{L}}

\newcommand{\bg}{\bar{g}}
\newcommand{\bh}{\bar{h}}
\newcommand{\bu}{\bar{u}}
\newcommand{\bx}{\bar{x}}
\newcommand{\chg}{\check{g}}
\newcommand{\chh}{\check{h}}
\newcommand{\chpi}{\check{\pi}}

\newcommand{\hf}{\hat{f}}
\newcommand{\hg}{\hat{g}}
\newcommand{\hr}{\hat{r}}
\newcommand{\hw}{\hat{w}}
\newcommand{\hep}{\hat{\epsilon}}
\newcommand{\hmfl}{\hat{\mfl}}

\newcommand{\tif}{\tilde{f}}
\newcommand{\tig}{\tilde{g}}
\newcommand{\tih}{\tilde{h}}
\newcommand{\tiu}{\tilde{u}}

\newcommand{\tiy}{\tilde{y}}
\newcommand{\tirh}{\tilde{\rho}}
\newcommand{\tipi}{\tilde{\pi}}
\newcommand{\tich}{\tilde{\chi}}

\newcommand{\obr}{\overline{B_R}}
\newcommand{\obrt}{\overline{B_{R/2}}}
\newcommand{\opbr}{\overline{\pa B_R'}}
\newcommand{\ox}{\overline{X}}
\newcommand{\whe}{\widehat{E}}
\newcommand{\whf}{\widehat{F}}

\newcommand{\whq}{\widehat{Q}}

\newcommand{\whw}{\widehat{W}}

\newcommand{\wtc}{\widetilde{C}}
\newcommand{\wtf}{\widetilde{F}}
\newcommand{\wtk}{\widetilde{K}}

\newcommand{\wtt}{\widetilde{T}}
\newcommand{\wtu}{\widetilde{U}}
\newcommand{\wtv}{\widetilde{V}}
\newcommand{\wth}{\widetilde{\mch}}
\newcommand{\wtps}{\widetilde{\Psi}}

\newcommand{\mrg}{(\mr^N_+;x_N^{1-2\ga})}
\newcommand{\loc}{\textnormal{loc}}

\renewcommand{\(}{\left(}
\renewcommand{\)}{\right)}
\newcommand{\la}{\left\langle}
\newcommand{\ra}{\right\rangle}

\begin{document}
\title[A compactness theorem of the fractional Yamabe problem]{A compactness theorem of the fractional Yamabe problem, Part I: The non-umbilic conformal infinity}

\author{Seunghyeok Kim}
\address[Seunghyeok Kim]{Department of Mathematics and Research Institute for Natural Sciences, College of Natural Sciences, Hanyang University, 222 Wangsimni-ro Seongdong-gu, Seoul 04763, Republic of Korea}
\email{shkim0401@gmail.com}

\author{Monica Musso}
\address[Monica Musso]{Department of Mathematical Sciences, University of Bath, Bath BA2 7AY, United Kingdom, and Facultad de Matem\'{a}ticas, Pontificia Universidad Cat\'{o}lica de Chile, Avenida Vicu\~{n}a Mackenna 4860, Santiago, Chile}
\email{m.musso@bath.ac.uk}

\author{Juncheng Wei}
\address[Juncheng Wei] {Department of Mathematics, University of British Columbia, Vancouver, B.C., Canada, V6T 1Z2}
\email{jcwei@math.ubc.ca}

\begin{abstract}
Assume that $(X, g^+)$ is an asymptotically hyperbolic manifold, $(M, [\bh])$ is its conformal infinity, $\rho$ is the geodesic boundary defining function associated to $\bh$ and $\bg = \rho^2 g^+$.
For any $\ga \in (0,1)$, we prove that the solution set of the $\ga$-Yamabe problem on $M$ is compact in $C^2(M)$
provided that convergence of the scalar curvature $R[g^+]$ of $(X, g^+)$ to $-n(n+1)$ is sufficiently fast as $\rho$ tends to 0 and the second fundamental form on $M$ never vanishes.
Since most of the arguments in blow-up analysis performed here is irrelevant to the geometric assumption imposed on $X$,
our proof also provides a general scheme toward other possible compactness theorems for the fractional Yamabe problem.
\end{abstract}

\date{\today}
\subjclass[2010]{35J65, 35R11, 53A30, 53C21}
\keywords{Fractional Yamabe problem, Non-umbilic conformal infinity, Compactness, Blow-up analysis.}
\maketitle

\allowdisplaybreaks
\numberwithin{equation}{section}

\section{Introduction}
Given any $n \in \mn$, let $(X^{n+1}, g^+)$ be an asymptotically hyperbolic manifold with conformal infinity $(M^n, [\bh])$.
According to \cite{GZ, MM, JS, CG, GQ}, there exists a family of self-adjoint conformal covariant pseudo-differential operators $P^{\ga}[g^+, \bh]$ on $M$ in general
whose principal symbols are the same as those of $(-\Delta_{\bh})^{\ga}$.
If $(X, g^+)$ is Poincar\'e-Einstein and $\ga \in \mn$,
the operator $P^{\ga}[g^+, \bh]$ coincides with the GJMS operator constructed by Graham et al. \cite{GJMS} via the ambient metric; refer to Graham and Zworski \cite{GZ}.
In particular, $P^{\ga}[g^+, \bh]$ is equal to the conformal Laplacian or the Paneitz operator for $\ga = 1$ or 2, respectively.

Let us call $Q^{\ga}[g^+, \bh] = P^{\ga}[g^+, \bh](1)$ the {\it $\ga$-scalar curvature}.
One natural question is if a conformal metric $\bh'$ to $\bh$ on $M$ exists whose $\ga$-scalar curvature $Q^{\ga}[g^+, \bh']$ is constant.
By virtue of the conformal covariance property of $P^{\ga}$, it is reduced to search a smooth solution of the equation
\begin{equation}\label{eq_Yamabe}
P^{\ga}[g^+, \bh] u = c u^p \quad \text{and} \quad u > 0 \quad \text{on } (M^n, \bh)
\end{equation}
for some constant $c \in \mr$ provided $n > 2\ga$ and $p = 2_{n,\ga}^*-1 = (n+2\ga)/(n-2\ga)$.

For $\ga = 1$, the study on the existence of a solution to \eqref{eq_Yamabe} was initiated by Yamabe \cite{Ya}
and completely solved through the successive works of Trudinger \cite{Tr}, Aubin \cite{Au} and Schoen \cite{Sc3}.
See also Lee and Parker \cite{LP} and Bahri \cite{Ba} where a unified proof based on the use of conformal normal coordinates
and a proof not depending on the positive mass theorem are devised, respectively.
If $\ga = 2$ (and $n \ge 5$), existence theory of \eqref{eq_Yamabe} becomes considerably harder because of the lack of a maximum principle.
Up to now, only partial results are available such as Qing and Raske \cite{QR}, Gursky and Malchiodi \cite{GM} and Hang and Yang \cite{HY}.
In \cite{HY}, the authors could treat a general class of manifolds having the property that
the Yamabe constant \eqref{eq_Y_cons} is positive and there exists a representative of the conformal class $[\bh]$ with semi-positive $Q$-curvature $Q^2$.
Meanwhile, equation \eqref{eq_Yamabe} with $\ga = 1/2$ has deep relationship with the boundary Yamabe problem
(or the higher dimensional Riemann mapping theorem) formulated by Escobar \cite{Es}; refer to Remark \ref{rmk_main} (1).
Its existence theory has been completed due to the effort of Escobar \cite{Es, Es2}, Marques \cite{Ma2, Ma3}, Almaraz \cite{Al2}, Chen \cite{Ch} and Mayer and Ndiaye \cite{MN2}.

If $\ga \notin \mn$, it is not a simple task to solve \eqref{eq_Yamabe} directly, since the operator $P^{\ga}[g^+, \bh]$ is nonlocal and defined in a rather abstract way.
However, Chang and Gonz\'alez \cite{CG} discovered that \eqref{eq_Yamabe} can be interpreted as a Caffarelli-Silvestre type degenerate elliptic equation in \cite{CaS}, which is indeed local,
for which a variety of well-known techniques like constraint minimization and the Moser iteration technique can be applied; see Proposition \ref{prop_CG} for a more precise description.
From this observation, Gonz\'alez and Qing \cite{GQ} succeeded to find solutions to \eqref{eq_Yamabe} for $\ga \in (0,1)$
under the hypothesis that the dimension $n$ of the underlying manifold $M$ is sufficiently high and $M$ is the non-umbilic boundary of $X$.
Their approach was further developed in the works of Gonz\'alez and Wang \cite{GW} and the authors of this paper \cite{KMS}, which cover most cases when the local geometry dominates.
In \cite{KMS}, the author also established the existence result for 2-dimensional or locally conformally flat manifolds
provided that a certain technical assumption on the Green's function of $P^{\ga}$ holds.
Recently, Mayer and Ndiaye \cite{MN} and Daskalopoulos et al. \cite{DSV} pursued the critical point at infinity approach and the flow approach, respectively, removing the technical condition on the Green's function.

Furthermore, Case and Chang \cite{CC} obtained an extension result for $\ga \in (1, n/2)$ which generalizes \cite{CG}.
By utilizing it and adapting the argument in Gursky and Malchiodi \cite{GM}, they also deduced that $P^{\ga}[g^+, \bh]$ satisfies a strong maximum principle for $\ga \in (1,\min\{2, n/2\})$
when $[\bh]$ possesses a metric whose scalar curvature is nonnegative and $\ga$-curvature is semi-positive.
It is plausible that this with some further ideas in \cite{GM} may allow one to get certain existence results of \eqref{eq_Yamabe} under the prescribed conditions.

\medskip
In many cases, \eqref{eq_Yamabe} may contain higher Morse index (or energy) solutions as shown in \cite{Sc4, Po} for $\ga = 1$.
When $\ga = 1$ and $(M, \bh)$ is the round sphere, Obata \cite{Ob} found all the solutions to \eqref{eq_Yamabe} and concluded that there is no uniform $L^{\infty}(M)$-bound on them.
Analogously, if $\ga \in (0,1)$ and $(X, g^+)$ is the Poincar\'e ball whose conformal infinity is the standard sphere,
the classification result \cite[Theorem 1.8]{JLX} of Jin et al. shows that the solution set of \eqref{eq_Yamabe} is not bounded in $L^{\infty}(M)$.

In this regards, for manifolds $M$ that are non-conformally diffeomorphic to the standard sphere,
Schoen \cite{Sc} raised a question on the $C^2(M)$-compactness of the solution set to \eqref{eq_Yamabe} with $\ga = 1$ and suggested a general strategy towards its proof.
The first affirmative answer was given by Schoen himself \cite{Sc2} in the locally conformally flat case.
Li and Zhu \cite{LZhu} obtained it in $n = 3$ and Druet \cite{Dr} did it in $n \le 5$.
If $n \ge 6$, the analysis is more delicate because one needs to prove that
the Weyl tensor vanishes at an order greater than $\lfloor (n-6)/2 \rfloor$ at a blow-up point.
By solving this technical difficulty, Marques \cite{Ma} and Li and Zhang \cite{LZ} could deal with the situation that either $n \le 7$ holds or the Weyl tensor never vanishes on $M$.
Assuming the validity of the positive mass theorem and performing refined blow-up analysis on the basis of the linearized problem (as in our Section \ref{sec_lin}),
Li and Zhang \cite{LZ2} extended the result up to dimension 11, and Khuri et al. \cite{KMS} finally verified it for $n \le 24$.
Surprisingly, according to Brendle \cite{Br} and Brendle and Marques \cite{BM},
there are $C^{\infty}$-metrics on the sphere $S^n$ with $n \ge 25$ such that
even though they are not conformally equivalent to the canonical metric, a blowing-up family of solutions to \eqref{eq_Yamabe} does exist.

For $\ga = 2$, Y. Li and Xiong \cite{LX} obtained the $C^4(M)$-compactness result if $5 \le n \le 9$ or $M$ is locally conformally flat,
and the positive mass theorem holds for the Paneitz operator $P^2$; see also the previous works \cite{HR, QR, Li}.
On the other hand, Wei and Zhao \cite{WZ} established a non-compactness result for $n \ge 25$.
While it is believed that $C^4(M)$-compactness holds in general up to dimension 24, a rigorous proof is not known yet.

For the boundary Yamabe problem, corresponding to the case $\ga = 1/2$, compactness results were deduced
when $X^{n+1}$ is locally conformally flat \cite{FO} or $n+1 = 3$ \cite{FO2}.
Compactness of the solution set also follows under the assumption that the second fundamental form on $M$ vanishes nowhere \cite{Al2}; refer to \cite{HL, DK} for more results.
Almaraz \cite{Al3} showed that a blow-up phenomenon still happens if $n+1 \ge 25$.

If $\ga$ is non-integer value, only a few has been revealed up to now.
As far as we know, the only article that investigates compactness of the solution set to \eqref{eq_Yamabe} for $\ga \in [1, n/2)$ is \cite{QR2} due to Qing and Raske,
which concerns with locally conformally flat manifolds $M$ with the positive Yamabe constant and the Poincar\'e exponent less than $(n-2\ga)/2$.
For the non-compactness, the authors of this paper \cite{KMW} constructed asymptotic hyperbolic manifolds
which are small perturbations of the Poincar\'e ball and exhibit a blow-up phenomenon for $n \ge 24$ if $\ga \in (0, \ga^*)$
and $n \ge 25$ if $\ga \in [\ga^*, 1)$ where $\ga^*$ is a number close to $0.940197$.
However, the compactness issue on \eqref{eq_Yamabe} for $\ga \in (0,1)$ has not been discussed in the literature so far, unless the underlying manifold is the Poincar\'e ball; see \cite{JLX}.

\medskip
In this paper, we are concerned with the compactness of the solution set to the {\it $\ga$-Yamabe problem} \eqref{eq_Yamabe} provided $\ga \in (0,1)$ and $c > 0$.
As can be predicted from the representation theorem of Palais-Smale sequences associated with fractional Yamabe-type equations in \cite{FG},
the conformal covariance property of $P^{\ga}$ makes us perform local analysis even though it is a nonlocal operator.

We will state the main theorem under a slightly more general setting; more precisely, we will allow $p$ to be subcritical.
Since we always assume that the metric $g^+$ in $X$ is fixed, we write $P^{\ga}_{\bh} = P^{\ga}[g^+, \bh]$ and $Q^{\ga}_{\bh} = Q^{\ga}[g^+, \bh]$.
\begin{thm} \label{thm_main_2}
Let $\ga \in (0,1)$,
\begin{equation}\label{eq_n}
n \ge \begin{cases}
7 &\text{for } 0 < \ga \le \sqrt{1 \over 19},\\
6 &\text{for } \sqrt{1 \over 19} < \ga \le {1 \over 2},\\
5 &\text{for } {1 \over 2} < \ga \le \sqrt{5 \over 11},\\
4 &\text{for } \sqrt{5 \over 11} < \ga < 1
\end{cases} \qquad \(\sqrt{1 \over 19} \simeq 0.229 \text{ and } \sqrt{5 \over 11} \simeq 0.674\)
\end{equation}
and $(X^{n+1}, g^+)$ be an asymptotically hyperbolic manifold with conformal infinity $(M^n, [\bh])$.
Denote by $\rho$ a geodesic defining function associated to $M$, i.e., a unique defining function splitting the metric $\bg = \rho^2 g^+$ as $d\rho^2 + h_{\rho}$ near $M$
where $\{h_{\rho}\}_{\rho}$ is a family of metrics on $M$ such that $h_0 = \bh$.

Assume that the first $L^2(X)$-eigenvalue $\lambda_1(-\Delta_{g^+})$ of the Laplace-Beltrami operator $-\Delta_{g^+}$ satisfies the inequality
\begin{equation}\label{eq_eig}
\lambda_1(-\Delta_{g^+}) > {(n-1)^2 \over 4} - \ga^2,
\end{equation}
the $\ga$-Yamabe constant defined as
\begin{equation}\label{eq_Y_cons}
\Lambda^{\ga}(M, [\bh]) = \inf_{u \in H^{\ga}(M) \setminus \{0\}} {\int_M u P^{\ga}_{\bh} u\, dv_{\bh} \over \(\int_M |u|^{2n \over n-2\ga} dv_{\bh}\)^{n-2\ga \over n}}
\end{equation}
is positive and
\begin{equation}\label{eq_hyp}
R[g^+] + n(n+1) = o(\rho^2) \quad \text{as } \rho \to 0 \text{ uniformly on } M
\end{equation}
where $R[g^+]$ is the scalar curvature of $(X, g^+)$.

If the second fundamental form $\pi$ of $(M,\bh) \subset (\ox,\bg)$ never vanishes, then for any $\vep_0 > 0$ small,
there exists a constant $C > 1$ depending only on $X^{n+1},\, g^+,\, \bh,\, \ga$ and $\vep_0$ such that
\begin{equation}\label{eq_u_est}
C^{-1} \le u \le C \quad \text{on } M \quad \text{and} \quad \|u\|_{C^2(M)} \le C
\end{equation}
for any solution in $H^{\ga}(M)$ to \eqref{eq_Yamabe} with $1 + \vep_0 \le p \le 2_{n,\ga}^*-1$.
\end{thm}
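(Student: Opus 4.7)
The plan is a proof by contradiction in the Schoen style, adapted to the nonlocal/degenerate-elliptic framework of Chang-Gonz\'alez. Suppose \eqref{eq_u_est} fails; then there are sequences $p_k\in[1+\vep_0,\,2^*_{n,\ga}-1]$ and $u_k\in H^{\ga}(M)$ of solutions of \eqref{eq_Yamabe} with $p=p_k$ and $M_k:=\max_M u_k\to\infty$. After extraction, $p_k\to p_\infty$, and Liouville-type obstructions in the strictly subcritical regime force $p_\infty=2^*_{n,\ga}-1$. I would then invoke Proposition \ref{prop_CG} to replace the nonlocal equation on $M$ by a degenerate elliptic extension problem on a collar $\{0\le\rho<\rho_0\}\subset X$ for the weighted divergence operator with weight $\rho^{1-2\ga}$, boundary value $u_k$ and weighted conormal derivative $-\lim_{\rho\to 0^+}\rho^{1-2\ga}\pa_\rho U_k=c\,u_k^{p_k}$. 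The eigenvalue bound \eqref{eq_eig}, positivity of $\Lambda^{\ga}(M,[\bh])$, and the decay \eqref{eq_hyp} yield both coercivity of this extension and the fact that in Fermi coordinates attached to $M\subset(\ox,\bg)$ all lower-order contributions to the extended equation are of order $o(\rho^2)$; the local model therefore reduces to the Caffarelli-Silvestre operator on $\mrg$.

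The second stage is standard blow-up analysis in three steps. (i) Define a blow-up point $\bx\in M$ as a limit of local maxima $x_k$ of $u_k$ with $u_k(x_k)\to\infty$, and show by a covering and energy argument that the blow-up set $\mathcal{S}$ is finite. (ii) Prove that each $\bx\in\mathcal{S}$ is an \emph{isolated} blow-up point: the rescaled sequence
\[
W_k(y)=M_k^{-1}u_k(\exp_{x_k}(M_k^{-(p_k-1)/2}y))
\]
together with its Caffarelli-Silvestre extension converges locally uniformly to the standard bubble $W_{1,0}$ of the Jin-Li-Xiong classification \cite{JLX}. (iii) Upgrade each isolated blow-up to an \emph{isolated simple} one via the monotonicity argument on weighted spherical averages of $\rho^{1-2\ga}$-harmonic type, using the strong maximum principle for the linearized extension operator.

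The heart of the argument is a local Pohozaev identity for the extension on a half-ball $B^+_\delta\subset\mr^{n+1}_+$ in Fermi coordinates. Applied to $U_k$ it produces a boundary term in $u_k^{p_k+1}$, which is $o(1)$ as $p_k\to 2^*_{n,\ga}-1$ thanks to isolated simpleness, together with a bulk contribution coming from the expansion $h_\rho=\bh-2\rho\pi+O(\rho^2)$ of the boundary metrics. Using the sharp local profile of $U_k$, the leading bulk term equals a strictly positive universal constant times $|\pi(\bx)|^2$, which is nonzero under the non-umbilicity hypothesis. This contradicts the vanishing of the boundary contribution and rules out blow-up. The mechanism is precisely the Pohozaev obstruction exploited to produce existence in \cite{GQ, GW, KMS}, now used in reverse to prove compactness.

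The main obstacle is the refined asymptotic analysis required to legitimately expand this Pohozaev integral. One must sharpen the zeroth-order bubble approximation by solving the linearized extension equation at the blow-up point and producing an explicit correction whose leading order is dictated by $|\pi|^2\rho^2$. The pointwise error $u_k-(\text{bubble}+\text{correction})$ must be controlled in weighted H\"older norms strong enough for the bulk Pohozaev integral to converge to the displayed multiple of $|\pi(\bx)|^2$; this in turn requires constructing a right inverse of the linearized operator modulo the kernel generated by translation and dilation modes. The dimension thresholds \eqref{eq_n} are exactly what is needed for the relevant weighted moments of $W_{1,0}$ to be finite and for the remainder terms to beat the geometric term, and the resulting linear theory constitutes the technical core of the proof along the scheme of Section \ref{sec_lin}.
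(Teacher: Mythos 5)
Your overall strategy coincides with the paper's: argue by contradiction, pass to the Chang--Gonz\'alez extension \eqref{eq_Yamabe_2}, run Schoen-type blow-up analysis, construct the linear correction at the blow-up point (the function $\Psi_m$ of Proposition \ref{prop_lin}), and feed the refined expansion of Proposition \ref{prop_ref} into a local Pohozaev identity whose bulk term is a positive multiple of $\|\pi\|^2\ep_m^2$ exactly under \eqref{eq_n}, contradicting the hypothesis that $\pi$ never vanishes.

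There is, however, a genuine gap in how you organize steps (i)--(iii). You present finiteness of the blow-up set (``covering and energy argument''), isolation, and the upgrade to isolated simple (``monotonicity of weighted spherical averages plus the strong maximum principle'') as geometry-free preliminaries to be settled before the Pohozaev identity enters. As stated these steps do not go through: the theorem concerns arbitrary solutions, so there is no a priori energy bound and hence no covering/energy argument bounding the number of bubbles; and monotonicity of $\bu_m$ together with the maximum principle cannot by itself exclude a second critical point. If $\bu_m$ had a second critical point $\vrh_m\to 0$, the rescaled sequence converges (via the Liouville-type Lemma \ref{lemma_li} and the B\^ocher-type Proposition \ref{prop_boc}) to $c_1\bigl(|x|^{-(n-2\ga)}+1\bigr)$, and this scenario is ruled out only by the local Pohozaev sign condition of Lemma \ref{lemma_lsign}, which itself requires $\tipi_0(y_0)\neq 0$ and the dimension restriction \eqref{eq_n}. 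In the paper the Pohozaev machinery is therefore used twice: first (Lemmas \ref{lemma_lsign}--\ref{lemma_sim}, Proposition \ref{prop_sim}, Corollary \ref{cor_sim}) to show that under non-umbilicity every blow-up point is isolated simple and bubbles cannot accumulate, and only then (Proposition \ref{prop_van}) to force $\pi(y_0)=0$ and reach the contradiction; your sketch collapses the first use into ``standard'' analysis, which is precisely where the argument would break down. Two smaller omissions: the reduction in Step 5 of Proposition \ref{prop_iso}, where a conformal change of $\bh$ and the first-order PDE for the geodesic defining function (using \eqref{eq_hyp}) restore nonnegativity of $E_{\bg}$, is needed before the maximum-principle estimates apply; and \eqref{eq_u_est} also asserts the lower bound and the $C^2$ bound, which the paper derives from $\Lambda^{\ga}(M,[\bh])>0$ together with the regularity theory of Appendix \ref{sec_reg}, whereas your proposal stops at the sup bound.
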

\noindent Here $H^{\ga}(M)$ is the fractional Sobolev space defined via a partition of unity of $M$.
A couple of remarks regarding Theorem \ref{thm_main_2} are in order.
\begin{rmk}\label{rmk_main}
(1) Condition \eqref{eq_hyp} makes the boundary Yamabe problem and the 1/2-Yamabe problem identical (modulo the remainder term) in view of the energy expansion.
Also, \eqref{eq_hyp} and the property that $\pi \ne 0$ at each point of $M$ are intrinsic in the sense
that they are not relevant to the choice of a representative of the class $[\bh]$.
Refer to Lemma 2.1 and Subsections 2.3, 2.4 of \cite{KMW2}.

\medskip \noindent (2) Hypothesis \eqref{eq_hyp} implies that the mean curvature $H$ is identically 0 on $M$; see \cite[Lemma 2.3]{KMW2}.
As a particular consequence, $\pi = \pi - H \bg$ on $M$, the latter tensor being the trace-free second fundamental form.
Thus our theorem generalizes the result of Almaraz \cite[Theorem 1.2]{Al} on the boundary Yamabe problem (corresponding to the case $\ga = 1/2$) under the further assumption that $H = 0$ on $M$.

\medskip \noindent (3) The standard transversality argument implies that the set of Riemannian metrics on $M$
whose second fundamental form is nonzero everywhere is open and dense in the space of all Riemannian metrics on $M$.
On the other hand, there exists an asymptotically hyperbolic manifold $X^{n+1}$ that can be realized as a small perturbation of the Poincar\'e half-space,
for which the solution set of the $\ga$-Yamabe problem is non-compact provided that $n \ge 24$ or $25$ according to the magnitude of $\ga \in (0,1)$; refer to \cite{KMW}.
In this example, the conformal infinity $M$ is the totally geodesic (in particular, umbilic) boundary of $X$.

\medskip \noindent (4) It is remarkable that the dimension restriction \eqref{eq_n} is exactly same as
the one appeared in the existence result \cite[Corollary 2.7]{KMW2} for equation \eqref{eq_Yamabe} on non-umbilic conformal infinities.

\medskip \noindent (5) We believe that the same type of a compactness result as Theorem \ref{thm_main_2} can be achieved
for any umbilic non-locally conformally flat conformal infinity whose Weyl tensor never vanishes,
whenever $n \ge 7$ for $\ga \in [1/2,1)$ and $n \ge 8$ for $\ga \in (0,1)$.
The condition on the dimension is suggested by an existence result \cite[Corollary 3.4]{KMW2}.
Also, if a suitable condition on the Green's function on $P_{\bh}^{\ga}$ is assumed,
a compactness result may be obtained provided that $M$ is either locally conformally flat or 2-dimensional as in Felli and Ould Ahmedou \cite{FO, FO2}.
An interesting question is to confirm whether the dimension assumption in \cite{KMW} is optimal to form a blow-up phenomenon of the solution set to \eqref{eq_Yamabe}.
\end{rmk}

One can establish Theorem \ref{thm_main_2} from the next theorem and elliptic regularity theory; see Subsection \ref{subsec_pf_m} and Appendix \ref{sec_reg}.
\begin{thm}[Vanishing theorem of the second fundamental form] \label{thm_main_1}
For $\ga \in (0,1)$ and $n \in \mn$ satisfying \eqref{eq_n}, let $(X^{n+1}, g^+)$ be an asymptotically hyperbolic manifold with conformal infinity $(M^n, [\bh])$ such that \eqref{eq_eig} is valid.
Moreover assume that $\rho$ is a geodesic defining function associated to $M$, $\bg = \rho^2 g^+$, $\Lambda^{\ga}(M, [\bh]) > 0$, and \eqref{eq_hyp} holds.
If $\{(u_m, y_m)\}_{m \in \mn}$ is a sequence of pairs in $C^{\infty}(M) \times M$ such that
each $u_m$ is a solution of \eqref{eq_Yamabe}, $y_m$ is a local maximum point of $u_m$ satisfying $u_m(y_m) \to \infty$ and $y_m \to y_0 \in M$ as $m \to \infty$,
then the second fundamental form $\pi$ at $y_0$ vanishes.
\end{thm}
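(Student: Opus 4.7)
The plan is to argue by contradiction: suppose $\pi(y_0) \ne 0$ and derive a contradiction through a refined blow-up analysis combined with a weighted Pohozaev-type identity, in the spirit of \cite{Ma, LZ, KMS} for $\ga = 1$ and \cite{Al} for $\ga = 1/2$.

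First, by the Chang--Gonz\'alez extension, each $u_m$ lifts to a function $U_m$ on a collar of $M \subset \ox$ satisfying a linear degenerate elliptic equation with weight $\rho^{1-2\ga}$ and a nonlinear Neumann-type boundary condition on $M$ whose leading term is $u_m^p$. I would work in conformal Fermi coordinates centered at $y_m$, chosen (as in \cite{KMW2}) to absorb as much of the ambient geometry as possible into a suitable conformal factor; then \eqref{eq_hyp} together with the vanishing of the mean curvature (Remark \ref{rmk_main}(2)) kills the leading interior curvature and boundary mean-curvature contributions, leaving $\pi$ as the dominant obstruction to an Euclidean model in the Taylor expansion of the background metric.

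Next, set $\lambda_m = u_m(y_m)^{-2/(n-2\ga)}$ and $\wtu_m(\mbx) = \lambda_m^{(n-2\ga)/2} U_m(\lambda_m \mbx)$. Using \eqref{eq_eig}, positivity of $\Lambda^{\ga}(M,[\bh])$, and weighted elliptic estimates, one shows that $\wtu_m$ converges to the standard bubble on $\mr^{n+1}_+$ classified in \cite{JLX}, that $y_m$ is an isolated simple blow-up point, and that one has the sharp pointwise bound $u_m(y) \le C u_m(y_m)^{-1}|y-y_m|^{-(n-2\ga)}$ on a fixed ball. Since the metric error from $\pi$ is only $O(|\mbx|)$, a single-bubble approximation is not sharp enough, so one would build a refined approximation $\wtv_m = W_{\lambda_m, y_m} + \lambda_m \Phi_1 + \lambda_m^2 \Phi_2 + \cdots$, where each $\Phi_j$ solves the linearized equation around the bubble with right-hand side polynomial in $\pi(y_m)$ and its derivatives. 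Solvability rests on non-degeneracy of the bubble \cite{JLX}, orthogonality to the kernel generated by conformal symmetries, and the sharp decay estimates for the weighted linearized operator developed in Section \ref{sec_lin}.

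Finally, I would apply the weighted Pohozaev identity to the extended equation on a half-ball $B_R^+(y_m)$, testing against $\mbx \cdot \nabla U_m + \tfrac{n-2\ga}{2} U_m$. Substituting the refined ansatz, sending $m \to \infty$ and then $R \to \infty$, the interior curvature term produces at the leading order in $\lambda_m$ a strictly positive multiple of $|\pi(y_0)|^2$ times an explicit bubble integral; the spherical boundary contributions vanish thanks to the pointwise bound above; and the nonlinear boundary integral has the correct sign up to subleading error, because $y_m$ is a local maximum and $p \ge 1+\vep_0$. The identity then forces $|\pi(y_0)|^2 \le 0$, contradicting the assumption. The hardest part, and the main obstacle, is the bookkeeping in the refined ansatz and the Pohozaev expansion: one must solve the linearized problem with the fractional weight $x_N^{1-2\ga}$, carry enough correctors so that every remainder beats the $|\pi|^2$ leading term, and guarantee finite integrability in $\mr^{n+1}_+$. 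The balance between the number of correctors required and the decay available from the standard bubble is exactly what dictates the dimension thresholds in \eqref{eq_n}, mirroring the $n \le 24$ bound in the classical Khuri--Marques--Schoen analysis.
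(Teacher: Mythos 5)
Your strategy is essentially the paper's: reduce to an isolated simple blow-up point, build a first-order corrector solving the linearized bubble equation with right-hand side proportional to $\pi$, and feed the refined expansion into a local Pohozaev identity. But two steps you treat as routine are exactly where the substance lies, and as stated they are gaps. The most serious one is the corrector cross-term. Your claimed positive term $|\pi(y_0)|^2$ appears at order $\lambda_m^2$, while the corrector $\Phi_1$ has size $\lambda_m$ and the metric perturbation it responds to is also of size $\lambda_m$; hence the mixed contributions (perturbation acting on $W$ paired with the Pohozaev multiplier of $\Phi_1$, and vice versa) enter at the \emph{same} order $\lambda_m^2$ and cannot be absorbed into ``subleading error.'' In the paper this is the term in \eqref{eq_whf_m_1}: after integration by parts and testing the corrector equation, it reduces to the quadratic form $\kappa_{\ga}\int x_N^{1-2\ga}|\nabla\Psi_m|^2\,dx-\frac{n+2\ga}{n-2\ga}\int w_{1,0}^{4\ga/(n-2\ga)}\Psi_m^2\,d\bx$, whose nonnegativity is not free -- it uses the orthogonality \eqref{eq_lin_3} together with the fact that the bubble has Morse index one (end of the proof of Proposition \ref{prop_van}). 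You invoke orthogonality only for solvability of the corrector and never for this sign, so your Pohozaev expansion does not yet yield the contradiction.

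Second, the positivity of the $\|\pi\|^2$ coefficient is not automatic, and your account of where \eqref{eq_n} enters is not correct. Decay and integrability of the refined expansion only require $n>2+2\ga$; the sharp thresholds in \eqref{eq_n} come from the explicit sign of the combination of bubble integrals in the quadratic term -- the $\pi_{ik}\pi_{kj}$ and $R_{iNjN}$ pieces of the metric expansion, and the $E_{\bg}$ term, where \eqref{eq_hyp}, Gauss--Codazzi and the choice of representative in Lemma \ref{lemma_rep} convert $R[\tig_m]$ into $-\frac{n}{n-1}\|\tipi_m\|^2$ -- producing the factor $3n^2+n(16\ga^2-22)+20(1-\ga^2)$ in \eqref{eq_whf_m_2}, which is positive precisely when \eqref{eq_n} holds. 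So the ``strictly positive multiple of $|\pi(y_0)|^2$'' must be computed, and the dimension restriction is used there, not in corrector bookkeeping as in the Khuri--Marques--Schoen Weyl-vanishing analysis. Finally, ``one shows that $y_m$ is an isolated simple blow-up point'' is itself a large portion of the proof: under $\pi(y_0)\ne 0$ it requires the local Pohozaev sign condition, Green's function asymptotics and a B\^ocher-type theorem for the degenerate operator, and the exclusion of bubble accumulation (Lemmas \ref{lemma_lsign}, \ref{lemma_sim} and Proposition \ref{prop_sim}); weighted elliptic estimates and the Liouville theorem alone do not give isolatedness or simplicity. (A minor point: the sign of the boundary term in the Pohozaev identity comes from $\delta_m\ge 0$ and the structure of $\mcp_{2m}$, not from $y_m$ being a local maximum.)
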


As a corollary of Theorem \ref{thm_main_2}, we can compute the Leray-Schauder degree of all solutions to equation \eqref{eq_Yamabe}
if every hypothesis imposed in the theorem holds.
Since $P^{\ga}_{\bh}$ is a self-adjoint operator as shown in \cite{GZ},
any $L^{p+1}(M)$-normalized minimizer of
\[\inf_{u \in H^{\ga}(M) \setminus \{0\}} {\int_M u P^{\ga}_{\bh} u\, dv_{\bh} \over \(\int_M |u|^{p+1} dv_{\bh}\)^{2 \over p+1}} \quad \text{for } 1 \le p \le 2_{n,\ga}^*-1\]
(which is the same as \eqref{eq_Y_cons} if $p = 2_{n,\ga}^*-1$) solves
\begin{equation}\label{eq_Yamabe_1}
P^{\ga}_{\bh} u = \mce(u) |u|^{p-1}u \quad \text{on } (M, \bh) \quad \text{where} \quad \mce(u) = \int_M u P^{\ga}_{\bh} u\, dv_{\bh} \quad \text{(the energy of $u$)}.
\end{equation}
Furthermore, if the $\ga$-Yamabe constant $\Lambda^{\ga}(M, [\bh])$ is positive, then it is simple to check that the operator $T: L^{2n/(n+2\ga)}(M) \to H^{\ga}(M)$ is well-defined by the relation
\[T(v) = u \quad \text{if and only if} \quad P^{\ga}_{\bh} u = v \quad \text{on } M.\]
Hence, it is natural to define the map $\mcf_p: D_{\Lambda} \to L^{\infty}(M)$ by $\mcf_p(u) = u - T(\mce(u) u^p)$ where
\[D_{\Lambda} = \left\{ u \in L^{\infty}(M): u > \Lambda^{-1} \text{ and } \|u\|_{L^{\infty}(M)} < \Lambda \right\} \quad \text{for each } \Lambda > 1,\]
for which it holds that $\mcf_p(u) = 0$ if and only if $u$ is a solution of \eqref{eq_Yamabe}.
Elliptic estimate in Lemma \ref{lemma_reg_0} below implies that $\mcf_p$ is the sum of the identity and a compact map.
Besides we infer from Lemma \ref{lemma_main_3}, a consequence of Theorem \ref{thm_main_2}, that $0 \notin \mcf_p(\pa D_{\Lambda})$ for all $1 \le p \le 2_{n,\ga}^*-1$ if $\Lambda$ is sufficiently large.
Therefore the Leray-Schauder degree $\deg(\mcf_p, D_{\Lambda}, 0)$ of the map $\mcf_p$ in the domain $D_{\Lambda}$ with respect to the point $0 \in L^{\infty}(M)$ is well-defined.
\begin{thm} \label{thm_main_3}
Under the assumptions of Theorem \ref{thm_main_2}, it holds that
\[\deg(\mcf_p, D_{\Lambda}, 0) = -1.\]
In particular, the fractional Yamabe equation \eqref{eq_Yamabe} possesses a solution.
\end{thm}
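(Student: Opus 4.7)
\medskip

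\noindent\textbf{Strategy.} The plan is to exploit the homotopy invariance of the Leray--Schauder degree along the family $\{\mcf_p\}_{p \in [1, 2_{n,\ga}^*-1]}$, deforming from the critical exponent down to the linear endpoint $p = 1$ where a direct spectral computation is available. Since $\deg(\mcf_{2_{n,\ga}^*-1}, D_\Lambda, 0) \ne 0$ will force a zero of $\mcf_{2_{n,\ga}^*-1}$ inside $D_\Lambda$, the existence statement is a corollary of the degree formula.

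\medskip

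\noindent\textbf{Admissibility of the homotopy.} First I would verify that $(p,u) \mapsto \mcf_p(u)$ is a continuous family of compact perturbations of the identity on $L^\infty(M)$. The operator $u \mapsto T(\mce(u)\, u^p)$ is compact because $T$, by the elliptic regularity of Lemma \ref{lemma_reg_0}, sends $L^{2n/(n+2\ga)}(M)$ into a space compactly embedded in $L^\infty(M)$, and joint continuity in $(u,p)$ is routine. The second requirement $0 \notin \mcf_p(\pa D_\Lambda)$ for every $p \in [1, 2_{n,\ga}^*-1]$ with $\Lambda$ sufficiently large is exactly Lemma \ref{lemma_main_3}. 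Consequently $p \mapsto \deg(\mcf_p, D_\Lambda, 0)$ is constant on $[1, 2_{n,\ga}^*-1]$ and it suffices to evaluate it at $p = 1$.

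\medskip

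\noindent\textbf{Computation at $p = 1$.} The equation $\mcf_1(u) = 0$ reads $P^{\ga}_{\bh} u = \mce(u)\, u$. Because $\Lambda^{\ga}(M,[\bh]) > 0$ and $P^{\ga}_{\bh}$ satisfies a strong maximum principle for $\ga \in (0,1)$ via the Chang--Gonz\'alez degenerate elliptic extension, its first eigenvalue $\lambda_1$ is positive and simple with a positive eigenfunction $\phi_1$, normalized by $\int_M \phi_1^2\, dv_{\bh} = 1$. Writing a positive solution as $u = c\phi_1$, the equation becomes $c \lambda_1 \phi_1 = c^3 \lambda_1 \phi_1$, hence $c = 1$, and $\phi_1$ is the unique zero of $\mcf_1$ in $D_\Lambda$. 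Differentiating $\mce(u) = \int_M u P^{\ga}_{\bh} u\, dv_{\bh}$ gives $D\mce(\phi_1)[v] = 2\lambda_1 \langle v, \phi_1\rangle_{L^2}$, so the linearization reads
\[
D\mcf_1(\phi_1)[v] \;=\; v - K[v], \qquad K[v] \;:=\; T\!\left(2\lambda_1 \langle v,\phi_1\rangle_{L^2}\, \phi_1 + \lambda_1 v\right).
\]
Using the $L^2$-orthonormal eigenbasis $\{\phi_j\}_{j\ge 1}$ of $P^{\ga}_{\bh}$, with $\lambda_1 < \lambda_2 \le \cdots$, one diagonalizes $K$: it acts on $\phi_1$ by $K\phi_1 = T(3\lambda_1 \phi_1) = 3\phi_1$ and on $\phi_j$ for $j \ge 2$ by $K\phi_j = T(\lambda_1 \phi_j) = (\lambda_1/\lambda_j)\phi_j \in (0,1)\,\phi_j$. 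The only eigenvalue of $K$ exceeding $1$ is the simple value $3$, so the Leray--Schauder index formula yields
\[
\deg(\mcf_1, D_\Lambda, 0) \;=\; (-1)^{1} \;=\; -1.
\]

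\medskip

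\noindent\textbf{Main obstacle.} The eigenvalue count is purely algebraic; the substantive difficulty lies upstream in Lemma \ref{lemma_main_3}, which extends the uniform $L^\infty$ bound of Theorem \ref{thm_main_2} (stated only for $p \in [1 + \vep_0, 2_{n,\ga}^*-1]$) to a neighbourhood of $p = 1$. Producing a priori estimates across the subcritical endpoint requires a separate argument, built on the simplicity of $\lambda_1$ and a perturbation analysis near the unique solution $\phi_1$; it is this step that actually makes the homotopy admissible on the entire interval $[1, 2_{n,\ga}^*-1]$ and thereby enables the degree computation above.
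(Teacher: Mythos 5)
Your proposal is correct and follows essentially the same route as the paper: homotopy invariance of the degree on $[1,2_{n,\ga}^*-1]$ justified by Lemma \ref{lemma_main_3}, followed by the computation at $p=1$ using the positivity of the first eigenvalue of $P^{\ga}_{\bh}$, the simplicity and positivity of its first eigenfunction (hence uniqueness of the zero $\phi_1$ of $\mcf_1$ in $D_{\Lambda}$), and the Leray--Schauder index formula giving $(-1)^1=-1$. The paper delegates this last spectral computation to Schoen's argument in \cite{Sc2}, whereas you carry it out explicitly; your closing observation that the genuine analytic burden sits in Lemma \ref{lemma_main_3} matches the paper's structure as well.
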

\noindent Theorem \ref{thm_main_3} gives a new proof on the existence of a solution to \eqref{eq_Yamabe} under rather restrictive assumptions.
Compare it with \cite{GQ, KMW2}.
We also expect that there exists the strong Morse inequality in our framework; refer to \cite[Theorem 1.4]{KMS}.

\medskip
Our proof of the main theorems relies on Schoen's argument \cite{Sc} for the proof of the compactness theorem of the classical Yamabe problem.
It has been further developed through the works of Li and Zhu \cite{LZhu}, Druet \cite{Dr}, Marques \cite{Ma}, Khuri et al. \cite{KMS} (for the Yamabe problem),
Han and Li \cite{HL}, Felli and Ould Ahmedou \cite{FO, FO2}, Almaraz \cite{Al} (for the boundary Yamabe problem), G. Li \cite{Li}, Y. Y. Li and Xiong \cite{LX} (for the Q-curvature problem),
Schoen and Zhang \cite{SZ}, Li \cite{Li2}, Jin et al. \cite{JLX2} (for the classical and fractional Nirenberg problem)
and Niu et al. \cite{NPX} (for the critical Lane-Emden equation involving the regional fractional Laplacian) among others.

Although certain parts of the proof can be achieved from minor modification of the classical arguments, there are still plenty of technical difficulties which demand new ideas.
We will pay attention to, for instance, the following features.
\begin{itemize}
\item[-] In our setting, the freedom of conformal transform is limited on the boundary $M$ and one cannot use the standard conformal normal coordinate on the whole manifold $\ox$.
    To handle the vertical direction to $M$, we use the geometric assumption \eqref{eq_hyp}
    and examine the first-order partial differential equation satisfied by the geodesic defining function;
\item[-] We largely depend on the extension result of Chang and Gonz\'alez \cite{CG} to analyze solutions.
    Because of the degeneracy of the extended problem \eqref{eq_Yamabe_2},
    it is not simple to study the asymptotic behavior of the Green's function near its singularity;
    see Appendix \ref{subsec_Green} where some of its qualitative properties are obtained.
    Hence, in order to show the decay property of rescaled solutions, we do not use potential analysis,
but iteratively apply the rescaling argument based on the maximum principle;
\item[-] Regularity theory which we require is technically more difficult to deduce than ones for the classical local problems, or even nonlocal problems on the Euclidean space;
\item[-] Because the bubbles have no explicit expression in general (see Subsection \ref{subsec_bubble}), we have to put some extra efforts compute integrals involving them.
\end{itemize}
To reduce overlaps, we will omit the proofs of several intermediate results which closely follow the standard arguments, leaving appropriate references.
Our main concern is to clarify the novelty of the nonlocal problems defined on general conformal infinities.

\medskip
The paper is organized as follows: In Section \ref{sec_pre}, we recall some analytic and geometric tools necessary to investigate the fractional Yamabe problem \eqref{eq_Yamabe}.
In Section \ref{sec_blow}, we introduce some concepts regarding a blowing-up sequence $\{u_m\}_{m \in \mn}$ of solutions to \eqref{eq_Yamabe}
and perform an asymptotic analysis near each blow-up point of $\{u_m\}_{m \in \mn}$.
Section \ref{sec_lin} is devoted to deducing sharp pointwise estimate of $u_m$ near each isolated simple blow-up point.
This allows one to establish the vanishing theorem of the second fundamental form at any isolated simple blow-up point, which is discussed in Section \ref{sec_van}.
Finally, the main theorems are proved in Section \ref{sec_main} with the aid of a local Pohozaev sign condition which guarantees that every blow-up point is isolated simple.
In the appendices, we provide technical results needed in the main body of the proof as well as their proofs.
Firstly, in Appendix \ref{sec_reg}, we present several elliptic regularity results.
Then we study the asymptotic behavior of the Green's functions near its singularity in Appendix \ref{subsec_Green}.
We also derive a fractional B\^ocher's theorem in Appendix \ref{subsec_Bocher}.
Finally, a number of integrals involving the {\it standard bubble} $W_{1,0}$, whose precise definition is given in Subsection \ref{subsec_bubble}, will be computed in Appendix \ref{sec_W_int}.

\bigskip \noindent \textbf{Notations.}

\medskip \noindent - The Einstein convention is adopted throughout the paper. We shall use the indices $1 \le i,\, j,\, k,\, l \le n$.

\medskip \noindent - For any $t \in \mr$, set $t_+ = \max\{t, 0\}$ and $t_- = \max\{-t,0\}$. Clearly, we have $t = t_+ - t_-$.

\medskip \noindent - Let $N = n+1$. Also, for any $x \in \mr^N_+ = \{(x_1, \cdots, x_n, x_N) \in \mr^N: x_N > 0\}$,
we denote $\bx = (x_1, \cdots, x_n) \in \mr^n \simeq \pa \mr^N_+$.

\medskip \noindent - For a function $f$ on $\mr^N_+$, we often write $\pa_i f = {\pa f \over \pa x_i}$ and $\pa_N f = {\pa f \over \pa x_N}$.

\medskip \noindent - For any $\bx \in \mr^n$, $x \in \mr^N_+$ and $r > 0$, $B^n(\bx,r)$ signifies the $n$-dimensional ball whose center and radius are $\bx$ and $r$, respectively.
Similarly, we define $B^N_+(x,r)$ by the $N$-dimensional upper half-ball centered at $x$ having radius $r$.
We often identify $B^n(\bx,r) = \pa B^N_+((\bx,0),r) \cap \pa \mr^N_+$.
Set $\pa_I B^N_+((\bx,0),r) = \pa B^N_+((\bx,0),r) \cap \mr^N_+$.

\medskip \noindent - $|\ms^{n-1}|$ is the surface area of the unit $(n-1)$-sphere $\ms^{n-1}$.

\medskip \noindent - The spaces $W^{1,2}\mrg$ and $D^{1,2}\mrg$ are the completions of $C_c^{\infty}(\overline{\mr^N_+})$ with respect to the norms
\begin{align*}
\|U\|_{W^{1,2}\mrg} &= \( \int_{\mr^N_+} x_N^{1-2\ga} \(|\nabla U|^2 + U^2\) dx \)^{1 \over 2}
\intertext{and}
\|U\|_{D^{1,2}\mrg} &= \( \int_{\mr^N_+} x_N^{1-2\ga} |\nabla U|^2 dx \)^{1 \over 2},
\end{align*}
respectively.
The natural function space $W^{1,2}(X;\rho^{1-2\ga})$ for the fractional Yamabe problem \eqref{eq_Yamabe_2} is analogously defined.

\medskip \noindent - For any number $\beta \in (0, \infty) \setminus \mn$ and domain $\Omega$, we write $C^{\beta}(\Omega)$
to refer the H\"older space $C^{\lfloor \beta \rfloor, \beta - \lfloor \beta \rfloor}(\Omega)$ where $\lfloor \beta \rfloor$ is the greatest integer that does not exceed $\beta$.

\medskip \noindent - Assume that $(M, \bh)$ and $(\ox, \bg)$ are compact Riemannian manifolds.
Then $B_{\bh}(y, r) \subset (M, \bh)$ stands for the geodesic ball centered at $y \in M$ of radius $r > 0$.
Besides, $dv_{\bg}$ is the volume form of $(\ox, \bg)$ and $d\sigma$ represents a surface measure.

\medskip \noindent - $C > 0$ denotes a generic constant possibly depending on the dimension $n$ of an underlying manifold $M$,
the order $\ga$ of the conformal fractional Laplacian $P^{\ga}$ and so on. It may vary from line to line.
Moreover, a notation $C(\alpha, \beta, \cdots)$ means that the constant $C$ depends on $\alpha, \beta, \cdots$.

\begin{rmk}
By \cite[Lemma 3.1]{FF} (or we may follow the proof of \cite[Proposition 2.1.1]{DMV} by replacing \cite[Theorem 1.2]{FKS} with \cite[Lemma 2.2]{TX}) and \cite[Section 3.2]{CaS}, we have
\begin{equation}\label{eq_emb}
D^{1,2}\mrg \hookrightarrow L^{2(n-2\ga+2) \over n-2\ga}\mrg \quad \text{and} \quad D^{1,2}\mrg \hookrightarrow H^{\ga}(\mr^n).
\end{equation}
Hence we infer from \cite[Corollary 7.2]{DPV} that the trace embedding
$D^{1,2}\mrg \hookrightarrow L^q(\Omega)$ is compact for any $q \in [1, 2_{n,\ga}^*)$ and a smooth bounded domain $\Omega \subset \mr^n$.
\end{rmk}

\section{Preliminaries}\label{sec_pre}
\subsection{Geometric background}
We recall the extension result involving the conformal fractional Laplacian $P^{\ga}$ obtained by Chang and Gonz\'alez \cite{CG}; see also \cite{CaS, GQ}.
\begin{prop} \label{prop_CG}
Suppose that $\ga \in (0,1)$, $n > 2\ga$, $(X, g^+)$ is an asymptotically hyperbolic manifold with conformal infinity $(M, [\bh])$.
Also, assume that $\rho$ is a geodesic defining function associated to $M$, $\bg = \rho^2 g^+$ and the mean curvature $H$ is $0$ on $M$. Set $s = n/2 + \ga$ and
\[E_{\bg}(\rho) = \rho^{-1-s} (-\Delta_{g^+}-s(n-s))\rho^{n-s} \quad \text{in } X.\]
Then we have
\begin{equation}\label{eq_E_exp}
\begin{aligned}
E_{\bg}(\rho) &= \({n-2\ga \over 4n}\) \left[ R[\bg] - (n(n+1)+R[g^+]) \rho^{-2} \right] \rho^{1-2\ga} \\
&= -\({n-2\ga \over 2}\) \({\pa_{\rho} \sqrt{|\bg|} \over \sqrt{|\bg|}}\) x_N^{-2\ga} \quad \text{(by } \textnormal{(2.5)} \text{ of \cite{KMW2})}
\end{aligned}
\end{equation}
in $M \times (0,r_0)$ for some small $r_0 > 0$, where $R[\bg]$ and $R[g^+]$ are the scalar curvatures of $(\ox, \bg)$ and $(X, g^+)$, respectively,
and $|\bg|$ is the determinant of $\bg$. In addition,

\medskip \noindent $(1)$ If a positive function $U \in W^{1,2}(X;\rho^{1-2\ga})$ satisfies
\begin{equation}\label{eq_ext}
\begin{cases}
-\textnormal{div}_{\bg} (\rho^{1-2\ga}\nabla U) + E_{\bg}(\rho)U = 0 &\text{in } (X, \bg),\\
U = u &\text{on } M,
\end{cases}
\end{equation}
then
\begin{equation}\label{eq_diff}
\pa^{\ga}_{\nu} U = - \kappa_{\ga} \(\lim_{\rho \to 0+} \rho^{1-2\ga}{\pa U \over \pa \rho}\) = P^{\ga}_{\bh} u \quad \text{on } M,
\quad \kappa_{\ga} = {2^{-(1-2\ga)} \Gamma(\ga) \over \Gamma(1-\ga)} > 0,
\end{equation}
where $\nu$ denotes the outward unit normal vector with respect to $X$ and $\Gamma(z)$ is the Gamma function.

\medskip \noindent $(2)$ If \eqref{eq_eig} is also true, then there is a special defining function $\rho^*$ such that $E_{\bg^*}(\rho^*) = 0$ in $X$ and $\rho^*(\rho) = \rho\, (1 + O(\rho^{2\ga}))$ near $M$.
Besides the function $U^* = (\rho/\rho^*)^{(n-2\ga)/2}U$ solves
\[\begin{cases}
-\textnormal{div}_{\bg^*} (\(\rho^*\)^{1-2\ga}\nabla U^*) = 0 &\text{in } (X, \bg^*),\\
\pa^{\ga}_{\nu} U^* = P^{\ga}_{\bh} u - Q^{\ga}_{\bh} u &\text{on } M.
\end{cases}\]
Here $\bg^* = (\rho^*)^2 g^+$ and $Q^{\ga}_{\bh} = P^{\ga}_{\bh}(1)$ are called the adapted metric on $\ox$ and the fractional scalar curvature on $(M, \bh)$, respectively.
\end{prop}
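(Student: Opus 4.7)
The plan is to unpack the abstract scattering definition of $P^{\ga}_{\bh}$ from \cite{GZ, MM} and convert it to the degenerate elliptic extension \eqref{eq_ext} via the substitution $V = \rho^{n-s} U$, with $V$ a scattering solution of $(-\Delta_{g^+}-s(n-s))V = 0$ on $X$. To derive \eqref{eq_E_exp}, I would first apply the conformal transformation law $-\Delta_{g^+}u = \rho^2(-\Delta_{\bg}u) + (n-1)\rho\la\nabla_{\bg}\rho,\nabla_{\bg}u\ra_{\bg}$ (coming from $g^+ = \rho^{-2}\bg$ on the $(n+1)$-manifold) to $u = \rho^{n-s}$. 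Because $\rho$ is geodesic, $|\nabla_{\bg}\rho|_{\bg} = 1$ and $\Delta_{\bg}\rho = \pa_{\rho}\log\sqrt{|\bg|}$ is a purely radial quantity. Expanding and collecting, the $s(n-s)\rho^{n-s}$ piece cancels out exactly, leaving only $E_{\bg}(\rho) = -\tfrac{n-2\ga}{2}(\pa_{\rho}\sqrt{|\bg|}/\sqrt{|\bg|})\rho^{-2\ga}$, which is the second line of \eqref{eq_E_exp}. The equivalence with the scalar-curvature form then follows from the conformal transformation law for $R$ under $\bg = \rho^2 g^+$ together with the assumption $H = 0$ on $M$, which keeps the coefficient bounded as $\rho \to 0$.

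For Part (1), the scattering definition asserts that for each $u \in C^{\infty}(M)$ there is a unique solution $V$ of $(-\Delta_{g^+}-s(n-s))V = 0$ on $X$ with asymptotic expansion $V = F\rho^{n-s} + G\rho^{s} + \cdots$ and $F|_M = u$, and that $P^{\ga}_{\bh}u = c_{\ga}\, G|_M$ for an explicit Graham--Zworski constant $c_{\ga}$. Substituting $V = \rho^{n-s}U$ into this equation and repeating the calculation used for \eqref{eq_E_exp} shows that it is equivalent to $-\textnormal{div}_{\bg}(\rho^{1-2\ga}\nabla U) + E_{\bg}(\rho)U = 0$, i.e., equation \eqref{eq_ext}. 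A Fuchsian indicial analysis at $\rho = 0$ (whose indicial roots are $0$ and $2\ga$) then yields the expansion $U = F + G\rho^{2\ga} + o(\rho^{2\ga})$, so that
\[\lim_{\rho \to 0^+}\rho^{1-2\ga}\pa_{\rho}U = 2\ga\, G|_M,\]
and \eqref{eq_diff} is recovered after matching the constants $c_{\ga}$ and $\kappa_{\ga}$ using standard $\Gamma$-function identities.

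For Part (2), condition \eqref{eq_eig} guarantees that $-\Delta_{g^+}-s(n-s)$ is Fredholm on the appropriate weighted $L^2$-space, so the equation $E_{\bg^*}(\rho^*) = 0$ under the normalization $\rho^*/\rho \to 1$ on $M$, which by the first identity of \eqref{eq_E_exp} applied to $\bg^* = (\rho^*)^2 g^+$ reduces to a linear elliptic problem for $\rho^*/\rho$, admits a unique positive smooth solution on all of $X$; the indicial analysis at $\rho = 0$ gives $\rho^*(\rho) = \rho(1+O(\rho^{2\ga}))$. Setting $U^* = (\rho/\rho^*)^{(n-2\ga)/2}U$ and invoking the conformal covariance of the weighted divergence operator reduces \eqref{eq_ext} to $-\textnormal{div}_{\bg^*}((\rho^*)^{1-2\ga}\nabla U^*) = 0$ (because $E_{\bg^*}(\rho^*) = 0$), while the Neumann boundary condition picks up the correction $-Q^{\ga}_{\bh}u$ through the normalization $P^{\ga}_{\bh}(1) = Q^{\ga}_{\bh}$. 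The main technical obstacle throughout is justifying the polyhomogeneous expansion of $U$ at $\rho = 0$ in enough pointwise regularity for the limit in \eqref{eq_diff} to be taken legitimately; this requires degenerate elliptic regularity theory for the Muckenhoupt-weighted operator in \eqref{eq_ext}, which is precisely what forces the careful indicial-root analysis above.
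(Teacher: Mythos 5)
The paper does not actually prove this proposition: it is quoted from Chang--Gonz\'alez \cite{CG} (see also \cite{GZ, CaS, GQ}), with the second expression for $E_{\bg}$ referred to (2.5) of \cite{KMW2}, and your outline correctly reconstructs the standard argument of those sources --- the substitution $V=\rho^{n-s}U$, the exact cancellation of the zeroth-order term in the geodesic gauge $|\nabla_{\bg}\rho|_{\bg}=1$, the indicial roots $0$ and $2\ga$ giving $U=F+G\rho^{2\ga}+o(\rho^{2\ga})$ and $\lim_{\rho\to 0^+}\rho^{1-2\ga}\pa_\rho U=2\ga\, G|_M$, and the construction of $\rho^*$ from the eigenfunction with leading coefficient $1$. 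One small correction: the equality of the two lines of \eqref{eq_E_exp} is a pointwise identity following from the conformal change of scalar curvature and the geodesic normal form alone; the hypothesis $H=0$ is not needed for that equivalence, but rather to ensure $\pa_\rho\sqrt{|\bg|}/\sqrt{|\bg|}=O(\rho)$, hence $E_{\bg}(\rho)=O(\rho^{1-2\ga})$, which is what makes the weighted $W^{1,2}(X;\rho^{1-2\ga})$ framework (and the trace/Neumann limit in \eqref{eq_diff}) legitimate.
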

\noindent Note that the condition $\Lambda^{\ga}(M, [\bh]) > 0$ (see \eqref{eq_Y_cons}) implies that the functional
\[J^{\ga}(U) = \int_X \(\rho^{1-2\ga} |\nabla U|^2_{\bg} + E_{\bg}(\rho) U^2\) dv_{\bg} \quad \text{for } U \in W^{1,2}(X;\rho^{1-2\ga})\]
is coercive, that is, there exists $C > 0$ independent of $U$ such that $J^{\ga}(U) \ge C \|U\|_{W^{1,2}(X;\rho^{1-2\ga})}^2$.
See \cite[Lemma 2.5]{DKP} for its proof.
Therefore, given any $u \in H^{\ga}(M)$, the standard minimization argument guarantees the existence and uniqueness of the extension $U \in W^{1,2}(X;\rho^{1-2\ga})$ of $u$ which satisfies \eqref{eq_ext}.
Furthermore, testing $u_-$ in \eqref{eq_ext}, we easily observe that if $u \ge 0$ on $M$, then $U \ge 0$ in $X$.
If it holds that $u > 0$ on $M$, then the strong maximum principle for (non-degenerate) elliptic operators gives $U > 0$ on $\ox$.

On the other hand, without loss of generality, we can always assume that the constant $c > 0$ in equation \eqref{eq_Yamabe} is exactly 1.
As a result, \eqref{eq_Yamabe} is equivalent to the degenerate elliptic problem
\begin{equation}\label{eq_Yamabe_2}
\begin{cases}
-\textnormal{div}_{\bg} (\rho^{1-2\ga}\nabla U) + E_{\bg}(\rho)U = 0 &\text{in } (X, \bg),\\
U > 0 &\text{on } \ox,\\
U = u &\text{on } M,\\
\pa^{\ga}_{\nu} U = u^p &\text{on } M.
\end{cases}
\end{equation}

\medskip
Next, choose any $y \in M$ and identify it with $0 \in \mr^n$.
Also, let $x = (\bx, x_N) \in \mr_+^N$ be {\it Fermi coordinates} on $\ox$ around $y$, i.e., $\bx = (x_1, \cdots, x_n)$ normal coordinates on $M$ at $y$ and $x_N = \rho$.
In \cite[Lemma 3.1]{Es}, the following expansion of the metric $\bg$ near $y$ is given.
\begin{lemma}\label{lemma_metric}
In terms of Fermi coordinates $x$ on $\ox$ around $y \in M$,
\[\sqrt{|\bg|}(x) = 1 - nH x_N + {1 \over 2} \(n^2H^2 - \|\pi\|^2 - R_{NN}[\bg]\) x_N^2 - nH_{,i}x_ix_N - {1 \over 6} R_{ij}[\bh] x_ix_j + O(|x|^3)\]
and
\[\bg^{ij}(x) = \delta_{ij} + 2 \pi_{ij} x_N + {1 \over 3} R_{ikjl}[\bh] x_kx_l + \bg^{ij}_{\phantom{ij},Nk} x_Nx_k + (3\pi_{ik}\pi_{kj} + R_{iNjN}[\bg]) x_N^2 + O(|x|^3).\]
Here
\begin{enumerate}
\item[-] $\delta_{ij}$ is the Kronecker delta;
\item[-] $\|\pi\|^2 = \bh^{ik}\bh^{jl}\pi_{ij}\pi_{kl}$ is the square of the norm of the second fundamental form $\pi$;
\item[-] $R_{ikjl}[\bh]$ is a component of the Riemannian curvature tensor on $M$ and $R_{iNjN}[\bg]$ is that of the Riemannian curvature tensor on $\ox$;
\item[-] $R_{ij}[\bh] = R_{ikjk}[\bh]$ and $R_{NN}[\bg] = R_{NiNi}[\bg]$.
\end{enumerate}
Every tensor in the expansion is evaluated at $y = 0$ and commas denote partial differentiation.
\end{lemma}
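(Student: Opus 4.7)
The plan is to derive both expansions by direct Taylor computation in Fermi coordinates, exploiting the structural identities available for the geodesic defining function $\rho = x_N$. The fact that $\rho$ is geodesic splits the metric as $\bg = dx_N^2 + (h_\rho)_{ij}\, dx^i dx^j$ near $M$, so in Fermi coordinates we have $\bg_{NN} \equiv 1$ and $\bg_{iN} \equiv 0$. Consequently $|\bg| = \det((h_\rho)_{ij})$, and the inverse metric satisfies $\bg^{NN} = 1$, $\bg^{iN} = 0$, with $\bg^{ij}$ given by the matrix inverse of the tangential block. This reduces everything to a Taylor expansion of the symmetric matrix $(h_\rho)_{ij}(\bx)$ in both $\bx$ (via normal coordinates on $(M, \bh)$ at $y$) and $x_N$ (via the geodesic flow off $M$).

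First I would compute the first three $x_N$-derivatives of $(h_\rho)_{ij}$ at $x_N = 0$. The zeroth-order term is $\bh_{ij}$, which in normal coordinates on $M$ at $y$ has the familiar expansion $\delta_{ij} - \tfrac{1}{3} R_{ikjl}[\bh] x_k x_l + O(|\bx|^3)$. The first-order term is fixed by the definition of the second fundamental form, $\pa_{x_N}(h_\rho)_{ij}|_{x_N=0} = -2\pi_{ij}$, and its Taylor expansion in $\bx$ produces both the leading $-2\pi_{ij}(y)\, x_N$ contribution and the mixed $x_N x_k$ coefficient appearing in the statement. For the second-order coefficient I would invoke the Riccati identity satisfied by the shape operator along the geodesic flow, $\pa_{x_N} A + A^2 = -R_{\cdot N \cdot N}[\bg]$ where $A$ denotes the Weingarten endomorphism, to express $\tfrac12 \pa_{x_N}^2 (h_\rho)_{ij}|_{x_N=0}$ in terms of $\pi_{ik}\pi_{kj}$ and $R_{iNjN}[\bg]$.

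Next, applying the Neumann series $(I+B)^{-1} = I - B + B^2 - \cdots$ to $\bg_{ij} = \delta_{ij} + B_{ij}$ and retaining only terms of total order at most $|x|^2$, the stated expansion of $\bg^{ij}$ drops out; the coefficient $\bg^{ij}_{\phantom{ij},Nk}$ simply abbreviates the mixed tangential–normal derivative arising from the variation of $\pi$ along $M$, while the quadratic-in-$x_N$ coefficient $3\pi_{ik}\pi_{kj} + R_{iNjN}[\bg]$ combines the Riccati contribution with the square term from inverting the linear part. For $\sqrt{|\bg|}$ I would use $\det(I+B) = 1 + \mathrm{tr}(B) + \tfrac12(\mathrm{tr}(B)^2 - \mathrm{tr}(B^2)) + O(|B|^3)$ and then take the square root. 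The identity $\bh^{ij}\pi_{ij} = nH$ delivers the leading $-nH x_N$ coefficient; the two quadratic traces combine with the Riccati contribution to give $\tfrac12(n^2 H^2 - \|\pi\|^2 - R_{NN}[\bg])\, x_N^2$; linearization of $\pi$ (hence of $H$) in $\bx$ supplies the $-nH_{,i} x_i x_N$ term; and tracing the normal coordinate expansion on $M$ contributes the $-\tfrac16 R_{ij}[\bh] x_i x_j$ piece.

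The only real obstacle is the bookkeeping: one must carefully track the sign conventions for $\pi$, the factor $2$ in the Gauss formula, and the separation of the curvature quantities of $\bh$ from those of $\bg$. Since each step is essentially mechanical, the argument reduces to a direct verification along the lines of \cite[Lemma 3.1]{Es}; organizing the computation in the order above makes both the $\bg^{ij}$ expansion and the $\sqrt{|\bg|}$ expansion follow automatically from the tangential expansion of $(h_\rho)_{ij}$.
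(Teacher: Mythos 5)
Your proposal is correct, but note that the paper does not actually prove this lemma: it simply quotes the expansion from Escobar's Lemma 3.1 in \cite{Es}, so your argument amounts to reconstructing the proof of the cited result rather than mirroring anything in the text. The computation you outline is the standard one and the pieces do fit together: since $x_N=\rho$ is the geodesic defining function, $\bg = dx_N^2 + h_{x_N}$ gives $\bg_{NN}=1$ and $\bg_{iN}=0$; the normal-coordinate expansion of $\bh$ supplies $-\tfrac13 R_{ikjl}[\bh]\,x_kx_l$; the identity $\pa_N (h_{x_N})_{ij}|_{x_N=0} = -2\pi_{ij}$ supplies the linear and mixed terms; and the radial curvature identity (equivalently your Riccati equation) gives $\tfrac12\,\pa_N^2 (h_{x_N})_{ij}|_{x_N=0} = \pi_{ik}\pi_{kj} - R_{iNjN}[\bg]$, after which the Neumann series produces $4\pi_{ik}\pi_{kj} - (\pi_{ik}\pi_{kj}-R_{iNjN}) = 3\pi_{ik}\pi_{kj}+R_{iNjN}$ for $\bg^{ij}$, and the determinant/square-root expansion produces $\tfrac12(n^2H^2-\|\pi\|^2-R_{NN})x_N^2$, $-nH_{,i}x_ix_N$ and $-\tfrac16 R_{ij}[\bh]x_ix_j$ exactly as stated. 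The only caution is the one you flag yourself: the sign in your Riccati identity must be taken consistently with the paper's curvature convention, since the quantity actually needed is the second normal derivative of $h_{x_N}$, namely $\tfrac12\ddot h_{ij} = \pi_{ik}\pi_{kj} - R_{iNjN}$; with the opposite convention one would get $+R_{iNjN}$ there and the final coefficients would not match the statement. With that bookkeeping done, your route is essentially the same as that of the source the paper relies on.
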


\noindent If $H = 0$ on $M$, which is the case when \eqref{eq_hyp} holds, all the coefficients of the terms $x_N$, $x_i x_N$, $x_i x_j x_N$, etc. in the expansion of $\sqrt{|\bg|}$ are 0.
In particular, condition \eqref{eq_sqrt_bg} holds.

\medskip
As pointed out in papers on the fractional Yamabe problem \cite{GQ, GW, KMW2}, only the boundary metric can be controlled through conformal changes.
It is one of the main differences compared to the boundary Yamabe problem treated in e.g. \cite{Es, Ma2, Ch, MN, Al}.
The following lemma is a reformulation of Lemmas 2.4 and 3.2 in \cite{KMW2}.
\begin{lemma}\label{lemma_rep}
Let $(X, g^+)$ be an asymptotically hyperbolic manifold such that \eqref{eq_hyp} holds.
Then the conformal infinity $(M, [\bh])$ admits a representative $\tih \in [\bh]$, its corresponding geodesic boundary defining function $\tirh$ and the metric $\tig = \tirh^2 g^+$ such that
\begin{itemize}
\item[(1)] $R_{ij}[\tih](y) = R_{ij;k}[\tih](y) + R_{jk;i}[\tih](y) + R_{ki;j}[\tih](y) = 0$,
\item[(2)] $H = 0$ on $M$ and $R_{\tirh\tirh}[\tig](y) = \dfrac{1-2n}{2(n-1)} \|\pi(y)\|^2$
\end{itemize}
for a fixed point $y \in M$. Here the semicolon designates the covariant differentiation.
\end{lemma}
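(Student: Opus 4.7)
The plan is to handle the two parts in sequence: Part (1) lives entirely on the boundary manifold $(M,\bh)$, while Part (2) propagates the choice into the bulk via the geodesic defining function and uses the geometric hypothesis \eqref{eq_hyp} to extract the normal-normal curvature.

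For Part (1), I would carry out the classical Lee--Parker conformal normal coordinate construction, truncated after two orders. Pick a representative of the form $\tih = e^{2f}\bh$ on $M$, and take $f$ to be a polynomial in $\bh$-normal coordinates centred at the fixed point $y$, say $f(x) = a_{ij} x_i x_j + b_{ijk} x_i x_j x_k + O(|x|^4)$ with $a_{ij}$, $b_{ijk}$ symmetric. Expanding the conformal transformation law
\[R_{ij}[\tih] = R_{ij}[\bh] - (n-2) f_{;ij} - (\Delta_{\bh} f)\bh_{ij} + (n-2)(f_{;i}f_{;j} - |\nabla f|^2_{\bh}\bh_{ij})\]
at $y$ and its first covariant derivative, one obtains a linear algebraic system for $a_{ij}$ and $b_{ijk}$ whose unknowns outnumber the equations. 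Solving it yields $R_{ij}[\tih](y)=0$ and the vanishing of the symmetrised derivative $R_{(ij;k)}[\tih](y)=0$ simultaneously; this is exactly the first two steps of the Lee--Parker normalisation.

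For Part (2), once $\tih$ is fixed, let $\tirh$ denote its associated geodesic boundary defining function and set $\tig = \tirh^{2}g^{+}$, so that $\tig$ splits as $d\tirh^{2}+h_{\tirh}$ near $M$ with $h_0=\tih$. The function $\tirh$ is uniquely determined near $M$ by the Hamilton--Jacobi equation $|d\log\tirh|^{2}_{g^{+}}=1$ together with $\tirh|_M=0$ and $d\tirh/d(\mathrm{normal})=1$ at $M$; standard characteristic methods give existence and smoothness in a collar. That $H\equiv 0$ on $M$ is not a fresh computation but a direct consequence of \eqref{eq_hyp}, already noted in Remark \ref{rmk_main}(2) via \cite[Lemma 2.3]{KMW2}. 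To extract $R_{\tirh\tirh}[\tig](y)$, I would combine two standard identities. First, writing $g^{+}=\tirh^{-2}\tig$ as a conformal rescaling with conformal factor $w=-\log\tirh$, the scalar curvature transformation law reads
\[R[g^{+}] = \tirh^{2}\,R[\tig] + 2n\,\tirh^{2}\,\Delta_{\tig}\log\tirh - n(n-1)\,|\nabla\tirh|^{2}_{\tig},\]
and because $|\nabla\tirh|^{2}_{\tig}=1$ near $M$, evaluating at order $\tirh^{0}$ plus the hypothesis $R[g^{+}]+n(n+1)=o(\tirh^{2})$ forces a precise value of $R[\tig]|_M$ up to a contribution from $\Delta_{\tig}\log\tirh$. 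The latter is controlled by the product formula relating $\Delta_{\tig}\tirh$, $H$, and the normal derivatives of $\sqrt{|\tig|}$, and collapses at $y$ using $H=0$. Second, the Gauss equation gives, at points of $M$,
\[R[\tig] = R[\tih] + 2\,R_{\tirh\tirh}[\tig] - \|\pi\|^{2} + H^{2}.\]
Plugging in $H(y)=0$ and $R[\tih](y)=0$ (the trace of Part (1)) isolates $R_{\tirh\tirh}[\tig](y)$ as a linear combination of $R[\tig](y)$ and $\|\pi(y)\|^{2}$, which after substituting the value obtained from the conformal identity yields the claimed coefficient $(1-2n)/[2(n-1)]$.

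The main obstacle I foresee is not any single step but the careful bookkeeping of coefficients and sign conventions, especially where the conformal rescaling formula interacts with the expansion of $\tirh$ versus $\rho$ near $M$ (the two agree to first order but differ at order $\tirh^{2\ga}$ in general, and only the normal-normal term at $y$ survives). Extracting the sharp constant $(1-2n)/[2(n-1)]$ rather than any nearby fraction requires consistently tracking the $(n-1)$ Gauss-contraction factor and the $n$ in the dimensional term of the conformal formula; a convenient sanity check is the umbilic limit $\pi=0$, in which case the hypothesis should imply that $R_{\tirh\tirh}[\tig](y)$ vanishes as well, a trivial consequence of the sought identity.
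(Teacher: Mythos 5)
Your route is essentially the intended one: the paper itself does not reprove this lemma but cites Lemmas 2.4 and 3.2 of \cite{KMW2}, whose content is exactly your two ingredients — Lee--Parker conformal normal coordinates at $y$ for item (1), and, for item (2), the conformal transformation law of the scalar curvature under $g^+=\tirh^{-2}\tig$ combined with \eqref{eq_hyp}, the expansion of $\sqrt{|\tig|}$ from Lemma \ref{lemma_metric}, and the twice-contracted Gauss equation. Part (1) as you set it up is standard and fine (the quadratic term is solved by the Schouten tensor, the cubic term by an analogous solvable linear system), and the transfer of the hypothesis from $o(\rho^2)$ to $o(\tirh^2)$ is harmless since any two defining functions are comparable.

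There are, however, two concrete defects in your treatment of item (2), and together they would prevent you from landing on the stated constant. First, your displayed Gauss equation has the wrong sign on $\|\pi\|^2$: the twice-contracted Gauss equation reads $R[\tig]\big|_M = R[\tih] + 2R_{\tirh\tirh}[\tig] + \|\pi\|^2 - (\mathrm{tr}\,\pi)^2$, which is also how it is used in the proof of Proposition \ref{prop_van}; with your $-\|\pi\|^2$ the same bookkeeping yields $R_{\tirh\tirh}[\tig](y) = -\tfrac{2n+1}{2(n-1)}\|\pi(y)\|^2$ instead of $\tfrac{1-2n}{2(n-1)}\|\pi(y)\|^2$, and your proposed umbilic sanity check cannot detect the error because both candidate constants multiply $\|\pi\|^2$ and vanish when $\pi=0$. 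Second, the bulk input is left too vague: since $|\nabla\tirh|_{\tig}=1$ near $M$, your identity reduces to $R[g^+]+n(n+1) = \tirh^2 R[\tig] + 2n\,\tirh\,\Delta_{\tig}\tirh$ (the order-$\tirh^0$ terms cancel identically, so nothing is "forced" at that order), and the term you say "collapses" does not vanish: $\Delta_{\tig}\tirh = \pa_{\tirh}\log\sqrt{|\tig|}$, so after dividing by $\tirh^2$, using $H=0$ and the expansion of Lemma \ref{lemma_metric}, it contributes exactly $-2n\big(\|\pi(y)\|^2 + R_{\tirh\tirh}[\tig](y)\big)$. The relation you need to record is therefore $R[\tig](y) = 2n\big(\|\pi(y)\|^2 + R_{\tirh\tirh}[\tig](y)\big)$, which combined with the correctly signed Gauss equation and $R[\tih](y)=0$ (the trace of item (1)) gives $2(n-1)R_{\tirh\tirh}[\tig](y) = (1-2n)\|\pi(y)\|^2$, as claimed. (A minor point: your remark about a discrepancy at order $\tirh^{2\ga}$ conflates the geodesic defining function of the new representative, obtained from the non-characteristic eikonal equation and smooth, with the adapted defining function $\rho^*$ of Proposition \ref{prop_CG}(2); it plays no role here.)
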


\subsection{Definition and properties of bubbles}\label{subsec_bubble}
Suppose that $\ga \in (0,1)$ and $n > 2\ga$.
For arbitrary $\lambda > 0$ and $\sigma \in \mr^n$, let $w_\ls$ be the {\it bubble} defined as
\begin{equation}\label{eq_bubble}
w_\ls(\bx) = \alpha_{n,\ga} \({\lambda \over \lambda^2 + |\bx - \sigma|^2}\)^{n-2\ga \over 2} \quad \text{for } \bx \in \mr^n,
\quad \alpha_{n,\ga} = 2^{n-2\ga \over 2} \({\Gamma\({n+2\ga \over 2}\) \over \Gamma\({n-2\ga \over 2}\)}\)^{n-2\ga \over 4\ga}.
\end{equation}
We also introduce the {\it $\ga$-harmonic extension} $W_\ls$ of $w_\ls$, namely, the unique solution of
\begin{equation}\label{eq_bubble_2}
\begin{cases}
-\text{div} (x_N^{1-2\ga} \nabla W_\ls) = 0 &\text{in } \mr^N_+,\\
W_\ls = w_\ls &\text{on } \mr^n.
\end{cases}
\end{equation}
Then it is known that
\begin{equation}\label{eq_CS}
\pa^{\ga}_{\nu} W_\ls = - \kappa_{\ga} \(\lim\limits_{x_N \to 0+} x_N^{1-2\ga} \pa_N W_\ls\)
\underset{\text{(by \cite{CaS})}}{=} (-\Delta)^{\ga} w_\ls  = w_\ls^{n+2\ga \over n-2\ga} \quad \text{on } \mr^n
\end{equation}
where $\kappa_{\ga} > 0$ is the number appeared in \eqref{eq_diff} and $\nu$ is the outward unit normal vector with respect to $\mr^N_+$.
\begin{lemma}\label{lemma_W}
\textnormal{(1) [Symmetry]} The value of $W_{1,0}(\bx, x_N)$ for $(\bx, x_N) \in \mr^N_+$ is governed by $|\bx|$ and $x_N$.
In particular, $\pa_i W_{1,0}(\bx, x_N) = - \pa_i W_{1,0}(-\bx, x_N)$ for each $1 \le i \le n$.

\medskip \noindent \textnormal{(2) [Decay]} There exists a constant $C > 0$ depending only on $n$, $\ga$ and $\ell$ such that
\begin{equation}\label{eq_W_dec}
\left| \nabla_{\bx}^{\ell} W_{1,0}(x) \right| \le {C \over 1+|x|^{n-2\ga+\ell}}
\quad \text{and} \quad
\left| x_N^{1-2\ga} \pa_N W_{1,0}(x) \right| \le {C \over 1+|x|^n}
\end{equation}
for all $x \in \mr^N_+$ and $\ell \in \mn \cup \{0\}$.

\medskip \noindent \textnormal{(3) [Classification]} Suppose that $\Phi \in W^{1,2}_{\loc}\mrg$ is a nontrivial solution of
\[\begin{cases}
-\textnormal{div} (x_N^{1-2\ga} \nabla \Phi) = 0 &\text{in } \mr^N_+,\\
\Phi \ge 0 &\text{in } \mr^N_+,\\
\pa^{\ga}_{\nu} \Phi = \Phi^p &\text{on } \mr^n.
\end{cases}\]
Then there is no such a function $\Phi$ for $1 \le p < 2_{n,\ga}^*-1$.
If $p = 2_{n,\ga}^*-1$, then $\Phi(\bx, 0) = w_\ls(\bx)$ for all $\bx \in \mr^n$ and some $(\ls) \in (0,\infty) \times \mr^n$.
Particularly, $\Phi(x) = W_\ls(x)$ for all $x \in \mr^N_+$.

\medskip \noindent \textnormal{(4) [Nondegeneracy]} The solution space of the linear problem
\[\begin{cases}
-\textnormal{div} (x_N^{1-2\ga} \nabla \Phi) = 0 &\text{in } \mr^N_+,\\
\pa^{\ga}_{\nu} \Phi = \({n+2\ga \over n-2\ga}\) w_\ls^{4\ga \over n-2\ga} \Phi &\text{on } \mr^n,\\
\|\Phi(\cdot, 0)\|_{L^{\infty}(\mr^n)} < \infty
\end{cases}\]
is spanned by
\begin{equation}\label{eq_Z}
Z_{\ls}^1 = {\pa W_\ls \over \pa \sigma_1},\ \cdots,\ Z_{\ls}^n = {\pa W_\ls \over \pa \sigma_n} \quad \text{and} \quad Z_{\ls}^0 = - {\pa W_\ls \over \pa \lambda}.
\end{equation}
\end{lemma}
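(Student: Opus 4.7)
The four parts are essentially independent, and I would dispatch them in turn.

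For parts (1) and (2), I would first argue by uniqueness: problem \eqref{eq_bubble_2} admits a unique solution in $D^{1,2}\mrg$, as the minimizer of a strictly convex coercive energy with boundary trace $w_{1,0}$, and both the equation and this boundary datum are $O(n)$-invariant in the $\bx$-variable, so $W_{1,0}$ depends only on $|\bx|$ and $x_N$. The reflection $\bx \mapsto -\bx$ then yields the oddness of each $\pa_i W_{1,0}$. For the decay, I would use the Poisson representation from \cite{CaS},
\[
W_{1,0}(\bx, x_N) = c_{n,\ga} \int_{\mr^n} \frac{x_N^{2\ga}\, w_{1,0}(\bar{y})}{(|\bx - \bar{y}|^2 + x_N^2)^{n+2\ga \over 2}}\, d\bar{y},
\]
and split the integral into the three regions $|\bar{y}| \le |x|/2$, $|x|/2 \le |\bar{y}| \le 2|x|$, and $|\bar{y}| \ge 2|x|$, inserting the known decay of $w_{1,0}$ in each to obtain the stated inverse-polynomial bound on $W_{1,0}$. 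Tangential derivatives $\nabla_{\bx}^\ell W_{1,0}$ are then handled by differentiating under the integral, gaining one power of $|x|^{-1}$ per derivative from the kernel; the weighted normal derivative $x_N^{1-2\ga}\pa_N W_{1,0}$ is handled by an analogous decomposition after a direct $x_N$-differentiation of the Poisson kernel, the weight $x_N^{1-2\ga}$ precisely cancelling the leading singular factor $x_N^{2\ga-1}$ generated by the differentiation.

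For part (3), setting $u = \Phi(\cdot, 0)$ and using \eqref{eq_CS} reduces the boundary condition to the fractional equation $(-\Delta)^\ga u = u^p$ on $\mr^n$ with $u \ge 0$. Nonexistence for $1 \le p < 2_{n,\ga}^* - 1$ and the explicit classification of positive solutions for $p = 2_{n,\ga}^* - 1$ are precisely the content of the fractional Liouville-type theorem of Jin, Li and Xiong \cite[Theorem 1.8]{JLX}, obtained via the method of moving spheres. Uniqueness of the Dirichlet extension (exactly as in part (1)) then identifies $\Phi$ throughout $\mr^N_+$ with $W_\ls$.

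For part (4), the plan is to reduce to ODEs by separation of variables. By the conformal invariance of the problem (translation and scaling in $\bx$, combined with the conformal action on $W_\ls$) we may assume $(\lambda, \sigma) = (1, 0)$. Decomposing a bounded solution $\Phi$ in $\mr^N_+$ into spherical harmonics of degree $k$ in $\bx$ yields, for each $k$, a two-variable weighted linear problem whose further radial reduction is a weighted second-order ODE with exactly two linearly independent solutions. An asymptotic analysis at both $r = 0$ and $r = \infty$ then shows that only $k = 0$ and $k = 1$ admit bounded nontrivial solutions: the $k = 0$ mode contributes the dilation kernel $Z^0_{1,0}$, while $k = 1$ contributes the translation kernels $Z^1_{1,0}, \ldots, Z^n_{1,0}$; all higher modes force $\Phi \equiv 0$. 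I expect this last part to be the main obstacle: one has to match carefully the asymptotics of the two ODE branches at each mode and apply weighted Schauder-type regularity to promote the a priori boundedness of the trace to boundedness of $\Phi$ up to the weighted boundary, so as to discard the singular branch.
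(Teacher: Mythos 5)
Your treatment of (1) and (3) coincides with the paper's: (1) follows from the radial symmetry of $w_{1,0}$ plus uniqueness of the $\ga$-harmonic extension, and (3) is exactly the Liouville/classification theorem of \cite[Theorem 1.8]{JLX}, invoked after reducing the boundary condition through \eqref{eq_CS}. For (2) and (4) the paper does not argue at all: it cites the sharp decay estimates of \cite[Section A]{KMW2} (obtained there from a Fourier--Bessel representation of $W_{1,0}$ rather than from the Poisson kernel) and the nondegeneracy theorem of D\'avila, del Pino and Sire \cite{DDS}, whereas you propose to re-derive both. Your three-region Poisson-kernel splitting does deliver the stated bounds for $W_{1,0}$ and for $\nabla_{\bx}^{\ell}W_{1,0}$, and your spherical-harmonics/ODE scheme for (4) is precisely the strategy of \cite{DDS}, so it is a viable route; but carrying out the mode-by-mode asymptotic matching and the weighted boundary regularity is a substantial piece of work in its own right, which is exactly what the citation buys the paper.

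The one step that fails as literally stated is your bound on $x_N^{1-2\ga}\pa_N W_{1,0}$ in (2). Writing $P$ for the Caffarelli--Silvestre Poisson kernel, differentiation in $x_N$ produces the two terms $2\ga\, x_N^{2\ga-1}(|\bx-\bar y|^2+x_N^2)^{-(n+2\ga)/2}$ and $-(n+2\ga)\, x_N^{2\ga+1}(|\bx-\bar y|^2+x_N^2)^{-(n+2\ga+2)/2}$; after multiplying by $x_N^{1-2\ga}$, each of these kernels, integrated against $w_{1,0}$, still behaves like $x_N^{-2\ga}w_{1,0}(\bx)$ as $x_N \to 0$, because $(|\bx-\bar y|^2+x_N^2)^{-(n+2\ga)/2}$ is not integrable across the diagonal $\bar y = \bx$ uniformly in $x_N$. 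The weight only removes the explicit power of $x_N$; the uniform bound $C(1+|x|)^{-n}$ arises solely from the cancellation between the two terms, so a term-by-term estimate after differentiating under the integral cannot give it. A standard repair is to use $\int_{\mr^n}P(\bx-\bar y,x_N)\,d\bar y=1$ (and, when $\ga \ge 1/2$, the evenness of $\pa_N P(\cdot,x_N)$ in the tangential variable) to rewrite $\pa_N W_{1,0}$ as an integral of $\pa_N P$ against $w_{1,0}(\bar y)-w_{1,0}(\bx)$, respectively against $w_{1,0}(\bar y)$ minus its first-order Taylor polynomial at $\bx$; after that subtraction your splitting does yield \eqref{eq_W_dec}. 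Alternatively one can simply reproduce the Bessel-representation computation of \cite{KMW2}, which is the paper's choice. With that repair, your proposal is complete in outline.
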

\begin{proof}
Since $w_{1,0}(\bx)$ depends only on $|\bx|$, claim (1) follows from the uniqueness of $\ga$-extension.
Moreover the sharp decay estimate \cite[Section A]{KMW2} for $W_{1,0}$ gives (2).
Assertions (3) and (4) are implied by the results of Jin et al. \cite[Theorem 1.8, Remark 1.9]{JLX} and D\'avila et al. \cite{DDS}, respectively.
\end{proof}

\subsection{Modification of equation \eqref{eq_Yamabe_2}} \label{subsec_Y_31}
Suppose that $(X, g^+)$ is an asymptotically hyperbolic manifold with conformal infinity $(M, [\bh])$.
Consider sequences of parameters $\{p_m\}_{m \in \mn} \subset [1+\vep_0, 2_{n,\ga}^*-1]$ for any fixed $\vep_0 > 0$, metrics $\{\bh_m\}_{m \in \mn} \subset [\bh]$ on $M$,
corresponding geodesic boundary defining function $\{\rho_m\}_{m \in \mn}$ and positive functions $\{f_m\}_{m \in \mn}$ on $M$.
Assume that $p_m \to p_0$, $\bg_m = \rho_m^2 g^+ \to \bg_0$ in $C^4(\ox, \mr^{N \times N})$ for a metric $\bg_0$ on $\ox$ and $f_m \to f_0 > 0$ in $C^2(M)$ as $m \to \infty$.
It turns out that it is convenient to deal with the following form of the equation
\begin{equation}\label{eq_Yamabe_3}
\begin{cases}
-\text{div}_{\bg_m} (\rho_m^{1-2\ga}\nabla U_m) + E_{\bg_m}(\rho_m) U_m = 0 &\text{in } (X, \bg_m),\\
U_m > 0 &\text{on } \ox,\\
U_m = u_m &\text{on } M,\\
\pa^{\ga}_{\nu} U_m = f_m^{-\delta_m} u_m^{p_m} &\text{on } M
\end{cases}
\end{equation}
rather than \eqref{eq_Yamabe_2}. Here $\delta_m = (2_{n,\ga}^*-1) - p_m \ge 0$.

\medskip
Suppose that $\tih_m = w_m^{4/(n-2\ga)} \bh_m$ on $M$ for a positive function $w_m$ on $M$ such that
\begin{equation}\label{eq_w_m_2}
w_m(y_m) = 1, \quad {\pa w_m \over \pa x_i}(y_m) = 0 \quad \text{for each } i = 1, \cdots, n.
\end{equation}
If $\tirh_m$ is the geodesic boundary defining function associated to $\tih_m$ and $\tig_m = \tirh_m^2 g^+$,
then $\tig_m = (\tirh_m/\rho_m)^2 \bg_m$ on $\ox$ and so $w_m = (\tirh_m/\rho_m)^{(n-2\ga)/2}$ on $M$.
Furthermore, a direct computation using \cite[Lemma 4.1]{CG} yields that $\wtu_m = (\rho_m/\tirh_m)^{(n-2\ga)/2} U_m$ solves
\begin{equation}\label{eq_Yamabe_30}
\begin{cases}
-\text{div}_{\tig_m} (\tirh_m^{1-2\ga}\nabla \wtu_m) + E_{\tig_m}(\tirh_m) \wtu_m = 0 &\text{in } (X, \tig_m),\\
\wtu_m > 0 &\text{on } \ox,\\
\wtu_m = \tiu_m &\text{on } M,\\
\pa^{\ga}_{\nu} \wtu_m = w_m^{-{n+2\ga \over n-2\ga}} \pa^{\ga}_{\nu} U_m = \tif_m^{-\delta_m} \tiu_m^{p_m} &\text{on } M
\end{cases}
\end{equation}
where $\tif_m = w_m f_m$, which is the same form as that of \eqref{eq_Yamabe_3}.

\subsection{Pohozaev's identity} \label{subsec_poho}
Pick a small number $r_1 \in (0, r_0)$ (see \eqref{eq_E_exp})
such that $\bg_m$-Fermi coordinate centered at $y \in M$
is well-defined in the closed geodesic half-ball $\overline{B^N_+(y,r_1)} \subset \ox$ for every $m \in \mn$ and $y \in M$.

In this subsection, we provide a local version of Pohozaev's identity for
\begin{equation}\label{eq_Yamabe_31}
\begin{cases}
-\text{div} (x_N^{1-2\ga}\nabla U) = x_N^{1-2\ga} Q &\text{in } B^N_+(0,r_1) \subset \mr^N_+,\\
U = u > 0 &\text{on } B^n(0,r_1) \subset \mr^n,\\
\pa^{\ga}_{\nu} U = f^{-\delta} u^p &\text{on } B^n(0,r_1)
\end{cases}
\end{equation}
where $p \in [1, 2_{n,\ga}^*-1]$, $Q \in L^{\infty}(B^N_+(0,r_1))$ and $f \in C^1(B^n(0,r_1))$.
\begin{lemma}\label{lemma_poho}
Let $U \in W^{1,2}(B^N_+(0,r_1); x_N^{1-2\ga})$ be a solution to \eqref{eq_Yamabe_31} such that $U$, $\pa_i U$ and $x_N^{1-2\ga} \pa_N U$ are H\"older continuous on $\overline{B^N_+(0,r_1)}$.
Given any number $r \in (0, r_1)$, we define
\begin{multline}\label{eq_poho_1}
\mcp(U, r) = \kappa_{\ga} \int_{\pa_I B^N_+(0,r)} x_N^{1-2\ga} \left[\({n-2\ga \over 2}\) u {\pa u \over \pa r} - {r \over 2} |\nabla u|^2
+ r \left| {\pa u \over \pa r} \right|^2 \right] d\sigma_{x} \\
+ {r \over p+1} \int_{\pa B^n(0,r)} f^{-\delta} u^{p+1} d\sigma_{\bx}
\end{multline}
where $\kappa_{\ga} > 0$ is the constant in \eqref{eq_diff}. Then we have
\begin{multline*}
\mcp(U, r) = - \kappa_{\ga} \int_{B^N_+(0,r)} x_N^{1-2\ga} Q \cdot \left[x_i \pa_i U + x_N \pa_N U + \({n-2\ga \over 2}\) U \right] dx \\
- {\delta \over p+1} \int_{B^n(0,r)} x_i \pa_if f^{-(\delta+1)} u^{p+1} d\bx + \({n \over p+1} - {n-2\ga \over 2}\) \int_{B^n(0,r)} f^{-\delta} u^{p+1} d\bx
\end{multline*}
for all $r \in (0, r_1)$.
\end{lemma}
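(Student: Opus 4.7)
The plan is to carry out the classical Pohozaev computation adapted to the degenerate weighted setting: multiply the equation by the Euler-type multiplier
$$Y \;=\; x_A\,\pa_A U + \tfrac{n-2\ga}{2}\, U \qquad (A=1,\ldots,N),$$
integrate over $B^N_+(0,r)$, and apply Green's formula with the weight $x_N^{1-2\ga}$. The boundary decomposes as $\pa_I B^N_+(0,r)\cup B^n(0,r)$ with outward normals $x/r$ and $-e_N$, respectively. The stated H\"older regularity of $U,\ \pa_i U$ and $x_N^{1-2\ga}\pa_N U$ forces $|\pa_N U|\lesssim x_N^{2\ga-1}$ near $\{x_N=0\}$, so that $x_N\pa_N U\to 0$ (giving $Y|_{x_N=0}=x_i\pa_i u+\tfrac{n-2\ga}{2}u$) and $x_N^{2-2\ga}|\nabla U|^2\to 0$; these two facts are what permit one to ignore any phantom flux across the flat face.

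First, compute the bulk integral. Using the algebraic identity
$$\nabla U\cdot\nabla Y \;=\; \tfrac{n-2\ga+2}{2}\,|\nabla U|^2 + \tfrac12\,x_A\pa_A|\nabla U|^2$$
together with $\sum_A\pa_A(x_N^{1-2\ga}x_A)=(n+2-2\ga)\,x_N^{1-2\ga}$, integration by parts in the second piece exactly cancels the bulk $|\nabla U|^2$ term, leaving only the spherical contribution $\tfrac{r}{2}\int_{\pa_I B^N_+(0,r)}x_N^{1-2\ga}|\nabla U|^2\,d\sigma_x$ (here one uses $|\bx|^2+x_N^2=r^2$ on the hemisphere). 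On $\pa_I B^N_+(0,r)$ one has $Y=r\pa_r U+\tfrac{n-2\ga}{2}U$, and combining this with the standard flux term $-\int_{\pa_I B^N_+(0,r)}x_N^{1-2\ga}\pa_r U\cdot Y\,d\sigma_x$ assembles exactly $-\kappa_\ga^{-1}$ times the hemisphere portion of $\mcp(U,r)$.

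Next, for the flat face $B^n(0,r)$, the Neumann condition \eqref{eq_diff} gives $\lim_{x_N\to 0^+}x_N^{1-2\ga}\pa_N U=-\kappa_\ga^{-1}f^{-\delta}u^p$, so the flux contribution reduces to $-\kappa_\ga^{-1}\int_{B^n(0,r)}f^{-\delta}u^p\bigl(x_i\pa_i u+\tfrac{n-2\ga}{2}u\bigr)d\bx$. Writing $u^p\,x_i\pa_i u=\tfrac{1}{p+1}x_i\pa_i(u^{p+1})$ and integrating by parts in $\bx$ with $\pa_i(f^{-\delta}x_i)=n f^{-\delta}-\delta f^{-(\delta+1)}(\pa_i f)x_i$ yields the two volume integrals appearing on the right-hand side of the claim, together with the arc integral $\kappa_\ga^{-1}\tfrac{r}{p+1}\int_{\pa B^n(0,r)}f^{-\delta}u^{p+1}\,d\sigma_{\bx}$; this arc term is precisely the second summand of $\mcp(U,r)$, so it cancels cleanly upon rearranging. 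Moving the $\int x_N^{1-2\ga}Q\,Y$ term to the other side and multiplying through by $-\kappa_\ga$ then produces the stated identity.

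The principal obstacle is the boundary analysis at $x_N=0$: one must show that no hidden flux is lost across the degenerate face. This reduces to the two limits $x_N\pa_N U\to 0$ and $x_N^{2-2\ga}|\nabla U|^2\to 0$, both justified by the assumed H\"older regularity of $x_N^{1-2\ga}\pa_N U$. Once these are in hand, everything else is careful bookkeeping, the most delicate aspect being the cancellation of the $\tfrac{r}{p+1}\int_{\pa B^n(0,r)}f^{-\delta}u^{p+1}\,d\sigma_{\bx}$ terms between the two integrations by parts.
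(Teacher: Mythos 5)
Your argument is correct and is essentially the proof the paper has in mind: the paper simply refers to \cite[Proposition 4.7]{JLX}, which is exactly this Pohozaev computation with the multiplier $x_A\pa_A U+\tfrac{n-2\ga}{2}U$, the weighted Green formula on $B^N_+(0,r)\cap\{x_N>\ep\}$, and the limits $x_N\pa_N U\to 0$, $x_N^{2-2\ga}|\nabla U|^2\to 0$ guaranteed by the H\"older continuity of $x_N^{1-2\ga}\pa_N U$. The only cosmetic quibble is that the arc term $\tfrac{r}{p+1}\int_{\pa B^n(0,r)} f^{-\delta}u^{p+1}\,d\sigma_{\bx}$ does not ``cancel'' but is moved to the left to form the second summand of $\mcp(U,r)$, which is what your bookkeeping in fact does.
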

\begin{proof}
The proof is similar to that of \cite[Proposition 4.7]{JLX}.
\end{proof}

\section{Basic properties of blow-up points}\label{sec_blow}
\subsection{Various types of blow-up points}\label{subsec_blow}
We start this section by recalling the notion of blow-up, isolated blow-up and isolated simple blow-up.
Our definition is a slight modification of the one introduced in \cite[Section 4]{Al} (cf. \cite{JLX, KMS, LX}).
\begin{defn}\label{def_blow}
As before, let $(X, g^+)$ be an asymptotically hyperbolic manifold with conformal infinity $(M, [\bh])$.
Here we use the notations in Subsection \ref{subsec_Y_31} and the small number $r_1 > 0$ picked in Subsection \ref{subsec_poho}.

\medskip \noindent (1) $y_0 \in M$ is called a {\it blow-up point} of $\{U_m\}_{m \in \mn} \subset W^{1,2}(X;\rho^{1-2\ga})$ if there exists a sequence of points $\{y_m\}_{m \in \mn} \subset M$
such that $y_m$ is a local maximum of $u_m = U_m|_M$ satisfying that $u_m(y_m) \to \infty$ and $y_m \to y_0$ as $m \to \infty$.
For simplicity, we will often say that $y_m \to y_0 \in M$ is a blow-up point of $\{U_m\}_{m \in \mn}$.

\medskip \noindent (2) $y_0 \in M$ is an {\it isolated blow-up point} of $\{U_m\}_{m \in \mn}$ if $y_0$ is a blow-up point such that
\begin{equation}\label{eq_blow_1}
u_m(y) \le C d_{\bh_m}(y, y_m)^{-{2\ga \over p_m-1}} \quad \text{for any } y \in M \setminus \{y_m\},\ d_{\bh_m}(y, y_m) < r_2
\end{equation}
for some $C > 0$, $r_2 \in (0, r_1]$ where $\bh_m = \bg_m|_{TM}$ and $d_{\bh_m}$ is the distance function in the metric $\bh_m$.

\medskip \noindent (3) Define a weighted spherical average
\begin{equation}\label{eq_bu}
\bu_m(r) = r^{2\ga \over p_m-1} \({\int_{\pa B^n(y_m, r)} u_m\, d\sigma_{\bh_m} \over \int_{\pa B^n(y_m, r)} d\sigma_{\bh_m}}\), \quad r \in (0, r_1)
\end{equation}
of $u_m$.
We say that an isolated blow-up point $y_0$ of $\{U_m\}_{m \in \mn}$ is {\it simple} if there exists a number $r_3 \in (0, r_2]$
such that $\bu_m$ possesses exactly one critical point in the interval $(0, r_3)$ for large $m \in \mn$.
\end{defn}
\noindent Roughly speaking, Item (2) (or (3), respectively) in the above definition depicts the situation
when {\it clustering of bubbles} (or {\it bubble towers}, respectively) is excluded among various blow-up scenarios.

\medskip
Hereafter, we always assume that $\{(u_m, y_m)\}_{m \in \mn}$ is a sequence of pairs in $C^{\infty}(M) \times M$ such that
$u_m$ is a solution to equation \eqref{eq_Yamabe} with $c = 1$, $y_m$ is a local maximum point of $u_m$ satisfying $u_m(y_m) \to \infty$ and $y_m \to y_0 \in M$ as $m \to \infty$.
Then $y_m \to y_0 \in M$ becomes a blow-up point of a sequence of functions $\{U_m\}_{m \in \mn} \subset W^{1,2}(X;\rho^{1-2\ga})$
where each $U_m$ is a solution to \eqref{eq_Yamabe_3} with $\bg_m = \bg$, $\bh_m = \bh$, $\rho_m = \rho$ and $f_m = 1$.
Let us set $M_m = u_m(y_m)$ and $\ep_m = M_m^{-{(p_m-1)/(2\ga)}}$ for each $m \in \mn$.
Obviously, $M_m \to \infty$ and $\ep_m \to 0$ as $m \to \infty$.

Also, denote by $\tih_m$ a representative of the class $[\bh_m]$ satisfying properties (1) and (2) in Lemma \ref{lemma_rep} with $y = y_m$,
and by $\wtu_m$ a solution to \eqref{eq_Yamabe_30}.
We shall often use $x \in \mr^N_+$ to denote $\tig_m$-Fermi coordinates on $\ox$ around $y_m$
so that $\wtu_m$ can be regarded as a function in $\mr^N_+$ near the origin.

\subsection{Blow-up analysis}
We study asymptotic behavior of a sequence of solutions $\{U_m\}_{m \in \mn}$ to \eqref{eq_Yamabe_3} near blow-up points.

\begin{prop}\label{prop_blow_a}
Assume that $p \in [1+\vep_0, 2_{n,\ga}^*-1]$.
For arbitrary small $\vep_1 > 0$ and large $R > 0$, there are constants $C_0,\, C_1 > 0$ depending only on $(X, g^+), \bh,\, n,\, \ga,\, \vep_0,\, \vep_1$ and $R$ such that
if $U \in W^{1,2}(X;\rho^{1-2\ga})$ is a solution to \eqref{eq_Yamabe_2} with the property that $\max_M U \ge C_0$,
then $(2_{n,\ga}^*-1) - p < \vep_1$ and $U|_M$ possesses local maxima $y_1,\, \cdots y_{\mcn} \in M$ for some $1 \le \mcn = \mcn(U) \in \mn$, for which the following statements hold:

\medskip \noindent \textnormal{(1)} It is valid that
\[\overline{B_{\bh}(y_{m_1}, \hr_{m_1})} \cap \overline{B_{\bh}(y_{m_2}, \hr_{m_2})} = \emptyset \quad \text{for } 1 \le m_1 \ne m_2 \le \mcn\]
where $\hr_m = R\, \alpha_{n,\ga}^{(p-1)/(2\ga)} u(y_m)^{-{(p-1)/(2\ga)}}$.

\medskip \noindent \textnormal{(2)} For each $m = 1, \cdots, \mcn$ and some $\beta = \beta(N, \ga) \in (0,1)$, we have
\begin{multline}\label{eq_blow_a_1}
\left\| \alpha_{n,\ga} U(y_m)^{-1} U\(\alpha_{n,\ga}^{p-1 \over 2\ga} U(y_m)^{-{p-1 \over 2\ga}} \cdot \) - W_{1,0} \right\|_{C^{\beta}(\overline{B^N_+(0,2R)})}\\
+ \left\| \alpha_{n,\ga}\, u(y_m)^{-1} u\(\alpha_{n,\ga}^{p-1 \over 2\ga} u(y_m)^{-{p-1 \over 2\ga}} \cdot \) - w_{1,0} \right\|_{C^{2+\beta}(\overline{B^n(0,2R)})} \le \vep_1
\end{multline}
in $\bg$-Fermi coordinates centered in $y_m$.

\medskip \noindent \textnormal{(3)} It holds that
\[U(y)\, d_{\bh}(y, \{y_1, \cdots, y_{\mcn}\})^{2\ga \over p-1} \le C_1 \quad \text{for } y \in M.\]
\end{prop}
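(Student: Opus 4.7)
The plan is to argue by contradiction through an iterative selection of local maxima of $u = U|_M$, in the spirit of Schoen \cite{Sc} and Almaraz \cite{Al}, combined with a rescaling argument whose limit is controlled by the classification result of Lemma \ref{lemma_W}(3). Suppose the conclusion fails: for some $\vep_1, R > 0$ fixed there exists a sequence of solutions $U_m \in W^{1,2}(X;\rho^{1-2\ga})$ to \eqref{eq_Yamabe_2} with parameters $p_m \in [1+\vep_0, 2_{n,\ga}^*-1]$ such that $\max_M u_m \to \infty$, and yet either $p_m$ does not converge to $2_{n,\ga}^*-1$, or no finite collection of local maxima of $u_m$ satisfies (1)--(3) for large $m$.

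First I would select a first local maximum $y_m^{(1)}$ (say, the global maximum of $u_m$) and introduce the rescaling
\[
V_m^{(1)}(x) = \alpha_{n,\ga}\, u_m(y_m^{(1)})^{-1}\, U_m\bigl(\ep_m^{(1)} x\bigr), \qquad \ep_m^{(1)} = \alpha_{n,\ga}^{(p_m-1)/(2\ga)}\, u_m(y_m^{(1)})^{-(p_m-1)/(2\ga)},
\]
in $\bg$-Fermi coordinates around $y_m^{(1)}$. A direct computation using the scaling of $\mathrm{div}_{\bg}$, the form of $E_{\bg}(\rho)$ in \eqref{eq_E_exp}, and the scaling of $\pa^{\ga}_{\nu}$ shows that $V_m^{(1)}$ satisfies a degenerate elliptic equation on $B^N_+(0, r_1/\ep_m^{(1)})$ with boundary condition $\pa^{\ga}_{\nu} V_m^{(1)} = (V_m^{(1)})^{p_m}$ on $\pa \mr^N_+$, whose leading coefficients converge locally uniformly to those of the Euclidean half-space extension problem \eqref{eq_bubble_2}; the zeroth order term and the metric perturbations tend to zero because of powers of $\ep_m^{(1)}$. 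By construction $V_m^{(1)}(0) = \alpha_{n,\ga}$ is a local maximum and $V_m^{(1)} \le \alpha_{n,\ga}$ on any fixed ball for $m$ large.

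Next, I would invoke the uniform $C^{\beta}_{\loc}(\overline{\mr^N_+})$ regularity for degenerate elliptic equations with $A_2$ weights collected in Appendix \ref{sec_reg} to extract a subsequence of $V_m^{(1)}$ converging in $C^{\beta}_{\loc}$ to a limit $V_0 \in W^{1,2}_{\loc}\mrg$ that solves the limiting problem of Lemma \ref{lemma_W}(3) with exponent $p_0 = \lim p_m$. Since $V_0(0) = \alpha_{n,\ga} > 0$ is nontrivial, Lemma \ref{lemma_W}(3) rules out $p_0 < 2_{n,\ga}^*-1$ and forces $V_0 = W_{1,0}$. This gives $(2_{n,\ga}^*-1) - p_m < \vep_1$ for large $m$, and delivers \eqref{eq_blow_a_1} at $y_m^{(1)}$. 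To generate further blow-up points I adopt the standard iterative scheme: having picked $y_m^{(1)},\dots,y_m^{(k)}$ with pairwise disjoint balls $\overline{B_{\bh}(y_m^{(\alpha)}, \hr_m^{(\alpha)})}$, I examine whether
\[
\Psi_m^{(k)}(y) := u_m(y)\, d_{\bh}\bigl(y, \{y_m^{(1)}, \dots, y_m^{(k)}\}\bigr)^{2\ga/(p_m-1)}
\]
is bounded by some fixed $C_1$; if so, the procedure terminates, otherwise I take $y_m^{(k+1)}$ to be a near-maximizer of $\Psi_m^{(k)}$, and a weighted rescaling centered there (compare Section 4 of \cite{Al} or Section 3 of \cite{JLX}) produces another bubble with automatic separation from the previous points. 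The iteration terminates in a number of steps bounded in terms of the Sobolev energy $\int_M u_m^{p_m+1}\, dv_{\bh}$, which is a priori bounded via coercivity of $J^{\ga}$ guaranteed by $\Lambda^\ga(M,[\bh]) > 0$; since each bubble contributes a definite amount $\ge c_0 > 0$, the total count $\mcn$ is bounded by a universal constant, yielding the required $C_1$.

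The main obstacle I expect is the rescaling/compactness step in the degenerate setting. Two points need care. First, one must verify uniformly in $m$ that the $L^{\infty}$ and $C^{\beta}$ estimates up to the boundary $\{x_N=0\}$ hold for the rescaled equation; the standard proofs for non-degenerate elliptic equations do not apply verbatim because of the weight $x_N^{1-2\ga}$ and because the Neumann-type datum $(V_m^{(1)})^{p_m}$ involves a critical growth exponent. Appendix \ref{sec_reg} is designed precisely for this purpose. Second, when passing to the limit one must keep track of how the geometric correction $E_{\bg}(\rho)$ enters the rescaled equation; the assumption \eqref{eq_hyp} (together with the vanishing of the linear $x_N$ term in the expansion of $\sqrt{|\bg|}$ when $H=0$, cf.\ Lemma \ref{lemma_metric}) is what ensures that the Euclidean limit reduces to the pure extension problem \eqref{eq_bubble_2}, so that Lemma \ref{lemma_W}(3) applies.
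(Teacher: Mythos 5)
Your overall scheme (contradiction, rescaling around successively selected local maxima, compactness from the weighted regularity theory of Appendix \ref{sec_reg}, and identification of the limit via the Liouville/classification result of Lemma \ref{lemma_W}(3)) is exactly the route the paper takes: its proof is a pointer to \cite[Theorem 1.8]{JLX} plus the induction argument of \cite[Proposition 5.1]{LZhu}. But your termination step is wrong as stated. You claim the iteration stops because the Sobolev energy $\int_M u_m^{p_m+1}\,dv_{\bh}$ is ``a priori bounded via coercivity of $J^{\ga}$'' and hence $\mcn$ is bounded by a universal constant. Coercivity together with the equation $P^{\ga}_{\bh}u = u^{p}$ only gives $\|u\|_{H^{\ga}(M)}^2 \le C\int_M u^{p+1}$, i.e.\ a \emph{lower} bound on the energy of any solution, not an upper bound; there is no a priori uniform energy bound for solutions of \eqref{eq_Yamabe} at this stage (on the standard sphere there are solutions of arbitrarily large energy, and ruling out accumulation of bubbles is precisely the content of Proposition \ref{prop_sim}, proved much later). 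Fortunately the proposition does not require a uniform bound on $\mcn$: it allows $\mcn = \mcn(U)$. For a fixed solution the selection terminates for an elementary reason: the selected points carry pairwise disjoint balls $\overline{B_{\bh}(y_j,\hr_j)}$ with $\hr_j \ge R\,\alpha_{n,\ga}^{(p-1)/(2\ga)}(\max_M u)^{-(p-1)/(2\ga)} > 0$, and the compact manifold $M$ can contain only finitely many disjoint balls of a fixed positive radius. Replace your energy argument by this counting argument and drop the claim that $\mcn$ is universal.

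Two smaller points. First, your assertion that $V_m^{(1)} \le \alpha_{n,\ga}$ on fixed balls is immediate only for the trace on $\mr^n$ (where $y_m^{(1)}$ is a maximum of $u_m$); for the extension in the interior of $\mr^N_+$ you need an extra step (Harnack inequality or the maximum-principle argument of Lemma \ref{lemma_conv}, cf.\ \eqref{eq_whu_5}) to get local uniform bounds before applying the $C^{\beta}$ estimates. Second, note that \eqref{eq_blow_a_1} asks for $C^{2+\beta}$ convergence of the trace on $\overline{B^n(0,2R)}$, so after the $C^{\beta}$ compactness in the half-ball you must upgrade the boundary regularity via the Schauder estimate of Lemma \ref{lemma_Sch}, as the paper does in Lemma \ref{lemma_conv}. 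With these corrections your argument is the paper's argument.
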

\begin{proof}
The validity of this proposition comes from the Liouville type theorem \cite[Theorem 1.8]{JLX} (see our Lemma \ref{lemma_W} (3) for its statement) and an induction argument.
See the proof of \cite[Proposition 5.1]{LZhu} for a detailed account.
\end{proof}
We have a remark on \eqref{eq_blow_a_1}:
According to Proposition \ref{prop_reg}, only the $C^{\beta}$-convergence is guaranteed on the closed half-ball $\overline{B^N_+(0,2R)}$.
However, we have the $C^{2+\beta}$-convergence on its bottom $\overline{B^n(0,2R)}$.

\medskip
Lemma \ref{lemma_reg_0} and the standard rescaling argument readily give the annular Harnack inequality around an isolated blow-up point.
\begin{lemma}\label{lemma_har}
Suppose that $y_m \to y_0 \in M$ is an isolated blow-up point of a sequence of solutions $\{U_m\}_{m \in \mn}$ to equation \eqref{eq_Yamabe_3}.
If we are in $\bg_m$-Fermi coordinate system centered at $y_m$, then there exists $C > 0$ independent of $m \in \mn$ and $r > 0$ such that
\[\max_{B^N_+(0,2r) \setminus B^N_+(0,r/2)} U_m \le C \min_{B^N_+(0,2r) \setminus B^N_+(0,r/2)} U_m\]
for any $r \in (0, r_2/3)$ where $r_2 > 0$ is a number defined in Definition \ref{def_blow} (2).
\end{lemma}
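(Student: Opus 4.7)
The plan is to rescale by the factor $r$ so that the half-annulus $B^N_+(0,2r) \setminus B^N_+(0,r/2)$ becomes a fixed half-annulus of unit size, and then apply a Harnack-type estimate uniformly in $m$. Working in $\bg_m$-Fermi coordinates $x$ centered at $y_m$, define the rescaled functions
\[
\wtu_m(x) = r^{2\ga/(p_m-1)}\, U_m(rx), \qquad \tiu_m(\bx) = r^{2\ga/(p_m-1)}\, u_m(r\bx).
\]
By the scaling properties of the degenerate operator $-\textnormal{div}(x_N^{1-2\ga}\nabla\cdot)$ and of $\pa^{\ga}_\nu$ (consistent because the bubble $W_\ls$ transforms under dilations with precisely this weight), the function $\wtu_m$ solves an equation of the same form as \eqref{eq_Yamabe_3}, but with the metric replaced by $x \mapsto \bg_m(rx)$, which is $C^1$-close to the Euclidean metric on bounded sets as $r \to 0$, and with a lower order coefficient $r^{2\ga}\, E_{\bg_m}(rx_N)$ that is uniformly bounded in $m$ and $r$ by \eqref{eq_E_exp}. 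The rescaled Neumann condition reads $\pa^{\ga}_\nu \wtu_m = \tiu_m^{p_m}$.

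Next, the isolated blow-up hypothesis \eqref{eq_blow_1} (applied in Fermi coordinates, where geodesic and Euclidean distances to $y_m$ are comparable) yields, for any $r \in (0, r_2/3)$ and any $\bx$ with $1/3 \le |\bx| \le 5/2$,
\[
\tiu_m(\bx) = r^{2\ga/(p_m-1)}\, u_m(r\bx) \le r^{2\ga/(p_m-1)} \cdot C\,(r|\bx|)^{-2\ga/(p_m-1)} = C\,|\bx|^{-2\ga/(p_m-1)} \le C',
\]
with $C'$ independent of $m$ and $r$. Hence the boundary nonlinearity $\tiu_m^{p_m}$ is uniformly bounded. Invoking the $L^{\infty}$ estimate for solutions of the degenerate elliptic problem with bounded Neumann datum provided by Lemma \ref{lemma_reg_0}, one obtains a uniform bound $\wtu_m \le C''$ on $\overline{B^N_+(0, 5/2) \setminus B^N_+(0, 1/3)}$.

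Finally, since $\wtu_m > 0$ is a weak solution of an equation in divergence form with the $A_2$-Muckenhoupt weight $x_N^{1-2\ga}$, with bounded lower order coefficients and bounded Neumann datum, the boundary Harnack inequality for this class of degenerate operators (the same Harnack statement underlying Lemma \ref{lemma_reg_0}, going back to Fabes--Kenig--Serapioni in the interior and to \cite{TX} up to $\{x_N=0\}$) applies on the slightly smaller half-annulus $\overline{B^N_+(0,2) \setminus B^N_+(0,1/2)}$, producing
\[
\max_{\overline{B^N_+(0,2) \setminus B^N_+(0,1/2)}} \wtu_m \;\le\; C\, \min_{\overline{B^N_+(0,2) \setminus B^N_+(0,1/2)}} \wtu_m
\]
for a constant $C>0$ independent of $m$ and $r$. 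Undoing the rescaling recovers the claimed inequality on $B^N_+(0, 2r) \setminus B^N_+(0, r/2)$.

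The main technical point is that all these steps must be uniform in $m$ and in the scale $r$: the metrics $\bg_m(r\,\cdot)$ converge to the Euclidean one and the rescaled coefficient $r^{2\ga} E_{\bg_m}(rx_N)$ stays uniformly small, so the hypotheses of Lemma \ref{lemma_reg_0} are fulfilled with constants independent of $(m,r)$. Modulo this uniformity, which is the content of the elliptic regularity results gathered in Appendix \ref{sec_reg}, the rescaling argument is essentially automatic and the conclusion follows.
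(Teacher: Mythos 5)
Your rescaling set-up is the right one (and is essentially what the paper does, following \cite[Lemma 4.3]{JLX}), but the way you handle the boundary nonlinearity creates a genuine gap. You treat $\pa^{\ga}_{\nu}\wtu_m=f_m^{-\delta_m}(r\cdot)\,\tiu_m^{p_m}$ as an inhomogeneous Neumann datum $q$ with a uniform bound and then invoke the Harnack estimate of Lemma \ref{lemma_reg_0}. But \eqref{eq_har_2} with a nonzero datum only yields the \emph{additive} estimate
\[
\sup \wtu_m \le C\(\inf \wtu_m + \|q\|_{L^{q_3}}\),
\]
and the term $\|q\|_{L^{q_3}}$ is in no way controlled by $\inf\wtu_m$ (for fixed $r$ and $m\to\infty$ the solution on the annulus can degenerate to $0$ while your bound on $q$ is just a fixed constant; moreover the datum norm lives on a larger region than the one where the sup/inf are taken, so no absorption argument closes). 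Hence you do not obtain the multiplicative inequality $\max\le C\min$ that the lemma asserts. The standard (and intended) remedy is precisely what the exponent in Definition \ref{def_blow} (2) is designed for: write the boundary condition as $\pa^{\ga}_{\nu}\wtu_m=a_m\,\tiu_m$ with $a_m=f_m^{-\delta_m}(r\cdot)\,\tiu_m^{\,p_m-1}$; by \eqref{eq_blow_1}, $a_m(\bx)\le C|\bx|^{-2\ga}\le C$ on the fixed annulus, uniformly in $m$ and $r$, so the Harnack inequality for the \emph{linear} problem with bounded coefficients (i.e.\ \eqref{eq_har_2} with $Q=F=q=0$, the constant depending only on $\|a_m\|$ and the lower-order interior coefficient) gives $\sup\le C\inf$ directly, with no a priori bound on the solution needed.

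A second, related problem: your intermediate claim that $\wtu_m\le C''$ on the closed half-annulus does not follow from Lemma \ref{lemma_reg_0}, since \eqref{eq_DNM} requires a uniform $L^2(B_R;x_N^{1-2\ga})$ bound on a larger half-annulus, and the isolated blow-up hypothesis \eqref{eq_blow_1} controls only the trace on $M$, not the values of $U_m$ in the interior of $X$. In fact, in the paper the interior decay \eqref{eq_whu_1} is \emph{deduced from} this very Harnack lemma in the proof of Lemma \ref{lemma_conv}, so assuming an interior sup bound at this stage is circular in spirit. Once you switch to the coefficient formulation above, this step becomes unnecessary and the rest of your argument (uniform ellipticity of $\bg_m(r\cdot)$, uniform boundedness of $r^{2\ga}E_{\bg_m}(rx_N)$, scaling back) goes through as you wrote it.
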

\begin{proof}
The proof is similar to that of \cite[Lemma 4.3]{JLX}.
\end{proof}

If $y_m \to y_0 \in M$ is an isolate blow-up point of solutions $\{U_m\}_{m \in \mn}$ to \eqref{eq_Yamabe_3},
Proposition \ref{prop_blow_a} can be extended in the following manner.
\begin{lemma}\label{lemma_conv}
Let $y_m \to y_0 \in M$ be an isolated blow-up point of a sequence of solutions $\{U_m\}_{m \in \mn}$ to \eqref{eq_Yamabe_3} with $f_m > 0$ in $B^n(0,r_2)$.
In addition, suppose that $\{R_m\}_{m \in \mn}$ and $\{\tau_m\}_{m \in \mn}$ are arbitrary sequences of positive numbers such that $R_m \to \infty$ and $\tau_m \to 0$ as $m \to \infty$.
Then $p_m \to 2^*_{n,\ga}-1$, and $\{U_\ell\}_{\ell \in \mn}$ and $\{p_\ell\}_{\ell \in \mn}$
have subsequences $\{U_{\ell_m}\}_{m \in \mn}$ and $\{p_{\ell_m}\}_{m \in \mn}$ such that for some $\beta \in (0,1)$
\begin{equation}\label{eq_conv}
\left\| \hep_{\ell_m}^{2\ga \over p_{\ell_m}-1} U_{\ell_m}\( \hep_{\ell_m} \cdot \) - W_{1,0} \right\|_{C^{\beta}(\overline{B^N_+(0,R_m)})}
+ \left\| \hep_{\ell_m}^{2\ga \over p_{\ell_m}-1} u_{\ell_m}\( \hep_{\ell_m} \cdot \) - w_{1,0} \right\|_{C^{2+\beta}(\overline{B^n(0,R_m)})} \le \tau_m
\end{equation}
in $\bg_m$-Fermi coordinates centered in $y_m$ and $R_m \hep_{\ell_m} \to 0$ as $m \to \infty$.
Here
\[\hep_{\ell_m} = \alpha_{n,\ga}^{p_{\ell_m}-1 \over 2\ga} M_{\ell_m}^{-{p_{\ell_m}-1 \over 2\ga}} = \alpha_{n,\ga}^{p_{\ell_m}-1 \over 2\ga} u_{\ell_m}(y_{\ell_m})^{-{p_{\ell_m}-1 \over 2\ga}}\]
for all $m \in \mn$.
\end{lemma}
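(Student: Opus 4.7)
The plan is to combine a rescaling argument with Proposition~\ref{prop_blow_a} through a diagonal extraction. Working in $\bg_m$-Fermi coordinates centered at $y_m$, I would define the rescaled functions
\[
V_m(x) = \hep_m^{2\ga/(p_m-1)} U_m(\hep_m x) = \alpha_{n,\ga} M_m^{-1} U_m(\hep_m x)
\]
on the expanding half-ball $\{x \in \mr^N_+ : \hep_m x \in B^N_+(0,r_2)\}$. A direct computation shows that $V_m$ solves a degenerate elliptic problem whose principal coefficients converge, uniformly on compacta, to $x_N^{1-2\ga}$ (the metric flattens after rescaling); the zeroth-order term $\hep_m^{2\ga} E_{\bg_m}(\rho_m)(\hep_m \cdot)$ vanishes in the limit thanks to \eqref{eq_hyp} and \eqref{eq_E_exp}; and the boundary nonlinearity $f_m(\hep_m \cdot)^{-\delta_m} V_m^{p_m}$ tends to $V_m^{p_m}$ since $f_m \to f_0 > 0$ in $C^2(M)$ and (as will be shown) $\delta_m \to 0$. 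The normalization gives $V_m(0) = \alpha_{n,\ga}$ and $\nabla_{\bx} V_m(0) = 0$ because $y_m$ is a local maximum of $u_m$, while the isolated blow-up assumption \eqref{eq_blow_1} yields the a priori bound $V_m(\bx,0) \le C |\bx|^{-2\ga/(p_m-1)}$ on the rescaled domain.

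Next I would apply Proposition~\ref{prop_blow_a} to each $U_m$ and exploit isolation. For each fixed pair $(R, \vep_1)$, once $m$ is large enough that $M_m \ge C_0(R, \vep_1)$, the proposition furnishes local maxima $y^m_1, \ldots, y^m_{\mcn(U_m)}$ of $u_m$ with pairwise spacing \textnormal{(1)} and convergence \eqref{eq_blow_a_1} around each one. Isolation forces exactly one of these points, namely $y_m$ itself, to lie inside $B_{\bh_m}(y_m, r_2)$ for large $m$; all others remain away by part~\textnormal{(1)} and \eqref{eq_blow_1}. Rewriting \eqref{eq_blow_a_1} in terms of $V_m$ then gives
\[
\|V_m - W_{1,0}\|_{C^{\beta}(\overline{B^N_+(0,2R)})} + \|V_m(\cdot,0) - w_{1,0}\|_{C^{2+\beta}(\overline{B^n(0,2R)})} \le \vep_1.
\]
A standard diagonal extraction selects indices $\ell_m \to \infty$ so that this estimate holds with $R = R_m$ and $\vep_1 = \tau_m$, producing \eqref{eq_conv}. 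Because $\hep_{\ell_m} \to 0$ as fast as we wish by passing further in $m$, the side condition $R_m \hep_{\ell_m} \to 0$ is obtained within the same extraction.

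It remains to prove $p_m \to 2^*_{n,\ga}-1$, which is intertwined with the above. Suppose for contradiction that $p_{\ell_m} \to p_0 \in [1+\vep_0, 2^*_{n,\ga}-1)$ along a subsequence. The regularity result Proposition~\ref{prop_reg}, applied to the rescaled equation, makes $\{V_{\ell_m}\}$ locally $C^{\beta}$-precompact (with $C^{2+\beta}$ control on the boundary trace), and a further subsequence converges to a nontrivial nonnegative solution $\Phi \in W^{1,2}_{\loc}\mrg$ of the limiting problem in Lemma~\ref{lemma_W}(3) at the subcritical exponent $p_0$; nontriviality follows from $\Phi(0) = \alpha_{n,\ga}$, so the nonexistence half of Lemma~\ref{lemma_W}(3) yields a contradiction. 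Once criticality is in hand, the classification half of that same lemma together with the normalization $V_m(0) = \alpha_{n,\ga}$ and $\nabla_{\bx} V_m(0) = 0$ pins the limiting profile down to $W_{1,0}$. The principal obstacle will be upgrading bounded-ball control (which is what Proposition~\ref{prop_blow_a} delivers) to uniform control on the expanding half-balls $B^N_+(0, R_m)$: here the isolation hypothesis \eqref{eq_blow_1} combined with the annular Harnack inequality of Lemma~\ref{lemma_har} is essential to exclude mass escape in the rescaled picture and to keep Proposition~\ref{prop_blow_a} applicable throughout the diagonal argument.
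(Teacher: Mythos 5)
Your architecture differs from the paper's: the paper proves Lemma \ref{lemma_conv} by a self-contained rescaling argument whose crux is a uniform bound on the rescaled functions $V_m$, obtained by showing (via the classical maximum principle together with the Hopf lemma, Lemma \ref{lemma_hopf}) that $\inf_{B^N_+(0,r)} V_m$ is attained on $\pa_I B^N_+(0,r)$, then combining this with the normalization $V_m(0)=\alpha_{n,\ga}$, the annular Harnack inequality and the decay from \eqref{eq_blow_1} to get $V_m \le C$ on the whole rescaled domain (\eqref{eq_whu_4} and \eqref{eq_whu_5} in the paper), after which regularity theory and Lemma \ref{lemma_W}(3) give both $p_m \to 2^*_{n,\ga}-1$ and the convergence to $W_{1,0}$. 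You instead try to outsource this to Proposition \ref{prop_blow_a} plus a diagonal extraction, and this is where the gaps lie.

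First, Proposition \ref{prop_blow_a} is stated (and its constants $C_0, C_1$ are built) only for solutions of \eqref{eq_Yamabe_2}, i.e.\ the fixed metric $\bg$ and $f\equiv 1$, whereas Lemma \ref{lemma_conv} must hold for solutions of \eqref{eq_Yamabe_3} with varying data $\bg_m \to \bg_0$, $f_m \to f_0$ (this generality is essential later, when the lemma is applied to the conformally modified $\wtu_m$ solving \eqref{eq_Yamabe_30}); invoking it verbatim is not legitimate, and re-proving it uniformly in $m$ essentially requires the direct rescaling argument you are trying to avoid. Second, your key sentence ``isolation forces exactly one of these points, namely $y_m$ itself, to lie inside $B_{\bh_m}(y_m,r_2)$'' is unjustified: the selection in Proposition \ref{prop_blow_a} knows nothing about $y_m$, and part (3) only guarantees a selected maximum $y^m_{j}$ within distance $O(\ep_m)$ of $y_m$, a priori with a different height $u_m(y^m_{j})$ and scale. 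To convert closeness to the bubble centered at $y^m_{j}$ into \eqref{eq_conv}, which is normalized at $y_m$ with height $M_m$ and $\nabla_{\bx}V_m(0)=0$, you must show $u_m(y^m_{j})/M_m \to 1$ and that the rescaled distance tends to $0$; this needs an extra argument (ruling out a much taller, thinner bubble near $y_m$ using \eqref{eq_blow_1}, and then using the nondegenerate unique maximum of $w_{1,0}$, much as in Corollary \ref{cor_i_blow}), which you assert but do not supply. Finally, the ``principal obstacle'' you name at the end — uniform control of $V_m$ on $B^N_+(0,1)$ and on intermediate scales, needed for the compactness in your subcritical contradiction argument — is precisely what the paper's Hopf-lemma/Harnack step resolves, and your proposal offers no substitute for it beyond citing the unavailable Proposition \ref{prop_blow_a}. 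As written, the proof therefore has a genuine gap.
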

\noindent In order to prove it, we first need the following type of the Hopf lemma.
\begin{lemma}\label{lemma_hopf}
Suppose that $\bg$ is a smooth metric on $\overline{B^N_+(0,1)}$, $A \in C^0(\overline{B^N_+(0,1)})$ and $U \in W^{1,2}(B^N_+(0,1); x_N^{1-2\ga})$ is a solution to
\[\begin{cases}
-\textnormal{div}_{\bg} (x_N^{1-2\ga}\nabla U) + x_N^{1-2\ga} A U = 0 &\text{in } B^N_+(0,1),\\
U \ge c_0 > 0 &\text{on } \overline{B^N_+(0,1)}
\end{cases}\]
such that $x_N^{1-2\ga} \pa_N U \in C^0(\overline{B^N_+(0,1)})$.
Assume also that there exist a small number $r > 0$ and a point $\bx_0 \in B^n(0,r) \setminus \overline{B^n(0,r/2)}$  such that $U(\bx_0, 0) = c_0$ and $U(\bx,0) > c_0$ on $\{\bx \in \mr^n: |\bx| = r/2\}$. Then
\[\lim_{x_N \to 0} x_N^{1-2\ga} \pa_N U(\bx_0,x_N) > 0.\]
\end{lemma}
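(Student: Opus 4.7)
The plan is a barrier argument built on the weighted profile $x_N^{2\ga}$. After translating so that $\bx_0 = 0$, set $V = U - c_0 \ge 0$ on $\overline{B^N_+(0,1)}$. The function $V$ satisfies
$$-\textnormal{div}_{\bg}(x_N^{1-2\ga}\nabla V) + x_N^{1-2\ga} A V = -c_0\, x_N^{1-2\ga} A \quad \text{in } B^N_+(0,1),$$
which is uniformly elliptic with smooth coefficients away from $\{x_N = 0\}$. The strong maximum principle in the interior, together with the hypothesis $U > c_0$ on $\{|\bx|=r/2\}$, forces $V > 0$ throughout $B^N_+(0,1)$. Consequently, for a small $\rho \in (0,r/4)$, one has $V \ge \delta_1 > 0$ on the spherical cap $\pa_I D_\rho$, where $D_\rho := B^N_+(0,\rho)$.

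Next I would construct a sub-barrier of the form
$$\Phi(x) = \vep\bigl(x_N^{2\ga} + C\, x_N^{2\ga + \beta}\bigr) \quad \text{on } D_\rho,$$
for some $\beta \in (0, \min(1, 2(1-\ga)))$ and parameters $C, \vep > 0$ to be fixed. A direct computation using $\pa_N(x_N^{2\ga+\alpha}) = (2\ga+\alpha) x_N^{2\ga+\alpha-1}$ and $\bg^{NN}(0) > 0$ (since the inverse of a positive-definite matrix has positive diagonal entries) gives
$$-\textnormal{div}_{\bg}\!\left(x_N^{1-2\ga}\nabla(x_N^{2\ga+\alpha})\right) = -(2\ga+\alpha)\alpha\,\bg^{NN}(x)\, x_N^{\alpha-1} + O(x_N^{\alpha})$$
for any $\alpha \ge 0$; for $\alpha = \beta > 0$ the leading term is a negative multiple of $x_N^{\beta-1}$ diverging to $-\infty$ as $x_N \to 0$, while for $\alpha = 0$ the expression is merely $O(1)$. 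The restriction $\beta < 2(1-\ga)$ ensures that $x_N^{\beta-1}$ dominates $x_N^{1-2\ga}$ near $x_N = 0$, so by choosing $C$ large and then $\rho$ small one can arrange
$$-\textnormal{div}_{\bg}(x_N^{1-2\ga}\nabla\Phi) + x_N^{1-2\ga} A\, \Phi \le -c_0\, x_N^{1-2\ga}A \quad \text{in } D_\rho,$$
i.e.\ the inequality needed for $W := \Phi - V$ to satisfy $LW \le 0$. A sufficiently small $\vep$ then gives $\Phi \le V$ on $\pa D_\rho$: trivially $\Phi = 0 \le V$ on the flat part, and $\Phi \le \vep(\rho^{2\ga} + C\rho^{2\ga+\beta}) \le \delta_1 \le V$ on $\pa_I D_\rho$.

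To apply a comparison principle in spite of the zero-order term $x_N^{1-2\ga}A$ having no a priori sign, I would perform the substitution $W = U\Xi$, exploiting that $U > 0$ solves $LU = 0$. A standard computation shows that $LW = U\,\wtm\Xi$ for a drift-diffusion operator
$$\wtm \Xi := -\textnormal{div}_{\bg}(x_N^{1-2\ga}\nabla\Xi) - 2\, x_N^{1-2\ga}\, U^{-1}\, \nabla U \cdot \nabla \Xi$$
with no zero-order term, so the weighted weak maximum principle for $\wtm$ applies directly. Combined with $LW \le 0$ and $W \le 0$ on $\pa D_\rho$, this yields $\Phi \le V$ in $D_\rho$. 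At $\bx = 0$ we then get $V(0,x_N) \ge \vep x_N^{2\ga}$ for $x_N \in (0,\rho)$, and the continuity of $x_N^{1-2\ga}\pa_N V$ up to $x_N = 0$, combined with the identity $V(0,x_N) = \int_0^{x_N} s^{2\ga-1}[s^{1-2\ga}\pa_N V(0,s)]\, ds$, implies
$$\lim_{x_N \to 0^+} x_N^{1-2\ga}\, \pa_N V(0, x_N) \ge 2\ga\vep > 0,$$
which is the desired Hopf bound.

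The main obstacle is the barrier construction: $\vep x_N^{2\ga}$ alone is not a sub-solution because of the metric and zero-order perturbations, and the right-hand side $-c_0 x_N^{1-2\ga}A$ can diverge like $x_N^{1-2\ga}$ when $\ga > 1/2$. The correction $C x_N^{2\ga+\beta}$ with $\beta < 2(1-\ga)$ is tailored so that $-\textnormal{div}_{\bg}(x_N^{1-2\ga}\nabla \cdot)$ produces a term of order $x_N^{\beta-1}$ which blows up \emph{faster} than $x_N^{1-2\ga}$ near the boundary, absorbing every error. The primary term $\vep x_N^{2\ga}$ carries the Hopf information $2\ga\vep$, while the correction contributes nothing to $x_N^{1-2\ga}\pa_N$ at $x_N = 0$ since $x_N^{1-2\ga}\pa_N(x_N^{2\ga+\beta}) = (2\ga+\beta)x_N^\beta \to 0$.
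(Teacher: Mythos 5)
Your barrier profile in the vertical variable ($\vep x_N^{2\ga}$ plus a higher-order correction, and the final extraction of the flux from $V(0,x_N)\ge \vep x_N^{2\ga}$) is in the same spirit as the paper's, but the argument has a genuine gap in the geometry of the comparison region, and it is fatal as written. Your region $D_\rho=B^N_+((\bx_0,0),\rho)$ is a half-ball centered at the touching point, so the closure of its spherical cap $\pa_I D_\rho$ meets $\{x_N=0\}$ along the rim $\{|\bx-\bx_0|=\rho,\ x_N=0\}$. At rim points you only know $V=U-c_0\ge 0$: the hypotheses give strictness only on the sphere $\{|\bx|=r/2\}$ and say nothing on the rest of the bottom, so nothing prevents $U(\cdot,0)$ from equalling $c_0$ at (or arbitrarily near) rim points. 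Hence $\delta_1:=\inf_{\pa_I D_\rho}V$ need not be positive, and the boundary inequality $\Phi\le V$ on the cap cannot be arranged by shrinking $\vep$, since near the rim $\Phi\sim\vep x_N^{2\ga}>0$ while $V$ can be arbitrarily small there; proving $\Phi\le V$ near a rim point is itself a Hopf-type statement, so the argument is circular. The preliminary step is also unjustified: the claim that the interior strong maximum principle forces $V>0$ throughout $B^N_+(0,1)$ does not follow, because $V=U-c_0$ satisfies $LV=-c_0\,x_N^{1-2\ga}A$ with $A$ of no sign, so $V$ is not a supersolution of any operator with nonnegative zero-order coefficient; your ground-state substitution removes the zero-order term only for genuine solutions such as $U$, not for $U$ minus a constant, and so it does not rescue this step either. (Even if interior positivity held, it would not produce $\delta_1>0$ on the cap, because the cap reaches the bottom.)

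This is exactly the point the paper's construction is designed to handle. There the comparison is run on an annular region over $B^n(0,r)\setminus\overline{B^n(0,r/2)}$, so that $(\bx_0,0)$ lies on the bottom face where the barrier vanishes, and the barrier $W=(x_N^{2\ga}+\wtc_1 x_N^{1+2\ga})(e^{-\wtc_2|\bx|}-e^{-\wtc_2 r})$ carries a horizontal factor vanishing on the outer wall $\{|\bx|=r\}$ (and nonpositive beyond it); consequently the only boundary portion where a strict gap $U\ge c_0+\delta W$ is required lies along the inner wall $\{|\bx|=r/2\}$, precisely where the hypothesis $U(\cdot,0)>c_0$, continuity, and a small choice of $\delta$ supply it, while the sign-changing zero-order term is handled by passing to $A_+$ after $U-\delta W$ has been made positive. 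To repair your proof you would need the same two ingredients: a comparison region keyed to the set where strictness is actually assumed, and a barrier with a horizontal cut-off so that it is nonpositive on every boundary piece where no gap of $U$ above $c_0$ is available. As it stands, the proposal does not go through.
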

\begin{proof}
Our proof is in the spirit of those in \cite[Theorem 3.5]{GQ} and \cite[Proposition 7.1]{CC}. Let
\[W(x) = x_N^{-(1-2\ga)} \(x_N + \wtc_1 x_N^2\) \(e^{-\wtc_2|\bx|} - e^{-\wtc_2 r}\) \quad \text{in } B^N_+(0,1)\]
with $\wtc_1$, $\wtc_2 > 0$ sufficiently large.
Then there exists a small number $\delta > 0$ such that
\[\begin{cases}
-\textnormal{div}_{\bg} (x_N^{1-2\ga}\nabla W) + x_N^{1-2\ga} A W \le 0 &\text{in } B^N_+(0,1),\\
U - \delta W \ge c_0/2 > 0 &\text{on } \overline{B^N_+(0,1)}.
\end{cases}\]
Therefore
\[\begin{cases}
-\textnormal{div}_{\bg} (x_N^{1-2\ga}\nabla (U - \delta W)) + x_N^{1-2\ga} A_+ (U - \delta W) \ge 0 &\text{in } \Gamma_1,\\
U - \delta W \ge c_0 &\text{on } \pa \Gamma_1
\end{cases}\]
where $\Gamma_1 = (B^n(0,1) \setminus \overline{B^n(0,1/2)}) \times (0,1/2)$.
By the maximum principle, $U-\delta W > c_0$ in $\Gamma_1$. Since $(U-\delta W )(\bx_0,0) = c_0$, the assertion follows.
\end{proof}

\begin{proof}[Proof of Lemma \ref{lemma_conv}]
We set
\[V_m(x) = \hep_m^{2\ga \over p_m-1} U_m\(\hep_m x\) \quad \text{for all } x \in B^N_+(0, r_2 \hep_m^{-1}).\]
Then we infer from \eqref{eq_Yamabe_3} and \eqref{eq_blow_1} that
\begin{equation}\label{eq_whu_2}
\begin{cases}
-\text{div}_{\bg_m(\hep_m \cdot)} (x_N^{1-2\ga} \nabla V_m) + \hep_m^2 x_N^{1-2\ga} A_m(\hep_m \cdot) V_m = 0 &\text{in } B^N_+(0, r_2 \hep_m^{-1}),\\
\pa^{\ga}_{\nu} V_m = f_m^{-\delta_m}(\hep_m \cdot) V_m^{p_m} &\text{on } B^n(0, r_2 \hep_m^{-1})
\end{cases}
\end{equation}
where
\[A_m = x_N^{-(1-2\ga)} E_{\bg_m}(x_N^{1-2\ga})\]
in $\bg_m(\hep_m \cdot)$-coordinate centered at $y_m$, and
\[V_m(\bx,0) \le C |\bx|^{-{2\ga \over p_m-1}} \quad \text{for all } \bx \in B^n(0, r_2 \hep_m^{-1}).\]
Thus Lemma \ref{lemma_har} leads us that
\begin{equation}\label{eq_whu_1}
V_m(x) \le C |x|^{-{2\ga \over p_m-1}} \quad \text{for all } x \in B^N_+(0, r_2 \hep_m^{-1}).
\end{equation}
Also, by Definition \ref{def_blow} (1), it holds that
\begin{equation}\label{eq_whu_3}
V_m(0) = \alpha_{n,\ga},\ \nabla_{\bx} V_m(0) = 0 \quad \text{for all } m \in \mn.
\end{equation}

On the other hand, Lemma \ref{lemma_hopf} implies
\begin{equation}\label{eq_whu_5}
\inf_{x \in \pa_I B^N_+(0,r)} V_m = \inf_{x \in B^N_+(0,r)} V_m \quad \text{for each } r \in (0,1].
\end{equation}
Indeed, since $V_m$ is positive in its domain, we have
\[-\text{div}_{\bg_m(\hep_m \cdot)} (x_N^{1-2\ga} \nabla V_m) + \hep_m^2 x_N^{1-2\ga} (A_m)_+(\hep_m \cdot) V_m \ge 0 \quad \text{in } B^N_+(0,1).\]
Because of the classical maximum principle, $V_m$ does not attain its infimum in the interior of $B^N_+(0,r)$ for any $r \in (0,1]$, unless it is a constant function.
However, it cannot be constant, otherwise we get an absurd relation
\[0 = \pa^{\ga}_{\nu} V_m = f_m^{-\delta_m}(\hep_m \cdot) V_m^{p_m} > 0 \quad \text{on } B^n(0,r).\]
Moreover, the infimum of $V_m$ is not achieved on the bottom $B^n(0,r)$, because the existence of a minimum point $\bx_m \in B^n(0,r)$ of $V_m$
and the Hopf lemma produce a contradictory relation
\[0 > \pa^{\ga}_{\nu} V_m(\bx_m,0) = f_m^{-\delta_m}(\hep_m \bx_m) V_m^{p_m}(\bx_m,0) > 0.\]
Therefore \eqref{eq_whu_5} must be true.

Now one observes from \eqref{eq_whu_3}, \eqref{eq_whu_5} and Lemma \ref{lemma_har} that $V_m(x) \le C$ for $|x| \le 1$. In light of \eqref{eq_whu_1}, it reads
\begin{equation}\label{eq_whu_4}
V_m(x) \le C \quad \text{for } x \in B^N_+(0,r_2 \hep_m^{-1})
\end{equation}
where $C > 0$ is a constant independent of $m \in \mn$.

Accordingly, by making use of \eqref{eq_whu_2}, \eqref{eq_whu_3}, \eqref{eq_whu_4}, \eqref{eq_DNM}, \eqref{eq_har_2}, \eqref{eq_reg_2}, \eqref{eq_Sch} and Lemma \ref{lemma_W} (3),
we deduce the existence of $\beta \in (0,1)$ such that
\[p_m \to 2^*_{n,\ga}-1, \quad V_m \to W_{1,0} \quad \text{in } C^{\beta}_{\loc}(\mr^N_+)
\quad \text{and} \quad
V_m(\cdot, 0) \to w_{1,0} \quad \text{in } C^{2+\beta}_{\loc}(\mr^n)\]
passing to a subsequence. The assertion of the lemma is true.
\end{proof}

Keeping in mind that our proof is not affected by the act of picking a subsequence of $\{U_\ell\}_{\ell \in \mn}$,
we always select $\{R_m\}_{m \in \mn}$ first and then $\{U_{\ell_m}\}_{m \in \mn}$ satisfying \eqref{eq_conv} and $R_m \hep_{\ell_m} \to 0$.
From now on, we write $\{U_m\}_{m \in \mn}$ to denote $\{U_{\ell_m}\}_{m \in \mn}$ and so on to simplify notations.

The next result is a simple consequence of the previous lemma under the choice $\tau_m = w_{1,0}(R_m)/2$.
\begin{cor}\label{cor_i_blow}
(1) Suppose that $y_m \to y_0 \in M$ is an isolated blow-up point of a sequence $\{U_m\}_{m \in \mn}$ of solutions to \eqref{eq_Yamabe_3}.
If $\{\wtu_m\}_{m \in \mn}$ is a sequence of solutions to \eqref{eq_Yamabe_30} constructed as in Subsection \ref{subsec_blow},
then $y_m \to y_0 \in M$ is an isolated blow-up point of $\{\wtu_m\}_{m \in \mn}$.

\medskip \noindent (2) Assume that $y_0 \in M$ is an isolated blow-up point of $\{U_m\}_{m \in \mn}$.
Then the function $\bu_m$ defined in \eqref{eq_bu} possesses exactly one critical point in the interval $(0, R_m \hep_m)$ for large $m \in \mn$.
In particular, if the isolated blow-up point $y_0 \in M$ of $\{U_m\}_{m \in \mn}$ is also simple, then $\bu_m'(r) < 0$ for all $r \in [R_m \hep_m, r_3)$; see Definition \ref{def_blow} (3).
\end{cor}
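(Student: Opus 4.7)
My plan is to extract both parts of the corollary directly from Lemma \ref{lemma_conv} applied with the sharp choice $\tau_m = w_{1,0}(R_m)/2$.

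For \emph{part (1)}, recall that $\tiu_m = w_m^{-1} u_m$ on $M$ where $w_m = (\tirh_m/\rho_m)^{(n-2\ga)/2}$ is smooth, uniformly positive, and satisfies $w_m(y_m)=1$ with $\nabla_{\bh_m} w_m(y_m)=0$ by \eqref{eq_w_m_2}. Since $\tih_m = w_m^{4/(n-2\ga)} \bh_m$ is uniformly comparable to $\bh_m$ in a fixed neighborhood of $y_m$, the isolated blow-up bound \eqref{eq_blow_1} transfers verbatim from $u_m$ to $\tiu_m$. For the local-maximum clause, I would take $\tiy_m = y_m$: a Hessian computation using $\nabla w_m(y_m) = \nabla u_m(y_m) = 0$ yields $\nabla \tiu_m(y_m) = 0$, and the dominant blow-up term in $\nabla^2 \tiu_m(y_m)$ comes from $w_m^{-1}(y_m)\nabla^2 u_m(y_m)$, which is strictly negative definite by Lemma \ref{lemma_conv} applied to $u_m$ and the negative-definite Hessian of $w_{1,0}$ at its maximum. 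Hence $y_m$ remains a local maximum of $\tiu_m$ for large $m$, with $\tiu_m(y_m) = u_m(y_m) \to \infty$.

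For \emph{part (2)}, I pass to the rescaled variable $s = r/\hep_m$. Denoting the spherical average of $u_m$ over $\pa B^n(y_m,r)$ by $\bar u_m(r)$, Lemma \ref{lemma_conv} implies $\hep_m^{2\ga/(p_m-1)} \bar u_m(\hep_m s) \to w_{1,0}(s)$ in $C^2(\overline{B^n(0, R_m)})$ with absolute error $\tau_m$, after commuting the spherical average with the $C^{2+\beta}$-limit. Combined with $p_m \to 2^*_{n,\ga}-1$, this gives $\bu_m(\hep_m\,\cdot) \to h$ in $C^2_{\loc}(0,\infty)$ where
\[
h(s) = \alpha_{n,\ga}\bigl(s/(1+s^2)\bigr)^{(n-2\ga)/2}, \qquad h'(s) = \tfrac{n-2\ga}{2}\alpha_{n,\ga}\, s^{(n-2\ga)/2-1}(1+s^2)^{-(n-2\ga)/2-1}(1-s^2).
\]
Thus $h$ has a unique non-degenerate maximum at $s=1$. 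On any fixed interval $[\delta,\delta^{-1}]$ containing $1$, the implicit function theorem provides a single critical point of $\bu_m(\hep_m\,\cdot)$ near $s=1$ for large $m$, while on the inner tail $s \in (0,\delta)$ the factor $s^{2\ga/(p_m-1)-1}$ dominates and forces $\pa_s \bu_m(\hep_m s) > 0$.

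The main obstacle is the outer tail $s \in [\delta^{-1}, R_m]$, where both $|h'(s)| \sim s^{-(n-2\ga)/2-1}$ and the $C^2$-error inherited from $\tau_m \sim R_m^{-(n-2\ga)}$ decay simultaneously as $R_m \to \infty$. The calibrated choice $\tau_m = w_{1,0}(R_m)/2$ is precisely what makes the argument work: the worst-case ratio of error to signal is
$\tau_m/|h'(R_m)| \sim R_m^{-(n-2\ga)/2 + 1}$,
which tends to $0$ because the dimension assumption \eqref{eq_n} forces $n-2\ga > 2$. Hence $\pa_s \bu_m(\hep_m s) < 0$ throughout the outer tail for large $m$, giving both existence and uniqueness of a critical point of $\bu_m$ in $(0, R_m \hep_m)$. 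The \emph{in particular} clause is immediate: simplicity localizes the unique critical point of $\bu_m$ on $(0, r_3)$ inside $(0, R_m \hep_m)$, so $\bu_m'$ does not vanish on $[R_m \hep_m, r_3)$; its negativity there is pinned down by continuity together with the strict negativity of $h'$ on $(1,\infty)$.
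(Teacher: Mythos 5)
Your part (1) is essentially right and reaches the conclusion by a somewhat more direct route than the paper. The paper first uses \eqref{eq_conv} to produce a local maximum $\tiy_m$ of $\tiu_m$ with $|\tiy_m|\le R\hep_m$, transfers the isolated bound to $\tiy_m$, re-runs the proof of Lemma \ref{lemma_conv} for $\tiu_m$ to see that $\tiu_m$ has a unique critical point in $B_{\tih_m}(\tiy_m,R'\hep_m)$, and only then invokes \eqref{eq_w_m_2} (which makes $y_m$ itself a critical point of $\tiu_m$) to conclude $\tiy_m=y_m$. You instead check directly that $y_m$ is a nondegenerate local maximum: $\nabla\tiu_m(y_m)=0$ by \eqref{eq_w_m_2}, and $\nabla^2\tiu_m(y_m)=w_m^{-1}(y_m)\nabla^2u_m(y_m)+u_m(y_m)\nabla^2(w_m^{-1})(y_m)$, whose first term, of size $M_m\hep_m^{-2}$ and negative definite by the $C^{2+\beta}$ convergence in \eqref{eq_conv}, dominates the second, of size $O(M_m)$. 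This works, provided you record the uniform $C^2$ bounds on the conformal factors $w_m$ coming from Lemma \ref{lemma_rep} and the compactness of $M$; both arguments hinge on the same two inputs (\eqref{eq_w_m_2} and the $C^2$ closeness to $w_{1,0}$).

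Part (2) is where there is a genuine gap, in the outer tail. Set $K_m=2\ga/(p_m-1)$, $K=(n-2\ga)/2$ and let $v_m(s)$ be the spherical average of $\hep_m^{K_m}u_m(\hep_m\cdot)$ at radius $s$; the sign of $\bu_m'(\hep_m s)$ is that of $K_mv_m(s)+sv_m'(s)$. Estimate \eqref{eq_conv} bounds $|v_m-w_{1,0}|$ and $|v_m'-w_{1,0}'|$ by $C\tau_m$, so the error in $K_mv_m+sv_m'$ is of order $s\,\tau_m$ (equivalently, at the level of $\frac{d}{ds}[s^{K_m}v_m]$ it is of order $s^{K}\tau_m$ because of the weight), not of order $\tau_m$ as your ratio assumes. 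Since the signal satisfies $|Kw_{1,0}(s)+sw_{1,0}'(s)|\simeq Ks^{-2K}$ for $s$ large while the error with $\tau_m=w_{1,0}(R_m)/2$ is $\simeq s\,R_m^{-2K}$, the error-to-signal ratio at $s\sim R_m$ is of order $R_m\to\infty$, not $R_m^{1-K}\to0$: your bookkeeping drops the amplification by $s^{K_m}$. As written, your argument excludes critical points only for $s\lesssim R_m^{2K/(2K+1)}$ and says nothing about the outermost annulus $[cR_m^{2K/(2K+1)},R_m]$; moreover the corollary neither states nor needs the dimension restriction \eqref{eq_n} (the analogous claim is standard even when $n-2\ga\le2$), so the fact that your tail estimate ``requires'' $n-2\ga>2$ is a symptom of the miscount rather than the mechanism that saves the proof. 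The fix is cheap: in Lemma \ref{lemma_conv} the sequence $\tau_m$ is arbitrary and is fixed before the subsequence, so one may additionally impose, say, $\tau_m\le R_m^{-(n-2\ga)-2}$, after which the comparison $s\,\tau_m\ll Ks^{-2K}$ holds up to $s=R_m$ and your inner/middle/outer decomposition goes through; alternatively one must argue in the outer annulus with more than the crude triangle inequality (e.g., via the Harnack inequality of Lemma \ref{lemma_har}). The paper itself offers no written proof of (2) (it is declared plain), but the quantitative step you propose would fail as stated.
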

\begin{proof}
Choose numbers $l > 0$ and $L > 0$ such that $l \le w_m^{-1} = \tiu_m/u_m \le L$ on $M$ for all $m \in \mn$.
If we use the normal coordinate on $M$ at $y_m$, it follows from \eqref{eq_conv} that
\begin{equation}\label{eq_tiu_m}
l M_m \(w_{1,0}(\hep_m^{-1} \cdot) - \tau_m\) \le \alpha_{n,\ga}\, \tiu_m \le L M_m \(w_{1,0}(\hep_m^{-1} \cdot) + \tau_m\) \quad \text{in } \overline{B^n(0, R_m \hep_m)}.
\end{equation}
Hence there exists a sufficiently large number $R > 0$ independent of $m \in \mn$ such that $R < R_m$ and $\tiu_m(\cdot) \le \tiu_m(0)/2$ on $\pa B^n(0, R \hep_m)$ for each large $m \in \mn$,
from which we infer that $\tiu_m$ has a local maximum $\tiy_m$ on $M$ satisfying $d_{\bh_m}(y_m, \tiy_m) = |\tiy_m| \le R \hep_m$.
Now it is easy to check that $\tiy_m \to y_0$ is a blow-up point of $\{\wtu_m\}_{m \in \mn}$.
Furthermore, since \eqref{eq_tiu_m} implies the existence of a constant $C > 0$ depending on $R > 0$ such that
\[\tiu_m \le C M_m \le C (|y| + |\tiy_m|)^{-{2\ga \over p_m-1}} \le C |y-\tiy_m|^{-{2\ga \over p_m-1}} \quad \text{if } |y| \le R \hep_m,\]
one finds
\[\tiu_m(y) \le C d_{\tih_m}(y, \tiy_m)^{-{2\ga \over p_m-1}} \quad \text{for any } y \in M \setminus \{\tiy_m\},\ d_{\tih_m}(y, \tiy_m) < r_2\]
where the magnitude of $r_2 > 0$ may be reduced if necessary.
As a result, an application of the proof of Lemma \ref{lemma_conv} to $\tiu_m$ shows that
for $R' \gg R$ large enough, $\tiu_m$ is $C^2(\overline{B_{\tih_m}(\tiy_m, R' \hep_m)})$-close to a suitable rescaling of the standard bubble $w_{1,0}$
so that it has the unique critical point on $B_{\tih_m}(\tiy_m, R' \hep_m)$, i.e., the local maximum $\tiy_m$.
However, by \eqref{eq_w_m_2}, $y_m \in B_{\tih_m}(\tiy_m, R' \hep_m)$ is already a critical point of $\tiu_m$, and so it should be equal to $\tiy_m$.
This completes the proof of (1). The verification of (2) is plain.
\end{proof}

\subsection{Isolated simple blow-up points}
Let $y_m \to y_0 \in M$ be an isolated simple blow-up point of $\{U_m\}_{m \in \mn}$.
By Corollary \ref{cor_i_blow} (1), $y_m \to y_0$ is an isolated blow-up point of $\{\wtu_m\}_{m \in \mn}$.
The objective of this subsection is to show that the behavior of each $\wtu_m$ in the geodesic half-ball $B_{\tig_m}(y_m, r) \cap \ox$ can be controlled whenever $r > 0$ is chosen to be sufficiently small.
We will use $\tig_m$-Fermi coordinate system centered at $y_m$, so $B_{\tig_m}(y_m, r) \cap X$ is identified with $B^N_+(0,r) \subset \mr^N_+$.

\begin{prop}\label{prop_iso}
Assume $n > 2 + 2\ga$ and $y_m \to y_0 \in M$ is an isolated simple blow-up point of a sequence of solutions $\{U_m\}_{m \in \mn}$ to \eqref{eq_Yamabe_3}.
Then one can choose $C > 0$ large and $r_4 \in (0, \min\{r_3, R_0\}]$ small (refer to Definition \ref{def_blow} (3) and Proposition \ref{prop_loc_G}), independent of $m \in \mn$, such that
\begin{equation}\label{eq_iso}
\begin{cases}
M_m \left| \nabla_{\bx}^{\ell} \wtu_m(x) \right| \le C|x|^{-(n-2\ga+\ell)} \ (\ell = 0, 1, 2),\\
M_m \left| x_N^{1-2\ga} \pa_N \wtu_m(x) \right| \le C|x|^{-n}
\end{cases}
\quad \text{for } 0 < |x| \le r_4
\end{equation}
where $M_m = u_m(y_m)$ and $\wtu_m$ is the function constructed in Subsection \ref{subsec_blow}.
\end{prop}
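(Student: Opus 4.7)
The plan is to divide $|x| \in (0, r_4]$ into the \emph{bubble region} $|x| \le R_m \hep_m$ and the \emph{intermediate region} $R_m \hep_m \le |x| \le r_4$. The bubble region is already contained in Lemma \ref{lemma_conv}, which, together with the sharp decay \eqref{eq_W_dec} of $W_{1,0}$ and the identity $\hep_m^{n-2\ga} M_m^{2-o(1)} = \alpha_{n,\ga}^{n-2\ga+o(1)}$ in the critical limit, directly yields \eqref{eq_iso}. The essential task is therefore to prove the pointwise $L^{\infty}$ bound
\begin{equation}\label{eq_iso_Linfty_plan}
M_m \wtu_m(x) \le C |x|^{-(n-2\ga)} \quad \text{for } R_m \hep_m \le |x| \le r_4,
\end{equation}
after which dyadic rescaling combined with the weighted Schauder theory from Appendix \ref{sec_reg} upgrades it to the full derivative estimates.

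I would prove \eqref{eq_iso_Linfty_plan} by contradiction. Suppose there is a sequence $x_m$ with $\bar{r}_m := |x_m| \in [R_m \hep_m, r_4]$ and $\Lambda_m := M_m \bar{r}_m^{n-2\ga} \wtu_m(x_m) \to \infty$. In $\tig_m$-Fermi coordinates rescale to
\[
\Psi_m(y) := \frac{\wtu_m(\bar{r}_m y)}{\wtu^*_m(\bar{r}_m)}, \qquad \wtu^*_m(r) := \max_{|z|=r} \wtu_m(z), \qquad y \in B^N_+(0, r_4/\bar{r}_m) \setminus \{0\}.
\]
Lemma \ref{lemma_har} makes $\Psi_m$ locally uniformly bounded on $\mr^N_+ \setminus \{0\}$. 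In the equation for $\Psi_m$ derived from \eqref{eq_Yamabe_30} the curvature term acquires a factor $\bar{r}_m^2 E_{\tig_m}(\tirh_m) = o(1)$ by \eqref{eq_E_exp} and \eqref{eq_hyp}, while the boundary nonlinearity carries the prefactor $\wtu^*_m(\bar{r}_m)^{p_m-1} \bar{r}_m^{2\ga}$; combining Corollary \ref{cor_i_blow}(2), the monotonicity of $\bu_m$ on $[R_m \hep_m, r_3)$ and $\bar{r}_m/\hep_m \to \infty$, this prefactor vanishes in the limit on compact subsets of $\mr^n \setminus \{0\}$. Passing to a subsequence, $\Psi_m \to \Psi_\infty$ locally uniformly on $\mr^N_+ \setminus \{0\}$ with $\Psi_\infty \ge 0$, $\Psi_\infty \not\equiv 0$, and
\[
-\mathrm{div}(y_N^{1-2\ga} \nabla \Psi_\infty) = 0 \text{ in } \mr^N_+, \qquad \pa^{\ga}_\nu \Psi_\infty = 0 \text{ on } \mr^n \setminus \{0\}.
\]

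The fractional B\^ocher theorem of Appendix \ref{subsec_Bocher} then classifies $\Psi_\infty$ as $a|y|^{-(n-2\ga)} + b$ with $a, b \ge 0$. Passing the monotonicity $\bu_m' < 0$ to the limit makes $r \mapsto r^{(n-2\ga)/2} \overline{\Psi}_\infty(r)$ non-increasing on $(0,\infty)$, and a direct inspection of the two-parameter family forces $b = 0$, so $\Psi_\infty(y) = a|y|^{-(n-2\ga)}$ with $a > 0$. Matching the limit with the bubble profile at the transition scale $|y| \sim \hep_m/\bar{r}_m$ via Lemma \ref{lemma_conv} pins down the asymptotic behavior of $\wtu^*_m(\bar{r}_m)$, which produces a contradiction with $M_m \bar{r}_m^{n-2\ga} \wtu^*_m(\bar{r}_m) \ge \Lambda_m \to \infty$. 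This establishes \eqref{eq_iso_Linfty_plan}. For the derivative bounds, for each $x_0$ with $|x_0| = \eta \in (0, r_4]$ the rescaled functions $\Phi_m(y) := \wtu_m(x_0 + \eta y/4)/\wtu^*_m(\eta)$ are uniformly bounded on $B^N_+(0,1)$ and satisfy an equation with a bounded boundary nonlinearity; the weighted H\"older and Schauder estimates \eqref{eq_har_2}, \eqref{eq_reg_2}, \eqref{eq_Sch} from Appendix \ref{sec_reg} give uniform $C^{2+\beta}$ control of $\Phi_m$ in $\bar{y}$ and $C^\beta$ control of $y_N^{1-2\ga} \pa_N \Phi_m$, which on scaling back produce the derivative estimates in \eqref{eq_iso}.

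The main obstacle is the limit analysis in the contradiction step: selecting the correct normalization $\wtu^*_m(\bar{r}_m)$ so that the rescaled nonlinearity vanishes uniformly on compact subsets of $\mr^n \setminus \{0\}$ in the limit, and combining the B\^ocher profile with the limiting monotonicity and the bubble matching at the transition scale to force the contradiction. The \emph{simplicity} of the isolated simple blow-up point (via the monotonicity $\bu_m' < 0$) is indispensable here, and the dimension restriction $n > 2+2\ga$ enters through the B\^ocher classification and the integrability needed to justify passing to the limit in this degenerate-elliptic setting.
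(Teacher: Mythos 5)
Your plan bypasses rather than resolves the two places where the real work of this proposition lies. First, nothing in your argument controls $M_m^{\delta_m}$ in the subcritical regime, and the estimate \eqref{eq_iso} hinges on exactly this quantity. At the bubble scale $|x|\sim\hep_m$, Lemma \ref{lemma_conv} gives $M_m\,\wtu_m(x)\,|x|^{n-2\ga}\approx c\,M_m^{2}\hep_m^{n-2\ga}=c'\,M_m^{\delta_m(n-2\ga)/(2\ga)}$, so the ``identity'' $\hep_m^{n-2\ga}M_m^{2-o(1)}=\alpha_{n,\ga}^{n-2\ga+o(1)}$ you invoke does not yield \eqref{eq_iso} unless one already knows $M_m^{\delta_m}=O(1)$; $\delta_m\to0$ alone does not imply this (take $M_m=e^{1/\delta_m^2}$). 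The paper needs a separate Pohozaev-based step (Step 4, $\delta_m=O(M_m^{-2/(n-2\ga)+o(1)})$ via Lemma \ref{lemma_poho}), which your proposal never produces. The same quantity reappears in your contradiction step: even granting the ``matching at the transition scale'', it only gives $M_m\bar{r}_m^{\,n-2\ga}\wtu_m^*(\bar{r}_m)\approx c\,M_m^2\hep_m^{n-2\ga}=c'M_m^{\delta_m(n-2\ga)/(2\ga)}$, which contradicts $\Lambda_m\to\infty$ only if $M_m^{\delta_m}$ is bounded. Moreover the matching itself is unjustified: the convergence $\Psi_m\to\Psi_\infty$ is only locally uniform on compact subsets of $\overline{\mr^N_+}\setminus\{0\}$, while the transition scale $|y|\sim\hep_m/\bar{r}_m\to0$ escapes every such set; uniform control there is essentially the estimate being proved, so the step is circular.

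Second, the regime $\bar{r}_m\not\to0$ (equivalently the fixed-scale bound $\max_{|x|=r_4}M_m\wtu_m\le C$, Step 5 of the paper) is the heart of the matter, and your monotonicity-plus-B\^ocher argument cannot close it. There the rescaled limit lives on a fixed half-ball and equals $c_1G+E$ with $E$ a regular solution; the inherited monotonicity of $r^{(n-2\ga)/2}\bar{\Psi}_\infty(r)$ holds only on a bounded range of radii and does not force the regular part to vanish (indeed $a\,r^{-(n-2\ga)}+b$ with $0<b\le a$ is admissible for $r\le1$). Ruling out the regular part is done via the local Pohozaev identity and requires a sign on the bulk term, i.e. nonnegativity of the potential coming from $E_{\bg_m}$; since this sign is not available in general, the paper constructs the conformal change $\chh_m=e^{2f_m}\bh_m$ and extends $f_m$ by solving the first-order equation \eqref{eq_first_1} along characteristics, which is where hypothesis \eqref{eq_hyp} enters in an essential way. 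None of this appears in your proposal. Finally, your closing regularity paragraph understates a flagged difficulty: since $p_m<2$ for $n>6\ga$, the Schauder step \eqref{eq_U_m_est_24} involves $\mcu_R^{p_m-2}\nabla_{\bx}\mcu_R$ and requires the lower bound \eqref{eq_iso_2}, obtained from the Green's function of Proposition \ref{prop_loc_G}, which your argument never establishes.
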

\begin{proof}[Proof of Proposition \ref{prop_iso}]
Its proof consists of six steps. Let $o(1)$ denote any sequence tending to 0 as $m \to \infty$.

\medskip \noindent \textsc{Step 1 (Rough upper decay estimate of $\{\wtu_m\}_{m \in \mn}$).}
For any fixed sufficiently small number $\eta > 0$, set $\lambda_m = (n-2\ga-\eta)(p_m-1)/(2\ga)-1$.
We shall show that there exist a number $r_4' = r_4'(\eta) \in (0, r_3]$ and a large constant $C > 0$ independent of $m \in \mn$ such that
\begin{equation}\label{eq_U_m_est}
M_m^{\lambda_m} \wtu_m(x) \le C |x|^{-(n-2\ga)+\eta}
\quad \text{in } \Gamma_2 = B^N_+(0,r_4') \setminus B^N_+(0, R_m \hep_m).
\end{equation}
It can be proved as in \cite[Lemma 2.7]{FO} and \cite[Lemma 4.6]{JLX}.
However, since this is one of the places where hypothesis \eqref{eq_hyp} is used, we sketch the proof.

Because $\wtu_m \le C U_m$ in $B^N_+(0,r_3)$ for some $C > 0$, it suffices to verify that
\begin{equation}\label{eq_U_m_est_3}
M_m^{\lambda_m} U_m(x) \le C |x|^{-(n-2\ga)+\eta} \quad \text{in } \Gamma_2.
\end{equation}
Thanks to Lemma \ref{lemma_har} and Corollary \ref{cor_i_blow} (2), it turns out that
\begin{equation}\label{eq_U_m^p_m}
U_m^{p_m-1}(x) \le C R_m^{-2\ga+o(1)} |x|^{-2\ga} \quad \text{in } B^N_+(0,r_3) \setminus B^N_+(0, R_m \hep_m).
\end{equation}
Let
\[\begin{cases}
\mfl_m(U) = -\text{div}_{\bg_m} (x_N^{1-2\ga}\nabla U) + E_{\bg_m}(x_N) U &\text{in } B^N_+(0,r_3),\\
\mfb_m(U) = \pa^{\ga}_{\nu} U - f_m^{-\delta_m} u_m^{p_m-1} u &\text{on } B^n(0,r_3)
\end{cases}\]
where $u = U$ on $B^n(0,r_3)$.
By \eqref{eq_Yamabe_3}, it clearly holds that $U_m > 0$, $\mfl_m(U_m) = 0$ in $B^N_+(0,r_3)$ and $\mfb_m(U_m) = 0$ in $B^n(0,r_3)$.

Assume that $0 \le \mu \le n-2\ga$. Then one can calculate
\begin{equation}\label{eq_mfl_1}
\mfl_m(|x|^{-\mu}) = x_N^{1-2\ga} \(\mu(n-2\ga-\mu) + O(|x|)\) |x|^{-(\mu+2)}.
\end{equation}
Moreover, \cite[Lemma 2.3]{KMW2} tells us that \eqref{eq_hyp} is a sufficient condition to ensure $H = 0$ on $M$.
Hence we have $\pa_N \sqrt{|\bg_m|} = O(x_N)$ by Lemma \ref{lemma_metric} and
\begin{align*}
&\ \text{div}_{\bg_m} (x_N^{1-2\ga}\nabla (x_N^{2\ga} |x|^{-(\mu+2\ga)})) - \text{div} (x_N^{1-2\ga}\nabla (x_N^{2\ga} |x|^{-(\mu+2\ga)})) \\
&= x_N^{1-2\ga} \left[O(|x|)\, x_N^{2\ga}\, \pa_{ij} |x|^{-(\mu+2\ga)} + O(|x|)\, x_N^{2\ga}\, \pa_i |x|^{-(\mu+2\ga)} + O(x_N) (\pa_N x_N^{2\ga}) |x|^{-(\mu+2\ga)} \right] \\
&= x_N^{1-2\ga} \left[ O(|x|)\, x_N^{2\ga}\, |x|^{-(\mu+2\ga+2)} \right]
\end{align*}
(cf. \eqref{eq_Q_m}). This implies that
\begin{equation}\label{eq_mfl_2}
\mfl_m(x_N^{2\ga} |x|^{-(\mu+2\ga)}) = x_N^{1-2\ga} \((\mu+2\ga)(n-\mu) + O(|x|)\) |x|^{-(\mu+2)} \({x_N \over |x|}\)^{2\ga}.
\end{equation}
From \eqref{eq_mfl_1}, \eqref{eq_mfl_2} and the computation
\begin{align*}
\mfb_m(|x|^{-\mu} - \zeta x_N^{2\ga} |x|^{-(\mu+2\ga)})
&= |x|^{-(\mu+2\ga)} \left[ 2\ga\kappa_{\ga} \zeta + f_m^{-\delta_m} u_m^{p_m-1} (\zeta x_N^{2\ga} - |x|^{2\ga}) \right] \\
&= |x|^{-(\mu+2\ga)} \left[ 2\ga\kappa_{\ga} \zeta + O(R_m^{-2\ga+o(1)}) \right] \quad (\text{by \eqref{eq_U_m^p_m}})
\end{align*}
in $B^N_+(0,r_3) \setminus B^N_+(0, R_m \hep_m)$ for a fixed $\zeta \in \mr$, we observe that the function
\[\Phi_{1m}(x) = L_m (|x|^{-\eta} - \zeta x_N^{2\ga} |x|^{-(\eta+2\ga)}) + L_0 M_m^{-\lambda_m} (|x|^{-(n-2\ga)+\eta} - \zeta x_N^{2\ga} |x|^{-n+\eta}),\]
with a suitable choice of $L_0,\, L_m > 0$ large and $\zeta,\, \eta > 0$ small, satisfies
\[\mfl_m(\Phi_{1m}) \ge 0 \quad \text{in } \Gamma_2, \quad \mfb_m(\Phi_{1m}) \ge 0 \quad \text{on } \pa \Gamma_2 \cap \mr^n
\quad \text{and} \quad U_m \le \Phi_{1m} \quad \text{on } \pa \Gamma_2 \setminus \mr^n.\]
Consequently, the generalized maximum principle (Lemma \ref{lemma_max}) leads us to conclude $U_m \le \Phi_{1m}$ in $\Gamma_2$.
From this and the assumption that $y_m \to y_0$ is an isolated simple blow-up point of $\{U_m\}_{m \in \mn}$, we infer that \eqref{eq_U_m_est_3} or \eqref{eq_U_m_est} holds.

\medskip \noindent \textsc{Step 2 (Lower decay estimate of $\{\wtu_m\}_{m \in \mn}$).}
We claim that there is a large constant $C > 0$ such that
\begin{equation}\label{eq_iso_2}
M_m \wtu_m(x) \ge C^{-1} |x|^{-(n-2\ga)} \quad \text{in } \Gamma_2
\end{equation}
where the magnitude of $r_4'$ is reduced if necessary.

Let $G_m$ be the Green's function that solves \eqref{eq_Green} provided $\bg = \bg_m$ and $B_R = B^N_+(0, r_4')$.
By \eqref{eq_Yamabe_3}, \eqref{eq_conv} and \eqref{eq_Green_dec}, we find that $U = M_m U_m - C^{-1}G_m$ solves
\[\begin{cases}
-\text{div}_{\bg_m} (x_N^{1-2\ga}\nabla U) + x_N^{1-2\ga} A_m U = 0 &\text{in } \Gamma_2,\\
\pa^{\ga}_{\nu} U = M_m f_m^{-\delta_m} u_m^{p_m} \ge 0 &\text{on } \pa \Gamma_2 \cap \mr^n,\\
U \ge 0 &\text{on } \pa \Gamma_2 \setminus \mr^n
\end{cases}\]
provided that $C > 0$ is large enough.
Hence the weak maximum principle depicted in Remark \ref{rmk_max_p} shows that $U \ge 0$ in $\Gamma_2$.
Inequality \eqref{eq_iso_2} now follows from the inequality $U_m \le C \wtu_m$ in $B^N_+(0,r_4')$ and \eqref{eq_Green_dec}.

\medskip \noindent \textsc{Step 3 (Rough upper decay estimate of derivatives of $\{\wtu_m\}_{m \in \mn}$).} We assert
\begin{equation}\label{eq_U_m_est_2}
\begin{cases}
M_m^{\lambda_m} \left| \nabla_{\bx}^{\ell} \wtu_m(x) \right| \le C \big( 1+\hep_m^{o(1)} \big) |x|^{-(n-2\ga+\ell)+\eta} \ (\ell = 1, 2),\\
M_m^{\lambda_m} \left| x_N^{1-2\ga} \pa_N \wtu_m(x) \right| \le C \big( 1+\hep_m^{o(1)} \big) |x|^{-n+\eta}
\end{cases}
\quad \text{in } \Gamma_2.
\end{equation}

We apply the standard rescaling argument depicted in, e.g., the proof of \cite[Lemma 2.6]{FO2}.
Given any $m \in \mn$ and $R \in [2R_m \hep_m, r_4'/2]$, set
\[\mcu_R(x) = M_m^{\lambda_m} R^{n-2\ga-\eta} \wtu_m(Rx) \quad \text{in } \Gamma_3 = B^N_+(0,2) \setminus B^N_+\(0,\frac{1}{2}\),\]
which solves
\begin{equation}\label{eq_U_m_est_21}
\begin{cases}
-\text{div}_{\bg_m(R\cdot)} (x_N^{1-2\ga} \nabla \mcu_R) + E_{\bg_m(R\cdot)}(x_N)\, \mcu_R = 0 &\text{in } \Gamma_3,\\
\pa^{\ga}_{\nu} \mcu_R = \(\dfrac{\hep_m}{R}\)^{2\ga\lambda_m} \tif_m^{-\delta_m}(R\cdot)\, \mcu_R^{p_m} &\text{on } \pa \Gamma_3' = \pa \Gamma_3 \cap \mr^n.
\end{cases}
\end{equation}
Inequalities \eqref{eq_U_m_est_2} will be valid if there exists $C > 0$ independent of $m$ and $R$ such that
\begin{equation}\label{eq_U_m_est_22}
\sum_{\ell=1}^2 \left| \nabla_{\bx}^{\ell} \mcu_R(x) \right| + \left| x_N^{1-2\ga} \pa_N \mcu_R(x) \right| \le C \big( 1+\hep_m^{o(1)} \big) \quad \text{for } |x| = 1.
\end{equation}
Because of relatively poor regularity property of degenerate elliptic equations,
especially when $\ga \in (0,1)$ is small, derivation of \eqref{eq_U_m_est_22} is rather technical.
In particular, as we will see shortly, it requires the lower estimate \eqref{eq_iso_2} of $M_m^{\lambda_m} \wtu_m$ in contrast to the local case $\ga = 1$.

In light of \eqref{eq_U_m_est}, it holds that $\mcu_R(x) \le C$ in $\Gamma_3$.
Applying the H\"older estimate \eqref{eq_DNM}, a bootstrap argument with the Schauder estimate \eqref{eq_Sch},
and the derivative estimate \eqref{eq_reg_2} to \eqref{eq_U_m_est_21}, we obtain
\begin{equation}\label{eq_U_m_est_23}
\|\nabla_{\bx} \mcu_R\|_{C^0(K)} \le C \left[ 1 + \(\dfrac{\hep_m}{R}\)^{2\ga\lambda_m} \right] \le C
\end{equation}
for any proper compact subset $K$ of $\Gamma_3 \cup \pa \Gamma_3'$.
Furthermore, \eqref{eq_iso_2} yields
\[\(\dfrac{\hep_m}{R}\)^{2\ga\lambda_m} \mcu_R^{p_m-2}(x) \le
\begin{cases}
C &\text{for } n < 6\ga,\\
\(\dfrac{\hep_m}{R}\)^{2\ga\lambda_m-\eta(2-p_m)} \hep_m^{o(1)} \le C \hep_m^{o(1)} &\text{for } n \ge 6\ga
\end{cases}\]
on $\pa \Gamma_3'$.
This together with \eqref{eq_Sch} and \eqref{eq_U_m_est_23} 
gives that
\begin{equation}\label{eq_U_m_est_24}
\begin{aligned}
\|\mcu_R\|_{C^{\beta'}(\wtk')} &\le C \left[ 1 + \(\dfrac{\hep_m}{R}\)^{2\ga\lambda_m}
\(1 + \|\mcu_R\|_{C^{\beta}(\wtk)} + \left\| \mcu_R^{p_m-2} \nabla_{\bx} \mcu_R \right\|_{C^0(\wtk)} \) \right] \\
&\le C \big( 1+\hep_m^{o(1)} \big)
\end{aligned}
\end{equation}
for all proper compact subsets $\wtk' \subsetneq \wtk$ of $\Gamma_3'$ and exponents $1 \le \beta < \beta' \le \min\{2,\beta+2\ga\}$.
The desired inequality \eqref{eq_U_m_est_22} is now derived from \eqref{eq_U_m_est_23}, \eqref{eq_U_m_est_24}, \eqref{eq_reg_2} with $\ell_0 = 2$ and \eqref{eq_reg_3}.

\medskip \noindent \textsc{Step 4 (Estimate of $\delta_m$).}
For $\delta_m = (2_{n,\ga}^*-1) - p_m \ge 0$, it holds that
\begin{equation}\label{eq_delta_m}
\delta_m = O\(M_m^{-{2 \over n-2\ga} + o(1)}\) \quad \text{and} \quad M_m^{\delta_m} \to 1 \quad \text{as } m \to \infty.
\end{equation}
The proof makes use of Pohozaev's identity in Lemma \ref{lemma_poho} and is analogous to that in \cite[Lemma 4.8]{JLX}. Hence we omit it.

\medskip \noindent \textsc{Step 5 (Estimate of $\{\wtu_m\}_{m \in \mn}$ on \{$|x| = r_4$\}).}
We demonstrate
\begin{equation}\label{eq_M_m_U_m}
\max_{|x| = r_4} M_m \wtu_m(x) \le C(r_4)
\end{equation}
for any sufficiently small $r_4 \in (0, r_4']$.

Suppose that it does not hold.
Then there exists a sequence $\{z_m\}_{m \in \mn}$ of points on the half-sphere $\{x \in \mr^N_+: |x| = r_4\}$ such that
\[M_m U_m(z_m) \to \infty \quad \text{as } m \to \infty.\]
Let us write $E_{\bg_m}(x_N) = x_N^{1-2\ga} A_m$ so that $\{A_m\}_{m \in \mn}$ is a family of functions whose $C^2$-norm is uniformly bounded.
We divide the cases according to the sign of $A_m$.

\medskip \noindent {\sc Case 1.} Suppose that $A_m \ge 0$ in $B^N_+(0, r_4')$ for all $m \in \mn$.

In this case, one can argue as in \cite[Proposition 4.5]{Ma} 
or \cite[Proposition 4.3]{Al} 
to reach a contradiction. The proof is omitted.

\medskip \noindent {\sc Case 2.} Assume the condition that $A_m \ge 0$ in $B^N_+(0, r_4')$ is violated for some $m \in \mn$.

To handle this situation, we will recover positivity of $A_m$ by employing conformal change of the metric $\bg_m$ on $M$.
Owing to \eqref{eq_hyp} (or the condition $H = 0$ on $M$), \eqref{eq_E_exp} and Lemma \ref{lemma_metric}, it holds that
\begin{align*}
x_N^{1-2\ga} A_m = E_{\bg_m}(x_N)
&\ge \({n-2\ga \over 2}\) x_N^{1-2\ga} \left[ 1 - 2r_4'\left\| \nabla \sqrt{|\bg_m|} \right\|_{L^{\infty}(B^N_+(0,r_4'))} \right] \\
&\ \times \left[\|\pi_m(0)\|^2 + R_{NN}[\bg_m](0) - r_4'\left\|\nabla \(x_N^{-1} \pa_N \sqrt{|\bg_m|}\) \right\|_{L^{\infty}(B^N_+(0,r_4'))} \right]
\end{align*}
in $B^N_+(0,r_4')$, where $\pi_m$ is the second fundamental form of $(M, \bh_m) \subset (\ox, \bg_m)$.
Also, inspecting the proof of \cite[Lemma 2.4]{KMW2}, we see
\[\|\pi_m(0)\|^2 + R_{NN}[\bg_m](0) = {1 \over 2(n-1)} \(R\,[\bh_m](0) - \|\pi_m(0)\|^2\).\]
We want to find a representative $\chh_m$ of the conformal class $[\bh_m]$ and a small number $r_4'' \in (0, r_4']$ such that
\[E_{\chg_m}(x_N) = x_N^{1-2\ga} \check{A}_m \ge 0 \quad \text{in } B^N_+(0, r_4'')\]
for all $m \in \mn$, where $\chg_m$ is the metric on $\ox$ defined via the geodesic boundary defining function associated to $\chh_m$.
To this end, it suffices to confirm that given a fixed small number $\vep > 0$,
\begin{equation}\label{eq_ver_1}
R\,[\chh_m](0) - \|\chpi_m(0)\|^2 \ge {1 \over \vep}
\end{equation}
and
\begin{equation}\label{eq_ver_2}
r_4'' \left\| \nabla \sqrt{|\chg_m|} \right\|_{L^{\infty}(B^N_+(0,r_4''))} \le \vep, \quad
r_4'' \left\| \nabla \(x_N^{-1} \pa_N \sqrt{|\chg_m|}\) \right\|_{L^{\infty}(B^N_+(0,r_4''))} \le {1 \over 2\vep}.
\end{equation}
Here $\chpi_m$ is the second fundamental form of $(M, \chh_m) \subset (\ox, \chg_m)$.

Set $f_m(\bx) = - K |\bx|^2$ in $B^n(0,r_4')$ for some large $K > 0$ and then extend it to $M$ suitably so that $f_m \in C^{\infty}(M)$.
If we let $\chh_m = e^{2f_m} \bh_m$, then the transformation law of the scalar curvature and the umbilic tensor under a conformal change (see (1.1) of \cite{Es} and (2.2) of \cite{KMW2}) gives
\begin{align*}
&\ R\,[\chh_m](0) - \|\chpi_m(0)\|^2 \\
&= e^{-2f_m(0)} \(R\,[\bh_m] - \|\pi_m\|^2 - 2(n-1) \Delta_{\bx} f_m - (n-1)(n-2) |\nabla_{\bx} f_m|^2\)(0) \\
&\ge 4n(n-1)K - \sup_{m \in \mn} \( \left| R\,[\bh_m] \right| + \|\pi_m\|^2\)(0) \ge 2n(n-1)K > 0,
\end{align*}
which establishes \eqref{eq_ver_1}.

Verifying \eqref{eq_ver_2} requires a little more work. We extend the function $f_m$ on $M$ to its collar neighborhood $M \times [0,r_4')$
by solving a first-order partial differential equation
\[\langle df_m, d\rho_m \rangle_{\bg_m} + \frac{\rho_m}{2} |df_m|_{\bg_m}^2 = 0 \quad \text{on } M \times [0,r_4').\]
Since it is non-characteristic, a solution exists and is unique provided $r_4'$ small. Locally, it is written as
\begin{equation}\label{eq_first_1}
{\pa f_m \over \pa x_N} + \frac{x_N}{2} \left[\bg_m^{ij} {\pa f_m \over \pa x_i} {\pa f_m \over \pa x_j} + \({\pa f_m \over \pa x_N}\)^2 \right] = 0 \quad \text{in } B^N_+(0,r_4').
\end{equation}
We easily see that $\chg_m = e^{2f_m} \bg_m$ on $M \times [0,r_4')$.
Hence, by the assumption $H = 0$ on $M$ and \eqref{eq_first_1}, it is sufficient to find a small number $r_4'' = r_4''(K) > 0$ such that
\[\|\nabla f_m\|_{L^{\infty}(B^N_+(0,r_4''))} \le \vep \quad \text{and} \quad \left\| \nabla^2 f_m \right\|_{L^{\infty}(B^N_+(0,r_4''))} \le {C \over \vep} \quad \text{for all } m \in \mn\]
so as to ensure the validity of \eqref{eq_ver_2}.
Given an arbitrary point $\bx \in B^n(0,2r_4'')$, the characteristic equation of \eqref{eq_first_1} is the system of $2N+1$ ordinary differential equations of functions
\[\mbp = \mbp(s; \bx) = (p_1, \cdots, p_N)(s; \bx), \quad z = z(s; \bx) \quad \text{and} \quad \mbx = \mbx(s; \bx) = (x_1, \cdots, x_N)(s; \bx)\]
defined as
\[\begin{cases}
\dot{\mbp} = - \(\dfrac{x_N}{2} \pa_1 \bg_m^{ij}(\mbx) p_ip_j, \cdots, \dfrac{x_N}{2} \pa_n \bg_m^{ij}(\mbx) p_ip_j, \dfrac{1}{2} \(\bg_m^{ij}(\mbx) p_i p_j + p_N^2\)\),\\
\dot{z} = x_N \bg_m^{ij}(\mbx) p_i p_j + p_N (1 + x_Np_N),\\
\dot{\mbx} = \(x_N \bg_m^{1i}(\mbx) p_i, \cdots, x_N \bg_m^{ni}(\mbx) p_i, 1 + x_Np_N\),\\
\mbp(0; \bx) = (-2K\bx, 0),\, z(0; \bx) = -K|\bx|^2,\, \mbx(0; \bx) = (\bx, 0).
\end{cases}\]
Here the dot notation stands for the differentiation with respect to $s$ and the domain of the functions $(\mbp,z,\mbx)(\cdot; \bx)$ is assumed to be the interval $[0,2r_4'')$.
Then asymptotic analysis on the system indicates
\[\|\nabla f_m\|_{L^{\infty}(B^N_+(0,r_4''))} \le \sup_{\bx \in B^n(0,2r_4'')} \|\mbp(\cdot; \bx)\|_{L^{\infty}([0,2r_4''))} \le 5K^{-1}\]
and
\[\left\| \nabla^2 f_m \right\|_{L^{\infty}(B^N_+(0,r_4''))} \le 2 \sup_{\bx \in B^n(0,2r_4'')} \(\|\nabla_{\bx} \mbp(\cdot; \bx)\|_{L^{\infty}([0,2r_4''))} + \|\dot{\mbp}(\cdot; \bx)\|_{L^{\infty}([0,2r_4''))} \) \le 5K\]
for any fixed $r_4'' \in (0, K^{-2})$, thereby establishing the desired inequalities.

\medskip
Now, with the fact that $x_N^{1-2\ga} \check{A}_m \ge 0$ in $B^N_+(0, r_4'')$,
we may consider the family $\{\check{U}_m\}_{m \in \mn}$ of solutions to \eqref{eq_Yamabe_3} in which the tildes are replaced with the checks.
Notice that since $H = 0$ is an intrinsic condition that comes from \eqref{eq_hyp},
the new metrics $\chg_m$ on $\ox$ still satisfy necessary conditions for the regularity results in Appendices \ref{sec_reg} and \ref{subsec_Bocher}.
Hence our situation is reduced to Case 1 and we get the same contradiction.

\medskip \noindent \textsc{Step 6 (Completion of the proof).}
Finally, reasoning as in \cite[Proposition 4.5]{Ma} or \cite[Proposition 4.3]{Al} with estimates \eqref{eq_delta_m} and \eqref{eq_M_m_U_m},
we get the desired inequality $M_m \wtu_m (x) \le C |x|^{-(n-2\ga)}$ in $B^N_+(0,r_4) \setminus \{0\}$.
The other estimates in \eqref{eq_iso} are established as in Step 3.
This finishes the proof.
\end{proof}

\section{Linear theory and refined blow-up analysis}\label{sec_lin}
\subsection{Linear theory}
Let $\chi: [0,\infty) \to [0,1]$ be a smooth function such that $\chi(t) = 1$ on $[0,1]$ and 0 in $[2, \infty)$.
Set also $\chi_{\ep}(t) = \chi(\ep t)$ for any $\ep > 0$.
\begin{prop}\label{prop_lin}
Let $\{\pi_m\}_{m \in \mn}$ and $\{\hep_m\}_{m \in \mn}$ be sequences of symmetric 2-tensors (that is, $n \times n$-matrices) and small positive numbers
such that the averaged trace $H_m = \textnormal{tr}(\pi_m)/n$ of $\pi_m$ is 0 for each $m \in \mn$ and $\hep_m \to 0$ as $m \to \infty$.
Also, $n > 2 + 2\ga$, $\chi_m = \chi_{\hep_m}$, and $W_{1,0}$ and $w_{1,0}$ are the standard bubbles appearing in \eqref{eq_bubble} and \eqref{eq_bubble_2}, respectively.
Then there exists a solution $\Psi_m \in D^{1,2}\mrg$ to the linear equation
\begin{equation}\label{eq_lin}
\begin{cases}
-\textnormal{div} (x_N^{1-2\ga} \nabla \Psi_m) = x_N^{1-2\ga} \(2 \hep_m x_N \chi_m(|x|) (\pi_m)_{ij}\, \pa_{ij} W_{1,0}\) &\text{in } \mr^N_+,\\
\pa^{\ga}_{\nu} \Psi_m = \({n+2\ga \over n-2\ga}\) w_{1,0}^{4\ga \over n-2\ga} \Psi_m &\text{on } \mr^n
\end{cases}
\end{equation}
such that
\begin{equation}\label{eq_lin_1}
\left| \nabla_{\bx}^{\ell} \Psi_m(x) \right| \le {C \hep_m |\pi_m|_{\infty} \over 1+|x|^{n-2\ga-1+\ell}},
\quad \left| x_N^{1-2\ga} \pa_N \Psi_m(x) \right| \le {C \hep_m |\pi_m|_{\infty} \over 1 + |x|^{n-1}}
\end{equation}
for any $x \in \mr^N_+$, $\ell \in \mn \cup \{0\}$ and some $C > 0$ independent of $m \in \mn$,
\begin{equation}\label{eq_lin_2}
\Psi_m(0) = {\pa \Psi_m \over \pa x_1}(0) = \cdots = {\pa \Psi_m \over \pa x_n}(0) = 0
\end{equation}
and
\begin{equation}\label{eq_lin_3}
\int_{\mr^N_+} x_N^{1-2\ga} \nabla \Psi_m \cdot \nabla W_{1,0} dx = \int_{\mr^n} w_{1,0}^{n+2\ga \over n-2\ga} \Psi_m\, d\bx = 0.
\end{equation}
Here $|\pi_m|_{\infty} = \max\limits_{i, j = 1, \cdots, n} |(\pi_m)_{ij}|$.
\end{prop}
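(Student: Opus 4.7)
The plan is to construct $\Psi_m$ via the Fredholm alternative for the linearization of the fractional Yamabe equation around the standard bubble $W_{1,0}$, whose kernel is described in Lemma \ref{lemma_W}(4), then to enforce (\ref{eq_lin_2}) by subtracting a suitable element of that $(n+1)$-dimensional kernel, to verify that (\ref{eq_lin_3}) is automatic, and finally to extract the pointwise decay (\ref{eq_lin_1}) by combining an energy estimate with a barrier argument.

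First I would verify that the source $S_m := 2 \hep_m x_N \chi_m(|x|) (\pi_m)_{ij}\pa_{ij} W_{1,0}$ is orthogonal, in the weighted pairing $(u,v)\mapsto \int_{\mr^N_+} x_N^{1-2\ga} u v\, dx$, to every kernel element $Z^k_{1,0}$ with $k = 0, 1, \ldots, n$. Since $W_{1,0}$ is rotationally symmetric in $\bx$, one may write $\pa_{ij}W_{1,0}=f(|\bx|,x_N)\delta_{ij}+g(|\bx|,x_N)x_ix_j$, so that
\begin{equation*}
(\pi_m)_{ij}\pa_{ij}W_{1,0} = \operatorname{tr}(\pi_m)\,f + g\,(\pi_m)_{ij}x_ix_j = g\,(\pi_m)_{ij}x_ix_j
\end{equation*}
by the assumption $H_m=0$. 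For $k\ge 1$ the pairing vanishes by oddness of $Z^k_{1,0}=\pa_k W_{1,0}$ in $x_k$; for $k=0$ the pairing reduces, using the radial symmetry of $Z^0_{1,0}$ in $\bx$, to an integral of the form $\int_{\mr^n} (\pi_m)_{ij}x_ix_j F(|\bx|,x_N)\,d\bx = \operatorname{tr}(\pi_m)\int_{\mr^n} x_1^2 F\,d\bx = 0$. Hence all $n+1$ solvability conditions hold, and the Fredholm alternative, together with the compact embedding in (\ref{eq_emb}) and the non-degeneracy from Lemma \ref{lemma_W}(4), produces a weak solution $\Psi_m^{(0)}\in D^{1,2}\mrg$ obeying $\|\Psi_m^{(0)}\|_{D^{1,2}\mrg}\le C\hep_m |\pi_m|_\infty$, unique modulo $\operatorname{span}\{Z^k_{1,0}\}_{k=0}^n$.

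Next I would set $\Psi_m:=\Psi_m^{(0)}+c_0 Z^0_{1,0}+\sum_{k=1}^n c_k Z^k_{1,0}$ and determine the $c_k$ from (\ref{eq_lin_2}). Using the symmetries of $W_{1,0}$ one has $Z^k_{1,0}(0)=0$ and $\pa_i Z^0_{1,0}(0)=0$ for $k,i\ge 1$, while $Z^0_{1,0}(0)\ne 0$ and $\pa_i Z^k_{1,0}(0)$ is a nonzero multiple of $\delta_{ik}$, so the linear system decouples and uniquely determines every $c_k$. Identity (\ref{eq_lin_3}) then follows automatically. Two integration-by-parts computations, pulling the derivatives once onto $W_{1,0}$ and once onto $\Psi_m$, give
\begin{equation*}
-\kappa_\ga^{-1}\int_{\mr^n} w_{1,0}^{(n+2\ga)/(n-2\ga)}\Psi_m\,d\bx = \int_{\mr^N_+} x_N^{1-2\ga}\nabla\Psi_m\cdot\nabla W_{1,0}\,dx
\end{equation*}
and
\begin{equation*}
\int_{\mr^N_+} x_N^{1-2\ga}\nabla\Psi_m\cdot\nabla W_{1,0}\,dx = \int_{\mr^N_+} x_N^{1-2\ga}S_m W_{1,0}\,dx - \kappa_\ga^{-1}\tfrac{n+2\ga}{n-2\ga}\int_{\mr^n} w_{1,0}^{(n+2\ga)/(n-2\ga)}\Psi_m\,d\bx,
\end{equation*}
whose difference reduces to a nonzero multiple of $\int_{\mr^N_+}x_N^{1-2\ga}S_m W_{1,0}\,dx$, and this last integral vanishes by the same traceless/radial-symmetry argument as above. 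The kernel adjustment preserves (\ref{eq_lin_3}) because $\int w_{1,0}^{(n+2\ga)/(n-2\ga)}Z^k_{1,0}\,d\bx = 0$ for $k\ge 1$ by parity, and for $k=0$ by the dilation identity $\pa_\lambda \int w_{\lambda,0}^{2n/(n-2\ga)}\,d\bx = 0$.

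The main technical obstacle is the pointwise estimate (\ref{eq_lin_1}). The source obeys $|S_m(x)|\le C\hep_m|\pi_m|_\infty\,\chi_m(|x|)(1+|x|)^{-(n-2\ga+1)}$ with support in $|x|\le 2/\hep_m$, and the H\"older/Schauder regularity in Appendix \ref{sec_reg} combined with the $D^{1,2}$-estimate yields a uniform bound $\|\Psi_m\|_{L^\infty(\mr^N_+)}\le C\hep_m|\pi_m|_\infty$. To upgrade this to the sharp polynomial rate $(1+|x|)^{-(n-2\ga-1)}$, I would follow Step 1 of the proof of Proposition \ref{prop_iso}: construct an explicit super-solution of the form $C\hep_m|\pi_m|_\infty\bigl(|x|^{-(n-2\ga-1)}-\zeta x_N^{2\ga}|x|^{-(n+1)}\bigr)$ on $\{|x|\ge 1\}$, whose weighted Laplacian dominates the tail of the source via expansions analogous to (\ref{eq_mfl_1})--(\ref{eq_mfl_2}), and whose Caffarelli--Silvestre normal derivative exceeds the boundary coefficient $\tfrac{n+2\ga}{n-2\ga}w_{1,0}^{4\ga/(n-2\ga)}\sim |\bx|^{-4\ga}$ at infinity; comparison via Lemma \ref{lemma_max} then delivers the envelope for $|\Psi_m|$. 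The derivative and weighted-normal-derivative bounds in (\ref{eq_lin_1}) then follow by the rescaling-and-Schauder bootstrap used in Step 3 of the same proposition.
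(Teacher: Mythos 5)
Your proposal follows essentially the same route as the paper: check orthogonality of the source to the kernel using $\mathrm{tr}(\pi_m)=0$ and parity, solve by the Fredholm alternative together with the nondegeneracy in Lemma \ref{lemma_W}(4), correct by kernel elements to enforce \eqref{eq_lin_2}, obtain \eqref{eq_lin_3} by testing the two equations against each other (the factor $\tfrac{n+2\ga}{n-2\ga}\ne 1$ forcing both integrals to vanish), and get \eqref{eq_lin_1} by maximum-principle barriers plus the rescaling/Schauder machinery of Appendix \ref{sec_reg} — this is exactly the paper's argument, which defers the decay details to \cite[Lemma 3.3]{DKP}.

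Two slips should be fixed. First, and more substantively, your barrier on $\{|x|\ge 1\}$ should be $C\hep_m|\pi_m|_\infty\bigl(|x|^{-(n-2\ga-1)}-\zeta\, x_N^{2\ga}|x|^{-(n-1)}\bigr)$, i.e. the correction exponent is $\mu+2\ga=n-1$ (the template $|x|^{-\mu}-\zeta x_N^{2\ga}|x|^{-(\mu+2\ga)}$ used throughout the paper), not $n+1$: with your exponent the Neumann contribution $\pa^{\ga}_{\nu}\bigl(-\zeta x_N^{2\ga}|x|^{-(n+1)}\bigr)=2\ga\kappa_{\ga}\zeta|\bx|^{-(n+1)}$ decays faster than the boundary coupling $\tfrac{n+2\ga}{n-2\ga}w_{1,0}^{4\ga/(n-2\ga)}\Phi\sim|\bx|^{-(n+2\ga-1)}$ whenever $\ga<1$, so the comparison inequality on $\mr^n$ fails at infinity; with $n-1$ it holds, and the bulk excess is still absorbed for $\zeta$ small. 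Second, the signs in your two testing identities are off (the correct ones are $\kappa_{\ga}\int x_N^{1-2\ga}\nabla\Psi_m\cdot\nabla W_{1,0}\,dx=\int_{\mr^n}w_{1,0}^{(n+2\ga)/(n-2\ga)}\Psi_m\,d\bx$ and $\kappa_{\ga}\int x_N^{1-2\ga}\nabla\Psi_m\cdot\nabla W_{1,0}\,dx=\kappa_{\ga}\int x_N^{1-2\ga}S_mW_{1,0}\,dx+\tfrac{n+2\ga}{n-2\ga}\int_{\mr^n}w_{1,0}^{4\ga/(n-2\ga)}\Psi_m\,w_{1,0}\,d\bx$); this does not affect the conclusion, since subtracting them still yields a nonzero multiple of $\int w_{1,0}^{(n+2\ga)/(n-2\ga)}\Psi_m\,d\bx$ equal to $\int x_N^{1-2\ga}S_mW_{1,0}\,dx=0$.
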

\begin{proof}
Fix $m \in \mn$. In the proof, we will suppress the subscript $m$ for convenience.

\medskip
Given a fixed $\hep > 0$, let $Q \in D^{1,2}\mrg$ be the function defined as
\[Q(x) = 2 \hep x_N \chi_{\hep}(|x|) \pi_{ij}\, \pa_{ij} W_{1,0}(x) \quad \text{for all } x = (\bx, x_N) \in \mr^N_+.\]
By symmetry of the functions $Z_{1,0}^0,\, \cdots, Z_{1,0}^n$ given in \eqref{eq_Z} and the assumption that $H = \text{tr}(\pi)/n = 0$, we see
\[\int_{\mr^N_+} x_N^{1-2\ga} Q\, Z_{1,0}^0 dx = \int_{\mr^N_+} x_N^{1-2\ga} Q\, Z_{1,0}^1 dx = \cdots = \int_{\mr^N_+} x_N^{1-2\ga} Q\, Z_{1,0}^n dx = 0.\]
Therefore, from the nondegeneracy result in Lemma \ref{lemma_W} (4) and the Fredholm alternative, we get a unique solution $\wtps \in D^{1,2}\mrg$ to \eqref{eq_lin} satisfying
\[\int_{\mr^N_+} x_N^{1-2\ga} \wtps\, Z_{1,0}^0 dx = \int_{\mr^N_+} x_N^{1-2\ga} \wtps\, Z_{1,0}^1 dx = \cdots = \int_{\mr^N_+} x_N^{1-2\ga} \wtps\, Z_{1,0}^n dx = 0.\]
Furthermore, by repetitive applications of the maximum principle and the scaling method with the help of Lemmas \ref{lemma_reg_2} and \ref{lemma_reg_3}, we can prove that $\wtps$ satisfies \eqref{eq_lin_1}.
Refer the proof of \cite[Lemma 3.3]{DKP} for the details.
Multiplying the first equation and \eqref{eq_bubble_2}-\eqref{eq_CS} by $W_{1,0}$ and $\wtps$, respectively, also reveals that
\[\({n+2\ga \over n-2\ga}\) \int_{\mr^n} w_{1,0}^{n+2\ga \over n-2\ga} \wtps d\bx
= \kappa_{\ga} \int_{\mr^N_+} x_N^{1-2\ga} \nabla \wtps \cdot \nabla W_{1,0} dx = \int_{\mr^n} w_{1,0}^{n+2\ga \over n-2\ga} \wtps d\bx.\]
Hence \eqref{eq_lin_3} holds for the function $\wtps$.

Now, if we set
\[\Psi = \wtps - \left[{2\wtps(0) \over \alpha_{n,\ga}(n-2\ga)}\right] Z_{1,0}^0 - \sum_{i=1}^n \left[{\pa_i \wtps(0) \over \alpha_{n,\ga}(n-2\ga)}\right]  Z_{1,0}^i,\]
then it can be easily shown from \eqref{eq_W_dec} that $\Psi$ is the desired function that satisfies \eqref{eq_lin}-\eqref{eq_lin_3}.
This concludes the proof.
\end{proof}

\subsection{Refined blow-up analysis}\label{subsec_ref}
As before, let $y_m \to y_0 \in M$ be an isolated simple blow-up point of $\{U_m\}_{m \in \mn}$.
In view of Corollary \ref{cor_i_blow} (1) and \eqref{eq_w_m_2}, $y_m \to y_0$ is an isolated blow-up point of $\{\wtu_m\}_{m \in \mn}$
and $M_m = \wtu_m(y_m)$ ($= \wtu_m(0)$ if $\tig_m$-Fermi coordinate system around $y_m$ is used).
Also, Proposition \ref{prop_iso} ensures the validity of the pointwise estimate \eqref{eq_iso} for $\{\wtu_m\}_{m \in \mn}$ near $y_0$.
The objective of this subsection is to refine it by analyzing the $\ep_m$-order terms.
Recall the functions $W_{1,0}$ and $\Psi_m$ defined in \eqref{eq_bubble_2} and constructed in Proposition \ref{prop_lin}
(where the tensor $\pi_m$ is replaced by the second fundamental form $\tipi_m(y_m)$ at $y_m$ of $(M, \tih_m) \subset (\ox, \tig_m)$), respectively.
\begin{prop}\label{prop_ref}
Suppose that $n > 2 + 2\ga$. Let $\ep_m = M_m^{-{(p_m-1)/(2\ga)}}$, $\hep_m = \alpha_{n,\ga}^{(p_m-1)/(2\ga)} \ep_m$,
\begin{equation}\label{eq_V_m}
\wtv_m(x) = \hep_m^{2\ga \over p_m-1} \wtu_m\(\hep_m x\) \quad \text{in } B^N_+(0, r_4' \hep_m^{-1}),
\end{equation}
and $\alpha_{n,\ga} > 0$ and $r_4 > 0$ be the constants introduced in \eqref{eq_bubble} and Proposition \ref{prop_iso}, respectively.
Then one can find numbers $C > 0$ and $r_5 \in (0, r_4]$ independent of $m \in \mn$ such that
\begin{equation}\label{eq_V_m_est}
\left| \nabla_{\bx}^{\ell} \wtv_m - \nabla_{\bx}^{\ell}(W_{1,0} + \Psi_m) \right|(x) \le {C \ep_m^2 \over 1+|x|^{n-2\ga-2+\ell}}
\end{equation}
for $\ell = 0, 1, 2$ and
\begin{equation}\label{eq_V_m_est_2}
\left| x_N^{1-2\ga} \pa_N \wtv_m - x_N^{1-2\ga} \pa_N(W_{1,0} + \Psi_m) \right|(x) \le {C \ep_m^2 \over 1+|x|^{n-2}}
\end{equation}
for $|x| \le r_5 \hep_m^{-1}$.
\end{prop}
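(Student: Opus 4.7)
\medskip\noindent
\textbf{Proof plan.} The strategy is to introduce the remainder $\Phi_m := \wtv_m - W_{1,0} - \Psi_m$, derive a linearized equation for $\Phi_m$ whose inhomogeneities are of size $\ep_m^2$ with suitable spatial decay, and then invert this equation via a blow-up contradiction argument built on the nondegeneracy of the standard bubble.

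First I would rescale \eqref{eq_Yamabe_30} through $x \mapsto \hep_m x$ and write out the resulting equation for $\wtv_m$ on $B^N_+(0, r_4' \hep_m^{-1})$. Expanding $\tig_m(\hep_m x)$ via Lemma \ref{lemma_metric} and exploiting both $H = 0$ on $M$ (a consequence of \eqref{eq_hyp}) and the additional vanishing $R_{ij}[\tih_m](y_m) = 0$ granted by Lemma \ref{lemma_rep}, the linear-in-$\hep_m$ part of the rescaled operator applied to $\wtv_m$ is exactly $2\hep_m x_N (\tipi_m)_{ij}(y_m)\pa_{ij}\wtv_m$, which matches the source of \eqref{eq_lin}. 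Subtracting the equations \eqref{eq_bubble_2}--\eqref{eq_CS} for $W_{1,0}$ and \eqref{eq_lin} for $\Psi_m$, one obtains
\[\begin{cases}
-\text{div}(x_N^{1-2\ga}\nabla \Phi_m) = x_N^{1-2\ga} E_m &\text{in } B^N_+(0, r_4'\hep_m^{-1}),\\
\pa^{\ga}_{\nu} \Phi_m = \(\dfrac{n+2\ga}{n-2\ga}\) w_{1,0}^{4\ga/(n-2\ga)}\Phi_m + F_m &\text{on } B^n(0, r_4'\hep_m^{-1}),
\end{cases}\]
where $E_m$ collects the $O(\hep_m^2 |x|^2)$ correction from the Fermi-chart expansion of $\tig_m$, the zeroth-order potential $\hep_m^2 \tilde{A}_m(\hep_m x)\wtv_m$, the cut-off defect produced by $\chi_m$ in \eqref{eq_lin}, and the bilinear interaction $2\hep_m x_N \tipi_{ij}\pa_{ij}(\Psi_m + \Phi_m)$; and $F_m$ collects the subcritical correction (of size $\delta_m = O(M_m^{-2/(n-2\ga) + o(1)})$ by \eqref{eq_delta_m}) together with the quadratic Taylor remainder of $v \mapsto v^{p_m}$ at $W_{1,0}$. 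A direct bookkeeping based on \eqref{eq_iso}, \eqref{eq_lin_1} and the explicit decay of $W_{1,0}$ yields
\[|E_m(x)| \le \frac{C\ep_m^2}{(1+|x|)^{n-2\ga}}, \qquad |F_m(\bx)| \le \frac{C \ep_m^2}{(1+|\bx|)^{n+2\ga - 2}}.\]

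The heart of the argument is to prove the $\ell = 0$ case of \eqref{eq_V_m_est} by contradiction. Suppose $\Lambda_m := \sup_{|x| \le r_5\hep_m^{-1}} (1+|x|)^{n-2\ga-2} |\Phi_m(x)|\ep_m^{-2} \to \infty$ and rescale $\Phi_m^* := \Phi_m/(\Lambda_m \ep_m^2)$. The weighted $L^\infty$ bound $\|(1+|\cdot|)^{n-2\ga-2}\Phi_m^*\|_\infty \le 1$, combined with the regularity estimates \eqref{eq_DNM}, \eqref{eq_Sch}, \eqref{eq_reg_2}, \eqref{eq_reg_3} applied to the rescaled equation and the vanishing of the rescaled $E_m, F_m$, produces a nontrivial locally $C^\beta$-limit $\Phi^*$ on $\overline{\mr^N_+}$ solving the homogeneous linearized problem around $W_{1,0}$ and satisfying $|\Phi^*(x)| \le C(1+|x|)^{-(n-2\ga-2)}$. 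By the nondegeneracy Lemma \ref{lemma_W} (4), $\Phi^* = \sum_{i=0}^n c_i Z^i_{1,0}$. The identities $\Phi_m(0) = 0$ and $\nabla_{\bx}\Phi_m(0) = 0$---which come from $\wtv_m(0) = \alpha_{n,\ga} = W_{1,0}(0)$, $\nabla_{\bx}\wtv_m(0) = 0$ (using that $y_m$ is a local maximum of $u_m$ and the normalization \eqref{eq_w_m_2}), and \eqref{eq_lin_2} for $\Psi_m$---pass to the limit, yielding $\Phi^*(0) = 0$ and $\nabla_{\bx}\Phi^*(0) = 0$. Since a direct calculation on the explicit formula \eqref{eq_bubble} gives $Z^0_{1,0}(0) \ne 0$, $Z^i_{1,0}(0) = 0$ and $\pa_j Z^i_{1,0}(0) = c\,\delta_{ij}$ with $c \ne 0$ for $i,j = 1, \ldots, n$, these normalizations force all $c_i = 0$, contradicting the nontriviality of $\Phi^*$.

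The main obstacle will be the sharp bound on $F_m$. When $p_m < 2$ (which happens for $n > 6\ga$), the nonlinearity is not $C^2$ at $0$, so one cannot directly Taylor-expand; a dyadic $L^\infty$ splitting depending on whether $|\Psi_m + \Phi_m| \lessgtr W_{1,0}$ is required to preserve the decay $(1+|\bx|)^{-(n+2\ga-2)}$, and this is where the dimension threshold $n > 2 + 2\ga$ enters quantitatively. Once the pointwise bound \eqref{eq_V_m_est} with $\ell = 0$ has been established, the derivative bounds for $\ell = 1, 2$ and the weighted normal derivative estimate \eqref{eq_V_m_est_2} follow by applying \eqref{eq_Sch}, \eqref{eq_reg_2} and \eqref{eq_reg_3} to the rescaled equation on annuli of radius comparable to $|x|$, exactly as in Step 3 of the proof of Proposition \ref{prop_iso}.
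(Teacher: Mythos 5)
Your overall architecture (subtract $W_{1,0}+\Psi_m$, linearize, run a contradiction/compactness argument against the nondegeneracy of the bubble) is the same skeleton the paper uses, but there is a genuine gap in how you handle the subcritical defect $\delta_m=(2^*_{n,\ga}-1)-p_m$. You claim $|F_m(\bx)|\le C\ep_m^2(1+|\bx|)^{-(n+2\ga-2)}$ by citing \eqref{eq_delta_m}, but \eqref{eq_delta_m} only gives $\delta_m=O(M_m^{-2/(n-2\ga)+o(1)})$, i.e.\ $\delta_m=O(\ep_m^{1+o(1)})$, not $O(\ep_m^2)$. The terms $(\hf_m^{-\delta_m}-1)(w_{1,0}+\Psi_m)^{p_m}$ and $(w_{1,0}+\Psi_m)^{\frac{n+2\ga}{n-2\ga}}\bigl[(w_{1,0}+\Psi_m)^{-\delta_m}-1\bigr]$ are of size $\delta_m\log(1+|\bx|)$ times a decaying factor, hence of order $\ep_m$, so your right-hand side is too small by a full power of $\ep_m$ and the blow-up quantity $\Lambda_m$ cannot be shown to stay bounded by this route. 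The sharp bound $\delta_m\le C\ep_m^2$ is precisely Step 2 of the paper's proof (following Almaraz), and it is obtained \emph{after} a first-stage estimate $|\wtv_m-(W_{1,0}+\Psi_m)|\le C\max\{\ep_m^2,\delta_m\}$ together with a Pohozaev-type identity; assuming it at the outset makes your argument circular. The correct structure is the paper's two-stage one: carry $\max\{\ep_m^2,\delta_m\}$ through the compactness/nondegeneracy step (keeping the $\delta_m\log(1+|\bx|)$ terms explicit, as in \eqref{eq_wtv_d_4}), then upgrade $\delta_m$ to $O(\ep_m^2)$, and only then prove the decaying estimate.

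A second, more technical omission: in your weighted contradiction argument you assert a \emph{nontrivial} limit $\Phi^*$, but the weighted supremum could be attained at points $|x_m|\to\infty$ (or near the outer boundary $|x|\sim r_5\hep_m^{-1}$), in which case the local limit is trivial and no contradiction results. The paper rules this out with explicit barrier functions and the weak maximum principle of Remark \ref{rmk_max_p} (the functions $\Phi_{2m}$ in Step 1 and $\Phi_{3m;\mu}$ in Step 3, the latter iterated over exponents $\mu_k=\min\{(k+1)\ga,\,n-2\ga-2\}$ to convert the uniform bound $C\ep_m^2$ into the stated decay $(1+|x|)^{-(n-2\ga-2)}$); note also that the boundary potential $B_m$ only decays like $(1+|\bx|)^{-4\ga+o(1)}$, so away from a compact set one genuinely needs the barrier/maximum-principle step rather than the nondegeneracy lemma. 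Your final paragraph (derivative and weighted normal-derivative estimates via rescaling and \eqref{eq_Sch}, \eqref{eq_reg_2}, \eqref{eq_reg_3}) does match the paper's conclusion of the proof.
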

\begin{proof}
Our main tool to prove this proposition will be the maximum principle; compare the proof of the corresponding result \cite[Proposition 6.1]{Al} for the boundary Yamabe problem
which makes use of Green's representation formula. 
The proof is splitted into three steps.

\medskip \noindent \textsc{Step 1 (An estimate of $\wtv_m - (W_{1,0} + \Psi_m)$).} We assert that
\begin{equation}\label{eq_wtv_d}
\left|\wtv_m - (W_{1,0} + \Psi_m)\right| \le C \max\{\ep_m^2, \delta_m\} \quad \text{in } B^N_+(0, r_4 \hep_m^{-1})
\end{equation}
where $\delta_m = (2_{n,\ga}^*-1) - p_m$.

Set
\[\Lambda_m = \max_{|x| \le r_4 \hep_m^{-1}} \left| \wtv_m - (W_{1,0} + \Psi_m) \right|(x) = \left| \wtv_m - (W_{1,0} + \Psi_m) \right|(\hat{x}_m).\]
If $|\hat{x}_m| \ge \eta r_4 \hep_m^{-1}$ for any fixed small number $\eta \in (0,1)$, we obtain a stronger inequality
\[\Lambda_m \le C \hep_m^{n-2\ga} = o(\ep_m^2)\]
than \eqref{eq_wtv_d}. Thus we may assume that $|\hat{x}_m| \le \eta r_4 \hep_m^{-1}$.

Let
\[\Theta_m = \Lambda_m^{-1} \left[ \wtv_m - (W_{1,0} + \Psi_m) \right]\]
and
\[\hmfl_m(\Theta_m) = -\text{div}_{\hg_m} (x_N^{1-2\ga} \nabla \Theta_m) + \whe_m(x_N) \Theta_m\]
in $B^N_+(0,r_4 \hep_m^{-1})$. It solves
\begin{equation}\label{eq_wtv_d_2}
\begin{cases}
\hmfl_m(\Theta_m) = x_N^{1-2\ga} \whq_{1m} &\text{in } B^N_+(0,r_4 \hep_m^{-1}),\\
\pa^{\ga}_{\nu} \Theta_m - B_m \Theta_m = \whq_{2m} &\text{on } B^n(0,r_4 \hep_m^{-1})
\end{cases}
\end{equation}
where $\hg_m = \tig_m(\hep_m \cdot)$,
\begin{align}
\whe_m(x_N) &= \({n-2\ga \over 4n}\) \left[ R[\hg_m] - (n(n+1)+R[g^+](\hep_m \cdot)) x_N^{-2} \right] x_N^{1-2\ga} \label{eq_whe_m}\\
&= \({n-2\ga \over 4n}\) \hep_m^2 \left[ R[\tig_m](\hep_m \cdot) + o(1)\right] x_N^{1-2\ga} \quad \text{(by \eqref{eq_hyp})}, \nonumber\\
\whq_{1m} &= \Lambda_m^{-1} \left[(\hg_m^{ij} - \delta^{ij})\, \pa_{ij} (W_{1,0} + \Psi_m)
+ \({\pa_i \sqrt{|\hg_m|} \over \sqrt{|\hg_m|}}\) \hg_m^{ij}\, \pa_j(W_{1,0} + \Psi_m) \right. \nonumber \\
&\hspace{40pt} + \({\pa_N \sqrt{|\hg_m|} \over \sqrt{|\hg_m|}}\) \pa_N(W_{1,0} + \Psi_m) + \pa_i \hg_m^{ij} \, \pa_j(W_{1,0} + \Psi_m) \nonumber \\
&\hspace{40pt} \left. - 2 \hep_m x_N \chi_m(|x|) (\pi_m)_{ij}\, \pa_{ij} W_{1,0} \right] \nonumber \\
\whq_{2m} &= \Lambda_m^{-1} \left[ (\hf_m^{-\delta_m} - 1)\, (w_{1,0} + \Psi_m)^{p_m} + (w_{1,0} + \Psi_m)^{n+2\ga \over n-2\ga} \left\{ (w_{1,0} + \Psi_m)^{-\delta_m} - 1 \right\} \right. \nonumber\\
&\hspace{40pt} \left. + \left\{ (w_{1,0} + \Psi_m)^{n+2\ga \over n-2\ga} - w_{1,0}^{n+2\ga \over n-2\ga} - \({n+2\ga \over n-2\ga}\) w_{1,0}^{4\ga \over n-2\ga} \Psi_m \right\} \right], \nonumber\\
B_m &= \hf_m^{-\delta_m} \left[{\wtv_m^{p_m} - (w_{1,0} + \Psi_m)^{p_m} \over \wtv_m - (w_{1,0} + \Psi_m)}\right] \nonumber
\end{align}
and $\hf_m = f_m(\hep_m \cdot)$.

As a preliminary step, we first deduce pointwise estimates of the functions $\whq_{1m}$, $\whq_{2m}$ and $B_m$.
By \eqref{eq_whu_4}, \eqref{eq_reg_2}, \eqref{eq_iso}, \eqref{eq_W_dec} and \eqref{eq_lin_1},
\begin{equation}\label{eq_wtv_d_0}
\left| \nabla_{\bx}^{\ell} \wtv_m (x) \right| + \left| \nabla_{\bx}^{\ell} W_{1,0}(x) \right| \le {C \over 1+|x|^{n-2\ga+\ell}}
\quad \text{and} \quad
\left| \nabla_{\bx}^{\ell} \Psi_m (x) \right| \le {C \hep_m \over 1+|x|^{n-2\ga-1+\ell}}
\end{equation}
in $B^N_+(0, r_4 \hep_m^{-1})$ for $\ell = 0, 1$. Moreover, by \eqref{eq_conv} and \eqref{eq_iso},
\begin{equation}\label{eq_wtv_d_01}
\wtv_m (\bx) \ge {C \over 1+|\bx|^{n-2\ga}} \quad \text{in } B^n(0, r_4 \hep_m^{-1}).
\end{equation}
From these inequalities and Lemmas \ref{lemma_metric}-\ref{lemma_W} (especially, $H = 0$ on $M$), we discover
\begin{equation}\label{eq_wtv_d_3}
\left| \whq_{1m}(x) \right| \le {C \Lambda_m^{-1} \ep_m^2 \over 1+|x|^{n-2\ga}} \quad \text{in } B^N_+(0, r_4 \hep_m^{-1}),
\end{equation}
\begin{equation}\label{eq_wtv_d_4}
\left| \whq_{2m}(\bx) \right| \le C \Lambda_m^{-1} \left[ {\delta_m \log(1+|\bx|) \over 1+|\bx|^{n+2\ga+o(1)}}
+ {\ep_m^2 \over 1+|\bx|^{n-2+2\ga}} \right] \quad \text{on } B^n(0, r_4 \hep_m^{-1})
\end{equation}
and
\begin{equation}\label{eq_B_m}
|B_m(\bx)| \le C \left[ \wtv_m^{p_m-1} + (w_{1,0} + \Psi_m)^{p_m-1} \right] \le {C \over 1+|\bx|^{4\ga + o(1)}} \quad \text{on } B^n(0, r_4 \hep_m^{-1}).
\end{equation}
Reducing the magnitude of $r_4$ if necessary, we have that $w_{1,0}(\bx) \ge 2|\Psi_m(\bx)|$ on $B^n(0,r_4\hep_m^{-1})$.
Using this fact, \eqref{eq_wtv_d_0}, \eqref{eq_wtv_d_01} and the inequality
\[|x^p-px+(p-1)| \le \begin{cases}
C(1-x)^2 \min\{1,x^{p-2}\} &\text{for } p \in (1, 2),\\
C(1-x)^2 (1+x^{p-2}) &\text{for } p \ge 2
\end{cases}\]
in $(0,\infty)$, 
we also deduce that
\begin{equation}\label{eq_B_m_der_1}
|\nabla_{\bx} B_m(\bx)| \le C\left[ {\delta_m \over 1+|\bx|^{4\ga + o(1)}} + {1 \over 1+|\bx|^{1 + 4\ga + o(1)}} \right]
\end{equation}
and
\begin{equation}\label{eq_B_m_der_2}
\left| \nabla_{\bx} \whq_{2m}(\bx) \right| \le C\Lambda_m^{-1} \left[ {\delta_m \ep_m \over 1+|\bx|^{n+2\ga+o(1)}}
+ {\delta_m \log(1+|\bx|) \over 1+|\bx|^{n+1+2\ga+o(1)}} + {\ep_m^2 \over 1+|\bx|^{n-1+2\ga}} \right]
\end{equation}
on $B^n(0,r_4 \hep_m^{-1})$.

Next, we claim that there is a small number $\eta \in (0,1)$ such that
\begin{equation}\label{eq_wtv_d_1}
|\Theta_m(x)| \le C \left[ {1 \over 1+|x|^{\ga}} + {\Lambda_m^{-1} (\ep_m^2 + \delta_m) \over 1+|x|^{n-2-2\ga}} \right] \quad \text{in } B^N_+(0, \eta r_4 \hep_m^{-1}).
\end{equation}
To verify it, we construct a barrier function
\begin{align*}
&\ \Phi_{2m}(x) \\
&= \begin{cases}
L \(1+ \Lambda_m^{-1} (\ep_m^2 + \delta_m)\) \left[\(2-|x|^2\) - \zeta x_N^{2\ga} \(2-|x|^{2-2\ga}\) \right] \hspace{41pt} \text{for } |x| \le 1,\\
L \left[ \(|x|^{-\ga} - \zeta x_N^{2\ga} |x|^{-3\ga}\) + \Lambda_m^{-1} (\ep_m^2 + \delta_m) \(|x|^{-(n-2-2\ga)} - \zeta x_N^{2\ga} |x|^{-(n-2)}\) \right] &\\\
\hspace{300pt} \text{for } 1 < |x| < \eta r_4 \hep_m^{-1}
\end{cases}
\end{align*}
with positive numbers $L$ large and $\zeta$ small. Indeed, we see from that $H = 0$ on $M$, \eqref{eq_wtv_d_2} and \eqref{eq_wtv_d_0}-\eqref{eq_B_m} that
\begin{equation}\label{eq_comp}
\begin{cases}
\begin{aligned}
\hmfl_m(\Phi_{2m}) &\ge {C x_N^{1-2\ga} \Lambda_m^{-1} \ep_m^2 \over 1+|x|^{n-2\ga}} \ge \pm x_N^{1-2\ga} \whq_{1m} = \hmfl_m(\pm \Theta_m)
\end{aligned} &\text{in } B^N_+(0, \eta r_4 \hep_m^{-1}), \\
\begin{aligned}
\pa^{\ga}_{\nu} \Phi_{2m} &\ge C \left[ {1 \over 1+|\bx|^{3\ga}} + {\Lambda_m^{-1} (\ep_m^2 + \delta_m) \over 1+|\bx|^{n-2}} \right]
\ge |B_m| + \left| \whq_{2m} \right| \\
&\ge \pm \pa^{\ga}_{\nu} \Theta_m
\end{aligned} &\text{on } B^n(0, \eta r_4 \hep_m^{-1}), \\
\Phi_{2m} \ge C \Lambda_m^{-1} \ep_m^{n-2\ga} \ge \pm \Theta_m &\text{on } \pa_I B^N_+(0, \eta r_4 \hep_m^{-1})
\end{cases}
\end{equation}
for sufficiently small $\eta \in (0,1)$. 
Thus, rescaling \eqref{eq_comp} and employing the weak maximum principle in Remark \ref{rmk_max_p},
we establish that $|\Theta_m| \le \Phi_{2m}$ in $B^N_+(0, \eta r_4 \hep_m^{-1})$. This implies \eqref{eq_wtv_d_1}.

Suppose now that $\Lambda_m^{-1} (\ep_m^2 + \delta_m) \to 0$ as $m \to \infty$.
By Lemmas \ref{lemma_conv}, \ref{lemma_reg_0}, \ref{lemma_Sch} and \eqref{eq_wtv_d_3}-\eqref{eq_B_m_der_2}, there exist a function $\Theta_0$ and a number $\beta \in (0,1)$ such that
\begin{equation}\label{eq_wtv_d_5}
\Theta_m \to \Theta_0 \quad \text{in } W^{1,2}_{\loc}(\mr^N_+; x_N^{1-2\ga}) \text{ weakly and }
C^{\beta}_{\loc}(\overline{\mr^N_+}) \cap C^1_{\loc}(\mr^n)
\end{equation}
along a subsequence, and so
\[\begin{cases}
-\text{div} (x_N^{1-2\ga} \nabla \Theta_0) = 0 &\text{in } \mr^N_+,\\
|\Theta_0| \le \dfrac{C}{1+|x|^{\ga}} &\text{in } \mr^N_+,\\
\pa^{\ga}_{\nu} \Theta_0 = \({n+2\ga \over n-2\ga}\) w_{1,0}^{4\ga \over n-2\ga} \Theta_0 &\text{on } \mr^n.
\end{cases}\]
Consequently, from the fact that $\wtv_m(0) = \nabla_{\bx} \wtv_m(0) = 0$, Lemma \ref{lemma_W} (1), \eqref{eq_lin_2} and \eqref{eq_wtv_d_5}, we see
\[\Theta_0(0) = {\pa \Theta_0 \over \pa x_1}(0) = \cdots = {\pa \Theta_0 \over \pa x_n}(0) = 0.\]
In view of Lemma \ref{lemma_W} (4), it should hold that $\Theta_0 = 0$ in $\mr^N_+$ and thus $|\hat{x}_m| \to \infty$ as $m \to \infty$.
However, this induces a contradiction because $1 = \Theta_m(\hat{x}_m) \to 0$ as $m \to \infty$.
Estimate \eqref{eq_wtv_d} must be true.

\medskip \noindent \textsc{Step 2 (Estimate of $\delta_m$).}
We assert
\begin{equation}\label{eq_d_m_est}
\delta_m \le C \ep_m^2.
\end{equation}
Its proof can be done as in \cite[Lemma 6.2]{Al} with minor modifications, so is omitted.

\medskip
As a particular consequence of \eqref{eq_wtv_d} and \eqref{eq_d_m_est}, we get
\begin{equation}\label{eq_wtv_d_6}
\left|\wtv_m - (W_{1,0} + \Psi_m)\right|(x) \le C \ep_m^2 \quad \text{for all } x \in B^N_+(0, r_4 \hep_m^{-1}).
\end{equation}
\noindent \textsc{Step 3 (Completion of the proof).}
We can now deduce \eqref{eq_V_m_est} with $\ell = 0$. Redefine
\[\Theta_m = \ep_m^{-2} \left[ \wtv_m - (W_{1,0} + \Psi_m) \right] \quad \text{for } x \in B^N_+(0,r_4 \hep_m^{-1})\]
so that it solves equation \eqref{eq_wtv_d_2} once each quantity $\Lambda_m$ in the definition of $\whq_{1m}$ and $\whq_{2m}$ is replaced with $\ep_m^2$.
As in \eqref{eq_wtv_d_3} and \eqref{eq_wtv_d_4}, it holds that
\begin{equation}\label{eq_Q_1m}
\left| \whq_{1m}(x) \right| \le {C \over 1+|x|^{n-2\ga}}
\quad \text{and} \quad
\left| \whq_{2m}(\bx) \right| \le {C \over 1+|\bx|^{n+2\ga-2}}
\end{equation}
for $x \in B^N_+(0,r_4 \hep_m^{-1})$ and $\bx \in B^n(0,r_4 \hep_m^{-1})$. By \eqref{eq_wtv_d_6} and \eqref{eq_wtv_d_0},
\begin{equation}\label{eq_th_m_3}
|\Theta_m| \le C \quad \text{in } B^N_+(0, r_4 \hep_m^{-1}) \quad \text{and} \quad
|\Theta_m| \le C \ep_m^{n-2\ga-2} \quad \text{on } \pa_I B^N_+(0, r_4 \hep_m^{-1}).
\end{equation}

For any $0 < \mu \le n-2\ga-2$, we define a function
\[\Phi_{3m; \mu}(x) = \begin{cases}
L_{\mu} \left[\(2-|x|^2\) - \zeta_{\mu} x_N^{2\ga} \(2-|x|^{2-2\ga}\)\right] &\text{for } |x| \le 1,\\
L_{\mu} \(|x|^{-\mu} - \zeta_{\mu} x_N^{2\ga} |x|^{-\mu-2\ga}\) &\text{for } 1 < |x| < r_4 \hep_m^{-1}
\end{cases}\]
where $L_{\mu}$ and $\zeta_{\mu}$ are a large and a small positive number relying only on $\mu$, $n$ and $\ga$, respectively.
If we set $\mu_0 = \min\{\ga, n-2\ga-2\}$, a direct computation using $H = 0$ on $M$, \eqref{eq_wtv_d_2}, \eqref{eq_B_m}, \eqref{eq_Q_1m} and \eqref{eq_th_m_3} shows
\begin{equation}\label{eq_phi_m}
\begin{cases}
\begin{aligned}
\hmfl_m(\Phi_{3m;\mu_0}) &\ge {C x_N^{1-2\ga} \over 1+|x|^{n-2\ga}} \ge \hmfl_m(\pm \Theta_m)
\end{aligned} &\text{in } B^N_+(0, r_5' \hep_m^{-1}), \\
\pa^{\ga}_{\nu} \Phi_{3m;\mu_0} \ge \pm \pa^{\ga}_{\nu} \Theta_m &\text{on } B^n(0, r_5' \hep_m^{-1}), \\
\Phi_{3m;\mu_0} \ge \pm \Theta_m &\text{on } \pa_I B^N_+(0, r_5' \hep_m^{-1})
\end{cases}
\end{equation}
for $r_5' \in (0,r_4]$ small enough. Hence we deduce from the weak maximum principle in Remark \ref{rmk_max_p} that
\begin{equation}\label{eq_th_m_2}
|\Theta_m| \le \Phi_{3m;\mu_0} \le {C \over 1+|x|^{\mu_0}} \quad \text{in } B^N_+(0, r_5' \hep_m^{-1}).
\end{equation}
If $\mu_0 = n-2\ga-2$, we are done. Otherwise, we put \eqref{eq_th_m_2} into the second inequality of \eqref{eq_phi_m} in order to improve itself so that
\[|\Theta_m| \le \Phi_{3m;\mu_1} \le {C \over 1+|x|^{\mu_1}} \quad \text{in } B^N_+(0, r_5'' \hep_m^{-1})\]
for $\mu_1 = \min\{2\ga, n-2\ga-2\}$ and $r_5'' \in (0, r_5']$.
Iterating this process, we can conclude the proof of \eqref{eq_V_m_est} with $\ell = 0$.

The remaining inequalities, i.e., \eqref{eq_V_m_est} for $\ell = 1, 2$ and \eqref{eq_V_m_est_2} are derived as in the justification of \eqref{eq_U_m_est_2}.
Indeed, a tedious but straightforward calculation shows that the second-order derivatives of the functions $B_m(\bx)$ and $\whq_{2m}(\bx)$ have the required decay rate as $|\bx| \to \infty$.
The proof is now completed.
\end{proof}

\section{Vanishing theorem of the second fundamental form}\label{sec_van}
Let us denote by $(\tig_0, \tih_0)$ the $C^4$-limit of the sequence $\{(\tig_m, \tih_m)\}_{m \in \mn}$ given in Subsection \ref{subsec_blow}.
For each $m \in \mn$, let $\tipi_m$ and $\tipi_0$ be the second fundamental forms of $(M, \tih_m) \subset (\ox, \tig_m)$ and $(M, \tih_0) \subset (\ox, \tig_0)$, respectively.
By employing the sharp pointwise estimate presented in Proposition \ref{prop_ref},
we now prove that $\tipi_0 = 0$ at an isolated simple blow-up point $y_0 \in M$ of a sequence of solutions $\{U_m\}_{m \in \mn}$ to \eqref{eq_Yamabe_3}.
\begin{prop}\label{prop_van}
Suppose that $\ga \in (0,1)$, $n \in \mn$ satisfies the dimension restriction \eqref{eq_n} and $y_m \to y_0$ is an isolated simple blow-up point of the sequence $\{U_m\}_{m \in \mn}$
so that the description for $\{\wtu_m\}_{m \in \mn}$ in the first paragraph of Subsection \ref{subsec_ref} holds. Then
\begin{equation}\label{eq_van}
\|\tipi_m(y_m)\| \to 0 \quad \text{as } m \to \infty.
\end{equation}
Particularly, $\tipi_0(y_0) = 0$.
\end{prop}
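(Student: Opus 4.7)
The plan is a contradiction argument using Pohozaev's identity \eqref{eq_poho_1} on $B^N_+(0,r)$ for a fixed small $r \in (0, r_5]$, following the strategy developed for compactness of the classical and boundary Yamabe problems. Suppose for contradiction that $\|\tipi_m(y_m)\| \ge c_0 > 0$ along a subsequence. Working in $\tig_m$-Fermi coordinates around $y_m$, which is legitimate after the normalization of Lemma \ref{lemma_rep} so that $H \equiv 0$ on $M$ and $R_{ij}[\tih_m](y_m) = 0$, one rewrites \eqref{eq_Yamabe_30} using the metric expansion of Lemma \ref{lemma_metric} as
\[
-\textnormal{div}(x_N^{1-2\ga} \nabla \wtu_m) = x_N^{1-2\ga} Q_m \quad \text{in } B^N_+(0,r_4), \qquad \pa^{\ga}_{\nu} \wtu_m = \tif_m^{-\delta_m} \wtu_m^{p_m} \quad \text{on } B^n(0,r_4),
\]
where the principal piece of $Q_m$ is $Q_m^{(1)} := 2\,(\tipi_m(y_m))_{ij}\, x_N\, \pa_{ij}\wtu_m$ and the remainder is $O(|x|^2)|\nabla^2 \wtu_m| + O(|x|)|\nabla \wtu_m| + O(x_N)|\wtu_m|$, using Lemma \ref{lemma_metric} together with \eqref{eq_hyp}.

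Next, I would apply Lemma \ref{lemma_poho} and change variables by $x = \hep_m y$, using $\wtu_m(x) = \alpha_{n,\ga}^{-1} M_m\,\wtv_m(x/\hep_m)$. The bulk contribution stemming from $Q_m^{(1)}$ rescales to
\[
\hep_m^{n+1-2\ga}\, M_m^2\, \alpha_{n,\ga}^{-2}\, (\tipi_m(y_m))_{ij} \int_{B^N_+(0,\,r/\hep_m)} y_N^{2-2\ga}\, (\pa_{ij}\wtv_m)(y)\, \Bigl[ y \cdot \nabla \wtv_m(y) + \tfrac{n-2\ga}{2}\,\wtv_m(y) \Bigr] dy.
\]
Substituting the sharp expansion $\wtv_m = W_{1,0} + \Psi_m + O(\ep_m^2)$ from Proposition \ref{prop_ref}, the pure-$W_{1,0}$ contribution vanishes identically, because $W_{1,0}$ is radial in $\bx$ by Lemma \ref{lemma_W} (1) and $\tipi_m(y_m)$ is trace-free at $y_m$, so that the angular average extracts only the trace. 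The leading surviving term comes from the cross contribution with $\Psi_m$ built in Proposition \ref{prop_lin}: since $\Psi_m$ is itself linear in $\tipi_m(y_m)$ and of order $\hep_m$ by \eqref{eq_lin_1}, this produces a quantity of the form $c(n,\ga)\,\|\tipi_m(y_m)\|^2\,\hep_m^{n+2-2\ga}\,M_m^2$, where $c(n,\ga)$ is a nonzero constant obtained by explicit integration using the identities collected in Appendix \ref{sec_W_int}. The higher-order pieces of $Q_m$, together with the errors of order $\ep_m^2$ in \eqref{eq_V_m_est}--\eqref{eq_V_m_est_2}, produce strictly subdominant contributions once paired against the rescaled bubble tails; this is where the dimension restriction \eqref{eq_n} enters, guaranteeing integrability and the right balance of scales.

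On the other side of Pohozaev, the boundary quantity $\mcp(\wtu_m,r)$ is controlled via Proposition \ref{prop_iso} and the annular Harnack inequality of Lemma \ref{lemma_har}, which pin down the profile of $M_m\,\wtu_m$ near $\pa_I B^N_+(0,r)$ as a Green's-function-like tail and yield $\mcp(\wtu_m,r) = O(M_m^{-2})$; the subcriticality correction is absorbed using $\delta_m = O(\hep_m^2)$ from \eqref{eq_d_m_est}. Since $\hep_m \sim M_m^{-2/(n-2\ga)}$ and $n > 2\ga + 2$, the leading bulk scale $\hep_m^2\,\|\tipi_m(y_m)\|^2 = M_m^{-4/(n-2\ga)}\,\|\tipi_m(y_m)\|^2$ strictly dominates $O(M_m^{-2})$, so dividing the Pohozaev equality by $\hep_m^2$ forces $\|\tipi_m(y_m)\|^2 = o(1)$, contradicting the contrary assumption; the assertion $\tipi_0(y_0) = 0$ follows from the $C^4$-convergence $(\tig_m,\tih_m) \to (\tig_0,\tih_0)$. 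The hard part is the $W_{1,0}$-level cancellation: a naive expansion leaves a vanishing leading term precisely because $\textnormal{tr}(\tipi_m(y_m)) = 0$, so one must go to the $\Psi_m$-level to extract the $\|\tipi_m(y_m)\|^2$ signal and then verify, via the explicit integrals of Appendix \ref{sec_W_int}, that the resulting coefficient $c(n,\ga)$ is nonvanishing in precisely the dimensional range \eqref{eq_n}.
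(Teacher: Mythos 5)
Your overall framework (Pohozaev on the rescaled ball, the expansion $\wtv_m = W_{1,0} + \Psi_m + O(\ep_m^2)$ from Proposition \ref{prop_ref}, boundary term of size $\ep_m^{n-2\ga}$, and the comparison of scales using $n > 2+2\ga$) matches the paper, but the heart of the argument is misplaced, and as stated it would fail. You claim that the pure $W_{1,0}$--$W_{1,0}$ contribution vanishes by trace-freeness and radial symmetry, and that the $\hep_m^2\,\|\tipi_m\|^2$ signal is produced by the cross term with $\Psi_m$. Only the first-order piece $2\hep_m x_N (\tipi_m)_{ij}\pa_{ij}W_{1,0}$ drops out by $\mathrm{tr}\,\tipi_m = 0$; the terms you dismiss as a subdominant ``remainder'' are not subdominant. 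After rescaling, the second-order coefficients of the metric expansion in Lemma \ref{lemma_metric} (the $x_N^2$ terms $3\pi_{ik}\pi_{kj}+R_{iNjN}[\tig_m]$, the $\pa_N\sqrt{|\bg|}$ term producing $\|\tipi_m\|^2+R_{NN}[\tig_m]$) together with the zeroth-order potential $E_{\tig_m}$ (which under \eqref{eq_hyp} is $\sim \hep_m^2 R[\tig_m]$, and $R[\tig_m](y_m) = -\tfrac{n}{n-1}\|\tipi_m\|^2$ by Gauss--Codazzi and Lemma \ref{lemma_rep}) all contribute exactly at order $\hep_m^2\,\|\tipi_m\|^2$ when paired with $W_{1,0}$ and $Z_{1,0}^0$. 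This is precisely where the main positive term \eqref{eq_whf_m_2} comes from in the paper, computed via the explicit integrals of Appendix \ref{sec_W_int}; and it is the sign of its coefficient $\tfrac{3n^2+n(16\ga^2-22)+20(1-\ga^2)}{8n(n-1)(1-\ga^2)}$, not any integrability issue, that forces the dimension restriction \eqref{eq_n}.

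Conversely, the cross term $\whf_m(W_{1,0},\Psi_m)+\whf_m(\Psi_m,W_{1,0})$ cannot be evaluated ``by explicit integration using Appendix \ref{sec_W_int}'': $\Psi_m$ has no closed form (it is produced abstractly by the Fredholm alternative in Proposition \ref{prop_lin}), so neither the value nor even the sign of your constant $c(n,\ga)$ is accessible that way, and nothing guarantees it is nonzero. The paper handles this term differently: integrating by parts and using the equation \eqref{eq_lin} satisfied by $\Psi_m$, the cross term equals $\kappa_\ga\int x_N^{1-2\ga}|\nabla\Psi_m|^2 - \tfrac{n+2\ga}{n-2\ga}\int w_{1,0}^{4\ga/(n-2\ga)}\Psi_m^2 + o(\ep_m^2)$, which is only bounded \emph{below} by $o(\ep_m^2)$ via the orthogonality \eqref{eq_lin_3} and the Morse-index-one property of $w_{1,0}$. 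Without this one-sided sign control (and with the main term attributed to the wrong source), your Pohozaev balance does not close: the cross term could a priori cancel whatever signal you extract. So the missing ingredients are (i) the explicit second-order computation of $\whf_m(W_{1,0},W_{1,0})$, where \eqref{eq_n} enters through positivity of the coefficient, and (ii) the variational sign argument for the $\Psi_m$ cross term.
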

\begin{proof}
In the proof, we will use Lemma \ref{lemma_rep} with $\tig = \tig_m$, $\tih = \tih_m$ and $y = y_m$,
and think as if $\wtu_m$ is a function in $\mr^N_+$ near the origin by appealing $\tig_m$-Fermi coordinates on $\ox$ around $y_m$.

\medskip
Denoting $\hg_m = \tig_m(\hep_m \cdot)$ and $\hf_m = \tif_m(\hep_m \cdot)$, we set
\[Q_{0m}(U) = \({n-2\ga \over 4n}\) \hep_m^2 (R[\tig_m](\hep_m \cdot) + o(1)) U\]
and
\begin{equation}\label{eq_Q_m}
\begin{aligned}
Q_m(U) &= (\hg_m^{ij} - \delta^{ij})\, \pa_{ij} U + \left[ \({\pa_i \sqrt{|\hg_m|} \over \sqrt{|\hg_m|}}\) \hg_m^{ij}\, \pa_jU + \({\pa_N \sqrt{|\hg_m|} \over \sqrt{|\hg_m|}}\) \pa_NU \right] + \pa_i \hg_m^{ij} \, \pa_j U \\
&= Q_{1m}(U) + Q_{2m}(U) + Q_{3m}(U).
\end{aligned}
\end{equation}
Also, let $\whe_m$ be the functions introduced in \eqref{eq_whe_m} so that $\wtv_m$ in \eqref{eq_V_m} is a solution of
\[\begin{cases}
-\text{div} (x_N^{1-2\ga} \nabla \wtv_m) + \whe_m(x_N) \wtv_m = x_N^{1-2\ga} Q_m(\wtv_m) &\text{in } B^N_+(0,r_5 \hep_m^{-1}),\\
\pa^{\ga}_{\nu} \wtv_m = \hf_m^{-\delta_m} \wtv_m^{p_m} &\text{on } B^n(0,r_5 \hep_m^{-1}).
\end{cases}\]
Thus in view of Pohozaev's identity in Lemma \ref{lemma_poho}, one can write
\[\mcp\(\wtv_m, r\hep_m^{-1}\) = \mcp_{1m}\(\wtv_m, r\hep_m^{-1}\) + {\delta_m \over p_m+1} \mcp_{2m}\(\wtv_m, r\hep_m^{-1}\) \quad \text{for any } r \in (0, r_5]\]
where
\begin{equation}\label{eq_F_R_m}
\begin{aligned}
\mcp_{1m}(U, r) &= \kappa_{\ga} \int_{B^N_+(0,r)} x_N^{1-2\ga} \left[ Q_{0m}(U) - Q_m(U) \right] \\
&\hspace{160pt} \times \left[x_i \pa_i U + x_N \pa_N U + \({n-2\ga \over 2}\) U \right] dx,\\
\mcp_{2m}(U, r) &= - \int_{B^n(0,r)} x_i \pa_i \hf_m \hf_m^{-(\delta_m+1)} u^{p_m+1} d\bx + \({n-2\ga \over 2}\) \int_{B^n(0,r)} \hf_m^{-\delta_m} u^{p_m+1} d\bx
\end{aligned}
\end{equation}
for $u = U$ on $B^n(0,r)$.

\medskip
For a fixed number $r \in (0, r_5]$, let
\begin{multline*}
\whf_m(V_1, V_2) = \kappa_{\ga} \int_{B^N_+(0,r\hep_m^{-1})} x_N^{1-2\ga} \left[Q_{0m}(V_1) - Q_m(V_1) \right] \\
\times \left[x_i \pa_i V_2 + x_N \pa_N V_2 + \({n-2\ga \over 2}\) V_2 \right] dx.
\end{multline*}
Owing to \eqref{eq_V_m_est} and \eqref{eq_V_m_est_2}, we are led to
\begin{multline}\label{eq_whf_m_0}
\mcp_{1m}\(\wtv_m, r\hep_m^{-1}\) \\
= \whf_m(W_{1,0}, W_{1,0}) + \whf_m(W_{1,0}, \Psi_m) + \whf_m(\Psi_m, W_{1,0}) + \whf_m(\Psi_m, \Psi_m) + o(\ep_m^2)
\end{multline}
provided that $n > 2 + 2\ga$.

We first estimate $\whf_m(W_{1,0}, W_{1,0})$. This amounts to calculate the integrals
\[\wtf_{\ell m} = \kappa_{\ga} \int_{\mr^N_+} x_N^{1-2\ga} Q_{\ell m}(W_{1,0})\, Z_{1,0}^0 dx \quad \text{for } \ell = 0,\, 1,\, 2,\, 3\]
where $Z_{1,0}^0 = x \cdot \nabla W_{1,0} + (n-2\ga)/2 \cdot W_{1,0}$ is the function described in \eqref{eq_Z}.
For the value $\wtf_{1m}$, we discover from Lemmas \ref{lemma_W} (1), \ref{lemma_metric} and \ref{lemma_rep} that
\begin{align}
\wtf_{1m} &= \kappa_{\ga} \left[ 2 \hep_m (\tipi_m)_{ij} \int_{\mr^N_+} x_N^{2-2\ga} \pa_{ij}W_{1,0} Z_{1,0}^0 dx \nonumber
+ {1 \over 3} \hep_m^2 R_{ikjl}[\tih_m] \int_{\mr^N_+} x_N^{1-2\ga} x_k x_l\, \pa_{ij} W_{1,0} Z_{1,0}^0 dx \right. \nonumber \\
&\hspace{30pt} + \hep_m^2 (\tig_m)^{ij}_{\phantom{ij},Nk} \int_{\mr^N_+} x_N^{2-2\ga} x_k \, \pa_{ij} W_{1,0} Z_{1,0}^0 dx \nonumber \\
&\hspace{30pt} \left. + \hep_m^2 \(3 (\tipi_m)_{ik} (\tipi_m)_{kj} + R_{iNjN}[\tig_m]\) \int_{\mr^N_+} x_N^{3-2\ga} \pa_{ij}W_{1,0} Z_{1,0}^0 dx + o(\ep_m^2) \right]  \label{eq_wtf_1} \\
&= \kappa_{\ga} \left[0 + 0 + 0 + \hep_m^2 \(3 \|\tipi_m\|^2 + R_{NN}[\tig_m]\) \cdot {1 \over n} \int_{\mr^N_+} x_N^{3-2\ga} \Delta_{\bx} W_{1,0} Z_{1,0} dx + o(\ep_m^2) \right] \nonumber \\
&\hspace{270pt} \text{(since $\widetilde{H}_m = R[\tih_m] = 0$)} \nonumber \\
&= \hep_m^2 \|\tipi_m\|^2 \kappa_{\ga} \left[{4n-5 \over 2n(n-1)}\right] \int_{\mr^N_+} x_N^{3-2\ga} \Delta_{\bx} W_{1,0} Z_{1,0}^0 dx + o(\ep_m^2) \nonumber
\end{align}
where $\widetilde{H}_m = \text{tr}(\tipi_m)/n$. Similarly, we have
\begin{align}
\wtf_{2m} &= \hep_m^2 \|\tipi_m\|^2 \kappa_{\ga} \left[{1 \over 2(n-1)}\right] \int_{\mr^N_+} x_N^{2-2\ga} \pa_N W_{1,0} Z_{1,0}^0 dx + o(\ep_m^2) \label{eq_wtf_31}
\intertext{and}
\wtf_{3m} &= o(\ep_m^2). \label{eq_wtf_32}
\end{align}
Moreover, the Gauss-Codazzi equation and Lemma \ref{lemma_rep} yield
\[R[\tig_m] = 2R_{NN}[\tig_m] + \|\tipi_m\|^2 + R[\tih_m] - \widetilde{H}_m^2 = - \({n \over n-1}\) \|\tipi_m\|^2 \quad \text{at } y_m \in M.\]
Hence we find
\begin{equation}\label{eq_wtf_2}
\wtf_{0m} = -\hep_m^2 \|\tipi_m\|^2 \kappa_{\ga} \left[{n-2\ga \over 4(n-1)}\right] \int_{\mr^N_+} x_N^{1-2\ga} W_{1,0} Z_{1,0}^0 dx + o(\ep_m^2).
\end{equation}
Consequently, by combining \eqref{eq_wtf_1}-\eqref{eq_wtf_2} and employing \eqref{eq_W_int}, we obtain that
\begin{equation}\label{eq_whf_m_2}
\whf_m(W_{1,0}, W_{1,0}) = \hep_m^2 \|\tipi_m\|^2 \kappa_{\ga} \left[{3n^2 + n(16\ga^2-22)+20(1-\ga^2) \over 8n(n-1)(1-\ga^2)}\right] \mcc_0 + o(\ep_m^2)
\end{equation}
for some constant $\mcc_0 > 0$ relying only on $n$ and $\ga$.
We note that the coefficient of $\ep_m^2$ (or $\hep_m^2$) is positive if and only if $n$ satisfies \eqref{eq_n} for each $\ga \in (0,1)$.

On the other hand, it holds that
\begin{equation}\label{eq_whf_m_1}
\whf_m(W_{1,0}, \Psi_m) + \whf_m(\Psi_m, W_{1,0}) \ge o(\ep_m^2)
\end{equation}
whose verification is deferred to the end of the proof, and
\begin{equation}\label{eq_whf_m_3}
\whf_m(\Psi_m, \Psi_m) = o(\ep_m^2).
\end{equation}
By plugging \eqref{eq_whf_m_2}-\eqref{eq_whf_m_3} into \eqref{eq_whf_m_0}, we reach at
\[\mcp_{1m}\(\wtv_m, r\hep_m^{-1}\) \ge \hep_m^2 \|\tipi_m\|^2 \kappa_{\ga} \left[{3n^2 + n(16\ga^2-22)+20(1-\ga^2) \over 8n(n-1)(1-\ga^2)}\right] \mcc_0 + o(\ep_m^2).\]

\medskip
Using \eqref{eq_iso}, \eqref{eq_delta_m} and \eqref{eq_V_m}, we deduce \[\begin{cases}
\left| \nabla_{\bx}^{\ell} \wtv_m(x) \right| \le C |x|^{-(n-2\ga+\ell)}  \ (\ell = 0, 1, 2),\\
\left| x_N^{1-2\ga} \pa_N \wtv_m(x) \right| \le C|x|^{-n}
\end{cases}
\quad \text{in } B^N_+(0, r_4 \hep_m^{-1}).\]
Thus, it follows from \eqref{eq_F_R_m} that
\[\left| \mcp\(\wtv_m, r\hep_m^{-1}\) \right| \le C \ep_m^{n-2\ga} \quad \text{and} \quad \mcp_{2m}\(\wtv_m, r\hep_m^{-1}\) \ge 0\]
if $r > 0$ is selected to be small enough. As a result, estimate \eqref{eq_van} follows.

\medskip \noindent \textbf{Derivation of \eqref{eq_whf_m_1}.}
Since $\tipi_m \to \tipi_0$ in $C^1(M)$, the norm $|\tipi_m|_{\infty}$ (see the statement of Proposition \ref{prop_lin}) is uniformly bounded in $m \in \mn$.
Thus, by virtue of \eqref{eq_Q_m}, \eqref{eq_W_dec}, \eqref{eq_lin_1}, Lemma \ref{lemma_metric} and integration by parts, we observe
\begin{align*}
&\ \whf_m(W_{1,0}, \Psi_m) + \whf_m(\Psi_m, W_{1,0}) \\
&= - 2 \hep_m (\tipi_m)_{ij}\, \kappa_{\ga} \int_{\mr^N_+} x_N^{2-2\ga} \left[\pa_{ij} W_{1,0} \left\{x_i \pa_i \Psi_m + x_N \pa_N \Psi_m + \({n-2\ga \over 2}\) \Psi_m \right\} \right. \\
&\hspace{130pt} \left. + \pa_{ij} \Psi_m \left\{x_i \pa_i W_{1,0} + x_N \pa_N W_{1,0} + \({n-2\ga \over 2}\) W_{1,0}\right\} \right] dx + o(\ep_m^2) \\
&= 2 \hep_m (\tipi_m)_{ij}\, \kappa_{\ga} \int_{\mr^N_+} x_N^{2-2\ga} \left[ (n-2\ga+2)\, \pa_i W_{1,0} \pa_j \Psi_m
+ \pa_i W_{1,0} (x_k \pa_{jk} \Psi_m + x_N \pa_{jN} \Psi_m) \right. \\
&\hspace{200pt} \left. + (x_k \pa_{ik} W_{1,0} + x_N \pa_{iN} W_{1,0}) \pa_j \Psi_m \right] dx + o(\ep_m^2) \\
&= - 2 \hep_m (\tipi_m)_{ij}\, \kappa_{\ga} \int_{\mr^N_+} x_N^{2-2\ga} \pa_i W_{1,0} \pa_j \Psi_m dx + o(\ep_m^2)
\end{align*}
provided that $n > 2 + 2\ga$. On the other hand, applying another integration by parts and testing $\Psi_m$ in \eqref{eq_lin} lead to
\begin{multline*}
- 2 \hep_m (\tipi_m)_{ij}\, \kappa_{\ga} \int_{\mr^N_+} x_N^{2-2\ga} \pa_i W_{1,0} \pa_j \Psi_m dx \\
= \underbrace{\kappa_{\ga} \int_{\mr^N_+} x_N^{1-2\ga} |\nabla \Psi_m|^2 dx - \({n+2\ga \over n-2\ga}\) \int_{\mr^n} w_{1,0}^{4\ga \over n-2\ga} \Psi_m^2 d\bx}_{= \mathcal{I}} + o(\ep_m^2).
\end{multline*}
It is well-known that the Morse index of $w_{1,0} \in H^{\ga}(\mr^n)$ is 1 due to the contribution of $w_{1,0}$ itself.
Hence we see from \eqref{eq_lin_3} that $\mathcal{I} \ge 0$; see the proof of \cite[Lemma 4.5]{DKP} for more explanation.
This completes the proof.
\end{proof}

\section{Proof of the main theorems}\label{sec_main}
\subsection{Exclusion of bubble accumulation}
Set
\begin{equation}\label{eq_mcp'}
\mcp'(U, r) = \kappa_{\ga} \int_{\pa_I B^N_+(0,r)} x_N^{1-2\ga} \left[\({n-2\ga \over 2}\) u {\pa u \over \pa r} - {r \over 2} |\nabla u|^2
+ r \left| {\pa u \over \pa r} \right|^2 \right] d\sigma_{x},
\end{equation}
which is a part of the function $\mcp$ defined in \eqref{eq_poho_1}.
\begin{lemma}\label{lemma_lsign}
Assume that $\ga \in (0,1)$ and the dimension condition \eqref{eq_n} holds.
Let $y_m \to y_0$ be an isolated simple blow-up point of $\{U_m\}_{m \in \mn}$, and $\{\wtu_m\}_{m \in \mn}$ be the sequence of the functions constructed in Subsection \ref{subsec_blow}.
Suppose further that $\tipi_0(y_0) \ne 0$.
Then, given $m \in \mn$ large and $r > 0$ small, there exist universal constants $\mcc_1, \cdots, \mcc_4 > 0$ such that
\begin{equation}\label{eq_lsign}
\hep_m^{n-2\ga} \mcp'\(\wtu_m(0)\wtu_m, r\) \ge \hep_m^2 \mcc_1 - \hep_m^{2+\eta} r^{2-\eta} \mcc_2 - \hep_m^{n-2\ga} r^{-n+2\ga+1} \mcc_3 - {\hep_m^n r^n \mcc_4 \over \hep_m^{2n+o(1)} + r^{2n+o(1)}}
\end{equation}
in $\bg_m$-Fermi coordinates centered in $y_m$. Here, $\eta > 0$ is an arbitrarily small number.
\end{lemma}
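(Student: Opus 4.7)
The plan is to lift the Pohozaev identity of Lemma \ref{lemma_poho} from the physical radius $r$ to the blow-up scale $r/\hep_m$, reuse the positivity already captured in the proof of Proposition \ref{prop_van}, and then track the $r$-dependence of the error terms that were harmlessly absorbed into $o(\ep_m^2)$ there. Concretely, I would apply Lemma \ref{lemma_poho} to the rescaled profile $\wtv_m$ from \eqref{eq_V_m} on the half-ball $B^N_+(0, r\hep_m^{-1})$, which solves the perturbed extension equation appearing in Section~\ref{sec_van}. Since $f_m \equiv 1$, this yields
\[
\mcp(\wtv_m, r\hep_m^{-1}) = \mcp_{1m}(\wtv_m, r\hep_m^{-1}) + \frac{\delta_m}{p_m+1}\mcp_{2m}(\wtv_m, r\hep_m^{-1}),
\]
in which $\mcp_{2m}$ reduces to $\frac{n-2\ga}{2}\int \wtv_m^{p_m+1}\, d\bx \ge 0$ and $\delta_m \ge 0$, so this contribution is dropped from below at no cost.

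Next, I would repeat the expansion of $\mcp_{1m}$ performed in Proposition~\ref{prop_van} with the sharp pointwise control from Proposition~\ref{prop_ref}, namely $\wtv_m = W_{1,0} + \Psi_m + E$ with $|E|(x) \le C\ep_m^2/(1+|x|^{n-2\ga-2})$. The ``clean'' part reproduces the key lower bound
\[
\whf_m(W_{1,0},W_{1,0}) + \whf_m(W_{1,0},\Psi_m) + \whf_m(\Psi_m,W_{1,0}) + \whf_m(\Psi_m,\Psi_m)
\ge \hep_m^2 \|\tipi_m\|^2\, \kappa_\ga\left[\tfrac{3n^2+n(16\ga^2-22)+20(1-\ga^2)}{8n(n-1)(1-\ga^2)}\right] \mcc_0 + o(\hep_m^2),
\]
whose coefficient is strictly positive under \eqref{eq_n}. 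Since $\tipi_0(y_0) \neq 0$ implies $\|\tipi_m(y_m)\|^2 \ge \tfrac12\|\tipi_0(y_0)\|^2$ for large $m$, this produces the leading $\hep_m^2 \mcc_1$. The two remaining error contributions come from (a) substituting $E$ (and higher Taylor coefficients of $\hg_m$) into the quadratic integrand of $\mcp_{1m}$, whose worst behaviour in the annulus $\{|x|\lesssim r\hep_m^{-1}\}$ is $\hep_m^2 \cdot (\hep_m/r)^{-\eta}$ times a convergent integral, yielding $\hep_m^{2+\eta}r^{2-\eta}\mcc_2$ for any small $\eta>0$; and (b) replacing the full-space integral of Proposition~\ref{prop_van} with the truncated one at radius $r\hep_m^{-1}$, whose tail is $\int_{|x|>r\hep_m^{-1}} x_N^{3-2\ga}|x|^{-(2n-4\ga+2)}\,dx \sim (r\hep_m^{-1})^{-(n-2\ga-1)}$ (after an integration by parts that gains one power of $|x|$), producing $\hep_m^{n-2\ga}r^{-n+2\ga+1}\mcc_3$.

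To pass from $\mcp$ to $\mcp'$, I would subtract the boundary contribution $\frac{r\hep_m^{-1}}{p_m+1}\int_{\pa B^n(0,r\hep_m^{-1})}\wtv_m^{p_m+1}\,d\sigma_{\bx}$. Using the uniform decay $\wtv_m(\bx)\le C(1+|\bx|)^{-(n-2\ga)}$ from Proposition~\ref{prop_iso} and $(n-2\ga)(p_m+1)=2n-(n-2\ga)\delta_m$, this boundary term is bounded by $C(\hep_m/r)^{n+o(1)}$, which is exactly the compact form $\mcc_4 \hep_m^n r^n/(\hep_m^{2n+o(1)}+r^{2n+o(1)})$ in the regime $\hep_m\ll r$. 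Finally I would rescale: $M_m\wtu_m(y) = (M_m^2/\alpha_{n,\ga})\wtv_m(y/\hep_m)$, and a direct change of variables in \eqref{eq_mcp'} gives $\mcp'(M_m\wtu_m,r) = (M_m^4/\alpha_{n,\ga}^2)\hep_m^{n-2\ga}\mcp'(\wtv_m,r/\hep_m)$. The prefactor $(M_m^4/\alpha_{n,\ga}^2)\hep_m^{2(n-2\ga)}$ equals $\alpha_{n,\ga}^2(1+o(1))$ by the relation $M_m\hep_m^{2\ga/(p_m-1)}=\alpha_{n,\ga}$ together with $\delta_m=O(\ep_m^2)$ from \eqref{eq_d_m_est}, so multiplying the lower bound for $\mcp'(\wtv_m,r/\hep_m)$ by $\hep_m^{n-2\ga}$ delivers \eqref{eq_lsign}.

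The main obstacle I expect is Step (a) above: turning the qualitative ``$o(\ep_m^2)$'' of Proposition~\ref{prop_van} into a quantitatively sharp $\hep_m^{2+\eta}r^{2-\eta}$ remainder. The cross-terms between the residual $E$ and the scaling field $Z_{1,0}^0$, and also between the higher-order part of the metric expansion of $\hg_m$ and $W_{1,0}+\Psi_m$, must be bookkept against their growth at $|x|\sim r\hep_m^{-1}$; the $\eta$ is the small gain needed to make the resulting integrals converge uniformly in $m$ in the absence of the symmetry cancellations that eliminated the formally larger terms in Proposition~\ref{prop_van}. Once this is done, steps concerning the tail and the $\mcp$-vs-$\mcp'$ correction are routine scaling exercises using the decay estimates already established in Propositions~\ref{prop_iso} and~\ref{prop_ref}.
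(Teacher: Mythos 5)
Your proposal is correct and follows essentially the same route as the paper: rescale so that $\hep_m^{n-2\ga}\mcp'(\wtu_m(0)\wtu_m,r)$ becomes $\mcp(\wtv_m,r\hep_m^{-1})$ minus the boundary integral, reuse the Pohozaev expansion from the proof of Proposition \ref{prop_van} (with $\tipi_0(y_0)\neq 0$ giving the positive $\hep_m^2\mcc_1$ term and the truncation/remainder terms giving $\mcc_2,\mcc_3$), drop the nonnegative $\delta_m\,\mcp_{2m}$ contribution, and bound the boundary term by the decay estimate \eqref{eq_wtv_d_0}. One small correction: after the conformal normalization the nonlinearity carries $\hf_m=\tif_m(\hep_m\cdot)=w_m(\hep_m\cdot)$ rather than $f_m\equiv 1$, so the sign of $\mcp_{2m}$ comes from \eqref{eq_w_m_2} (smallness of $x_i\pa_i\hf_m$ for $r$ small), exactly as asserted in Proposition \ref{prop_van}, not from the vanishing of that term.
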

\begin{proof}
It holds that
\[\mcp'\(\wtu_m(0)\wtu_m, r\) 
= \hep_m^{-(n-2\ga)+o(1)} \left[ \mcp\(\wtv_m, r\hep_m^{-1}\) -
{r\hep_m^{-1} \over p+1} \int_{\pa B^n(0,r\hep_m^{-1})} \hf_m^{-\delta_m} \wtv_m^{p_m+1} d\sigma_{\bx} \right]\]
where $\wtv_m$ is the function defined in \eqref{eq_V_m} and $\hep_m^{o(1)} \to 1$ as $m \to \infty$.
Inspecting the proof of Proposition \ref{prop_van} and using the assumption that $\tipi_0(y_0) \ne 0$, we obtain
\[\mcp\(\wtv_m, r\hep_m^{-1}\) \ge \hep_m^2 \mcc_1 - \hep_m^{2+\eta} r^{2-\eta} \mcc_2 - \hep_m^{n-2\ga} r^{-n+2\ga+1} \mcc_3\]
for some $\mcc_1, \mcc_2, \mcc_3 > 0$ and small $\eta > 0$. Also, by \eqref{eq_wtv_d_0},
\[\left| r\hep_m^{-1} \int_{\pa B^n(0,r\hep_m^{-1})} \hf_m^{-\delta_m} \wtv_m^{p_m+1} d\sigma_{\bx} \right| \le {\mcc_4 \hep_m^n r^n \over \hep_m^{2n+o(1)} + r^{2n+o(1)}}\]
for some $\mcc_4 > 0$. Therefore \eqref{eq_lsign} holds.
\end{proof}

We shall use the following Liouville type lemma to prove Lemma \ref{lemma_sim}.
\begin{lemma}\label{lemma_li}
If $U \in W^{1,2}_{\loc}(\mr_+^N; x_N^{1-2\ga})$ is a solution to
\begin{equation}\label{eq_har}
\begin{cases}
-\textnormal{div} (x_N^{1-2\ga} \nabla U) = 0 &\text{in } \mr^N_+,\\
\pa^{\ga}_{\nu} U = 0 &\text{on } \mr^n,\\
\liminf_{|x| \to \infty} U(x) \ge 0,
\end{cases}
\end{equation}
then it should be a nonnegative constant.
\end{lemma}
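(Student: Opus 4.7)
My plan is to extend $U$ by even reflection across the boundary hyperplane $\{x_N = 0\}$ and then invoke a classical Liouville-type property for nonnegative solutions of the resulting degenerate elliptic equation on all of $\mr^N$.

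First, I would set $\tilde U(\bx, x_N) := U(\bx, |x_N|)$ for $x = (\bx, x_N) \in \mr^N$. Because the boundary condition $\pa^{\ga}_{\nu} U = 0$ is precisely the natural Neumann-type condition associated to the bulk operator $-\textnormal{div}(x_N^{1-2\ga} \nabla \cdot)$, a direct integration-by-parts against a test function $\phi \in C_c^{\infty}(\mr^N)$ (splitting $\phi$ into its even and odd parts in $x_N$) shows that $\tilde U \in W^{1,2}_{\loc}(\mr^N; |x_N|^{1-2\ga})$ weakly satisfies
\[
-\textnormal{div}(|x_N|^{1-2\ga} \nabla \tilde U) = 0 \quad \text{in } \mr^N.
\]
Since $|x_N|^{1-2\ga}$ lies in the Muckenhoupt class $A_2$, the weighted Sobolev theory of Fabes-Kenig-Serapioni applies globally to $\tilde U$.

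Second, I would upgrade the liminf hypothesis to $\tilde U \ge 0$ pointwise on $\mr^N$. For any $\ep > 0$, the hypothesis produces $R_0 > 0$ such that $U \ge -\ep$ outside $B^N(0,R_0) \cap \mr^N_+$; by definition of $\tilde U$, the same bound holds outside $B^N(0,R_0) \subset \mr^N$. Applying the weak minimum principle for $A_2$-weighted divergence-form operators on $B^N(0,R)$ with $R > R_0$ (with boundary datum $\ge -\ep$), I conclude $\tilde U \ge -\ep$ throughout $B^N(0,R)$. Sending $R \to \infty$ and then $\ep \to 0$ yields $\tilde U \ge 0$ everywhere.

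Third, with $\tilde U$ a global nonnegative solution of the $A_2$-weighted equation, I would deduce constancy by combining the Harnack inequality with interior H\"older regularity, both supplied by Fabes-Kenig-Serapioni in this weighted setting (cf.\ the references to \cite{FKS} and \cite{TX} already used in this paper). Specifically, for arbitrary $x, y \in \mr^N$ and $R \gg |x| + |y|$, the interior H\"older estimate gives
\[
|\tilde U(x) - \tilde U(y)| \le C R^{-\alpha} |x-y|^{\alpha} \sup_{B^N(0, 2R)} \tilde U
\]
for some universal $\alpha \in (0,1)$, while Harnack's inequality on the enlarged ball bounds the supremum on the right by $C'\, \tilde U(0)$ uniformly in $R$. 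Letting $R \to \infty$ forces $\tilde U(x) = \tilde U(y)$, so $\tilde U$ (and hence $U$) is a nonnegative constant.

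The main technical point will be the justification at the first step, namely verifying that the Neumann-type condition $\pa^{\ga}_{\nu} U = 0$ (interpreted as the vanishing of the weighted conormal derivative in the trace sense) allows the even reflection to produce a genuine weak solution of the degenerate divergence-form equation across $\{x_N = 0\}$. Once this standard reflection principle is in place, the remaining steps are straightforward applications of $A_2$-weighted elliptic theory.
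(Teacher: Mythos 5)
Your argument is correct, but it takes a genuinely different route from the paper's. You reflect $U$ evenly across $\{x_N=0\}$, using the weak Neumann condition $\pa^{\ga}_{\nu}U=0$ to show that $\tilde U(\bx,x_N)=U(\bx,|x_N|)$ solves the $A_2$-weighted equation in all of $\mr^N$, then first upgrade the liminf hypothesis to $\tilde U\ge 0$ via a weak minimum principle on large balls, and finally get constancy from the interior Harnack inequality combined with interior H\"older (oscillation) decay of Fabes--Kenig--Serapioni, with constants uniform in the radius by homogeneity of the weight $|x_N|^{1-2\ga}$. The paper instead stays on the half-space and never reflects: it uses the boundary H\"older estimate \eqref{eq_DNM} together with the asymptotic condition to see that $U$ is continuous up to $\mr^n$ and bounded below, sets $m_0=\inf_{\mr^N_+}U$, and applies the boundary Harnack inequality \eqref{eq_har_2} of Lemma \ref{lemma_reg_0} (which already encodes the Neumann condition) to $U-m_0\ge 0$ on half-balls of every radius; letting the radius tend to infinity makes the infimum side vanish, so $U\equiv m_0$, and the liminf condition then gives $m_0\ge 0$. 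The orders are reversed: you prove nonnegativity first and constancy second, the paper proves constancy first and nonnegativity last, which lets it dispense with your separate minimum-principle step. What each buys: your reflection argument is self-contained modulo the classical interior $A_2$ theory of \cite{FKS} (cf. \cite{TX}) and does not require a Harnack inequality up to the Neumann boundary, at the cost of justifying the reflection principle (which you do correctly via the even/odd splitting of test functions) and an extra comparison step; the paper's argument is shorter because the up-to-the-boundary Harnack inequality with scale-invariant constant is already available in its appendix for other purposes.
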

\begin{proof}
According to H\"older estimate \eqref{eq_DNM} and the asymptotic condition in \eqref{eq_har},
$U$ is contained in $C^{\beta}_{\loc}(\overline{\mr^N_+})$ and bounded from below.
Let us denote $m_0 = \inf_{\mr^N_+} U$ and $U_{m_0} = U - m_0 \ge 0$.
By the Harnack inequality \eqref{eq_har_2} and scaling invariance of \eqref{eq_har}, we have
\begin{equation}\label{eq_har_3}
\sup_{B^N_+(0,r)} U_{m_0} \le C \inf_{B^N_+(0,r)} U_{m_0} \quad \text{for any } r > 0
\end{equation}
where $C > 0$ is independent of $r$.
Letting $r \to \infty$ in \eqref{eq_har_3}, we obtain that $U_{m_0} = 0$ or $U = m_0$ in $\mr^N_+$.
One more application of the asymptotic condition on $U$ forces $m_0 \ge 0$. The proof is concluded.
\end{proof}

\begin{lemma}\label{lemma_sim}
Assume that $\ga \in (0,1)$ and the dimension condition \eqref{eq_n} holds.
Let $y_m \to y_0 \in M$ be an isolated blow-up point of the sequence $\{U_m\}_{m \in \mn}$ and $\pi_0(y_0) \ne 0$.
Then $y_0$ is an isolated simple blow-up point of $\{U_m\}_{m \in \mn}$.
\end{lemma}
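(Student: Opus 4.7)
The plan is to argue by contradiction: assume $y_0$ is an isolated but not isolated simple blow-up point of $\{U_m\}_{m\in\mn}$. By Corollary~\ref{cor_i_blow}(2), in the simple regime $\bu_m$ would have a unique critical point in $(0,R_m\hep_m)$, so the failure of simplicity yields, after passing to a subsequence, a sequence of ``second'' critical points $r_m$ of $\bu_m$ with $r_m\in(R_m\hep_m, r_3)$, $r_m\to 0$, and $r_m/\hep_m\to\infty$. By the minimality of $r_m$, $\bu_m'<0$ on $[R_m\hep_m, r_m)$.

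First I rescale by setting $\xi_m(x)=r_m^{2\ga/(p_m-1)}\tiu_m(r_m x)$ in $\tig_m$-Fermi coordinates, so that $\xi_m$ solves a version of \eqref{eq_Yamabe_30} with metric converging to the Euclidean one, and $\bar\xi_m(s)=\bu_m(r_m s)$. Then $\xi_m(0)=\alpha_{n,\ga}(r_m/\hep_m)^{2\ga/(p_m-1)}\to\infty$, the isolated blow-up estimate \eqref{eq_blow_1} descends to $\xi_m(\bar x)\le C|\bar x|^{-2\ga/(p_m-1)}$, and $\bar\xi_m$ has critical points only at $s\approx\hep_m/r_m\to 0$ and at $s=1$. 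Hence the origin is an isolated simple blow-up point of $\{\xi_m\}$ on any $B^N_+(0,R)$ with $R<1$, and Proposition~\ref{prop_iso} applied to $\{\xi_m\}$ furnishes the uniform bound $\xi_m(0)\xi_m(x)\le C|x|^{-(n-2\ga)}$ inside $B^N_+(0,r_4)$. Since the rescaled Neumann source is $\xi_m(0)^{1-p_m}(\xi_m(0)\xi_m)^{p_m}$, which tends to $0$ uniformly on compact subsets away from the origin, a subsequential $C^2_{\loc}(\overline{B^N_+(0,r_4)}\setminus\{0\})$ limit $G:=\lim_m\xi_m(0)\xi_m$ exists and solves the homogeneous problem
\begin{equation*}
-\mathrm{div}\bigl(x_N^{1-2\ga}\nabla G\bigr)=0,\qquad \pa^{\ga}_{\nu} G=0\ \text{on } \mr^n\setminus\{0\},
\end{equation*}
with $G(x)\le C|x|^{-(n-2\ga)}$ and an isolated singularity at the origin.

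Next I invoke the fractional B\^ocher theorem of Appendix~\ref{subsec_Bocher} to decompose $G=a\Phi+H$ near $0$, where $\Phi$ is the fundamental solution (so $\Phi(x)\sim c_0|x|^{-(n-2\ga)}$) and $H$ extends smoothly across $\{x_N=0\}\cap\{0\}$. The limit condition $\bar G'(1)=0$, inherited from $\bar\xi_m'(1)=0$, forces $a>0$, since otherwise $G$ would be regular at $0$ and inconsistent with $\xi_m(0)^2\to\infty$. Applying Pohozaev's identity (Lemma~\ref{lemma_poho}) to $G$ on the annular half-region $B^N_+(0,r)\setminus B^N_+(0,\ep)$, together with the vanishing of the volume source and Neumann source, yields $\mcp'(G,r)=\mcp'(G,\ep)$ for every $0<\ep<r<r_4$. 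Computing $\lim_{\ep\to 0^+}\mcp'(G,\ep)$ from the $a\Phi+H$ expansion produces a strictly negative ``mass'' term when $a>0$, so $\mcp'(G,1)<0$. On the other hand, Lemma~\ref{lemma_lsign} evaluated at $r=r_m$ yields $\hep_m^{n-2\ga}\mcp'(\tiu_m(0)\tiu_m,r_m)\ge(\mcc_1/2)\hep_m^2$ once one checks---using $n>2+2\ga$ from \eqref{eq_n} and the rate $r_m/\hep_m\to\infty$---that every error term in \eqref{eq_lsign} is $o(\hep_m^2)$. Translating back via $\tiu_m(0)\tiu_m(r_m x)=r_m^{-4\ga/(p_m-1)}\xi_m(0)\xi_m(x)$ and tracking the scaling of $\mcp'$ (the powers of $r_m$ and $\hep_m$ balance exactly because $\xi_m(0)^2=\alpha_{n,\ga}^2(r_m/\hep_m)^{n-2\ga+o(1)}$), I obtain $\mcp'(\xi_m,1)\ge c>0$, hence $\mcp'(G,1)\ge c>0$ in the limit, contradicting the preceding computation.

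The \emph{main obstacle} is the sign analysis of $\lim_{\ep\to 0^+}\mcp'(G,\ep)$: in the degenerate weighted setting with nonlocal Neumann condition, $\Phi$ is neither radial across $\{x_N=0\}$ nor explicit, so establishing strict negativity of the mass term requires the precise Green's function asymptotics of Appendix~\ref{subsec_Green} together with a careful cancellation analysis based on the B\^ocher expansion---the ``radial cancellation'' that would trivialize the leading term must be ruled out using the Neumann boundary profile of $\Phi$ and the nontriviality of $H$. A secondary technical point is verifying that $r_m$ lies in the admissible range of \eqref{eq_lsign}: if the separation $r_m/\hep_m$ grows too slowly, one must choose $R_m$ more aggressively in Lemma~\ref{lemma_conv} or perform an auxiliary rescaling at an intermediate scale between $\hep_m$ and $r_m$ to force $r_m$ into a regime where the error terms in \eqref{eq_lsign} are genuinely subdominant.
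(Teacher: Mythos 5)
Your overall skeleton (argue by contradiction via a second critical point $r_m\to 0$ of $\bu_m$, rescale at scale $r_m$, pass to a limit with an isolated singularity, decompose by the fractional B\^ocher theorem, and play the Pohozaev sign of Lemma \ref{lemma_lsign} against the limit) is exactly the paper's strategy, but the decisive sign computation has a genuine gap. For a function of the form $U=A|x|^{-(n-2\ga)}+B$ the terms quadratic in the singular part cancel identically in \eqref{eq_mcp'}, and one finds $\mcp'(U,r)=-\kappa_{\ga}\,c(n,\ga)\,AB\int_{\pa_I B^N_+(0,1)}x_N^{1-2\ga}\,d\sigma_x$ for all $r$; so knowing only $a>0$ in your decomposition $G=a\Phi+H$ does \emph{not} make $\lim_{\ep\to 0}\mcp'(G,\ep)$ strictly negative — if $H(0)=0$ the limit is $0$ and no contradiction results. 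What is missing is the identification of the regular part: since the rescaled domains exhaust $\mr^N_+$, the regular part is a global nonnegative solution of the homogeneous problem and hence, by the Liouville-type Lemma \ref{lemma_li}, a nonnegative \emph{constant}; the critical-point condition $\bar G'(1)=0$ then forces this constant to equal the singular coefficient, and \eqref{eq_iso_2} forces that coefficient to be positive, so the limit is exactly $c_1\bigl(|x|^{-(n-2\ga)}+1\bigr)$ with $c_1>0$ and $\mcp'\bigl(c_1(|x|^{-(n-2\ga)}+1),1\bigr)<0$. Your ``main obstacle'' paragraph worries about non-radiality and non-explicitness of $\Phi$, but in the blow-up limit the metric is flat and the fundamental solution is the explicit radial function \eqref{eq_Green_mr^N}; the genuine issue is pinning down $H$, which you leave open.

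There is a second gap on the other side of the comparison: your claim that every error term in \eqref{eq_lsign} is $o(\hep_m^2)$ fails in general. With only $r_m\ge R_m\hep_m$ available, the term $\hep_m^{n-2\ga}r_m^{-n+2\ga+1}\mcc_3$ compares to $\hep_m^2$ like $\hep_m^{-1}R_m^{-(n-2\ga-1)}$, which need not tend to zero, and $R_m$ is fixed (in Lemma \ref{lemma_conv}) before $\hep_m$ along the subsequence, so it cannot be ``chosen more aggressively'' afterwards; the $\mcc_4$ term has the same defect. Consequently your lower bound $\mcp'(G,1)\ge c>0$ is unjustified. The paper sidesteps this by multiplying the inequality of Lemma \ref{lemma_lsign} at $r=\vrh_m$ by $\vrh_m^{n-2\ga+o(1)}$: after this rescaling the $\mcc_3$ contribution is $O(\vrh_m)$ and the $\mcc_4$ contribution is $O(R_m^{-2\ga+o(1)})$, so one only concludes $\liminf_m\mcp'\bigl(\wtt_m(0)\wtt_m,1\bigr)\ge 0$ — which is all that is needed once the limit is known to be strictly negative. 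Both gaps are repairable along these lines, but as written neither the strict negativity of the limit nor your positive lower bound is established.
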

\begin{proof}
Thanks to Corollary \ref{cor_i_blow} (2), the weighted average $\bar{u}_m$ of $u_m = U_m|_M$ (see \eqref{eq_bu} for its precise definition)
has exactly one critical point in $(0, R_m \hep_m)$ for large $m \in \mn$.
To the contrary, suppose that there exists the second critical point $\vrh_m$ of $\bar{u}_m$ such that $R_m \hep_m \le \vrh_m \to 0$ as $m \to \infty$. Define
\[\wtt_m(x) = \vrh_m^{2\ga \over p_m-1} \wtu_m(\vrh_m x) \quad \text{in } B^N_+(0, \vrh_m^{-1}r_5).\]
Then, using Propositions \ref{prop_iso} and \ref{prop_boc}, Lemma \ref{lemma_li} and \eqref{eq_iso_2}, one can verify the existence of $c_1 > 0$ and $\beta \in (0,1)$ such that
\begin{equation}\label{eq_wtt}
\wtt_m(0)\, \wtt_m \to c_1 \(|x|^{-(n-2\ga)} + 1\) \quad \text{in } C^{\beta}_{\loc}(\overline{\mr^N_+} \setminus \{0\}) \cap C^1_{\loc}(\mr^n \setminus \{0\})
\end{equation}
up to a subsequence; see \cite[Proposition 8.1]{Al}.
It follows from \eqref{eq_lsign} that
\begin{multline}\label{eq_wtt_1}
\hep_m^{-(n-2\ga)} \( \hep_m^2 \mcc_1 - \hep_m^{2+\eta} \vrh_m^{2-\eta} \mcc_2 - \hep_m^{n-2\ga} \vrh_m^{-n+2\ga+1} \mcc_3\) - {\hep_m^{2\ga} \vrh_m^n \mcc_4 \over \hep_m^{2n+o(1)} + \vrh_m^{2n+o(1)}} \\
\le \mcp'\(\wtu_m(0)\wtu_m, \vrh_m\) = \vrh_m^{-(n-2\ga)+o(1)} \mcp'\(\wtt_m(0)\wtt_m, 1\).
\end{multline}
Note that
\begin{equation}\label{eq_wtt_21}
\vrh_m^{n-2\ga+o(1)} \cdot \hep_m^{-(n-2\ga)} \( \hep_m^2 \mcc_1 - \hep_m^{2+\eta} \vrh_m^{2-\eta} \mcc_2 - \hep_m^{n-2\ga} \vrh_m^{-n+2\ga+1} \mcc_3\) \ge - 2 \vrh_m \mcc_3 \to 0
\end{equation}
and
\begin{equation}\label{eq_wtt_22}
\vrh_m^{n-2\ga+o(1)} \cdot {\hep_m^{2\ga} \vrh_m^n \over \hep_m^{2n+o(1)} + \vrh_m^{2n+o(1)}} \le C \({\hep_m \over \vrh_m}\)^{2\ga+o(1)} \le {C \over R_m^{2\ga+o(1)}} \to 0
\end{equation}
as $m \to \infty$. Hence, taking the limit on \eqref{eq_wtt_1} and employing \eqref{eq_wtt_21}, \eqref{eq_wtt_22}, \eqref{eq_wtt} and \eqref{eq_mcp'}, we obtain
\begin{align*}
0 \le \lim_{m \to \infty} \mcp'\(\wtt_m(0)\wtt_m, 1\) &= \mcp'\(c_1 \(|x|^{-(n-2\ga)} + 1\), 1\) \\
&= - \kappa_{\ga} c_1^2 \({n-2\ga \over 2}\) \int_{\pa_I B^N_+(0,1)} x_N^{1-2\ga} d\sigma_{x} < 0,
\end{align*}
which is a contradiction. The assertion in the statement must be true.
\end{proof}

We rule out bubble accumulation by applying Lemma \ref{lemma_sim}.
\begin{prop}\label{prop_sim}
Assume the hypotheses of Theorem \ref{thm_main_2}.
Let $\vep_0, \vep_1,\, R,\, C_0,\, C_1$ be positive numbers in the statement of Proposition \ref{prop_blow_a}.
Suppose that $U \in W^{1,2}(X; \rho^{1-2\ga})$ is a solution to \eqref{eq_Yamabe_3} and $\{y_1, \cdots, y_{\mcn}\}$ is the set of its local maxima on $M$.
Then there exists a constant $C_2 > 0$ depending only on $(X, g^+)$, $\bh$, $n$, $\ga$, $\vep_0$, $\vep_1$ and $R$ such that if $\max_M U \ge C_0$,
then $d_{\bh}(y_{m_1}, y_{m_2}) \ge C_2$ for all $1 \le m_1 \ne m_2 \le \mcn(U)$.
\end{prop}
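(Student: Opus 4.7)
The plan is a contradiction argument parallel to that of Lemma~\ref{lemma_sim}, combining the isolated simple blow-up theory already developed with the Pohozaev sign inequality of Lemma~\ref{lemma_lsign}. Suppose the conclusion fails; then, after extracting a subsequence and relabeling, there exist pairs of local maxima $y_{m_1}, y_{m_2}$ of $u_m = U_m|_M$ with $d_m := d_{\bh}(y_{m_1}, y_{m_2}) \to 0$, and both sequences converge to the same point $y_0 \in M$, for which $\pi(y_0) \ne 0$ by the standing hypothesis. I would select the pair to realize the minimum pairwise distance in the cluster, and assume $M_m := u_m(y_{m_1}) \ge u_m(y_{m_2})$; write $\hep_{m,i} = \alpha_{n,\ga}^{(p_m-1)/(2\ga)} u_m(y_{m_i})^{-(p_m-1)/(2\ga)}$ for the respective bubble scales.

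The first goal is to separate the scales. Proposition~\ref{prop_blow_a}(1) already forces $d_m \ge R\hep_{m,1}$ for the chosen $R$, so by picking $R$ large in advance one has $d_m/\hep_{m,1} \to \infty$. Next, I would argue that $y_{m_1}$ behaves like an isolated blow-up point on the shrinking ball $B_{\bh}(y_{m_1}, d_m/3)$: by the minimality of $d_m$, for any $y$ in that ball the closest local maximum in the cluster is $y_{m_1}$ itself, so Proposition~\ref{prop_blow_a}(3) yields the bound $u_m(y) \le C\, d_{\bh}(y, y_{m_1})^{-2\ga/(p_m-1)}$ required in Definition~\ref{def_blow}(2), with $r_2$ replaced by the shrinking radius $d_m/3$.

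The second goal is to install the sharp pointwise decay on this shrinking scale. I would adapt the arguments of Lemma~\ref{lemma_sim} and Proposition~\ref{prop_iso} to the annulus $\{R\hep_{m,1} \le |x| \le \kappa d_m\}$ (for a small fixed $\kappa > 0$) in $\bg_m$-Fermi coordinates centered at $y_{m_1}$, to prove
\[M_m\, \wtu_m(x) \le C |x|^{-(n-2\ga)}\]
together with the derivative estimates of \eqref{eq_iso}. The non-vanishing condition $\pi(y_0) \ne 0$ enters exactly at the step where the B\^ocher-type blow-down analysis (as in the proof of Lemma~\ref{lemma_sim}) is invoked to exclude the emergence of a second critical point of the weighted spherical average $\bu_m$ on this annulus.

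The final step is to apply Lemma~\ref{lemma_lsign} at the radius $r = \kappa d_m$: since $d_m \gg \hep_{m,1}$, the right-hand side is dominated by $\hep_{m,1}^2 \mcc_1$, yielding a uniform positive lower bound for $\hep_{m,1}^{n-2\ga}\, \mcp'(\wtu_m(0)\wtu_m, \kappa d_m)$. On the other hand, computing $\mcp'(\wtu_m(0)\wtu_m, \kappa d_m)$ directly from the sharp decay of the previous step, one verifies that the leading $|x|^{-(n-2\ga)}$-profile integrates to zero on the hemisphere (by homogeneity of the Green's-function tail), while the remaining contributions coming from the metric corrections and from the second peak at $y_{m_2}$ have the wrong sign or are of strictly smaller order, producing the desired contradiction. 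The main obstacle is carrying out the isolated simple analysis \emph{uniformly on the shrinking scale} $d_m$: the constants of Section~\ref{sec_lin} were derived for a fixed radius $r_4$, and one must verify that they remain under control when $r_4$ is replaced by the vanishing quantity $\kappa d_m$, which in turn requires a rescaled version of Proposition~\ref{prop_van} on the annulus.
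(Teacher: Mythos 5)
Your overall strategy (contradiction via a minimal-distance pair plus the Pohozaev sign of Lemma~\ref{lemma_lsign}) is the same as the paper's, which simply runs the classical cluster-exclusion argument of Almaraz/Jin--Li--Xiong with the ingredients listed there. However, two steps of your sketch do not work as written. First, the scale separation: Proposition~\ref{prop_blow_a}(1) with the \emph{fixed} $R$ of the statement only gives $d_m \ge R\,\hep_{m,1}$, not $d_m/\hep_{m,1} \to \infty$; you cannot ``pick $R$ large in advance'' since the selected points and all constants depend on $R$. The case where $d_m/\hep_{m,1}$ stays bounded must be excluded separately, and this is done by rescaling by $d_m$ and invoking the classification in Lemma~\ref{lemma_W}(3): a limiting bubble would have two critical points (at $0$ and at the rescaled image of $y_{m_2}$ at distance $1$), which is impossible. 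This dichotomy is entirely missing from your proposal, and it is also what makes the subsequent analysis uniform: after rescaling by $d_m$, the origin is an isolated blow-up point of the rescaled sequence at a \emph{fixed} radius with metrics converging in $C^4$, so Corollary~\ref{cor_i_blow}, the isolated-simple argument of Lemma~\ref{lemma_sim}, Proposition~\ref{prop_iso} and Proposition~\ref{prop_boc} apply with uniform constants. Working unrescaled on the shrinking annulus, as you propose, forces exactly the ``rescaled Proposition~\ref{prop_van} on the annulus'' that you flag as the main obstacle but never supply; that obstacle is the core of the proof, not a technicality.

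Second, the final comparison is quantitatively wrong. At $r = \kappa d_m$ the right-hand side of \eqref{eq_lsign} is \emph{not} dominated by $\hep_{m,1}^2\mcc_1$: the term $\hep_m^{n-2\ga} r^{-n+2\ga+1}\mcc_3$ (and the $\mcc_2$ term) need not be $o(\hep_m^2)$, since nothing controls the rate of $d_m$ relative to $\hep_{m,1}$. The correct bookkeeping (as in the proof of Lemma~\ref{lemma_sim}) multiplies by $d_m^{\,n-2\ga}$, uses the positivity of the $\mcc_1$-term only to absorb the $\mcc_2$-term, and concludes merely that $\liminf_m d_m^{\,n-2\ga}\,\mcp'\(\wtu_m(0)\wtu_m, \kappa d_m\) \ge 0$. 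The contradiction must then come from showing that this quantity converges to a \emph{strictly negative} limit, namely $\mcp'\(a|x|^{-(n-2\ga)} + E, \kappa\)$ with $E(0) > 0$, where the positivity of the regular part $E$ is produced by the second peak at rescaled distance one through B\^ocher's theorem, the Harnack inequality, the maximum principle and the Hopf lemma (precisely the ingredients the paper cites). In your sketch the second peak's contribution is dismissed as ``wrong sign or strictly smaller order''; in fact it is of the same order $d_m^{-(n-2\ga)}$ as the cross term and its definite sign is the engine of the contradiction, so without establishing $E(0)>0$ the argument does not close.
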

\begin{proof}
By appealing to Propositions \ref{prop_blow_a}, \ref{prop_iso} and \ref{prop_boc}, Lemmas \ref{lemma_lsign} and \ref{lemma_sim},
the maximum principle and the Hopf lemma in Lemma \ref{lemma_hopf},
one can argue as in \cite[Proposition 8.2]{Al} or \cite[Proposition 5.2]{JLX}.
\end{proof}

By Proposition \ref{prop_sim}, $\sup_{m \in \mn} \mcn(U_m)$ is bounded. Therefore we have
\begin{cor}\label{cor_sim}
Assume the hypotheses of Theorem \ref{thm_main_2}.
Then the set of blow-up points of $\{U_m\}_{m \in \mn}$ is finite and it consists of isolated simple blow-up points.
\end{cor}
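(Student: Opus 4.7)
The plan is to derive Corollary \ref{cor_sim} as a direct assembly of three results already established: Proposition \ref{prop_sim} (separation of distinct local maxima of $u_m$), Proposition \ref{prop_blow_a}(3) (uniform pointwise decay of $u_m$ away from the set of local maxima), and Lemma \ref{lemma_sim} (an isolated blow-up point is automatically simple once the second fundamental form does not vanish at the limit). No genuinely new analysis is required; the work is essentially bookkeeping of subsequences.

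First I would bound $\mcn(U_m)$ uniformly. Since $u_m(y_m) \to \infty$ along any blow-up sequence, eventually $\max_M u_m \ge C_0$, so Proposition \ref{prop_sim} forces any two distinct local maxima of $u_m$ on $M$ to lie at geodesic $\bh$-distance at least $C_2 > 0$, independent of $m$. Compactness of $(M, \bh)$ then yields a uniform bound $\mcn(U_m) \le \mcn_0$ by a standard packing argument. In particular every blow-up point is a limit of local maxima $y_m$ of $u_m$, and by a diagonal extraction there are at most $\mcn_0$ such limit points, all pairwise separated by at least $C_2$; hence the set of blow-up points is finite.

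Next I would show that each such blow-up point $y_0$, with associated sequence $y_m \to y_0$, is isolated in the sense of Definition \ref{def_blow}(2). For any $y \in M$ with $d_{\bh}(y, y_m) < C_2/3$, the triangle inequality combined with the separation $\ge C_2$ of distinct local maxima of $u_m$ forces every other local maximum $y_m^j$ of $u_m$ to satisfy $d_{\bh}(y, y_m^j) \ge 2C_2/3 > d_{\bh}(y, y_m)$, so that $y_m$ itself realizes the distance from $y$ to the full set of local maxima. Plugging this identification into Proposition \ref{prop_blow_a}(3) yields
\[u_m(y) \le C_1\, d_{\bh}(y, y_m)^{-{2\ga \over p_m-1}}, \qquad d_{\bh}(y, y_m) < C_2/3,\]
which is exactly the isolated blow-up condition with $r_2 = C_2/3$.

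Finally I would promote each isolated blow-up point to an isolated simple one by invoking Lemma \ref{lemma_sim}. The only thing to check is the non-vanishing hypothesis $\pi_0(y_0) \ne 0$, and this is precisely where the hypothesis of Theorem \ref{thm_main_2} that the second fundamental form of $(M, \bh) \subset (\ox, \bg)$ never vanishes on $M$ enters; by its conformally invariant nature recorded in Remark \ref{rmk_main}(1), this property transfers to the limit tensor built from the rescaled metrics used in the blow-up analysis, and by continuity $\pi_0(y_0) \ne 0$. The main obstacle, such as it is, lies solely in threading one subsequence through all three invocations so that convergence of $\{y_m\}$, stability of $\mcn_m$, and the decay estimate all hold simultaneously; once this bookkeeping is arranged the conclusion is immediate.
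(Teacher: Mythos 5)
Your assembly is exactly the paper's (implicit) argument: Proposition \ref{prop_sim} gives the uniform separation $C_2$ between local maxima and hence a uniform bound on $\mcn(U_m)$ and finiteness of the blow-up set, Proposition \ref{prop_blow_a}(3) then yields the isolatedness estimate with $r_2 = C_2/3$, and Lemma \ref{lemma_sim} upgrades isolated to isolated simple because the hypothesis that $\pi$ never vanishes (an intrinsic condition, cf. Remark \ref{rmk_main}) forces $\tipi_0(y_0) \ne 0$. This is precisely the paper's one-line deduction of the corollary from Proposition \ref{prop_sim}, so your proposal is correct and takes essentially the same route.
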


\subsection{Proof of the main theorems}\label{subsec_pf_m}
We are now ready to complete the proof of the main theorems.
All notations used in the proofs are borrowed from Subsection \ref{subsec_blow}.
\begin{proof}[Proof of Theorem \ref{thm_main_1}]
According to Corollary \ref{cor_sim}, any blow-up point $y_m \to y_0 \in M$ of $\{U_m\}_{m \in \mn}$ is isolated simple.
Therefore Proposition \ref{prop_van} implies the validity of Theorem \ref{thm_main_1}.
\end{proof}

\begin{proof}[Proof of Theorem \ref{thm_main_2}]
We first claim that $u \le C$ on $M$.
If it does not hold, then by Proposition \ref{prop_CG}, there is a sequence $\{U_m\}_{m \in \mn} \subset W^{1,2}(X;\rho^{1-2\ga})$ of solutions to \eqref{eq_Yamabe_2} which blows-up at a point $y_0 \in M$.
By applying Theorem \ref{thm_main_1}, we conclude that $\pi(y_0) = 0$.
However, it is contradictory to the assumption that $\pi$ never vanishes on $M$, so our claim should be true.

A combination of \eqref{eq_Y_cons}, Lemma \ref{lemma_reg_0} and Proposition \ref{prop_reg} now yields the other estimates in \eqref{eq_u_est},
that is to say, the lower and $C^{2+\beta}$-estimates of $u$ on $M$.
\end{proof}

At this stage, only Theorem \ref{thm_main_3} is remained to be verified.
To define the Leray-Schauder degree $\deg(\mcf_p, D_{\Lambda},0)$ for all $1 \le p \le 2_{n,\ga}^*-1$ and apply its homotopy invariance property, we need the following result.
\begin{lemma}\label{lemma_main_3}
Assume the hypotheses of Theorem \ref{thm_main_2}.
Then one can choose a constant $C = C(X^{n+1}, g^+, \bh, \ga) > 1$ such that
\[C^{-1} \le u \le C \quad \text{on } M\]
for all $1 \le p \le 2_{n,\ga}^*-1$ and $u > 0$ satisfying \eqref{eq_Yamabe_1}.
\end{lemma}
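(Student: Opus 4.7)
The plan is to compare \eqref{eq_Yamabe_1} with \eqref{eq_Yamabe} and split the range of $p$ at a small threshold. Testing \eqref{eq_Yamabe_1} against $u$ immediately gives $\int_M u^{p+1}\,dv_{\bh} = 1$, so every solution is automatically $L^{p+1}$-normalized. Accordingly, for $p > 1$ the rescaled function $v := \mce(u)^{1/(p-1)} u$ solves $P^{\ga}_{\bh} v = v^p$ on $M$, which is precisely \eqref{eq_Yamabe} with $c = 1$, and the normalization forces $\mce(u) = \|v\|_{L^{p+1}(M,\bh)}^{p-1}$. The task therefore reduces to uniform two-sided bounds on $v$ over the full range $p \in [1, 2_{n,\ga}^{*}-1]$.

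Fix a small $\vep_0 > 0$ with $1+\vep_0 < 2_{n,\ga}^{*}-1$. For $p \in [1+\vep_0, 2_{n,\ga}^{*}-1]$, I would directly invoke Theorem \ref{thm_main_2} applied to $v$; this yields $C_0^{-1} \le v \le C_0$ uniformly, hence $\mce(u)$ lies in a compact subset of $(0,\infty)$ and $u = \mce(u)^{-1/(p-1)} v$ is bounded above and below by constants depending only on $(X^{n+1}, g^+, \bh, \ga)$.

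For $p \in [1, 1+\vep_0]$, I would argue by contradiction using the subcritical Liouville theorem. Assume a sequence $\{u_k\}$ of solutions with $p_k \in [1, 1+\vep_0]$ and $M_k := u_k(y_k) = \|u_k\|_{L^{\infty}(M)} \to \infty$, and set $\lambda_k := \bigl(\mce(u_k) M_k^{p_k-1}\bigr)^{-1/(2\ga)}$. If $\mce(u_k) M_k^{p_k-1}$ remains bounded, then \eqref{eq_Yamabe_1} can be rewritten as $P^{\ga}_{\bh} u_k = b_k u_k$ with $b_k := \mce(u_k) u_k^{p_k-1}$ uniformly bounded in $L^{\infty}(M)$; a weak Harnack inequality for the extension $U_k$ (Lemma \ref{lemma_reg_0}) together with $\|u_k\|_{L^{p_k+1}} = 1$ then bounds $\max_M u_k$ uniformly, contradicting $M_k \to \infty$. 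Otherwise $\lambda_k \to 0$, and in $\bg$-Fermi coordinates centered at $y_k$ the rescaled extension $\tilde U_k(x) := M_k^{-1} U_k(\lambda_k x)$ behaves as in Lemma \ref{lemma_conv} and, after passing to a subsequence, converges to a nontrivial nonnegative solution $\tilde U_{\infty}$ of $-\textnormal{div}(x_N^{1-2\ga}\nabla \tilde U_{\infty}) = 0$ in $\mr^N_+$ with $\pa^{\ga}_{\nu} \tilde U_{\infty} = \tilde u_{\infty}^{p_{\infty}}$ on $\mr^n$, $\tilde u_\infty(0) = 1$, and $p_\infty \in [1, 1+\vep_0] \subset [1, 2_{n,\ga}^{*}-1)$; Lemma \ref{lemma_W}(3) then forbids such a limit. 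Hence $\|u\|_{L^{\infty}(M)} \le C$ uniformly on this range as well. The matching lower bound $u \ge C^{-1}$ on $M$ then follows from the boundary Harnack inequality for the extension $U$, combined with $\|u\|_{L^{p+1}(M,\bh)} = 1$ and the compactness of $M$.

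The main obstacle is the regime $p_k \to 1^{+}$: the natural rescaling parameter $\lambda_k$ may fail to tend to zero, which is precisely why the argument bifurcates into the two subcases according to the size of $\mce(u_k) M_k^{p_k-1}$. Ensuring that the Harnack constants in the first subcase are uniform in both $k$ and $p_k$ requires the $L^{\infty}$-boundedness of $b_k$ together with the uniform degenerate ellipticity of the extended operator, both of which hold under the standing assumptions \eqref{eq_eig} and \eqref{eq_hyp}.
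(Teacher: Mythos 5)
Your proposal is correct and takes essentially the same route as the paper: the paper's proof just applies Theorem \ref{thm_main_2} and adapts Lemmas 4.1 and 6.5 of \cite{FO}, and your splitting at $1+\vep_0$ --- Theorem \ref{thm_main_2} after rescaling to $c=1$ for $p\ge 1+\vep_0$, and a blow-up/subcritical-Liouville plus Harnack argument (using the automatic normalization $\int_M u^{p+1}\,dv_{\bh}=1$) for $p\in[1,1+\vep_0]$ --- is precisely what that adaptation amounts to. The only sketch-level detail to make explicit is the locally uniform bound on the rescaled extensions before passing to the limit (and the bound on $\mce(u)$ used in the final Harnack step), both of which follow from the Harnack inequality \eqref{eq_har_2} and the coercivity coming from $\Lambda^{\ga}(M,[\bh])>0$, exactly as in the paper's own blow-up analysis.
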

\begin{proof}
We consider the extension problem \eqref{eq_Yamabe_2} where the fourth line is substituted with $\pa^{\ga}_{\nu} U = \mce(u) u^p$ on $M$.
By adapting the proof of Lemmas 4.1 and 6.5 in \cite{FO} and applying Theorem \ref{thm_main_2}, we get the result.
\end{proof}

\begin{proof}[Proof of Theorem \ref{thm_main_3}]
From the previous lemma, we find that $0 \notin \mcf_p(\pa D_{\Lambda})$ for all $1 \le p \le 2_{n,\ga}^*-1$ provided $\Lambda > 0$ large enough. Therefore
\[\deg(\mcf_p, D_{\Lambda}, 0) = \deg(\mcf_1, D_{\Lambda}, 0) \quad \text{for all } 1 \le p \le 2_{n,\ga}^*-1.\]

Since $\Lambda^{\ga}(M,[\bh]) > 0$, the first $L^2(M)$-eigenvalue of $P^{\ga}_{\bh}$ must be positive, for
\[\int_M u P^{\ga}_{\bh} u\, dv_{\bh} \ge \Lambda^{\ga}(M, [\bh])\, \|u\|_{L^{2n \over n-2\ga}(M)}^2 \ge \Lambda^{\ga}(M, [\bh]) |M|^{-{2\ga \over n}} \, \|u\|_{L^2(M)}^2, \quad u \in H^{\ga}(M).\]
Also, in \cite[Section 4]{GQ} and \cite[Section 1]{Cs}, it was proved that the first eigenspace of $P^{\ga}_{\bh}$ is one-dimensional and spanned by a positive function on $M$.
By the $L^2(M)$-orthogonality, the other eigenfunctions must change their signs.
Using these characterizations, one can follow the argument in \cite{Sc2}, up to minor modifications, to derive $\deg(\mcf_1, D_{\Lambda}, 0) = -1$.
The proof of Theorem \ref{thm_main_3} is completed.
\end{proof}

\appendix
\section{Elliptic regularity}\label{sec_reg}
For a fixed point $x_0 \in \mr^n \simeq \pa \mr^N_+$ and $R > 0$, let
\begin{equation}\label{eq_B_R}
B_R = B^N_+((x_0,0),R) \subset \mr^N_+, \quad \pa B_R' = B^n(x_0,R) \subset \mr^n, \quad
\pa B_R'' = \pa_I B^N_+((x_0,0),R)
\end{equation}
so that $\pa B_R = \pa B_R' \cup \pa B_R''$. Suppose also that $\ga \in (0,1)$,
\begin{enumerate}
\item[(g1)] $\bg$ is a smooth metric on $\obr$ such that $\bg_{iN} = 0$, $\bg_{NN} = 1$
    and
    $\lambda_{\bg} |\xi|^2 \le \bg_{ij}(x) \xi_i \xi_j \le \Lambda_{\bg} |\xi|^2$ on $\obr$
    for some positive numbers $\lambda_{\bg} \le \Lambda_{\bg}$ and all vectors $\xi \in \mr^n$;
\item[(A1)] $A \in L^{2(n-2\ga+2)/(n-2\ga+4)}(B_R; x_N^{1-2\ga})$,  $Q \in L^1(B_R; x_N^{1-2\ga})$ and $F = (F_1, \cdots, F_n, F_N) \in L^1(B_R; x_N^{1-2\ga})$;
\item[(a1)] $a \in L^{2n/(n+2\ga)}(\pa B_R')$ and $q \in L^1(\pa B_R')$.
\end{enumerate}
In this section, we will examine regularity of a weak solution $U \in W^{1,2}(B_R; x_N^{1-2\ga})$ to a degenerate elliptic equation
\begin{equation}\label{eq_deg}
\begin{cases}
-\text{div}_{\bg} (x_N^{1-2\ga} \nabla U) + x_N^{1-2\ga} A U = x_N^{1-2\ga} Q + \text{div}(x_N^{1-2\ga} F) &\text{in } B_R,\\
U = u &\text{on } \pa B_R',\\
\pa^{\ga}_{\nu, F} U = au + q &\text{on } \pa B_R'
\end{cases}
\end{equation}
where
\[\pa^{\ga}_{\nu, F} U = \kappa_{\ga} \lim_{x_N \to 0+} x_N^{1-2\ga} \(F_N - {\pa U \over \pa x_N} \) = \pa^{\ga}_{\nu} U + \kappa_{\ga} \lim_{x_N \to 0+} x_N^{1-2\ga} F_N.\]
Notice that $\pa^{\ga}_{\nu} = \pa^{\ga}_{\nu,0}$.
We first recall the precise meaning of a weak solution to \eqref{eq_deg}.
\begin{defn}
Assume that (g1), (A1) and (a1) are valid. A function $U \in W^{1,2}(B_R; x_N^{1-2\ga})$ is called a {\it weak solution} to \eqref{eq_deg} if it satisfies
\begin{multline*}
\kappa_{\ga} \int_{B_R} x_N^{1-2\ga} \(\la \nabla U, \nabla \Phi \ra_{\bg} + AU\Phi\) dv_{\bg} \\
= \kappa_{\ga} \int_{B_R} x_N^{1-2\ga} \(Q\Phi - F_i \pa_i\Phi - F_N \pa_N \Phi\) dv_{\bg} + \int_{\pa B_R'} (au + q) \phi\, dv_{\bh}
\end{multline*}
for every $\Phi \in C^1(\obr)$ such that $\Phi = \phi$ on $\pa B_R'$ and $\Phi = 0$ on $\pa B_R''$.
Here $u$ is the trace of $U$ on $\pa B_R'$ and $\bh = \bg|_{\pa B_R'}$.
\end{defn}

\subsection{H\"older estimates}
By applying the Moser iteration technique, we can deduce H\"older estimates of weak solutions to \eqref{eq_deg}.
\begin{lemma}\label{lemma_reg_0}
Assume that the metric $\bg$ satisfies \textnormal{(g1)} and $U \in W^{1,2}(B_R; x_N^{1-2\ga})$ is a weak solution to \eqref{eq_deg}.
Suppose also that
\begin{enumerate}
\item[(A2)] $A, Q \in L^{q_1}(B_R; x_N^{1-2\ga})$ and $F \in L^{q_2}(B_R; x_N^{1-2\ga})$ for $q_1 > (n-2\ga+2)/2$ and $q_2 > n-2\ga+2$;
\item[(a2)] $a, q \in L^{q_3}(\pa B_R')$ for $q_3 > n/(2\ga)$.
\end{enumerate}
Then $U \in C^{\beta}(\obrt)$ and
\begin{multline}\label{eq_DNM}
\|U\|_{C^{\beta}(\obrt)} \le C \(\|U\|_{L^2(B_R; x_N^{1-2\ga})} + \|Q\|_{L^{q_1}(B_R; x_N^{1-2\ga})} \right. \\
\left. + \|F\|_{L^{q_2}(B_R; x_N^{1-2\ga})} + \|q\|_{L^{q_3}(\pa B_R')}\)
\end{multline}
where $C > 0$ and $\beta \in (0,1)$ depend only on $n$, $\ga$, $R$, $\lambda_{\bg}$, $\Lambda_{\bg}$, $\|A\|_{L^{q_1}(B_R; x_N^{1-2\ga})}$ and $\|a\|_{L^{q_3}(\pa B_R')}$.
Besides, if $U$ is nonnegative on $\obr$, then it is also true that
\begin{equation}\label{eq_har_2}
\sup_{B_{R/2}} U \le C \(\inf_{B_{R/2}} U + \|Q\|_{L^{q_1}(B_R; x_N^{1-2\ga})} + \|F\|_{L^{q_2}(B_R; x_N^{1-2\ga})} + \|q\|_{L^{q_3}(\pa B_R')}\)
\end{equation}
for some $C > 0$ depending only on $n$, $\ga$, $R$, $\lambda_{\bg}$, $\Lambda_{\bg}$, $\|A\|_{L^{q_1}(B_R; x_N^{1-2\ga})}$ and $\|a\|_{L^{q_3}(\pa B_R')}$.
\end{lemma}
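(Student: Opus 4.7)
The plan is to follow the classical Moser iteration scheme, adapted to the $A_2$-weighted setting with mixed Dirichlet/Robin boundary conditions. Since $x_N^{1-2\ga}$ lies in the Muckenhoupt class $A_2$ for $\ga\in(0,1)$, one has at disposal a weighted Sobolev inequality on $B_R$ with exponent $2(n-2\ga+2)/(n-2\ga)$ (compare \eqref{eq_emb}) and a weighted trace inequality $W^{1,2}(B_R;x_N^{1-2\ga})\hookrightarrow L^{2n/(n-2\ga)}(\pa B_R')$, both with constants depending only on $n$, $\ga$, $R$, $\lambda_{\bg}$, $\Lambda_{\bg}$; these are exactly the tools used in \cite{FKS,TX,CaS}. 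The boundedness conditions in \textnormal{(g1)} ensure that the leading-order operator is uniformly elliptic with respect to the weight.

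First I would establish the local $L^\infty$ bound. For $p\ge 2$ and a cutoff $\eta\in C^\infty(\obr)$ vanishing on $\pa B_R''$, I test \eqref{eq_deg} against $\Phi=(U_++k)^{p-1}\eta^2$, where $k$ is chosen comparable to $\|Q\|_{L^{q_1}(B_R;x_N^{1-2\ga})}+\|F\|_{L^{q_2}(B_R;x_N^{1-2\ga})}+\|q\|_{L^{q_3}(\pa B_R')}$ to dominate the inhomogeneous terms. Ellipticity, Young's inequality, and H\"older's inequality with the exponent gaps in \textnormal{(A2)}--\textnormal{(a2)} absorb the $AU$ and $au$ terms into the weighted Dirichlet energy; the weighted Sobolev and trace embeddings then yield a reverse H\"older inequality of the form
\[
\|U_++k\|_{L^{\chi p}(B_{r'};x_N^{1-2\ga})}\le \frac{C^{1/p}}{(r-r')^{\theta/p}}\,\|U_++k\|_{L^p(B_r;x_N^{1-2\ga})},\qquad R/2\le r'<r\le R,
\]
with $\chi=(n-2\ga+2)/(n-2\ga)>1$. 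A dyadic Moser iteration in $p$ along a shrinking sequence of radii produces $\|U_+\|_{L^\infty(\obrt)}$ bounded by the right-hand side of \eqref{eq_DNM}; applying the same argument to $-U$ controls $U_-$ as well.

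For the Harnack inequality \eqref{eq_har_2}, when $U\ge 0$ I run the iteration with negative exponents $p<0$ using $\Phi=(U+k)^{p-1}\eta^2$, which gives a uniform upper bound on $\|(U+k)^{-1}\|_{L^\infty(B_{R/2})}$ in terms of $\|(U+k)^{-1}\|_{L^{p_0}(B_R;x_N^{1-2\ga})}$ for some small $p_0>0$. A complementary estimate going from the $L^{p_0}$ norm down to the infimum is obtained via a John--Nirenberg argument applied to $\log(U+k)$, itself produced by testing with $(U+k)^{-1}\eta^2$ and exploiting the weighted Poincar\'e inequality on $B_R$ together with the trace contribution of $a$ and $q$. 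Combining these ingredients by the standard Bombieri--Giusti bridge yields \eqref{eq_har_2}. The H\"older bound \eqref{eq_DNM} is then deduced by the De Giorgi--Nash--Moser oscillation decay: applying \eqref{eq_har_2} to $U-\inf_{B_r}U$ and to $\sup_{B_r}U-U$ gives $\sup_{B_{r/2}}U-\inf_{B_{r/2}}U\le\theta\bigl(\sup_{B_r}U-\inf_{B_r}U\bigr)$ for some $\theta\in(0,1)$, and iterating along dyadic radii produces H\"older regularity with exponent $\beta=-\log_2\theta$.

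The main obstacle, and the only place where the setup departs from the purely interior theory, is the careful bookkeeping of the boundary contribution $\int_{\pa B_R'}(au+q)\phi\,dv_{\bh}$ through the iteration. The restriction $q_3>n/(2\ga)$ is precisely what allows the term $\int au\phi\,dv_{\bh}$ to be absorbed into the weighted Dirichlet energy after H\"older's inequality and the $L^{2n/(n-2\ga)}$ trace embedding, and analogously $q_1>(n-2\ga+2)/2$ is what controls the interior potential $AU$. Once these two thresholds are respected, all constants acquire the dependence advertised in the statement and the iteration closes exactly as in the unweighted De Giorgi--Nash--Moser theory; the mixed boundary condition causes no extra difficulty because admissible test functions are required to vanish on $\pa B_R''$, which is compatible both with the cutoff $\eta$ used above and with the trace of $U$ prescribed in \eqref{eq_deg}.
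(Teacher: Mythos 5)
Your Moser-iteration argument is essentially the paper's intended proof: the paper itself only cites Lemma 5.1 and Remark 5.2 of \cite{Ki}, which carry out exactly this weighted De Giorgi--Nash--Moser scheme (local boundedness by testing with powers, Harnack via negative exponents plus a John--Nirenberg estimate for $\log(U+k)$, then oscillation decay), and your exponent bookkeeping for $q_1,q_2,q_3$ against the weighted Sobolev and trace embeddings \eqref{eq_emb} is the correct one. The only detail worth stating explicitly is that in the oscillation-decay step the functions $\sup U-U$ and $U-\inf U$ satisfy the Robin condition with modified data (e.g.\ $-a(\sup U)-q$ enters as an extra inhomogeneity), which is harmless since the $L^\infty$ bound has already been established.
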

\begin{proof}
Derivation of \eqref{eq_DNM} and \eqref{eq_har_2} can be found in Lemma 5.1 and Remark 5.2 of \cite{Ki}.
\end{proof}

\subsection{Derivative estimates}
If the functions $A, Q, a$ and $q$ have classical derivatives in the tangential direction,
weak solutions to \eqref{eq_deg} have higher differentiability in the same direction.
The following result is a huge improvement of \cite[Lemma 5.3]{Ki} in that a much milder condition on $\bg$ is imposed.
The reader is advised to see carefully why handling \eqref{eq_deg} becomes more difficult if $\bg$ is non-Euclidean and how it is resolved in the proof.
\begin{lemma}\label{lemma_reg_2}
Assume that the metric $\bg$ satisfies \textnormal{(g1)} and $U \in W^{1,2}(B_R; x_N^{1-2\ga})$ is a weak solution to \eqref{eq_deg}.
Suppose also that $\ell_0$ is $1$, $2$ or $3$, and
\begin{enumerate}
\item [(g2)] it holds that
\begin{equation}\label{eq_sqrt_bg}
\left| \nabla_{\bx}^{\ell} \pa_N \sqrt{|\bg|}(x) \right| \le C x_N \quad \text{on } \obr
\end{equation}
for any $\ell = 0, \cdots, \ell_0$;
\item[(A3a)] $A, \cdots, \nabla_{\bx}^{\ell_0-1} A, Q, \cdots, \nabla_{\bx}^{\ell_0-1} Q \in L^{\infty}(B_R)$
    and $F, \cdots, \nabla_{\bx}^{\ell_0-1}F \in C^{\beta'}(\obr)$ for $\beta' \in (0,1)$;
\item[(A3b)] $\nabla_{\bx}^{\ell_0} A, \nabla_{\bx}^{\ell_0} Q \in L^{q_1}(B_R; x_N^{1-2\ga})$
    and $\nabla_{\bx}^{\ell_0} F \in L^{q_2}(B_R; x_N^{1-2\ga})$ for $q_1 > (n-2\ga+2)/2$ and $q_2 > n-2\ga+2$;
\item[(a3)] $a, \cdots, \nabla_{\bx}^{\ell_0} a, q, \cdots, \nabla_{\bx}^{\ell_0} q \in L^{q_3}(\pa B_R')$ for $q_3 > n/(2\ga)$.
\end{enumerate}
Then $\nabla^{\ell_0}_{\bx} U \in C^{\beta}(\obrt)$ and
\begin{equation}\label{eq_reg_2}
\begin{aligned}
\|\nabla^{\ell_0}_{\bx} U\|_{C^{\beta}(\obrt)} &\le C \(\|U\|_{L^2(B_R; x_N^{1-2\ga})}
+ \sum_{\ell=1}^{\ell_0-1} \|\nabla_{\bx}^{\ell} A\|_{L^{\infty}(B_R)} + \|\nabla_{\bx}^{\ell_0} A\|_{L^{q_1}(B_R; x_N^{1-2\ga})} \right.\\
&\ + \sum_{\ell=0}^{\ell_0-1} \|\nabla_{\bx}^{\ell} Q\|_{L^{\infty}(B_R)}
+ \|\nabla_{\bx}^{\ell_0} Q\|_{L^{q_1}(B_R; x_N^{1-2\ga})} + \sum_{\ell=0}^{\ell_0-1} \|\nabla_{\bx}^{\ell} F\|_{C^{\beta'}(\obr)} \\
&\ \left. + \|\nabla_{\bx}^{\ell_0} F\|_{L^{q_2}(B_R)} + \sum_{\ell=1}^{\ell_0} \|\nabla_{\bx}^{\ell} a\|_{L^{q_3}(\pa B_R')}
+ \sum_{\ell=0}^{\ell_0} \|\nabla_{\bx}^{\ell} q\|_{L^{q_3}(\pa B_R')} \)
\end{aligned}
\end{equation}
for $C > 0$ and $\beta \in (0,1)$ relying only on $n$, $\ga$, $R$, $\bg$, $A$, $\|a\|_{L^{q_3}(\pa B_R')}$ and $\sum_{\ell=0}^{\ell_0-1} \|\nabla_{\bx}^{\ell} U\|_{L^{\infty}(B_R)}$.
\end{lemma}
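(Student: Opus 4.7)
My plan is to prove Lemma \ref{lemma_reg_2} by the Nirenberg difference quotient method, applied iteratively in the tangential directions $x_1,\dots,x_n$. The structural observation driving everything is that the weight $x_N^{1-2\ga}$, the function spaces $W^{1,2}(B_R;x_N^{1-2\ga})$ and $L^p(B_R;x_N^{1-2\ga})$, and the hypotheses (g1)--(g2) are all invariant under tangential translations. Consequently, an induction on $\ell_0 \in \{1,2,3\}$ reduces matters to the case $\ell_0 = 1$: once each $\pa_i U$ ($1 \le i \le n$) has been shown to lie in $W^{1,2} \cap C^{\beta}$ on a slightly smaller half-ball and to weakly solve an equation of the same form \eqref{eq_deg} with data satisfying the hypotheses at one differentiation order lower, the argument reapplies. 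The base case of Lemma \ref{lemma_reg_0} furnishes the initial $L^\infty$ control needed to feed the iteration.

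For $\ell_0 = 1$, I fix a tangential direction $e_i$ and a small $h > 0$, and set $\Delta_h^i V(x) = h^{-1}\bigl(V(x+h e_i) - V(x)\bigr)$. Applying $\Delta_h^i$ to the weak formulation of \eqref{eq_deg} produces an equation of the same structural type for $\Delta_h^i U$, with the tangentially translated metric $\bg(\cdot + he_i)$, translated coefficients $A,Q,F,a,q$, and commutator right-hand sides of the form $(\Delta_h^i \bg)(\cdot)\, \nabla U(\cdot + h e_i)$, $(\Delta_h^i \sqrt{|\bg|}/\sqrt{|\bg|})\, \pa_N U$, etc. Testing against $\eta^2 \Delta_h^i U$ for a cutoff $\eta \in C_c^\infty(\obr)$ equal to $1$ on $\obrt$, and controlling the boundary contribution via the trace embedding alluded to in \eqref{eq_emb} together with (a3), I obtain the Caccioppoli-type bound
\[
\int_{B_R} x_N^{1-2\ga} \eta^2 \left|\nabla \Delta_h^i U\right|^2 dx \le C
\]
uniformly in $h$. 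Letting $h \to 0^+$ yields $\pa_i U \in W^{1,2}(\obrt;x_N^{1-2\ga})$, and identifies $\pa_i U$ as a weak solution of \eqref{eq_deg} on $B_{R/2}$ with new data $\tilde A, \tilde Q, \tilde F, \tilde a, \tilde q$ built from (A3a)--(a3) and the $L^\infty$-bound on $U$ and $\nabla_{\bx}^{\ell} U$ ($\ell \le \ell_0 - 1$) supplied by the inductive hypothesis. Lemma \ref{lemma_reg_0} applied to this equation then upgrades $\pa_i U$ to $C^\beta(\obrt)$ with the claimed estimate \eqref{eq_reg_2}.

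The main obstacle, and the reason (g2) is indispensable, arises when the divergence is expanded. Thanks to (g1) (namely $\bg_{iN}=0$, $\bg_{NN}=1$), the operator splits cleanly into tangential and normal parts, but the normal part produces
\[
\frac{1}{\sqrt{|\bg|}}\, \pa_N\!\left(\sqrt{|\bg|}\, x_N^{1-2\ga}\, \pa_N U\right)
= x_N^{1-2\ga}\, \pa_N^2 U + (1-2\ga)\, x_N^{-2\ga}\, \pa_N U + \frac{\pa_N \sqrt{|\bg|}}{\sqrt{|\bg|}}\, x_N^{1-2\ga}\, \pa_N U.
\]
For a generic smooth metric the last term behaves like $x_N^{1-2\ga} \cdot O(1)\cdot \pa_N U$, which cannot be absorbed as a lower-order term because $\pa_N U$ has only the degenerate $L^2(B_R;x_N^{1-2\ga})$-regularity, not $L^2(B_R;x_N^{-(1-2\ga)})$. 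Assumption (g2), i.e.\ $\pa_N \sqrt{|\bg|} = O(x_N)$, converts the problematic term into $O(x_N^{2-2\ga})\, \pa_N U = O(x_N)\cdot \bigl(x_N^{1-2\ga}\pa_N U\bigr)$, which is bounded by the natural weighted $L^2$-norm of $\pa_N U$ and so is harmless. The higher-order bounds $|\nabla_{\bx}^{\ell} \pa_N \sqrt{|\bg|}| \le C x_N$ for $\ell = 1,\dots,\ell_0 - 1$ are precisely what is required so that each tangential differentiation in the iteration produces commutator terms of the same benign form, closing the induction up to $\ell_0 = 3$. A parallel (and simpler) analysis of the boundary conormal condition $\pa^{\ga}_{\nu,F} U = au + q$, using (a3) to differentiate tangentially, provides the corresponding boundary data for the equation satisfied by $\pa_i U$.
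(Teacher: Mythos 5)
Your Caccioppoli step (difference quotients, testing with $\eta^2 D^hU$, passing $h\to 0$ to get $\nabla_{\bx}U\in W^{1,2}_{\loc}(x_N^{1-2\ga})$) is sound, but the pivotal step of your plan --- that $\pa_i U$ then solves a problem of the same form \eqref{eq_deg} whose data satisfy the hypotheses of Lemma \ref{lemma_reg_0}, so that that lemma upgrades $\pa_i U$ to $C^{\beta}$ --- has a genuine gap. Differencing (or differentiating) the normal part of $\textnormal{div}_{\bg}$ produces the commutator $\pa_N\bigl(x_N^{1-2\ga}\,D^h\sqrt{|\bg|}\cdot\pa_N U^h\bigr)$, i.e.\ a divergence-form datum whose normal component is $D^h\sqrt{|\bg|}\,\pa_N U^h$. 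Hypothesis (g2) controls $\pa_N\sqrt{|\bg|}$ and its \emph{tangential} derivatives, not $\nabla_{\bx}\sqrt{|\bg|}$ itself, so this coefficient is merely bounded and does not vanish at $x_N=0$; meanwhile $\pa_N U$ is only known in weighted $L^2$, and generically $\pa_N U\sim x_N^{2\ga-1}$ near the boundary (since $x_N^{1-2\ga}\pa_N U$ has a finite limit). Hence this component of the new $F$ fails the integrability $F\in L^{q_2}(B_R;x_N^{1-2\ga})$, $q_2>n-2\ga+2$, required by Lemma \ref{lemma_reg_0}, whenever $\ga$ is small, and your induction does not close. Your discussion of (g2) addresses the comparatively harmless term $\pa_N\sqrt{|\bg|}\,\pa_N U$ (which is absorbed in the energy estimate without any help from (g2)), not this actual obstruction, which the paper explicitly singles out as ``the most problematic term.''

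The paper's proof supplies exactly the two ingredients your plan lacks. First, a rescaling argument (as in the proof of \cite[Lemma 5.3]{Ki}) gives the pointwise bound $\|x_N\pa_N U\|_{L^{\infty}(B_{2R/3})}\le C\bigl(\|U\|_{L^{\infty}}+\|F\|_{C^{\beta'}(\obr)}+\|Q\|_{L^{\infty}}\bigr)$. Second, instead of treating $D^h\sqrt{|\bg|}\,\pa_N U^h$ as a datum, the paper integrates by parts on $\{x_N>\vep\}$ and substitutes the modified test function $\Xi=(\sqrt{|\bg|}^h)^{-1}(D^h\sqrt{|\bg|})\Phi$ into the original equation satisfied by $U^h$, obtaining identity \eqref{eq_reg_22}: the problematic pairing is re-expressed through the equation in terms of $\nabla_{\bx}U^h$, the original data, and the terms $(\pa_N\sqrt{|\bg|})^h\pa_N U^h$ and $(\pa_N D^h\sqrt{|\bg|})\pa_N U^h$ --- and it is precisely here that (g2) enters, reducing these to $C\,(x_N\pa_N U)^h$, which is bounded by the rescaling estimate. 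Only after this rewriting is the $L^{\infty}$ and H\"older control of $\nabla_{\bx}U$ obtained, by running the Moser iteration directly on the difference-quotient equation (with the shifted truncations $V_h=(D^hU)_++k$, $V_{h,K}$), rather than by invoking Lemma \ref{lemma_reg_0} for a new equation. Without the substitution identity and the $x_N\pa_N U$ bound, the tangential-derivative-of-$\sqrt{|\bg|}$ commutator cannot be handled, and this is the core difficulty the lemma is designed to overcome.
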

\begin{proof}
Assuming that $\ell_0 = 1$, we shall derive \eqref{eq_reg_2}.
Given any vector $h \in \mr^n$ with small magnitude $|h|$, we define the {\it difference quotient} $D^h U$ of $U$ by
\[D^h U(\bx, x_N) = {U(\bx+h, x_N) - U(\bx, x_N) \over |h|} \quad \text{for } (\bx, x_N) \in B_{3R/4}.\]
Then it weakly solves
\begin{equation}\label{eq_reg_20}
\begin{cases}
-\text{div}_{\bg} (x_N^{1-2\ga} \nabla (D^h U)) + x_N^{1-2\ga} A (D^h U) = x_N^{1-2\ga} Q^* + \text{div} (x_N^{1-2\ga} F^*) &\text{in } B_{3R/4},\\
\pa^{\ga}_{\nu, F^*} (D^h U) = a (D^h U) + q^* &\text{on } \pa B_{3R/4}'
\end{cases}
\end{equation}
where $U^h(\bx, x_N) = U(\bx+h, x_N)$,
\begin{align*}
F^* &= \(D^h F_i + D^h \(\sqrt{|\bg|} \bg^{ij}\) \cdot \pa_j U^h, D^h F_N + D^h\sqrt{|\bg|} \cdot \pa_N U^h\), \\
Q^* &= D^hQ - D^hB \cdot U^h
\quad \text{and} \quad
q^* = D^h a \cdot U^h + D^h q.
\end{align*}

The most problematic term in analyzing \eqref{eq_reg_20} turns out to be $\text{div} (x_N^{1-2\ga} F^*)$,
especially, its subterm $\pa_N (x_N^{1-2\ga} D^h \sqrt{|\bg|} \cdot \pa_N U^h)$.
Let us concern it in depth.
If we fix a small number $\vep > 0$ and write
\[B_{3R/4, \vep} = B_{3R/4} \cap \{x_N > \vep\} \quad \text{and} \quad \pa B_{3R/4, \vep}' = B_{3R/4} \cap \{x_N = \vep\},\]
then an integration by parts shows
\begin{multline}\label{eq_reg_21}
\int_{B_{3R/4, \vep}} x_N^{1-2\ga} \(D^h\sqrt{|\bg|} \cdot \pa_N U^h\) \pa_N\Phi dx
= - \int_{B_{3R/4, \vep}} \(D^h\sqrt{|\bg|}\) \pa_N (x_N^{1-2\ga} \pa_NU^h) \Phi dx \\
- \int_{B_{3R/4, \vep}} x_N^{1-2\ga} \(\pa_N D^h\sqrt{|\bg|}\) \pa_NU^h \Phi dx
- \int_{\pa B_{3R/4, \vep}'} x_N^{1-2\ga} \(D^h\sqrt{|\bg|}\) \pa_NU^h \phi \, d\bx
\end{multline}
for any function $\Phi \in C^1(\overline{B_{3R/4}})$ such that $\Phi = \phi$ on $\pa B_{3R/4}'$ and $\Phi = 0$ on $\pa B_{3R/4}''$.
On the other hand, we obtain from \eqref{eq_deg} that
\begin{align*}
&\ \int_{B_{3R/4, \vep}} \sqrt{|\bg|}^h \pa_N (x_N^{1-2\ga} \pa_NU^h) \Phi dx \\
&= - \int_{B_{3R/4, \vep}} x_N^{1-2\ga} \(\sqrt{|\bg|} \bg^{ij}\)^h \pa_iU^h \pa_j \Phi dx
- \int_{B_{3R/4, \vep}} x_N^{1-2\ga} A^h U^h \Phi dx + \int_{B_{3R/4, \vep}} x_N^{1-2\ga} Q^h \Phi dx \\
&\ - \int_{B_{3R/4, \vep}} x_N^{1-2\ga} (F_i)^h \pa_i \Phi dx - \int_{B_{3R/4, \vep}} x_N^{1-2\ga} (F_N)^h \pa_N \Phi dx + \int_{\pa B_{3R/4, \vep}'} x_N^{1-2\ga} \sqrt{|\bg|}^h \pa_NU^h \phi d\bx \\
&\ + \int_{B_{3R/4, \vep}} x_N^{1-2\ga} \(\pa_N \sqrt{|\bg|}^h\) \pa_N U^h \Phi dx
+ \int_{\pa B_{3R/4, \vep}'} x_N^{1-2\ga}((F_N)^h - \pa_NU^h) \phi \, d\bx
\end{align*}
where $\sqrt{|\bg|}^h(\bx, x_N) = \sqrt{|\bg|}(\bx+h, x_N)$ and so on.
Consequently, after substituting
\[\Xi = \(\sqrt{|\bg|}^h\)^{-1} \(D^h\sqrt{|\bg|}\) \Phi\]
for $\Phi$ in the above identity,
combining the result with \eqref{eq_reg_21} and then taking $\vep \to 0$, we get
\begin{align}
&\ \int_{B_{3R/4}} x_N^{1-2\ga} \(D^h\sqrt{|\bg|} \cdot \pa_N U^h\) \pa_N\Phi dx \nonumber \\
&= - \int_{B_{3R/4}} x_N^{1-2\ga} \(\sqrt{|\bg|} \bg^{ij}\)^h \pa_iU^h \pa_j \Xi dx
- \int_{B_{3R/4}} x_N^{1-2\ga} A^h U^h \Xi dx + \int_{B_{3R/4}} x_N^{1-2\ga} Q^h \Xi dx \nonumber \\
&\ - \int_{B_{3R/4}} x_N^{1-2\ga} (F_i)^h \pa_i \Xi dx - \int_{B_{3R/4}} x_N^{1-2\ga} (F_N)^h \pa_N \Xi dx + \int_{B_{3R/4}} x_N^{1-2\ga} \(\pa_N \sqrt{|\bg|}\)^h \pa_N U^h \Xi dx \nonumber \\
&\ - \int_{B_{3R/4}} x_N^{1-2\ga} \(\pa_N D^h\sqrt{|\bg|}\) \pa_NU^h \Phi dx
+ \kappa_{\ga}^{-1} \int_{\pa B_{3R/4}'} (au+q) \phi\, d\bx. \label{eq_reg_22}
\end{align}
We have two remarks on \eqref{eq_reg_22}: First, $\Xi$ has the same regularity as that of $\Phi$ and vanishes on $\pa B_{3R/4}''$.
Second, by virtue of (g2), there exists a constant $C > 0$ such that
\[\left| \(\pa_N \sqrt{|\bg|}^h\) \pa_N U^h \right| + \left|\(\pa_N D^h\sqrt{|\bg|}\) \pa_NU^h \right| \le C(x_N \pa_N U)^h \quad \text{in } B_{3R/4}.\]
Furthermore, as pointed out in the proof of \cite[Lemma 5.3]{Ki}, an application of the rescaling argument gives
\[\|x_N \pa_N U\|_{L^{\infty}(B_{2R/3})} \le C\(\|U\|_{L^{\infty}(B_{3R/4})} + \|F\|_{C^{\beta'}(\obr)} + \|Q\|_{L^{\infty}(B_R)}\)\]
where $C > 0$ depends only $n$, $R$, $\bg$ and $\|A\|_{L^{\infty}(B_R)}$.
Therefore no terms in the right-hand side of \eqref{eq_reg_22} are harmful.

Now, we introduce a number
\[k = \begin{cases}
\begin{aligned}
\|\nabla_{\bx} A\|_{L^{q_1}(B_R; x_N^{1-2\ga})} +
\|\nabla_{\bx} Q\|_{L^{q_1}(B_R; x_N^{1-2\ga})} +
\|\nabla_{\bx} F\|_{L^{q_2}(B_R; x_N^{1-2\ga})} \\
\|x_N \pa_NU\|_{L^{\infty}(B_{2R/3})} + \|\nabla_{\bx} a\|_{L^{q_3}(\pa B_R')} + \|\nabla_{\bx} q\|_{L^{q_3}(\pa B_R')}
\end{aligned} &\text{if it is nonzero},\\
\text{any positive number} &\text{otherwise}.
\end{cases}\]
In the latter case, we send $k \to 0$ at the last stage. For a fixed $K > 0$ and $m \ge 0$, we define
\[V_h = (D^h U)_+ + k,\quad V_{h,K} = \min\{V_h, K\} \quad \text{and} \quad Z_{h,m} = V_h^{(m+2)/2}.\]
We test \eqref{eq_reg_20} with $\Phi = \tich (V_{h,K}^m V_h - k^{m+1})$ where $\tich \in C^{\infty}(\obr)$ denotes a suitable cut-off function.
Employing \eqref{eq_reg_22}, H\"older's inequality, Young's inequality,
the weighted Sobolev inequality and the weighted Sobolev trace inequality (see \eqref{eq_emb})
and then taking $K \to \infty$, we derive
\begin{multline}\label{eq_reg_23}
\|\nabla(\tich Z_{h,m})\|_{L^2(B_R; x_N^{1-2\ga})}^2
\le Cm^{\eta} \left[ \int_{B_R} x_N^{1-2\ga} (\tich^2 + |\nabla \tich|^2) Z_{h,m}^2 dx \right. \\
\left. + \int_{B_R} x_N^{1-2\ga} \tich^2 V_h^m (|\nabla_{\bx} U_h|^2 + \|x_N \pa_N U\|_{L^{\infty}(B_{2R/3})}^2) dx \right]
\end{multline}
for some $C > 0$ depending only on $n$, $\ga$, $R$, $\bg$, $A$, $\|a\|_{L^{q_3}(\pa B_R')}$ and $\|U\|_{L^{\infty}(B_R)}$,
and $\eta > 1$ depending only on $n$ and $\ga$;
refer to the proofs of \cite[Proposition 1]{FF} and \cite[Lemma 5.3]{Ki} which provide more detailed descriptions.
Combining \eqref{eq_reg_23} with the corresponding inequality for $(D^hU)_- + k$ and letting $h \to 0$, we see
\[\||\nabla_{\bx} U| + k\|_{L^{(m+2)\({n-2\ga+2 \over n-2\ga}\)}(B_R \cap \{\tich = 1\}; x_N^{1-2\ga})}^{m+2}
\le Cm^{\eta} \|\nabla \tich\|_{L^{\infty}(B_R)}^2 \||\nabla_{\bx} U| + k\|_{L^{(m+2)}(B_R; x_N^{1-2\ga})}^{m+2}.\]
Hence the Moser iteration argument implies that \[\|\nabla_{\bx} U\|_{L^{\infty}(B_{R/2})} \le \text{(the right-hand side of \eqref{eq_reg_2} with } \ell_0 = 1).\]

Similarly, one can obtain the weak Harnack inequality as well as the H\"older estimate for $\nabla_{\bx} U$.
The cases $\ell_0 = 2$ or $3$ can be also treated. We omit the details.
\end{proof}

In the following lemma, we take into account H\"older regularity of the weighted derivative $x_N^{1-2\ga} \pa_N U$ of a weak solution $U$ to \eqref{eq_deg}.
\begin{lemma}\label{lemma_reg_3}
Suppose that the metric $\bg$ satisfies \textnormal{(g1)} and $U \in W^{1,2}(B_R; x_N^{1-2\ga})$ is a weak solution to \eqref{eq_deg}
such that $U, \nabla_{\bx}U, \nabla_{\bx}^2 U \in C^{\beta}(\obr)$ for some $\beta \in (0,1)$.
Furthermore, assume that the following conditions hold:
\begin{enumerate}
\item[(A4)] $A \in C^{\beta}(\obr)$, $\sup_{x_N \in (0,R)} (\|Q(\cdot, x_N)\|_{C^{\beta}(\opbr)}
    + \|\pa_i F_i(\cdot, x_N)\|_{C^{\beta}(\opbr)}) < \infty$ and $F_N = 0$;
\item[(a4)] $a, q \in C^{\beta}(\opbr)$.
\end{enumerate}
Then $x_N^{1-2\ga} \pa_N U \in C^{\min\{\beta, 2-2\ga\}}(\obrt)$ and
\begin{multline}\label{eq_reg_3}
\left\| x_N^{1-2\ga} \pa_N U \right\|_{C^{\min\{\beta, 2-2\ga\}}(\obrt)}
\le C \( \sum_{\ell = 0}^2 \|\nabla_{\bx}^{\ell} U\|_{C^{\beta}(\obr)} \right. \\
\left. + \sup_{x_N \in (0,R)} (\|Q(\cdot, x_N)\|_{C^{\beta}(\opbr)} + \|\pa_i F_i(\cdot, x_N)\|_{C^{\beta}(\opbr)}) + \|q\|_{C^{\beta}(\opbr)}\)
\end{multline}
for $C > 0$ depending only on $n$, $\ga$, $R$, $\bg$, $\|A\|_{C^{\beta}(\obr)}$ and $\|a\|_{C^{\beta}(\opbr)}$.
\end{lemma}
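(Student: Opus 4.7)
The plan is to derive a one-sided integral representation for $V(x) := x_N^{1-2\ga}\,\pa_N U(x)$ by integrating the equation \eqref{eq_deg} in the $N$-direction, and then read off the H\"older regularity of $V$ directly from it. Using $\bg_{iN}=0$, $\bg_{NN}=1$ and $F_N\equiv 0$ to expand $\textnormal{div}_{\bg}(x_N^{1-2\ga}\nabla U)$ and to separate the $N$-derivative, \eqref{eq_deg} is rewritten as
\begin{equation*}
\pa_N\bigl(g\,V\bigr) \;=\; x_N^{1-2\ga}\,H(\bx,x_N) \qquad \text{in } B_R,
\end{equation*}
with $g=\sqrt{|\bg|}$ and
\begin{equation*}
H \;=\; -\,\pa_i(g\bg^{ij})\,\pa_j U \;-\; g\bg^{ij}\,\pa_{ij}U \;+\; g\bigl(AU-Q-\pa_iF_i\bigr).
\end{equation*}
The hypotheses (g1), (A4) and the assumed $C^{\beta}$-regularity of $U$, $\nabla_{\bx}U$, $\nabla_{\bx}^2 U$ on $\obr$ make $H$ bounded on $B_R$ and $C^{\beta}$ in $\bx$ uniformly in $x_N$, with norms controlled by the right-hand side of \eqref{eq_reg_3}. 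The boundary condition $\pa^{\ga}_{\nu,F}U=au+q$ together with $F_N=0$ forces $V(\bx,0)=-\kappa_{\ga}^{-1}\bigl(a(\bx)u(\bx)+q(\bx)\bigr)$, which lies in $C^{\beta}(\opbr)$ by (a4). Integrating $\pa_N(gV)$ in $x_N$ from $0$ yields the representation
\begin{equation*}
V(\bx,x_N) \;=\; \frac{1}{g(\bx,x_N)}\Bigl[\,g(\bx,0)\,V(\bx,0) \;+\; \int_0^{x_N} t^{1-2\ga}\,H(\bx,t)\,dt\,\Bigr].
\end{equation*}

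From this representation I would read off the H\"older estimate in two separate directions. For the tangential direction, fix $x_N\in(0,R/2)$: the factor $1/g(\bx,x_N)$ is smooth, $V(\cdot,0)\in C^{\beta}(\opbr)$, and the integrand is $C^{\beta}$ in $\bx$ with the integrable envelope $t^{1-2\ga}$ on $(0,x_N)$; hence $\bx\mapsto V(\bx,x_N)$ is $C^{\beta}$ with a norm uniform in $x_N$, and a fortiori $C^{\min\{\beta,2-2\ga\}}$. For the normal direction, fix $\bx$ and take $0\le x'_N<x_N\le R/2$: the difference $V(\bx,x_N)-V(\bx,x'_N)$ splits into a piece coming from the variation of $1/g(\bx,\cdot)$, which is $O(|x_N-x'_N|)$ by the smoothness and uniform positivity of $g$, and a piece equal to $g(\bx,x_N)^{-1}\!\int_{x'_N}^{x_N} t^{1-2\ga}H(\bx,t)\,dt$, which is bounded by $C\,\bigl|x_N^{2-2\ga}-(x'_N)^{2-2\ga}\bigr|$. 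The elementary inequality $|a^{2-2\ga}-b^{2-2\ga}|\le C_R\,|a-b|^{\min\{1,2-2\ga\}}$ for $0\le a,b\le R/2$ converts this into the H\"older exponent $\min\{\beta,2-2\ga\}$ in $x_N$. A standard triangle-inequality split along $(\bx,x_N)\to(\bx',x_N)\to(\bx',x'_N)$ then assembles the two one-variable estimates into the full bound \eqref{eq_reg_3}.

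The structural point on which everything hinges, and the only place where the hypothesis $F_N=0$ is truly essential, is the derivation of $\pa_N(gV)=x_N^{1-2\ga}H$: without it the right-hand side would acquire the extra term $\pa_N(x_N^{1-2\ga}F_N)$, whose singular behaviour as $x_N\to 0^+$ would simultaneously destroy the integrability of the integrand at $0$ and the identification of the boundary trace $V(\bx,0)$. The exponent $2-2\ga$ appearing in the conclusion is in turn sharp in a trivial sense: it is forced by the degenerate weight $t^{1-2\ga}$ in the $x_N$-integral and cannot be improved by raising $\beta$. Beyond these observations the argument is a straightforward H\"older-norm bookkeeping; no Moser iteration or freezing-of-coefficients scheme is needed, because the ambient $C^{\beta}$-regularity of $U$ and its horizontal derivatives is part of the hypothesis.
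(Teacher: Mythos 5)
The paper itself gives no argument for this lemma: it simply refers to \cite[Lemma 5.5]{Ki}. Your proposal is a correct, self-contained proof, and it takes the natural direct route: with $\bg_{iN}=0$, $\bg_{NN}=1$ and $F_N=0$, the equation really does collapse to $\pa_N\bigl(\sqrt{|\bg|}\,x_N^{1-2\ga}\pa_N U\bigr)=x_N^{1-2\ga}H$ with $H$ built from $A,U,\nabla_{\bx}U,\nabla^2_{\bx}U,Q,\pa_iF_i$, all of which are bounded and $C^{\beta}$ in $\bx$ uniformly in $x_N$ by hypothesis, and integrating in $x_N$ then gives both the tangential $C^{\beta}$ bound and the normal exponent $\min\{1,2-2\ga\}$ via $\int_0^{x_N}t^{1-2\ga}\,dt$, which assembles into \eqref{eq_reg_3}; this also transparently explains where $2-2\ga$ and the hypothesis $F_N=0$ enter. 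Two routine points deserve a sentence each if this were written out in full: (i) the passage from the weak formulation to the a.e. identity $\pa_N(gV)=x_N^{1-2\ga}H$ for $V=x_N^{1-2\ga}\pa_N U$ — legitimate precisely because $\nabla_{\bx}U,\nabla^2_{\bx}U$ are continuous by assumption, so the tangential part can be moved to the right-hand side and one gets, by Fubini, that $gV(\bx,\cdot)$ is absolutely continuous in $x_N$ for a.e.\ $\bx$ with the stated derivative; and (ii) the identification $\lim_{x_N\to0^+}V(\bx,x_N)=-\kappa_{\ga}^{-1}(au+q)$, which follows from testing the weak formulation with $\phi(\bx)\eta_{\ep}(x_N)$ for a vertical cutoff $\eta_{\ep}$ and using the convergence of $gV$ already obtained from the representation. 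One pedantic caveat, which is a wrinkle in the paper's own statement rather than in your argument: the paper's weak formulation carries the volume element $dv_{\bg}$ on the $F$-term, under which $H$ would pick up $g^{-1}\pa_i(gF_i)=\pa_iF_i+F_i\,\pa_i\log g$ and hence a dependence on $F_i$ itself, not covered by (A4) or by the right-hand side of \eqref{eq_reg_3}; reading the strong equation \eqref{eq_deg} literally (Euclidean divergence), as you did, avoids this and matches the stated hypotheses, and in the paper's application (Lemma \ref{lemma_reg_2}) $F$ is anyway $C^{\beta'}$ so either reading is harmless.
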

\begin{proof}
Refer to \cite[Lemma 5.5]{Ki}.
\end{proof}

\subsection{Two maximum principles}
In this part, we list two maximum principles which are used throughout the paper.

\medskip
The following lemma describes the generalized maximum principle for degenerate elliptic equations.
\begin{lemma}\label{lemma_max}
Suppose that $A \in L^{\infty}(B_R)$, $a \in L^{\infty}(\pa B_R'')$ and there exists a function $V \in C^0(\obr) \cup C^2(B_R)$
such that $\nabla_{\bx} V,\, x_N^{1-2\ga} \pa_N V \in C^0(\obr)$ and
\[\begin{cases}
-\textnormal{div} (x_N^{1-2\ga}\nabla V) + x_N^{1-2\ga} A V \ge 0 &\text{in } B_R,\\
V > 0 &\text{on } \obr,\\
\pa_{\nu}^{\ga} V + aV \ge 0 &\text{on } \pa B_R'.
\end{cases}\]
If $U \in C^0(\obr) \cup C^2(B_R)$ satisfies $\nabla_{\bx} U,\, x_N^{1-2\ga} \pa_N U \in C^0(\obr)$ and solves
\begin{equation}\label{eq_max}
\begin{cases}
-\textnormal{div} (x_N^{1-2\ga}\nabla U) + x_N^{1-2\ga} A U \ge 0 &\text{in } B_R,\\
\pa_{\nu}^{\ga} U + aU \ge 0 &\text{on } \pa B_R',\\
U \ge 0 &\text{on } \pa B_R'',
\end{cases}
\end{equation}
then $U \ge 0$ on $\obr$.
Furthermore, the same conclusion holds if $B_R$ and $\pa B_R'$ are substituted by $\mr^N_+$ and $\mr^n$, respectively,
and the third inequality in \eqref{eq_max} is replaced with the condition that $|U(x)|/V(x) \to 0$ uniformly as $|x| \to \infty$.
\end{lemma}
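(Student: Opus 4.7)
My plan is to pass to the quotient $W := U/V$, which is well-defined and continuous on $\obr$ because $V > 0$ there, and to derive a maximum principle for $W$ whose leading part no longer has a zero-order term. A direct computation using the product rule for $-\textnormal{div}(x_N^{1-2\ga}\nabla\,\cdot)$ gives
\begin{align*}
-\textnormal{div}(x_N^{1-2\ga}\nabla U) + x_N^{1-2\ga} A U
&= -V\,\textnormal{div}(x_N^{1-2\ga}\nabla W) - 2 x_N^{1-2\ga}\nabla V \cdot \nabla W \\
&\quad + W\left[-\textnormal{div}(x_N^{1-2\ga}\nabla V) + x_N^{1-2\ga} A V\right],
\end{align*}
while on the flat boundary, using the Leibniz rule inside the operator $\pa_\nu^\ga$, one has $\pa_\nu^\ga U + aU = V\,\pa_\nu^\ga W + W(\pa_\nu^\ga V + aV)$. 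Since the bracketed supersolution quantities for $V$ are $\ge 0$, at any point where $W \le 0$ the product $W[\,\cdots\,]$ is $\le 0$, so dividing by $V>0$ the hypotheses on $U$ force
\[
-\textnormal{div}(x_N^{1-2\ga}\nabla W) - 2 x_N^{1-2\ga} V^{-1}\nabla V \cdot \nabla W \ge 0 \text{ in } B_R\cap\{W\le 0\}, \quad \pa_\nu^\ga W \ge 0 \text{ on } \pa B_R' \cap \{W \le 0\}.
\]
The tangential drift $V^{-1}\nabla_{\bx} V$ is bounded on the compact set $\obr$ because $V$ is $C^1$ in $\bx$ and bounded below by a positive constant, and the weighted normal drift $V^{-1}(x_N^{1-2\ga}\pa_N V)$ extends continuously to $\{x_N=0\}$ by the standing regularity of $V$.

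The second step is a contradiction argument on $\Omega_- := \{x \in \obr : W(x) < 0\}$. Since $U \ge 0$ on $\pa B_R''$ and $V > 0$, $\Omega_- \cap \pa B_R'' = \emptyset$. If $\Omega_- \ne \emptyset$, the infimum of $W$ on $\overline{\Omega_-}$ is strictly negative and equals $0$ on $\pa\Omega_- \cap (B_R \cup \pa B_R')$; hence it is attained either at an interior point of $B_R \cap \Omega_-$, where the strong maximum principle for a degenerate elliptic operator with the $A_2$-weight $x_N^{1-2\ga}$ and no zero-order term (Fabes--Kenig--Serapioni theory) forces $W$ to be locally constant and yields a contradiction with its vanishing boundary values on $\pa \Omega_-$; or at a point $x_0 \in \pa B_R' \cap \Omega_-$, where a Hopf lemma analogous to Lemma \ref{lemma_hopf} applied to $W$ gives $\pa_\nu^\ga W(x_0) < 0$, contradicting the boundary inequality. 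Either alternative is absurd, so $W \ge 0$ on $\obr$ and the conclusion $U \ge 0$ follows. For the unbounded variant with $\mr^N_+$ and $\mr^n$ in place of $B_R$ and $\pa B_R'$, the decay hypothesis $|U(x)|/V(x) \to 0$ as $|x| \to \infty$ translates directly to $W(x) \to 0$, so any negative infimum of $W$ is necessarily attained on a compact set and the same dichotomy applies.

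The main obstacle will be the Hopf lemma step for the drift-perturbed degenerate operator governing $W$: the tangential drift is harmless, but the weighted normal drift $V^{-1}(x_N^{1-2\ga}\pa_N V)$ need not vanish at $\{x_N=0\}$ and must be absorbed by the barrier. I would follow the recipe of Lemma \ref{lemma_hopf} and take a comparison function of the form $x_N^{-(1-2\ga)}(x_N + c_1 x_N^2)(e^{-c_2|\bx|} - e^{-c_2 r})$, tracking the additional drift terms and choosing $c_1, c_2$ large enough to ensure the strict supersolution property at the boundary. The interior strong maximum principle for the operator in question is standard once the coefficients are bounded measurable, so no essential new work is required there.
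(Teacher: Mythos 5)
Your overall route is the standard one and is essentially what the paper itself points to (it gives no proof, deferring to \cite[Lemma A.3]{JLX} and \cite[Lemma 3.7]{KMW}): divide by the positive supersolution, observe that $W=U/V$ satisfies a differential inequality with no zero-order term on $\{W\le 0\}$ together with $\pa_\nu^\ga W\ge 0$ there, and run a minimum-principle/Hopf dichotomy; the algebraic identities you write for the interior and for the boundary operator are correct, and the treatment of the unbounded case via $W\to 0$ is fine. Two small points of hygiene: the interior step should not be attributed to Fabes--Kenig--Serapioni for the operator you actually have (it carries a drift), but this is harmless because interior points satisfy $x_N>0$, where the operator is locally uniformly elliptic with locally bounded coefficients, so the classical strong minimum principle applies; alternatively note the cleaner divergence structure $-\textnormal{div}(x_N^{1-2\ga}V^2\nabla W)\ge 0$ on $\{W\le 0\}$, which has no drift at all and a weight comparable to $x_N^{1-2\ga}$.

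The genuine gap is in your Hopf step. With the barrier $W_b=(x_N^{2\ga}+c_1x_N^{1+2\ga})(e^{-c_2|\bx|}-e^{-c_2r})$ borrowed from Lemma \ref{lemma_hopf}, the normal part of the drift contributes $-2V^{-1}\bigl(x_N^{1-2\ga}\pa_N V\bigr)\pa_N W_b$, and since only $x_N^{1-2\ga}\pa_N V$ (not $\pa_N V$) is bounded and its limit has no sign, this term is of size $x_N^{2\ga-1}\bigl(e^{-c_2|\bx|}-e^{-c_2r}\bigr)$ near the bottom. The favorable terms the barrier produces are only $-c_1(1+2\ga)(e^{-c_2|\bx|}-e^{-c_2r})=O(1)$ and $O(x_N)$, so for $\ga<1/2$ the bad term blows up as $x_N\to 0$ and no choice of $c_1,c_2$ makes $W_b$ a subsolution of the drifted operator; ``choosing the constants large'' only works for $\ga\ge 1/2$. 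The fix is cheap and worth making explicit: either (a) use $W_b/V$ as the barrier, since the same product identity you already used (with $U$ replaced by $W_b$) gives
\[
-\textnormal{div}\bigl(x_N^{1-2\ga}V^2\nabla (W_b/V)\bigr)
= V\bigl[-\textnormal{div}(x_N^{1-2\ga}\nabla W_b)+x_N^{1-2\ga}AW_b\bigr]
- W_b\bigl[-\textnormal{div}(x_N^{1-2\ga}\nabla V)+x_N^{1-2\ga}AV\bigr]\le 0,
\]
while $\lim_{x_N\to 0}x_N^{1-2\ga}\pa_N(W_b/V)=2\ga\,(e^{-c_2|\bx|}-e^{-c_2r})/V(\bx,0)>0$, so the Hopf conclusion survives; or (b) add a corrector of the form $c_3x_N^{4\ga}(e^{-c_2|\bx|}-e^{-c_2r})$, whose divergence yields a favorable $-c\,c_3x_N^{2\ga-1}$ term that absorbs the drift; or (c) avoid Hopf altogether by testing the inequality $-\textnormal{div}(x_N^{1-2\ga}V^2\nabla W)\ge 0$ on $\{W\le0\}$ against $W_-$ (admissible by the assumed continuity of $\nabla_{\bx}U,\,\nabla_{\bx}V,\,x_N^{1-2\ga}\pa_N U,\,x_N^{1-2\ga}\pa_N V$), which kills both the interior and the bottom-boundary cases at once. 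With any of these repairs your argument is complete.
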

\begin{proof}
One can obtain it by suitably modifying the proofs of \cite[Lemma A.3]{JLX} and \cite[Lemma 3.7]{KMW}.
\end{proof}

The next remark concerns on the weak maximum principle when the size of the domain is sufficiently small.
\begin{rmk}\label{rmk_max_p}
For any fixed $R > 0$, we introduce the space
\begin{equation}\label{eq_mcw}
\mcw_0^{1,2}(B_R; x_N^{1-2\ga}) = \left\{ U \in W^{1,2}(B_R; x_N^{1-2\ga}): U = 0 \text{ on } \pa B_R'' \right\},
\end{equation}
endowed with the standard $W^{1,2}(B_R; x_N^{1-2\ga})$-norm.
As shown in \cite[Lemma 2.1.2]{DMV}, the map $D^{1,2}\mrg \hookrightarrow L^2(B_R; x_N^{1-2\ga})$ is compact.
Therefore a minimizer of the Rayleigh quotient
\[\lambda_1(R) = \inf_{U \in \mcw_0^{1,2}(B_R; x_N^{1-2\ga}) \setminus \{0\}} {\int_{B_R} x_N^{1-2\ga} |\nabla U|^2 dx \over \int_{B_R} x_N^{1-2\ga} U^2 dx}\]
is always attained, and so $\lambda_1(R) > 0$.
Moreover, we see from the dilation symmetry that
\[\lambda_1(R) = \({R' \over R}\)^2 \lambda_1(R') \quad \text{for any } 0 < R < R'.\]
Therefore if $|A| \le \mcm$ for some constant $\mcm > 0$, then there exists $R_0' = R_0'(\mcm, \bg) > 0$ such that
\begin{equation}\label{eq_*}
\|U\|_* = \(\int_{B_R} x_N^{1-2\ga} \(|\nabla U|_{\bg}^2 + A U^2\) dv_{\bg}\)^{1 \over 2} \quad \text{for } U \in \mcw_0^{1,2}(B_R; x_N^{1-2\ga})
\end{equation}
is a norm equivalent to the $\mcw_0^{1,2}(B_R; x_N^{1-2\ga})$-norm for $R \in (0,R_0')$.

In particular, we have a weak maximum principle:
Given any $R \in (0,R_0')$, suppose that $U \in W^{1,2}(B_R; x_N^{1-2\ga})$ satisfies
\begin{equation}\label{eq_ineq}
\begin{cases}
-\text{div}_{\bg} (x_N^{1-2\ga}\nabla U) + x_N^{1-2\ga} A U \ge 0 &\text{in } B_R,\\
\pa^{\ga}_{\nu} U \ge 0 &\text{on } \pa B_R',\\
U \ge 0 &\text{on } \pa B_R''.
\end{cases}
\end{equation}
Then $U \ge 0$ in $B_R$. To check it, we just put a test function $U_- \in W^{1,2}(B_R; x_N^{1-2\ga})$ into \eqref{eq_ineq} and use the equivalence between the $*$-norm and the $\mcw_0^{1,2}(B_R; x_N^{1-2\ga})$-norm.
\end{rmk}

\subsection{Schauder estimates}
In this subsection, we prove the Schauder estimate for solutions to
\begin{equation}\label{eq_deg_2}
\begin{cases}
-\text{div}_{\bg} (x_N^{1-2\ga} \nabla U) + x_N^{1-2\ga} A U = 0 &\text{in } B_R,\\
U = u > 0 &\text{on } \pa B_R',\\
\pa^{\ga}_{\nu} U = q &\text{on } \pa B_R',
\end{cases}
\end{equation}
which is a special case of \eqref{eq_deg}.
\begin{lemma}\label{lemma_Sch}
Assume that the metric $\bg$ satisfies \textnormal{(g1)} and \textnormal{(g2)}, and $q \in C^{\beta}(\overline{B_R})$ for some $0 < \beta \notin \mn$.
Suppose also that
\begin{enumerate}
\item[(g3)] $\bg_{ij}(0) = \delta_{ij}$ where $\delta_{ij}$ is the Kronecker delta;
\item[(A3c)] $A, \cdots, \nabla_{\bx}^{\lceil \beta \rceil} A \in L^{\infty}(B_R)$ and $\beta' \in (0, \beta+2\ga] \cap (0, \lceil \beta \rceil)$.
\end{enumerate}
If $U \in W^{1,2}(B_R; x_N^{1-2\ga})$ is a weak solution to \eqref{eq_deg_2}, then $u \in C^{\beta'}(\overline{\pa B_{R/2}'})$ and
\begin{equation}\label{eq_Sch}
\|u\|_{C^{\beta'}(\overline{\pa B_{R/2}'})}
\le C \(\|U\|_{L^2(B_R; x_N^{1-2\ga})} + \|q\|_{C^{\beta}(\overline{\pa B_R'})} + \sum_{\ell=0}^{\lceil \beta \rceil} \|\nabla_{\bx}^{\ell} A\|_{L^{\infty}(B_R)}\)
\end{equation}
Here $\lceil \beta \rceil$ is the smallest integer exceeding $\beta$ and $C > 0$ depends only on $n$, $\gamma$, $\beta$, $R$, $\bg$ and $A$.
\end{lemma}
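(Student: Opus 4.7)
The plan is to combine tangential bootstrap from Lemma \ref{lemma_reg_2} with the Caffarelli--Silvestre identification of $\pa^{\ga}_{\nu}$ as $\kappa_{\ga}(-\Delta)^{\ga}$ for the flat weighted Laplacian. This will reduce the boundary equation satisfied by $u$ to a (perturbed) fractional Laplace equation on $\mr^n$, to which the classical Schauder theory for $(-\Delta)^{\ga}$ applies.

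First I would apply Lemma \ref{lemma_reg_2} iteratively for $\ell_0 = 1, 2, \dots, \lceil \beta \rceil - 1$, starting from the H\"older bound of Lemma \ref{lemma_reg_0} and using (g2) and (A3c); at the $\ell$-th step, the term $AU$ is treated as a datum that is controlled by the previous iteration. Combined with Lemma \ref{lemma_reg_3} for the normal direction, this produces uniform $C^{\beta''}$ bounds (for some $\beta'' > 0$) on $\nabla_{\bx}^{\ell} U$ up to the boundary on a slightly smaller half-ball for every $\ell \le \lceil \beta \rceil - 1$, as well as a bound on $x_N^{1-2\ga} \pa_N U$.

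Next I would use (g3) to expand $\bg^{ij}(x) = \delta^{ij} + O(|x|)$ and $\sqrt{|\bg|}(x) = 1 + O(|x|)$, and rewrite \eqref{eq_deg_2} as the flat Caffarelli--Silvestre extension with an extra scalar source $Q^{\sharp}$ and vector field $F^{\sharp}$ assembled from the non-flatness of $\bg$ and from the term $AU$. By the previous step and (A3c), $F^{\sharp}$ and $Q^{\sharp}$ are $C^{\beta}$ in $\bx$ uniformly in $x_N$ and vanish to first order at $0$. Choosing a cutoff $\eta \in C_c^{\infty}(\pa B_R')$ equal to $1$ on $\pa B_{R/2}'$, letting $V$ be the flat $\ga$-harmonic extension of $\eta u$ to $\mr^N_+$, and combining the identity $\pa^{\ga}_{\nu}V = \kappa_{\ga}(-\Delta)^{\ga}(\eta u)$ from \eqref{eq_CS} with $\pa^{\ga}_{\nu}U = q$, I arrive at
\[\kappa_{\ga}(-\Delta)^{\ga}(\eta u) = \tilde q \quad \text{on } \mr^n,\]
with $\tilde q \in C^\beta(\mr^n)$ controlled by the right-hand side of \eqref{eq_Sch}. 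Applying the classical Schauder estimate for the fractional Laplacian (as in the appendix of \cite{JLX}) then gives $\eta u \in C^{\beta+2\ga}$ away from integer thresholds, which upon restriction to $\overline{\pa B_{R/2}'}$ yields \eqref{eq_Sch} with $\beta'$ in the stated range; the upper bound $\beta' < \lceil \beta \rceil$ is exactly the familiar integer obstruction of Schauder theory for $(-\Delta)^{\ga}$.

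The main obstacle is the passage from the local condition $\pa^{\ga}_{\nu}U = q$ on $\pa B_R'$ to a \emph{global} fractional equation on $\mr^n$, since the Caffarelli--Silvestre identification $\pa^{\ga}_{\nu} = \kappa_{\ga}(-\Delta)^{\ga}$ is intrinsically a flat-space identity. The cutoff commutators, the contribution of $AU$, and especially the metric perturbation must each be decomposed carefully into $C^\beta$ pieces; the hypothesis $\bg(0) = I$ from (g3) is essential precisely because it makes the metric correction vanish at the origin, so that the effective forcing $\tilde q$ genuinely lies in $C^\beta$ rather than being merely bounded.
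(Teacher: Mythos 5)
There is a genuine gap at the heart of your reduction: the claim that the effective datum $\tilde q=\pa^{\ga}_{\nu}V$ is of class $C^{\beta}$ with norm controlled by the right-hand side of \eqref{eq_Sch}. Writing $D=V-U$ on $B_{R/2}$ (where $\eta=1$, so $D$ has zero trace), your identity reads $(-\Delta)^{\ga}(\eta u)=q+\pa^{\ga}_{\nu}D$, and $D$ solves a flat weighted equation whose forcing is assembled from $(\bg^{ij}-\delta^{ij})\pa_{ij}U$, lower-order metric terms, $\pa_N\sqrt{|\bg|}\,\pa_N U$ and $AU$. To conclude that $\pa^{\ga}_{\nu}D$ is H\"older of exponent $\beta$ you need, uniformly up to $x_N=0$, H\"older-in-$\bx$ control of $\nabla_{\bx}U$, $\nabla^2_{\bx}U$ and of weighted normal derivatives of $D$ --- i.e.\ precisely the Schauder-type regularity for the degenerate extension problem that the lemma is trying to prove. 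The lemmas you invoke do not supply it: for $\beta\in(0,1)$ your iteration range for Lemma \ref{lemma_reg_2} is empty and Lemma \ref{lemma_reg_0} only yields some small exponent $\beta''$ (indeed Lemma \ref{lemma_reg_2} with $\ell_0=1$ is inapplicable because hypothesis (a3) requires $\nabla_{\bx}q\in L^{q_3}$, which fails for merely H\"older $q$, so not even $\nabla_{\bx}U\in L^{\infty}$ is available at that stage); and Lemma \ref{lemma_reg_3} presupposes $C^{\beta}$ bounds on $\nabla^2_{\bx}U$. Moreover, the fact that the metric correction vanishes at $0$ by (g3) only helps pointwise at the origin, whereas the fractional Schauder estimate for $(-\Delta)^{\ga}(\eta u)$ needs the $C^{\beta}$ seminorm of $\tilde q$ on a whole neighborhood; so (g3) cannot compensate for the missing regularity. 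As written the argument is circular (or at best requires an unstated bootstrap, gaining a small increment of regularity per step, whose key ingredient --- a Schauder estimate for $\pa^{\ga}_{\nu}$ of the flat extension problem with H\"older divergence-form data --- is exactly the nontrivial content).

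For comparison, the paper avoids this circle by following \cite[Theorem 2.14]{JLX}: it introduces correctors $W_m$ on the dyadic half-balls $B_{R/2^m}$ whose Neumann data are the Taylor remainder of $q$ at $0$ (see \eqref{eq_Sch_2}), obtains $\|W_m\|_{L^{\infty}}\lesssim 2^{-(\beta+2\ga)m}$ from explicit barriers and the weak maximum principle of Remark \ref{rmk_max_p}, and then applies Lemmas \ref{lemma_reg_0} and \ref{lemma_reg_2} to $U+W_0$ and to the differences $h_m=W_{m+1}-W_m$ --- which is legitimate precisely because their Neumann data are constant, respectively zero, so no differentiability of $q$ is needed --- and finally telescopes to get the pointwise oscillation bound $|u(\bx)-u(0)|\lesssim|\bx|^{\min\{\beta+2\ga,1\}}$. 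If you want to salvage your route, you would have to first prove a constant-coefficient version of the lemma (for the flat weight with H\"older Neumann data and H\"older divergence-form interior data) and then run a genuine freezing-of-coefficients/bootstrap scheme; that missing constant-coefficient step is essentially what the paper's dyadic argument supplies.
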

\begin{proof}
We shall follow closely the argument in the proof of \cite[Theorem 2.14]{JLX}.

Considering a finite open cover of $B_R$ which consists of balls and half-balls with small diameters,
we may assume that $R > 0$ is so small that the $*$-norm in \eqref{eq_*} is equivalent to the standard $W^{1,2}(B_R; x_N^{1-2\ga})$-norm.
For simplicity, we set
\[M = \|q\|_{C^{\beta}(\opbr)} + \sum_{\ell=1}^{\lceil \beta \rceil} \|\nabla_{\bx}^{\ell} A\|_{L^{\infty}(B_R)}.\]

Assume that $\beta \in (0,1)$. For $m \in \mn$, let $W_m$ be the unique solution in $W^{1,2}(B_{R/2^m}; x_N^{1-2\ga})$ to
\begin{equation}\label{eq_Sch_2}
\begin{cases}
-\text{div}_{\bg}\(x_N^{1-2\ga}\nabla W_m\) + x_N^{1-2\ga} A W_m = 0 &\text{in } B_{R/2^m},\\
\pa^{\ga}_{\nu} W_m = q(0) - q(\bx) &\text{on } \pa B_{R/2^m}',\\
W_m = 0 &\text{on } \pa B_{R/2^m}''.
\end{cases}
\end{equation}
Then an application of the weak maximum principle (Remark \ref{rmk_max_p}) to the equation of the  function
\[2^{2\ga m} W_m\({x \over 2^m}\) \pm {M R^\beta \over 2^{\beta m}} \left[{2R^2-|x|^2 \over n+2-2\ga} + {2R^{2\ga}-x_N^{2\ga} \over 2\ga \kappa_{\ga}} \right]\]
shows
\begin{equation}\label{eq_Sch_3}
\|W_m\|_{L^{\infty}(B_{R/2^m})} \le {C M \over 2^{(\beta + 2\ga)m}}
\end{equation}
for every $m \in \mn$. Define $h_m = W_{m+1} - W_m$.
Thanks to Lemmas \ref{lemma_reg_0} and \ref{lemma_reg_2}, we have
\begin{equation}\label{eq_Sch_4}
\|\nabla_{\bx}^{\ell} (U+W_0)\|_{L^{\infty}(B_{R/2})} \le C(\|U\|_{L^2(B_R;x_N^{1-2\ga})} + M)
\end{equation}
and
\begin{equation}\label{eq_Sch_5}
\|\nabla_{\bx}^{\ell} h_m\|_{L^{\infty}(B_{R/2^{m+2}})} \le {C M \over 2^{(\beta + 2\ga-\ell)m}}
\end{equation}
for $\ell = 0,\, 1$ and all $m \in \mn$. By \eqref{eq_Sch_3}-\eqref{eq_Sch_5} and the mean value theorem,
\begin{align*}
|u(\bx) - u(0)| &\le |W_m(0,0)| + |W_m(\bx,0)| + |(U+W_0)(\bx,0) - (U+W_0)(0,0)| \\
&\ + \sum_{j=0}^{m-1} |h_j(\bx,0)-h_j(0,0)|  \\
&\le C(\|U\|_{L^2(B_R;x_N^{1-2\ga})} + M) |\bx|^{\min\{\beta + 2\ga,1\}} \quad \text{for all } \bx \in \pa B_{R/2}'
\end{align*}
so that $u \in C^{\beta'}(\overline{\pa B_{R/2}'})$ and \eqref{eq_Sch} holds.

Suppose $\beta \in (1,2)$. In this case, we modify $W_m$ by replacing the second equation of \eqref{eq_Sch_2} with
\[\pa^{\ga}_{\nu} W_m = q(0) + \nabla_{\bx} q(0) \cdot \bx - q(\bx) \quad \text{on } \pa B_{R/2^m}'.\]
Then we adopt the above argument to conclude that $u \in C^{\beta'}(\overline{\pa B_{R/2}'})$ and \eqref{eq_Sch} holds.

The case $\beta > 2$ can be similarly treated. This finishes the proof.
\end{proof}

\subsection{Conclusion}
From the results obtained in the previous subsections, one gets the following regularity property of solutions to \eqref{eq_Yamabe} and its extension problem \eqref{eq_Yamabe_2}.
\begin{prop}\label{prop_reg}
Suppose that $U \in W^{1,2}(X;\rho^{1-2\ga})$ is a weak solution of \eqref{eq_Yamabe_2} with a fixed $p \in (1, 2_{n,\ga}^*-1]$ and condition \eqref{eq_hyp} holds.
Then the functions $U$, $\nabla_{\bx} U$, $\nabla_{\bx}^2 U$ and $x_N^{1-2\ga} \pa_N U$ are H\"older continuous on $\ox$.
In particular, the trace $u \in C^2(M)$ of $U$ on $M$ satisfies \eqref{eq_Yamabe} in the classical sense.
\end{prop}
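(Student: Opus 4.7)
The argument is local near an arbitrary boundary point $y\in M$: in $\bg$-Fermi coordinates centered at $y$, a neighborhood of $y$ in $\ox$ is identified with $B_R\subset\mr^N_+$ as in \eqref{eq_B_R}, and the extension equation \eqref{eq_Yamabe_2} becomes a special case of \eqref{eq_deg_2} with $A=x_N^{-(1-2\ga)}E_{\bg}(x_N)\in L^\infty(B_R)$ and Neumann datum $\pa_\nu^\ga U=u^p$. The hypothesis \eqref{eq_hyp} forces $H\equiv 0$ on $M$ (by \cite[Lemma 2.3]{KMW2}), so Lemma \ref{lemma_metric} delivers the structural condition \eqref{eq_sqrt_bg} required by (g2); the conditions (g1), (g3) are automatic in Fermi coordinates. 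Interior regularity in $X\setminus M$ is classical (the equation is uniformly elliptic away from $\{x_N=0\}$); only the regularity up to $M$ needs work.

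\textbf{Step 1 (Boundedness).} I first upgrade the a priori regularity $u\in H^\ga(M)\hookrightarrow L^{2n/(n-2\ga)}(M)$ to $u\in L^\infty(M)$, and hence $U\in L^\infty(\ox)$ via the generalized maximum principle of Lemma \ref{lemma_max}. The natural route is a Brezis--Kato / Moser iteration on the bottom boundary: writing $\pa_\nu^\ga U=a\,u$ with $a:=u^{p-1}\in L^{n/(2\ga)}(M)$ at the critical exponent, one tests \eqref{eq_Yamabe_2} with powers $U\min\{|U|^{q-1},K\}$ and uses the weighted Sobolev trace embedding \eqref{eq_emb}, splitting $a$ into a large-$L^{n/(2\ga)}$ part of small norm plus a bounded remainder to close the iteration.

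\textbf{Step 2 (Bootstrap on $M$).} With $u\in L^\infty(M)$, the coefficient $a=u^{p-1}$ and the datum $q\equiv 0$ satisfy (a2), so Lemma \ref{lemma_reg_0} gives $U\in C^\beta(\obrt)$ for some $\beta\in(0,1)$. Setting now $q:=u^p\in C^\beta(\opbr)$ and $a\equiv 0$ in \eqref{eq_deg_2}, Lemma \ref{lemma_Sch} yields $u\in C^{\beta'}(\overline{\pa B_{R/2}'})$ with $\beta'=\min\{\beta+2\ga,\lceil\beta\rceil\}$. Iterating Lemma \ref{lemma_Sch} finitely many times (each iteration improves the boundary H\"older exponent of $u$, hence of $u^p$, until $\beta'>2$), I obtain $u\in C^{2+\beta''}(M)$ for some $\beta''\in(0,1)$.

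\textbf{Step 3 (Tangential derivatives of $U$ and the normal derivative).} Because \eqref{eq_hyp} gives (g2), and all tangential derivatives of $A$ and $Q\equiv 0$, $F\equiv 0$ are $L^\infty$, while $a\equiv 0$ and $q=u^p\in C^{1+\beta''}$ satisfy (a3), Lemma \ref{lemma_reg_2} applies successively with $\ell_0=1$ and $\ell_0=2$, giving $\nabla_{\bx} U,\nabla_{\bx}^2 U\in C^\beta(\obrt)$. Finally, the resulting H\"older continuity of $U,\nabla_{\bx}U,\nabla_{\bx}^2 U$ together with $q=u^p\in C^\beta$, $F_N\equiv 0$ puts us in the setting of Lemma \ref{lemma_reg_3}, which produces H\"older continuity of $x_N^{1-2\ga}\pa_N U$ on $\obrt$. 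Passing to the trace in \eqref{eq_diff} confirms that $u\in C^2(M)$ solves \eqref{eq_Yamabe} classically, and patching the local charts completes the proof.

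\textbf{Main obstacle.} The only genuinely delicate part is Step 1, where $p$ may equal the critical exponent $2_{n,\ga}^*-1$; the standard nonlocal Brezis--Kato argument does apply, but one must be careful to work with the truncation $U\min\{|U|^{q-1},K\}$ in the half-space with the weight $x_N^{1-2\ga}$ rather than directly on $M$, so that the maximum principle of Remark \ref{rmk_max_p} and the trace embedding \eqref{eq_emb} can be used in tandem. All subsequent steps, although requiring careful bookkeeping of H\"older exponents, are direct applications of the toolbox in Appendix \ref{sec_reg}, with the crucial structural input \eqref{eq_sqrt_bg} supplied exactly by the geometric hypothesis \eqref{eq_hyp}.
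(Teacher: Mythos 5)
Your proposal is correct and follows essentially the same route as the paper: the paper's proof simply observes that the regularity assertions are local and cites Lemmas \ref{lemma_reg_0}, \ref{lemma_reg_2}, \ref{lemma_reg_3} and \ref{lemma_Sch}, which is precisely your Steps 2--3 carried out in $\bg$-Fermi coordinates, with \eqref{eq_hyp} supplying \eqref{eq_sqrt_bg} exactly as you say. Your Step 1 (the Brezis--Kato/Moser $L^{\infty}$ bound, needed because $a=u^{p-1}$ is only borderline $L^{n/(2\ga)}$ at the critical exponent and hypothesis (a2) demands $q_3>n/(2\ga)$) is left implicit in the paper's one-line proof, but it is the standard preliminary the cited lemmas require, so making it explicit is a reasonable elaboration rather than a different approach.
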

\begin{proof}
The above regularity properties are local. Therefore Lemmas \ref{lemma_reg_0}, \ref{lemma_reg_2}, \ref{lemma_reg_3} and \ref{lemma_Sch} ensure their validity.
\end{proof}

\section{Green's function and B\^ocher's theorem}
In this section, we keep using the notations given in \eqref{eq_B_R}.

\subsection{Green's function}\label{subsec_Green}
Given a small number $R > 0$, we shall examine the existence and the growth rate of the {\it Green's function $G$} in $B_R$, a solution of
\begin{equation}\label{eq_Green}
\begin{cases}
-\text{div}_{\bg} (x_N^{1-2\ga} \nabla G) + E_{\bg}(x_N)\, G = 0 &\text{in } B_R,\\
\pa_{\nu}^{\ga} G = \delta_0 &\text{on } \pa B_R',\\
G = 0 &\text{on } \pa B_R''
\end{cases}
\end{equation}
where $\bg$ is the metric satisfying (g1) and $\delta_0$ is the Dirac measure centered at $0 \in \mr^N$.
Our argument is based on elliptic regularity theory and does not rely on parametrices.

\medskip
We start with deriving an auxiliary lemma.
\begin{lemma}\label{lemma_reg_1}
Given a small $R > 0$, suppose that $a = q = 0$ on $\pa B_R'$, $A \in L^{\infty}(B_R)$ and $Q \in L^{q_1}(B_R; x_N^{1-2\ga})$ for
\begin{equation}\label{eq_q_1}
q_1 \in \left[{2(n-2\ga+2) \over n-2\ga+4}, {n-2\ga+2 \over 2}\).
\end{equation}
Assume also that $U \in \mcw_0^{1,2}(B_R; x_N^{1-2\ga})$ is a weak solution to \eqref{eq_deg}
and the $*$-norm is equivalent to the $\mcw_0^{1,2}(B_R; x_N^{1-2\ga})$-norm; see \eqref{eq_mcw} and \eqref{eq_*}).
Then
\[\|U\|_{L^{q_4}(B_R; x_N^{1-2\ga})} \le C \|Q\|_{L^{q_1}(B_R; x_N^{1-2\ga})}\]
for any pair $(q_1, q_4)$ satisfying $1/q_1 = 1/q_4 + 2/(n-2\ga+2)$
and some constant $C > 0$ depending only on $n$, $\ga$, $R$, $\lambda_{\bg}$, $\Lambda_{\bg}$, $q_1$ and $q_4$.
\end{lemma}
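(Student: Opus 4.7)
The plan is to combine the weighted Sobolev embedding \eqref{eq_emb} with a Moser-type test-function argument. Write $s = n-2\ga+2$ for brevity, so that $q_1 \in [2s/(s+2),s/2)$ and the relation $1/q_1 = 1/q_4+2/s$ puts $q_4 \in [2s/(s-2),\infty)$. Together with the zero boundary condition on $\pa B_R''$, \eqref{eq_emb} yields the embedding $\mcw_0^{1,2}(B_R;x_N^{1-2\ga}) \hookrightarrow L^{2s/(s-2)}(B_R;x_N^{1-2\ga})$.

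The endpoint case $q_1 = 2s/(s+2)$ (so $q_4 = 2s/(s-2)$) is immediate: by the hypothesized equivalence between the $*$-norm \eqref{eq_*} and the $\mcw_0^{1,2}$-norm, Lax--Milgram furnishes the unique solution $U$. Testing against $U$ itself, then applying Hölder's inequality and the weighted Sobolev embedding on the right-hand side, gives
\[
\|U\|_{\mcw_0^{1,2}}^2 \le C\|U\|_*^2 \le C\|Q\|_{L^{2s/(s+2)}}\|U\|_{L^{2s/(s-2)}} \le C\|Q\|_{L^{2s/(s+2)}}\|U\|_{\mcw_0^{1,2}},
\]
from which the assertion follows via one more application of the Sobolev embedding.

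For $q_1 \in (2s/(s+2),s/2)$, I would fix $\beta>0$ by the requirement $(2\beta+2)\,s/(s-2) = (2\beta+1)q_1'$; a short algebraic check shows the common value equals exactly $q_4$ and that the lower bound on $q_1$ is precisely what makes $\beta>0$. Testing the equation with the bounded truncation $\Phi_M = |T_MU|^{2\beta}T_MU$, where $T_MU=\mathrm{sgn}(U)\min(|U|,M)$, integrating by parts, using the uniform ellipticity from (g1), and letting $M\to\infty$ via monotone convergence produces
\[
\int_{B_R} x_N^{1-2\ga}\bigl|\nabla|U|^{\beta+1}\bigr|^2 dv_{\bg} \le C\int_{B_R} x_N^{1-2\ga}\bigl(|A||U|^{2\beta+2}+|Q||U|^{2\beta+1}\bigr) dv_{\bg}.
\]
Applying the weighted Sobolev embedding to $|U|^{\beta+1}$ and Hölder's inequality with the matching exponents gives
\[
\|U\|_{L^{q_4}}^{2\beta+2} \le C\bigl(\|A\|_\infty\|U\|_{L^{2\beta+2}}^{2\beta+2}+\|Q\|_{L^{q_1}}\|U\|_{L^{q_4}}^{2\beta+1}\bigr).
\]
Dividing by $\|U\|_{L^{q_4}}^{2\beta+1}$ and using that $2\beta+2<q_4$ together with the smallness of the weighted measure of $B_R$ (already implicit in the hypothesis that the $*$-norm be equivalent to the $\mcw_0^{1,2}$-norm) to absorb the $\|A\|_\infty$ term, the asserted inequality follows.

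The main technical obstacle is the justification of the truncation step and the bootstrap of the a priori integrability of $U$: initially one only has $U \in \mcw_0^{1,2} \hookrightarrow L^{2s/(s-2)}$ from the endpoint, which suffices to carry out one Moser step when $q_4 \le 2s^2/(s-2)^2$; for larger $q_4$ the procedure has to be iterated a finite number of times, increasing the a priori integrability by the factor $\kappa = s/(s-2)$ at each step until the target $q_4$ is reached. This iteration is entirely analogous to the weighted Moser scheme carried out in \cite[Proposition 1]{FF}, whose argument can be adapted with only notational changes to the present boundary setting.
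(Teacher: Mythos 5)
Your proposal follows essentially the route the paper intends: the paper's own proof is a one-line reference to \cite[Lemma 3.3]{CK}, i.e.\ precisely this Brezis--Kato/Moser-type test-function scheme, with the only announced modification being the use of the weighted Sobolev inequality \eqref{eq_emb} in place of the Sobolev trace inequality. Your endpoint energy estimate, the exponent bookkeeping (with $s=n-2\ga+2$, the requirement $(2\beta+2)\frac{s}{s-2}=(2\beta+1)q_1'$ is indeed equivalent to $1/q_1=1/q_4+2/s$ and forces $\beta>0$ exactly on the stated range of $q_1$), the truncations $T_MU$, and the finite iteration needed before one may divide by $\|U\|_{L^{q_4}}^{2\beta+1}$ are all consistent with that scheme.

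The one step I would not let stand as written is the absorption of the $\|A\|_{L^{\infty}}$-term. The hypothesis that $\|\cdot\|_*$ in \eqref{eq_*} is equivalent to the $\mcw_0^{1,2}(B_R;x_N^{1-2\ga})$-norm is an $L^2$-level coercivity statement; it does not by itself supply the quantitative smallness $C\,\|A\|_{L^{\infty}(B_R)}\big(\int_{B_R}x_N^{1-2\ga}\,dx\big)^{2/s}<1$ that a direct absorption of $\|A\|_{\infty}\|U\|_{L^{2\beta+2}}^{2\beta+2}$ into $\|U\|_{L^{q_4}}^{2\beta+2}$ would require, because the constant in that comparison comes from the Sobolev/H\"older step, not from the coercivity. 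A clean repair inside your own framework: since $2<2\beta+2<q_4$, interpolate and apply Young's inequality to get $\|A\|_{\infty}\|U\|_{L^{2\beta+2}}^{2\beta+2}\le \tfrac12\|U\|_{L^{q_4}}^{2\beta+2}+C\|U\|_{L^{2}}^{2\beta+2}$ (carried out at the level of $T_MU$, whose norms are finite), and then control $\|U\|_{L^{2}(B_R;x_N^{1-2\ga})}$ by your endpoint computation, $\|U\|_{L^2}\le C\|U\|_*\le C\|Q\|_{L^{2s/(s+2)}(B_R;x_N^{1-2\ga})}\le C\|Q\|_{L^{q_1}(B_R;x_N^{1-2\ga})}$, using that $q_1\ge 2s/(s+2)$ and that $B_R$ has finite weighted measure. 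This is where the norm-equivalence hypothesis genuinely enters, and it makes the constant depend on the bound $\mcm$ for $A$ behind Remark \ref{rmk_max_p} (as it implicitly does in the paper's applications). With that adjustment your argument is complete.
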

\begin{proof}
One can follow the lines in the proof of \cite[Lemma 3.3]{CK}.
The main difference is that we need to apply the Sobolev inequality here instead of the Sobolev traced inequality as used in the reference; see \eqref{eq_emb}.
\end{proof}

Appealing to the previous lemma, we prove the main result in this subsection.
\begin{prop}\label{prop_loc_G}
Assume that $n \ge 2 + 2\ga$.
Then there exists $0 < R_0 \le \min\{R_0', r_1\}$ small (refer to Remark \ref{rmk_max_p} and Subsection \ref{subsec_poho})
such that \eqref{eq_Green} possesses a unique solution $G \in W^{1,2}_{\loc}(B_R \setminus \{0\}; x_N^{1-2\ga})$ satisfying
\begin{equation}\label{eq_Green_dec}
|x|^{n-2\ga} G(x) \to g_{n,\ga} \quad \text{uniformly as } |x| \to 0
\quad \text{where } g_{n,\ga} = {\Gamma\({n-2\ga \over 2}\) \over \pi^{n/2} 2^{2\ga} \Gamma(\ga)} > 0
\end{equation}
for any fixed $R \in (0, R_0)$.
\end{prop}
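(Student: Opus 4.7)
The plan is to construct $G$ as a perturbation of the explicit flat fundamental solution
\[
\Gamma_0(x) := g_{n,\ga}|x|^{-(n-2\ga)}, \qquad x \in \overline{\mr^N_+} \setminus \{0\}.
\]
First I would verify that $\Gamma_0$ solves the flat analogue of \eqref{eq_Green}: because $N = n+1$, a direct computation cancels the bulk term to give $-\textnormal{div}(x_N^{1-2\ga}\nabla \Gamma_0) = 0$ in $\mr^N_+ \setminus \{0\}$, and the normalization $g_{n,\ga}$ is precisely the Riesz constant for which $(-\Delta)^\ga(g_{n,\ga}|\bx|^{-(n-2\ga)}) = \delta_0$; the extension characterization \eqref{eq_diff} then reads $\pa^{\ga}_{\nu}\Gamma_0 = \delta_0$ on $\mr^n$.

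To pass to the curved setting, I would approximate $\delta_0$ by a family $\{f_\vep\}_{\vep>0}$ with $\int_{\mr^n} f_\vep = 1$ and $\textnormal{supp}\, f_\vep \subset B^n(0,\vep)$, and solve
\[
\begin{cases}
-\textnormal{div}_{\bg}(x_N^{1-2\ga}\nabla G_\vep) + E_{\bg}(x_N) G_\vep = 0 & \text{in } B_R,\\
\pa^{\ga}_{\nu} G_\vep = f_\vep & \text{on } \pa B_R',\\
G_\vep = 0 & \text{on } \pa B_R''
\end{cases}
\]
via Lax--Milgram in $\mcw_0^{1,2}(B_R; x_N^{1-2\ga})$. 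This works once $R_0 \le \min\{R_0', r_1\}$ is chosen small enough that the $*$-norm of \eqref{eq_*} is equivalent to the standard $W^{1,2}(B_R; x_N^{1-2\ga})$-norm, in view of Remark \ref{rmk_max_p}; the same remark also gives $G_\vep > 0$.

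The crux is a uniform two-sided bound of the form $(g_{n,\ga}-\tau)|x|^{-(n-2\ga)} - C_\tau \le G_\vep(x) \le (g_{n,\ga}+\tau)|x|^{-(n-2\ga)} + C_\tau$ for arbitrary $\tau > 0$, independent of $\vep$. I would obtain these by comparison with appropriately truncated versions of $\Gamma_0$: because $\bg = \textnormal{id} + O(|x|)$ and $E_{\bg}(x_N) = O(x_N^{1-2\ga})$ near the origin (via Lemma \ref{lemma_metric}, \eqref{eq_E_exp}, and $H \equiv 0$), one builds super- and sub-solutions of the form $(g_{n,\ga}\pm\tau)\Gamma_0 + W^\pm_\tau$ with $W^\pm_\tau$ bounded and satisfying compatible boundary conditions, and the generalized maximum principle of Lemma \ref{lemma_max} sandwiches $G_\vep$ between them. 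The dimensional hypothesis $n \ge 2 + 2\ga$ enters here, ensuring the perturbative source terms that $W^\pm_\tau$ must absorb lie in $L^{q_1}(B_R; x_N^{1-2\ga})$ as required by Lemma \ref{lemma_reg_0}. With these bounds in place, the interior estimate \eqref{eq_DNM} provides $C^\beta$-control of $G_\vep$ on every compact subset of $\overline{B_R} \setminus \{0\}$, and a diagonal extraction along $\vep \to 0$ and $\tau \to 0$ yields a limit $G \in W^{1,2}_{\textnormal{loc}}(B_R \setminus \{0\}; x_N^{1-2\ga})$ solving \eqref{eq_Green} with the asymptotic \eqref{eq_Green_dec}.

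The main obstacle I foresee is the construction of $W^\pm_\tau$ with the \emph{sharp} prefactor $g_{n,\ga}$: this requires iterating the ansatz to peel off successive lower-order singularities produced by the curved operator acting on $\Gamma_0$, so that what remains to be absorbed is of strictly smaller order than $|x|^{-(n-2\ga)}$ in the relevant weighted Lebesgue norms. For uniqueness, the difference of two solutions satisfies the homogeneous problem on $B_R \setminus \{0\}$ with vanishing boundary data and $|x|^{n-2\ga}(G_1-G_2) \to 0$ as $|x| \to 0$; I would invoke the fractional B\^ocher theorem of Appendix \ref{subsec_Bocher} to show that the singularity at $0$ is removable, whereupon the weak maximum principle of Remark \ref{rmk_max_p} forces $G_1 = G_2$.
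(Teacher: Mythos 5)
Your overall architecture (flat fundamental solution as the model, existence by an approximation/variational step, uniqueness via the B\^ocher theorem plus the small-domain maximum principle) is reasonable, and the uniqueness sketch is essentially the paper's (modulo the detail that Proposition \ref{prop_boc} is stated for \emph{nonnegative} solutions, so the difference $G_1-G_2$ needs the removability argument of Lemma \ref{lemma_boc_1} run directly rather than B\^ocher verbatim). But the crux of your plan --- the uniform two-sided bound with the \emph{sharp} constant $g_{n,\ga}$ obtained by sandwiching $G_\vep$ between $(g_{n,\ga}\pm\tau)\Gamma_0+W^\pm_\tau$ via Lemma \ref{lemma_max} --- has a genuine gap. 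To apply the comparison on $B_R$ you must have $\pa^\ga_\nu(\text{upper barrier})\ge f_\vep$ pointwise on $\pa B_R'$; but $f_\vep\sim\vep^{-n}$ on $B^n(0,\vep)$ while the Neumann data of your barrier is $O(1)$ near the origin (or at best $O(\zeta|x|^{-n})$ with a small $\zeta$ forced by the interior supersolution requirement), so the hypothesis of Lemma \ref{lemma_max} fails exactly where the mass sits. If instead you excise a neighborhood of $\operatorname{supp}f_\vep$ and compare on an annulus, you need control of $G_\vep$ on the inner spherical boundary, which is precisely the estimate you are trying to prove. The normalization $g_{n,\ga}$ is encoded in the datum $\delta_0$ and cannot be extracted by barriers that do not carry that datum; the natural repair is to compare with the flat potential of $f_\vep$ itself (its $\ga$-harmonic extension, which has Neumann data exactly $f_\vep$) plus correctors --- but then you are effectively reproducing the paper's decomposition $G=G_{\mr^N_+}-\mch-g_{n,\ga}R^{-(n-2\ga)}$ rather than a genuinely independent argument. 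A secondary but related problem: a \emph{bounded} corrector $W^\pm_\tau$ cannot absorb the leading error $x_N^{1-2\ga}O(|x|^{-(n-2\ga+1)})$ created by the curved operator acting on $\Gamma_0$ (you would need a profile of order $|x|^{-(n-2\ga-1)}$, which is unbounded since $n>1+2\ga$), and your stated role for the hypothesis $n\ge 2+2\ga$ is not correct: that error term fails to lie in $L^{q_1}(B_R;x_N^{1-2\ga})$ for any $q_1>(n-2\ga+2)/2$ in every admissible dimension, so Lemma \ref{lemma_reg_0} cannot be invoked on it regardless of $n$.

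For contrast, the paper does not mollify at all: existence comes from a duality argument with the Dirac datum (giving $G\in W^{1,q}$ for $q<(n-2\ga+2)/(n-2\ga+1)$), and the asymptotics come from studying the regular part $\mch$, whose equation \eqref{eq_mch} has the singular source $\mcq(G_{\mr^N_+})=O(|x|^{-(n-2\ga+1)})$; this is handled not by Moser iteration but by testing against the adjoint problem and applying Lemma \ref{lemma_reg_1} --- this is exactly where $n\ge 2+2\ga$ enters, to make $q_1=(n-2\ga+2)/3+\eta''$ fit the window \eqref{eq_q_1} --- yielding $\mch\in L^{q'}$ for $q'<(n-2\ga+2)/(n-2\ga-1)$, and then a rescaling argument upgrades this to the pointwise bound $|\mch(x)|\le C|x|^{-(n-2\ga)+\eta'}$, which gives \eqref{eq_Green_dec}. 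To salvage your approach you would need either this duality-plus-rescaling step or barriers built on the flat potential of $f_\vep$; as written, the sharp asymptotic constant is not established.
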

\begin{proof}
The proof is divided into four steps.

\medskip \noindent \textsc{Step 1 (Existence).}
By using \eqref{eq_hyp} and \eqref{eq_E_exp}, we rewrite the first equation of \eqref{eq_Green} as
\[-\text{div}_{\bg} (x_N^{1-2\ga} \nabla G) + x_N^{1-2\ga} A G = 0 \quad \text{in } B_R\]
where $A \in C^2(\obr)$.
In view of Remark \ref{rmk_max_p}, there exists $R_0 > 0$ such that $\|\cdot\|_{*}$ in \eqref{eq_*} serves as a norm
equivalent to the standard $\mcw^{1,2}(B_R; x_N^{1-2\ga})$-norm for all $R \in (0, R_0)$.
Then a duality argument in the proof of \cite[Lemma 4.2]{KMW2} shows that the desired function $G$ exists
and is contained in $W^{1,2}_{\loc}(B_R \setminus \{0\}; x_N^{1-2\ga}) \cap W^{1,q}(B_R; x_N^{1-2\ga})$ for any $1 < q < (n-2\ga+2)/(n-2\ga+1)$.

\medskip \noindent \textsc{Step 2 (Regularity).} Recall that
\begin{equation}\label{eq_Green_mr^N}
G_{\mr^N_+}(x) = {g_{n,\ga} \over |x|^{n-2\ga}} \quad \text{in } \mr^N_+
\end{equation}
solves
\[\begin{cases}
-\text{div} (x_N^{1-2\ga} \nabla G_{\mr^N_+}) = 0 &\text{in } \mr^N_+,\\
\pa_{\nu}^{\ga} G_{\mr^N_+} = \delta_0 &\text{on } \mr^n.
\end{cases}\]
Hence $G$ satisfies \eqref{eq_Green} if and only if $\mch = G_{\mr^N_+} - G - g_{n,\ga} R^{-(n-2\ga)}$ is a solution of
\begin{equation}\label{eq_mch}
\begin{cases}
-\text{div} (x_N^{1-2\ga} \nabla \mch) + x_N^{1-2\ga} A \mch = x_N^{1-2\ga} \( \mcq(G_{\mr^N_+}) - A g_{n,\ga} R^{-(n-2\ga)}\) &\text{in } B_R,\\
\pa_{\nu}^{\ga} \mch = 0 &\text{on } \pa B_R',\\
\mch = 0 &\text{on } \pa B_R''
\end{cases}
\end{equation}
where
\begin{multline*}
\mcq(U) = - (\bg^{ij} - \delta^{ij})\, \pa_{ij} U - \({\pa_i \sqrt{|\bg|} \over \sqrt{|\bg|}}\) \bg^{ij}\, \pa_jU - \({\pa_N \sqrt{|\bg|} \over \sqrt{|\bg|}}\) \pa_NU \\
- \pa_i \bg^{ij} \, \pa_jU + \({n-2\ga \over 4n}\) (R[\bg] + o(1)) U.
\end{multline*}
By a direct calculation, we see
\begin{equation}\label{eq_mch_mcq_2}
\mch \in W^{1,2}_{\loc}(B_R \setminus \{0\}; x_N^{1-2\ga}) \cap W^{1,q}(B_R; x_N^{1-2\ga})
\end{equation}
and
\begin{equation}\label{eq_mch_mcq}
|\mcq(G_{\mr^N_+})| \le {C \over |x|^{n-2\ga+1}} \in L^q(B_R; x_N^{1-2\ga})
\end{equation}
for all $1 < q < (n-2\ga+2)/(n-2\ga+1)$.
We claim that
\begin{equation}\label{eq_mch_i}
\|\mch\|_{L^{q'}(B_R; x_N^{1-2\ga})} < \infty
\quad \text{whenever } n > 2+2\ga \text{ and } 1 < q' < \frac{n-2\ga+2}{n-2\ga-1}.
\end{equation}

To justify it, we consider the formal adjoint of \eqref{eq_mch}
\begin{equation}\label{eq_dual}
\begin{cases}
-\text{div} (x_N^{1-2\ga}\nabla U) + x_N^{1-2\ga} A U = x_N^{1-2\ga} Q &\text{in } B_R,\\
\pa^{\ga}_{\nu} U = 0 &\text{on } \pa B_R',\\
U = 0 &\text{on } \pa B_R''
\end{cases}
\end{equation}
where $Q$ is an arbitrary function of class $C^1(\obr)$. Then
\begin{align*}
(x_N^{1-2\ga} \pa_N U)(\bx, x_N) - (x_N^{1-2\ga} \pa_N U)(\bx, \vep)
&= \int_{\vep}^{x_N} \pa_N (x_N^{1-2\ga} \pa_N U) dx_N \\
&= - \int_{\vep}^{x_N} x_N^{1-2\ga} \(\Delta_{\bx} U - AU + Q\) dx_N
\end{align*}
for any small $\vep > 0$. Since $(x_N^{1-2\ga} \pa_N U)(\bx, \vep) \to 0$ as $\vep \to 0$ thanks to the boundary condition
and $\Delta_{\bx} U \in C^0(\obr)$ in light of Lemma \ref{lemma_reg_2}, we observe
\[|\pa_N U (\bx, x_N)| \le Cx_N \quad \text{in } B_R\]
and in particular $U \in C^1(\obr)$.
Thus one may use $\mch$ and $U$ as a test function for \eqref{eq_dual} and \eqref{eq_mch}, respectively.
As a consequence, it holds that
\begin{align*}
\int_{B_R} x_N^{1-2\ga} \mch Q\, dx
&= \int_{B_R} x_N^{1-2\ga} \(\nabla \mch \cdot \nabla U + A \mch U\) dx \\
&= \int_{B_R} x_N^{1-2\ga} \(\mcq(G_{\mr^N_+}) - A g_{n,\ga} r^{-(n-2\ga)}\) U dx \\
&\le \left\|\mcq(G_{\mr^N_+}) - A g_{n,\ga} r^{-(n-2\ga)}\right\|_{L^{{n-2\ga+2 \over n-2\ga+1}-\eta}(B_R; x_N^{1-2\ga})}
\|U\|_{L^{n-2\ga+2+\eta'}(B_R; x_N^{1-2\ga})} \\
&\le C \|Q\|_{L^{{n-2\ga+2 \over 3} + \eta''}(B_R; x_N^{1-2\ga})}
\end{align*}
for any $Q \in C^1(\obr)$ and small $\eta > 0$.
Here $\eta'$ and $\eta''$ are small positive numbers depending only on $\eta$.
Also, the last inequality is due to \eqref{eq_mch_mcq} and Lemma \ref{lemma_reg_1},
and the assumption $n \ge 2+2\ga$ is required to ensure that $q_1 = (n-2\ga+2)/3 + \eta''$ satisfies condition \eqref{eq_q_1}.
By duality, the assertion follows.

\medskip \noindent \textsc{Step 3 (Blow-up rate).}
We use the rescaling argument in the proof of \cite[Proposition B.1]{LZhu2}. For $0 < R' < R/3$, we define
\[\wth(x) = (R')^{n-2\ga} \mch(R'x) \quad \text{in } B_3 \setminus \overline{B_{1/3}}.\]
It clearly solves
\[\begin{cases}
-\text{div} (x_N^{1-2\ga} \nabla \wth) + x_N^{1-2\ga} (R')^2 A(R'x) \wth = x_N^{1-2\ga} O(R') &\text{in } B_3 \setminus \overline{B_{1/3}},\\
\pa^{\ga}_{\nu} \wth = 0 &\text{on } \pa B_3' \setminus \overline{\pa B_{1/3}'}
\end{cases}\]
where $O(R') \le CR'$.
The local integrability condition \eqref{eq_mch_mcq_2}, Lemma \ref{lemma_reg_1} and estimate \eqref{eq_mch_i} show that
\begin{align*}
\|\wth\|_{L^{\infty}(B_2 \setminus B_{1/2})}
&\le C \(\|\wth\|_{L^{n-2\ga+2+\eta \over n-2\ga}(B_3 \setminus \overline{B_{1/3}}; x_N^{1-2\ga})} + O(R')\) \\
&= C\((R')^{\eta'} \|\mch\|_{L^{n-2\ga+2+\eta \over n-2\ga}(B_R; x_N^{1-2\ga})} + O(R')\) \le C(R')^{\eta'}
\end{align*}
for a small $\eta > 0$ and $\eta' = (n-2\ga)\eta/(n-2\ga+2+\eta)$. Therefore we have
\begin{equation}\label{eq_mch_dec}
|\mch(x)| \le C |x|^{-(n-2\ga)+\eta'} \quad \text{in } B_{2R/3}.
\end{equation}
On the other hand, by virtue of \eqref{eq_mch_mcq_2} and \eqref{eq_mch_mcq}, we can apply Lemma \ref{lemma_reg_1} to \eqref{eq_mch}, getting
\begin{equation}\label{eq_mch_dec_2}
|\mch(x)| \le C \quad \text{in } B_R \setminus \overline{B_{2R/3}}.
\end{equation}
Putting \eqref{eq_Green_mr^N}, \eqref{eq_mch_dec} and \eqref{eq_mch_dec_2} together gives the blow-up rate \eqref{eq_Green_dec} of $G$.

\medskip \noindent \textsc{Step 4 (Uniqueness).}
The uniqueness of $G$ follows from B\^ocher's theorem stated in Proposition \ref{prop_boc}.
\end{proof}

\begin{cor}
Assume that $n \ge 2 + 2\ga$.
The regular part $\mch$ of the Green's function $G$ defined in the proof of the previous proposition satisfies
\[|\nabla_{\bx} \mch(x)| \le C {|x|^{\eta'} \over |x|^{n-2\ga+1}}
\quad \text{and} \quad
\left| x_N^{1-2\ga} \pa_N \mch(x) \right| \le C {|x|^{\eta'} \over |x|^n}\]
for any fixed $R \in (0, R_0)$ and small $\eta' > 0$.
\end{cor}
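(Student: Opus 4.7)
The plan is to run the rescaling argument used in Step 3 of Proposition \ref{prop_loc_G}, but now applied to the derivatives of $\mch$; the pointwise estimate $|\mch(x)| \le C |x|^{-(n-2\ga)+\eta'}$ from \eqref{eq_mch_dec} will serve as the $L^\infty$ input, and the regularity Lemmas \ref{lemma_reg_2} and \ref{lemma_reg_3} will produce the corresponding estimates for $\nabla_{\bx}\mch$ and $x_N^{1-2\ga}\pa_N\mch$.

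For a fixed $R' \in (0, R/3)$, I will set
\[
\wth(x) \;=\; (R')^{n-2\ga-\eta'}\,\mch(R'x) \qquad \text{for } x \in B_3 \setminus \overline{B_{1/3}},
\]
with the same small $\eta' > 0$ appearing in \eqref{eq_mch_dec}. A direct change of variables, using \eqref{eq_mch}, shows that $\wth$ satisfies a degenerate elliptic equation of the form
\[
\begin{cases}
-\textnormal{div}(x_N^{1-2\ga}\nabla\wth) + x_N^{1-2\ga}(R')^2 A(R'x)\,\wth \;=\; x_N^{1-2\ga}\,\widetilde{Q}_{R'}(x) &\text{in } B_3 \setminus \overline{B_{1/3}}, \\
\pa^{\ga}_{\nu}\wth \;=\; 0 &\text{on } \pa B_3' \setminus \overline{\pa B_{1/3}'},
\end{cases}
\]
where $\widetilde{Q}_{R'}(x)$ consists of a rescaled $\mcq(G_{\mr^N_+})$-term and a rescaled constant-curvature term. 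From $|\mcq(G_{\mr^N_+})(y)| \le C|y|^{-(n-2\ga+1)}$, the scaling identity gives $|\widetilde{Q}_{R'}(x)| \le C(R')^{1-\eta'}$ on the annular region, which is uniformly bounded as $R' \to 0$; similarly, $\nabla_{\bx}\widetilde{Q}_{R'}$ is uniformly bounded. Combined with the input bound $\|\wth\|_{L^\infty(B_3 \setminus \overline{B_{1/3}})} \le C$, which follows directly from \eqref{eq_mch_dec} by the choice of $\eta'$, this places $\wth$ in the hypotheses of Lemma \ref{lemma_reg_2} (with $\ell_0 = 1$) and of Lemma \ref{lemma_reg_3}, all constants being independent of $R'$.

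Thus, applying Lemma \ref{lemma_reg_2} on a compactly contained sub-annulus I obtain
\[
\|\nabla_{\bx}\wth\|_{L^\infty(B_2 \setminus \overline{B_{1/2}})} \;\le\; C,
\]
and applying Lemma \ref{lemma_reg_3} afterwards I also get
\[
\|x_N^{1-2\ga}\pa_N\wth\|_{L^\infty(B_2 \setminus \overline{B_{1/2}})} \;\le\; C,
\]
both with $C > 0$ independent of $R'$. Unscaling, namely reading $\nabla_{\bx}\wth(x) = (R')^{n-2\ga-\eta'+1}(\nabla_{\bx}\mch)(R'x)$ and $x_N^{1-2\ga}\pa_N\wth(x) = (R')^{n-\eta'}\,y_N^{1-2\ga}(\pa_N\mch)(y)$ at $y = R'x$, and then choosing $R' = |y|$ (so $|x| \in [1/2, 2]$), yields
\[
|\nabla_{\bx}\mch(y)| \le C\,|y|^{-(n-2\ga+1)+\eta'}, \qquad \left|y_N^{1-2\ga}\pa_N\mch(y)\right| \le C\,|y|^{-n+\eta'},
\]
which are the two desired inequalities in $B_{2R/3} \setminus \{0\}$; the remaining region $B_R \setminus \overline{B_{2R/3}}$ is handled directly by Lemmas \ref{lemma_reg_2}--\ref{lemma_reg_3} (using \eqref{eq_mch_dec_2}).

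The step I expect to demand the most care is verifying that the rescaled source $\widetilde{Q}_{R'}$ and the rescaled zeroth-order coefficient $(R')^2 A(R'\cdot)$ genuinely satisfy the structural hypotheses (g2), (A3a), (A3b), (a3) of Lemma \ref{lemma_reg_2} uniformly in $R'$; in particular one must check that $F_N$ can be arranged to vanish (as required by (A4) for Lemma \ref{lemma_reg_3}) by routing the $\pa_N\sqrt{|\bg|}$-contribution into the $Q$-part, using the hypothesis \eqref{eq_sqrt_bg} that follows from $H = 0$ on $M$. Once this bookkeeping is done, the rest is an essentially mechanical rescale-and-unscale.
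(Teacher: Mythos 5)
Your proposal is correct and is essentially the paper's own argument: the paper deduces the corollary precisely by combining the decay estimate \eqref{eq_mch_dec} with the rescaling argument of Step 3 of Proposition \ref{prop_loc_G}, exactly as you do, with Lemmas \ref{lemma_reg_2} and \ref{lemma_reg_3} supplying the tangential-gradient and weighted normal-derivative bounds for $\wth$ on the rescaled annuli before unscaling. The only minor adjustment is that Lemma \ref{lemma_reg_3} also requires H\"older control of $\nabla_{\bx}^2 \wth$, so you should invoke Lemma \ref{lemma_reg_2} with $\ell_0 = 2$ (the rescaled data satisfy the corresponding hypotheses uniformly in $R'$) before estimating $x_N^{1-2\ga}\pa_N\wth$.
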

\begin{proof}
The result follows immediately from \eqref{eq_mch_dec} and the rescaling argument.
\end{proof}

\subsection{The proof of B\^ocher's theorem}\label{subsec_Bocher}
We present the following version of a fractional B\^ocher's theorem, which is needed in the proof of Proposition \ref{prop_iso}.
The Euclidean case was considered in \cite[Lemma 4.10]{JLX} and \cite[Proposition 3.4]{NPX}.
\begin{prop}\label{prop_boc}
Fix any $R \in (0,R_0)$.
Suppose that the metric $\bg$ satisfies \textnormal{(g1)} and \textnormal{(g2)},
$\|A\|_{L^{\infty}(B_{R_0})} \le \mcm$, and $\nabla_{\bx} A \in L^{q_1}(B_{R_0}; x_N^{1-2\ga})$ for $q_1 > (n-2\ga+2)/2$.
If a function $U$ is nonnegative in $B_R \setminus \{0\}$, belongs to $W^{1,2}(B_R \setminus \overline{B_{\vth}'}; x_N^{1-2\ga})$ and weakly solves
\[\begin{cases}
-\textnormal{div}_{\bg} (x_N^{1-2\ga}\nabla U) + x_N^{1-2\ga} A U = 0 &\text{in } B_R \setminus \overline{B_{\vth}'},\\
\pa^{\ga}_{\nu} U = 0 &\text{on } \pa B_R' \setminus \overline{\pa B_{\vth}'}
\end{cases}\]
for all $\vth \in (0,R)$, then for any $R' \in (0,R)$
\[U = c_1 G + E \quad \text{in } B_{R'} \setminus \{0\}.\]
Here $c_1$ is a nonnegative constant,
$G$ is the Green's function that satisfies \eqref{eq_Green} (where $R$ is replaced with $R'$)
and $E \in W^{1,2}(B_R; x_N^{1-2\ga})$ solves
\[\begin{cases}
-\textnormal{div}_{\bg} (x_N^{1-2\ga}\nabla E) + x_N^{1-2\ga} A E = 0 &\text{in } B_{R'},\\
\pa^{\ga}_{\nu} E = 0 &\text{on } \pa B_{R'}'.
\end{cases}\]
\end{prop}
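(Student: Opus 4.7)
The plan is to adapt the classical B\^ocher theorem, as implemented for the Euclidean fractional Laplacian in \cite{JLX,NPX}, to our weighted/Neumann setting. The proof splits into three stages.

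First, I would establish the a priori upper bound $U(x) \le C_0 G(x)$ on $B_{R'}\setminus\{0\}$ for some constant $C_0 > 0$. Since $U$ is H\"older continuous up to $\pa_I B^N_+(0, R')$ by Lemma~\ref{lemma_reg_0}, $C_0$ can be chosen so that $U \le C_0 G$ there. Both $U$ and $C_0 G$ solve the same homogeneous equation in $B_{R'}\setminus\{0\}$ with zero $\ga$-Neumann condition on $\pa B_{R'}' \setminus \{0\}$; applying the generalized maximum principle of Lemma~\ref{lemma_max} to $C_0 G - U$ on the annular region $B_{R'}\setminus\overline{B^N_+(0, \vth)}$ and letting $\vth \to 0$ (using $G(x)\to\infty$) yields the claim.

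Second, I would extract the singular coefficient
\[
c_1 \;=\; \limsup_{|x|\to 0^+} \frac{U(x)}{G(x)} \;\in\; [0, C_0].
\]
The key claim is that $E := U - c_1 G$ satisfies $|E(x)|/G(x) \to 0$ as $|x|\to 0$. The limsup side is built in. For the liminf, I would argue as in \cite[Lemma 4.10]{JLX} via a Harnack-chain iteration: the up-to-boundary Harnack inequality \eqref{eq_har_2} applied to $U$ along a sequence of telescoping half-annuli, together with the two-sided asymptotic \eqref{eq_Green_dec} for $G$, shows that the oscillation of $U/G$ on $\pa_I B^N_+(0,r)$ decays as $r \to 0$, forcing $\inf_{\pa_I B^N_+(0,r)} U/G \to c_1$ as well.

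Third, I would prove removable singularity for $E$. Let $\widetilde{E} \in W^{1,2}(B_{R'}; x_N^{1-2\ga})$ be the unique weak solution with Dirichlet data $E|_{\pa_I B^N_+(0, R')}$ and zero $\ga$-Neumann condition on $\pa B_{R'}'$; existence and uniqueness come from the Lax--Milgram theorem via the coercivity observed in Remark~\ref{rmk_max_p}, and $\widetilde{E} \in C^0(\overline{B_{R'}})$ by Lemma~\ref{lemma_reg_0}. Set $h := E - \widetilde{E}$, which weakly solves the homogeneous problem in $B_{R'}\setminus\{0\}$ with zero upper-boundary data and zero Neumann data on $\pa B_{R'}' \setminus \{0\}$. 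Boundedness of $\widetilde{E}$ and $|E|/G \to 0$ imply $|h| \le \ep G$ on $\pa_I B^N_+(0, \vth_\ep)$ for $\vth_\ep$ small enough. Applying Lemma~\ref{lemma_max} to $\pm h - \ep G$ on $B_{R'}\setminus \overline{B^N_+(0, \vth_\ep)}$ gives $|h| \le \ep G$ throughout this annulus; sending first $\vth_\ep \to 0$ and then $\ep \to 0$ forces $h \equiv 0$. Hence $E = \widetilde{E} \in W^{1,2}(B_{R'}; x_N^{1-2\ga})$, and $c_1 \ge 0$ by construction.

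The principal technical obstacle is Stage 2, namely producing a single coefficient $c_1$ for which the $\limsup$ and $\liminf$ of $U/G$ at the origin agree. The one-sided Harnack bound $\sup_{A_r} U \le C \inf_{A_r} U$ on each annulus $A_r$ (with $C$ independent of $r$ by scale invariance) does \emph{not} by itself imply oscillation decay of $U/G$; one must iterate along overlapping annuli and use the fact that the leading-order operator on $B_{R'}\setminus\{0\}$ is asymptotically scale-invariant. The degenerate weight $x_N^{1-2\ga}$ and the mixed Dirichlet/Neumann structure force the use of the up-to-boundary Harnack inequality \eqref{eq_har_2} (rather than an interior version), and the uniformity of its constants under the rescaling $x \mapsto rx$ is what makes the iteration close. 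Once Stage 2 is in place, Stage 3 is a routine maximum-principle comparison.
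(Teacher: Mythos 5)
Your overall architecture (upper bound by $G$, extraction of a coefficient $c_1$, remainder $=o(G)$, removable singularity by comparison with the solution having the same outer data) is the standard B\^ocher scheme that the paper also follows via \cite[Proposition 9.1]{LZhu}, \cite[Proposition 3.4]{NPX} together with Lemmas \ref{lemma_boc_1} and \ref{lemma_boc_2}, and your Stage 3 is essentially Lemma \ref{lemma_boc_1}. However, Stage 1 as written has a genuine gap: to apply Lemma \ref{lemma_max} to $C_0G-U$ on $B_{R'}\setminus\overline{B^N_+(0,\vth)}$ you must know $C_0G-U\ge 0$ on the inner hemisphere $\pa_I B^N_+(0,\vth)$, which is exactly the bound you are trying to prove; ``letting $\vth\to 0$ and using $G\to\infty$'' does not supply it, because $U$ is not yet known to be $O(|x|^{-(n-2\ga)})$ (a priori it could blow up faster than $G$). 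Moreover, since $G$ vanishes on $\pa B_{R'}''$ by \eqref{eq_Green}, you cannot even arrange $U\le C_0G$ on the outer hemisphere; the correct statement is the bound near the origin, $\limsup_{r\to 0}\max_{|x|=r}|x|^{n-2\ga}U(x)<\infty$, which is Lemma \ref{lemma_boc_2}, and its proof (as in \cite[Lemma 9.3]{LZhu} or \cite[Lemma 3.7]{NPX}) is by contradiction, rescaling, the Harnack inequality and a comparison \emph{from below} with the Green's function on annuli--- not by the direct upper comparison you describe. This is where the real work of the proposition sits, and your argument for it is circular.

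In Stage 2 your normalization $c_1=\limsup_{x\to0}U/G$ forces you to prove full convergence of $U/G$, i.e.\ decay of its oscillation on $\pa_I B^N_+(0,r)$, and the mechanism you invoke does not do this: the Harnack inequality \eqref{eq_har_2} applied to $U$ itself on telescoping half-annuli only yields $\max_{|x|=r}U\le C\min_{|x|=r}U$ with $C$ independent of $r$, i.e.\ \emph{bounded} (not vanishing) oscillation of $U/G$; to get decay one would have to run Harnack on sign-constrained differences such as $U-m_rG$, whose nonnegativity off a single sphere is not available, so your sketch does not close. The route the paper follows avoids this entirely: take $c_1=\liminf_{x\to 0}U/G$ (finite by Lemma \ref{lemma_boc_2}), so that $w_\ep=U-(c_1-\ep)G>0$ near the origin by the very definition of $\liminf$; one application of \eqref{eq_har_2} on a $\liminf$-attaining sequence of half-spheres, combined with \eqref{eq_Green_dec}, gives $w_\ep\le C\ep G$ on those spheres, and the weak maximum principle (Remark \ref{rmk_max_p}, or Lemma \ref{lemma_max} with $G$ as the comparison function) on the intermediate annuli propagates this to the whole punctured half-ball, yielding $U-c_1G=o(|x|^{-(n-2\ga)})$ with no oscillation iteration; Lemma \ref{lemma_boc_1} then removes the singularity of $E=U-c_1G$. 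I recommend restructuring Stages 1 and 2 along these lines.
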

\noindent The numbers $R_0$ and $\mcm$ were chosen in Proposition \ref{prop_loc_G} and Remark \ref{rmk_max_p}.
To prove the proposition, we will use the strategy from \cite[Section 9]{LZhu} and \cite[Section 3]{NPX}.
As a preliminary step, we derive two results.
\begin{lemma}\label{lemma_boc_1}
Assume that $U$ satisfies all the conditions in Proposition \ref{prop_boc}. If $U(x) = o(|x|^{-(n-2\ga)})$ as $|x| \to 0$, then 0 is a removable singularity of $U$
and there exists $\beta \in (0,1)$ such that $U \in W^{1,2}(B_{R'}; x_N^{1-2\ga}) \cap C^{\beta}(\overline{B_{R'}})$ for any $R' \in (0,R)$.
\end{lemma}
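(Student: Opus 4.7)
My strategy is to compare $U$ with the unique $W^{1,2}$-regular solution $V$ on $B_{R'}$ sharing the same outer Dirichlet data, and then to use the Green's function from Proposition \ref{prop_loc_G} as a barrier which, together with the subcritical growth $U(x) = o(|x|^{-(n-2\ga)})$, forces $U \equiv V$. Fix any $R' \in (0,R)$, shrinking if necessary so that the $\ast$-norm of Remark \ref{rmk_max_p} is coercive on $\mcw_0^{1,2}(B_{R'}; x_N^{1-2\ga})$. The Lax--Milgram theorem then produces a unique $V \in W^{1,2}(B_{R'}; x_N^{1-2\ga})$ solving
\[
-\textnormal{div}_{\bg}(x_N^{1-2\ga}\nabla V) + x_N^{1-2\ga} A V = 0 \ \text{in}\ B_{R'},\qquad \pa^{\ga}_{\nu} V = 0 \ \text{on}\ \pa B_{R'}',\qquad V = U \ \text{on}\ \pa B_{R'}''.
\]
The trace of $U$ on $\pa B_{R'}''$ is well defined because $\pa B_{R'}'' \subset B_R \setminus \overline{B_\vth'}$ for all sufficiently small $\vth > 0$, and $U \in W^{1,2}(B_R \setminus \overline{B_\vth'}; x_N^{1-2\ga})$ by hypothesis. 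Lemma \ref{lemma_reg_0} then shows that $V$ is H\"older continuous, and in particular bounded, on $\overline{B_{R'}}$.

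Setting $W := U - V$, the function $W$ is a $W^{1,2}_{\loc}(B_{R'}\setminus\{0\}; x_N^{1-2\ga})$ weak solution of the homogeneous equation, vanishes on $\pa B_{R'}''$, has zero fractional Neumann data on $\pa B_{R'}' \setminus \{0\}$, and inherits $W(x) = o(|x|^{-(n-2\ga)})$ as $x \to 0$ since $V$ is bounded. Let $G$ denote the Green's function of Proposition \ref{prop_loc_G}. For every $\epsilon > 0$, the decay of $W$ together with \eqref{eq_Green_dec} yields $\rho_\epsilon \in (0, R'/2)$ so small that $|W| \le \epsilon G$ on the inner half-sphere $\pa_I B^N_+((0,0),\rho_\epsilon)$. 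On the annular region $\Omega_\epsilon := B_{R'} \setminus \overline{B^N_+((0,0),\rho_\epsilon)}$, each of $\epsilon G \pm W$ solves the homogeneous equation, vanishes on the outer half-sphere $\pa B_{R'}''$, is nonnegative on the inner half-sphere, and has zero fractional Neumann data on the bottom annulus $\pa B_{R'}' \setminus \overline{B^n(0,\rho_\epsilon)}$. Since $G$ itself is a strictly positive supersolution on $\Omega_\epsilon$ with zero fractional Neumann data on its bottom, an annular version of Lemma \ref{lemma_max} gives $\epsilon G \pm W \ge 0$ throughout $\Omega_\epsilon$. Fixing any $x \in B_{R'}\setminus\{0\}$ and letting $\epsilon \to 0$ forces $W(x) = 0$, so $U \equiv V$ on $B_{R'} \setminus \{0\}$.

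Upon redefining $U(0) := V(0)$, it follows that $U \in W^{1,2}(B_{R'}; x_N^{1-2\ga}) \cap C^\beta(\overline{B_{R'}})$ for some $\beta \in (0,1)$, which is the desired conclusion. The main technical obstacle is the annular adaptation of Lemma \ref{lemma_max}: the lemma as stated lives on a half-ball, and one has to check that the positivity of $V^\ast := G$ together with its vanishing fractional Neumann data on the bottom is enough to rerun the same barrier argument on $\Omega_\epsilon$. A secondary subtlety is that the Neumann boundary condition on $U$ is only prescribed on $\pa B_R' \setminus \overline{\pa B_\vth'}$ for every $\vth > 0$; since $V$ is globally smooth, this condition passes in the limit $\vth \to 0$ to yield the desired condition on $\pa B_{R'}' \setminus \{0\}$ for $W$, legitimizing the comparison above.
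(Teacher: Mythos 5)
Your route is genuinely different from the paper's. The paper does not introduce a comparison solution: it first shows that $U$ stays bounded near the origin by trapping it with $\ep G+\mathrm{const}$ via the weak maximum principle of Remark \ref{rmk_max_p} and the asymptotics \eqref{eq_Green_dec}, and then runs the scaling method with Lemmas \ref{lemma_reg_2} and \ref{lemma_reg_3} to get $U\in C^{\beta}(\overline{B_{R'}})$ together with the weighted bounds $|x||\nabla_{\bx}U|+|x|^{2\ga}|x_N^{1-2\ga}\pa_N U|\le C$, from which $U\in W^{1,2}(B_{R'};x_N^{1-2\ga})$ follows by direct integration. Your ``regular replacement'' argument (solve for $V$ with the same outer data by Lax--Milgram and coercivity of the $*$-norm \eqref{eq_*}, then squeeze $W=U-V$ between $\pm\ep G$ and let $\ep\to0$) is the classical B\^ocher/removable-singularity scheme; it has the advantage that removability, the $W^{1,2}$ membership and the H\"older continuity are all inherited at once from $V$, and it avoids having to verify separately that the bounded function $U$ is a weak solution across the origin. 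The trade-off is that it does not produce the pointwise gradient decay that the paper's scaling argument yields (not needed for the statement, but in the same spirit as estimates used elsewhere). Minor points you should still tidy: $V$ is H\"older on $\overline{B_{R'/2}}$ by Lemma \ref{lemma_reg_0}, so to get $C^{\beta}(\overline{B_{R'}})$ run the whole argument at a slightly larger radius $R''\in(R',R)$; near $\pa B_{R'}''$ the regularity of $U$ itself (local estimates away from $0$) covers the rest.

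The one genuine flaw is your maximum principle on the annulus $\Omega_\ep$. You propose to use $V^{*}=G$ as the strictly positive supersolution required by Lemma \ref{lemma_max}, but $G$ vanishes identically on the outer spherical boundary $\pa B_{R'}''$ (this is part of \eqref{eq_Green}), so $G$ is \emph{not} positive on $\overline{\Omega_\ep}$ and the hypotheses of Lemma \ref{lemma_max} fail exactly where $\ep G\pm W$ also degenerates to zero. The repair is easy and is in fact what the paper itself relies on: since $R'<R_0\le R_0'$, the weak maximum principle of Remark \ref{rmk_max_p} applies verbatim on $\Omega_\ep$ --- test the inequality for $\ep G\pm W$ with its negative part, which has zero trace on both spherical pieces of $\pa\Omega_\ep$ and hence extends by zero to an element of $\mcw_0^{1,2}(B_{R'};x_N^{1-2\ga})$, so coercivity of the $*$-norm forces $(\ep G\pm W)_-=0$; no auxiliary positive function is needed. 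Alternatively, take as $V^{*}$ the Green's function of the larger half-ball $B_R$ restricted to $\overline{\Omega_\ep}$, which is strictly positive there, solves the homogeneous equation and has vanishing fractional Neumann data on the bottom annulus, so Lemma \ref{lemma_max} does apply. With either fix your argument goes through and proves the lemma.
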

\begin{proof}
We argue as in \cite[Lemma 3.6]{NPX} with minor modifications.
The maximum principle in Remark \ref{rmk_max_p} combined with the asymptotic behavior \eqref{eq_Green_dec} of $G$ near the origin shows that $U$ is bounded in $B_{R'}$.
Owing to the regularity hypotheses on $\bg$ and $A$, we can apply the scaling method with Lemmas \ref{lemma_reg_2} and \ref{lemma_reg_3},
deducing $U \in C^{\beta}(\overline{B_{R'}})$ for some $\beta \in (0,1)$ and
\[|x||\nabla_{\bx} U(x)| + |x|^{2\ga} \left| x_N^{1-2\ga}\pa_N U(x) \right| \le C \quad \text{in } \overline{B_{R'}}.\]
This in turn implies that $U \in W^{1,2}(B_{R'}; x_N^{1-2\ga})$.
\end{proof}
\begin{lemma}\label{lemma_boc_2}
Assume that $U$ satisfies all the conditions in Proposition \ref{prop_boc}. Then
\[\limsup_{r \to 0+} \max_{|x|=r} |x|^{n-2\ga} U(x) < \infty.\]
\end{lemma}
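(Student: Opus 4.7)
The plan is to argue by contradiction. Suppose $\limsup_{r\to 0^+}r^{n-2\ga}\max_{|x|=r}U(x)=+\infty$, so there exists a sequence $r_k\downarrow 0$ with $r_k^{n-2\ga}\max_{|x|=r_k,\,x\in\overline{B_R}}U(x)\to\infty$. The first step is to upgrade this max blow-up to a uniform blow-up on each half-sphere. Cover $\{|x|=r_k\}\cap\overline{B_R}$ by finitely many rescaled half-balls of radius comparable to $r_k$; on each, rescale $U$ to unit scale (the rescaled equation has coefficient $r_k^2A(r_k\cdot)$, which is uniformly bounded since $A\in L^\infty(B_{R_0})$) and apply the boundary Harnack inequality \eqref{eq_har_2} or its interior analogue when the ball sits away from $\pa B_R'$. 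The resulting chaining constant $C_H>0$ is independent of $k$, so
\[
\max_{|x|=r_k,\,x\in\overline{B_R}}U\ \le\ C_H\min_{|x|=r_k,\,x\in\overline{B_R}}U.
\]
Combining this with the asymptotic $\max_{|x|=r_k}G=(g_{n,\ga}+o(1))\,r_k^{-(n-2\ga)}$ from \eqref{eq_Green_dec} yields
\[
\alpha_k\ :=\ \frac{\min_{|x|=r_k,\,x\in\overline{B_R}}U(x)}{\max_{|x|=r_k,\,x\in\overline{B_R}}G(x)}\ \longrightarrow\ +\infty.
\]

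The second step is a Green's function comparison on the half-annulus $\Omega_k=B_R\setminus\overline{B_{r_k}}$. Fix any constant $\alpha>0$ and set $V_k=U-\alpha G$. Then $V_k$ is a weak solution of the homogeneous equation $-\textnormal{div}_{\bg}(x_N^{1-2\ga}\nabla V_k)+x_N^{1-2\ga}AV_k=0$ in $\Omega_k$ with $\pa^{\ga}_{\nu}V_k=0$ on the annular bottom $\pa B_R'\setminus\overline{\pa B_{r_k}'}$ (the Dirac mass in \eqref{eq_Green} is supported at the origin, which is excised). On the two Dirichlet hemispheres one has $V_k\ge 0$: on $\pa B_R''$ because $G=0$ there and $U\ge 0$, and on $\pa B_{r_k}''$ because $\alpha_k>\alpha$ for $k$ large.

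The third step invokes the weak maximum principle in the spirit of Remark \ref{rmk_max_p} on the annular domain $\Omega_k$. Since $R<R_0\le R_0'$, the quadratic form $\|\cdot\|_*^2$ from \eqref{eq_*} is coercive on $\mcw_0^{1,2}(B_R;x_N^{1-2\ga})$, and by zero extension into the inner half-ball $B_{r_k}$ this coercivity transfers to the closed subspace of $W^{1,2}(\Omega_k;x_N^{1-2\ga})$ consisting of functions vanishing on $\pa B_R''\cup\pa B_{r_k}''$. Testing the weak formulation of the equation for $V_k$ with $(V_k)_-$, which lies in this subspace and annihilates the Neumann contribution on $\pa B_R'\setminus\overline{\pa B_{r_k}'}$, produces
\[
\int_{\Omega_k}x_N^{1-2\ga}\Bigl(|\nabla(V_k)_-|_{\bg}^2+A\,(V_k)_-^{\,2}\Bigr)\,dv_{\bg}\ =\ 0,
\]
from which coercivity forces $(V_k)_-\equiv 0$, i.e., $U\ge\alpha G$ on $\Omega_k$. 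Since $\bigcup_k\Omega_k=B_R\setminus\{0\}$ and $\alpha>0$ is arbitrary, letting $k\to\infty$ yields $U(x)\ge\alpha G(x)$ for every $x\in B_R\setminus\{0\}$ and every $\alpha>0$; as $G>0$ there, this forces $U\equiv+\infty$ on $B_R\setminus\{0\}$, contradicting the assumption that $U$ is a finite-valued weak solution on each $B_R\setminus\overline{B_\vth'}$.

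The main obstacle is the uniform boundary Harnack inequality in Step 1 along the half-sphere $\{|x|=r_k\}\cap\overline{B_R}$ near the Neumann portion $\pa B_R'$, and in particular at the corner where the half-sphere meets the flat bottom. Verification amounts to checking that a finite covering by rescaled half-balls exists with a covering number bounded uniformly in $k$, and that the rescaled data $\bg(r_k\cdot)$ and $r_k^2A(r_k\cdot)$ satisfy the hypotheses of \eqref{eq_har_2} with constants depending only on $n$, $\ga$, $\lambda_{\bg}$, $\Lambda_{\bg}$ and $\mcm$. This is standard (and an analogue has already been used for Lemma \ref{lemma_har}), but relies precisely on the scaling-friendly form of \eqref{eq_har_2}. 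Once Step 1 is in place, the subsequent Green's function comparison is essentially routine.
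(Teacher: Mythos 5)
Your proposal is correct and is essentially the argument the paper intends: its proof of Lemma \ref{lemma_boc_2} simply refers to \cite[Lemma 9.3]{LZhu} and \cite[Lemma 3.7]{NPX}, whose scheme is exactly your contradiction argument combining a spherical Harnack inequality (obtained by rescaling \eqref{eq_har_2} and chaining, as in Lemma \ref{lemma_har}) with a comparison of $U$ against $\alpha G$ on half-annuli via the weak maximum principle of Remark \ref{rmk_max_p} and the asymptotics \eqref{eq_Green_dec}. The adaptations you supply for the degenerate weighted setting (uniformity of the Harnack constant under rescaling, and the zero-extension coercivity argument to run the maximum principle on the half-annulus with Neumann bottom) are the right ones.
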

\begin{proof}
It can be checked as in \cite[Lemma 9.3]{LZhu} or \cite[Lemma 3.7]{NPX}.
\end{proof}

\begin{proof}[Proof of Proposition \ref{prop_boc}]
One can carry out the proof by adapting the ideas of \cite[Proposition 9.1]{LZhu} or \cite[Proposition 3.4]{NPX}.
Lemmas \ref{lemma_boc_1} and \ref{lemma_boc_2} are required.
\end{proof}

\section{Computation of the integrals involving the standard bubble}\label{sec_W_int}
We obtain the values of several integrals involving the standard bubble $W_{1,0}$ and its derivatives, which are needed in the proof of the vanishing theorem in Section \ref{sec_van}.

\begin{prop}\label{prop_W_int}
Suppose $\ga \in (0,1)$ and $n > 2 + 2\ga$. Then there exists a constant $\mcc_0 > 0$ depending only on $n$ and $\ga$ such that
\begin{equation}\label{eq_W_int}
\begin{aligned}
\int_{\mr^N_+} x_N^{3-2\ga} \Delta_{\bx} W_{1,0} Z_{1,0}^0 dx &= \mcc_0,\\
\int_{\mr^N_+} x_N^{2-2\ga} \pa_N W_{1,0} Z_{1,0}^0 dx &= \left[{3 \over 2(1+\ga)}\right] \mcc_0,\\
\int_{\mr^N_+} x_N^{1-2\ga} W_{1,0} Z_{1,0}^0 dx &= -\left[{3 \over 2(1-\ga^2) }\right] \mcc_0.
\end{aligned}
\end{equation}
Here $W_{1,0}$ and $Z_{1,0}^0$ are the functions given in \eqref{eq_bubble_2} and \eqref{eq_Z}.
\end{prop}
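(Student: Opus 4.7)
The plan is to express each of the three integrals as an explicit multiple of $A_{1-2\ga} := \int_{\mr^N_+} x_N^{1-2\ga} W_{1,0}^2 dx$, which is finite and positive under the hypothesis $n > 2 + 2\ga$ by the sharp decay in Lemma \ref{lemma_W}(2). The only tools used are the extension equation $\Delta_{\bx} W_{1,0} + \pa_{NN} W_{1,0} + \frac{1-2\ga}{x_N} \pa_N W_{1,0} = 0$ in $\mr^N_+$, the identity $Z_{1,0}^0 = \frac{n-2\ga}{2} W_{1,0} + x_i \pa_i W_{1,0} + x_N \pa_N W_{1,0}$ (which is immediate from $W_{\lambda,0}(x) = \lambda^{-(n-2\ga)/2} W_{1,0}(x/\lambda)$ together with $Z_{1,0}^0 = -\pa_{\lambda} W_{\lambda,0}|_{\lambda=1}$), and repeated integration by parts.

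For the third integral, I would differentiate the scaling identity $\int_{\mr^N_+} x_N^a W_{\lambda,0}^2 dx = \lambda^{a+1+2\ga} A_a$ at $\lambda = 1$ to obtain $\int_{\mr^N_+} x_N^a W_{1,0} Z_{1,0}^0 dx = -\frac{a+1+2\ga}{2} A_a$, and the choice $a = 1-2\ga$ gives $I_3 = -A_{1-2\ga}$. For the second integral, I would first derive the symmetric identity $\int_{\mr^N_+} x_N^{2-2\ga} W_{1,0} \pa_N Z_{1,0}^0 dx = I_2$ by multiplying the extension equations for $W_{1,0}$ and for $Z_{1,0}^0$ by $x_N^2 Z_{1,0}^0$ and $x_N^2 W_{1,0}$ respectively, integrating by parts in $x_N$, and subtracting so that the common cross term $\int x_N^{3-2\ga} \pa_N W_{1,0} \pa_N Z_{1,0}^0 dx$ cancels. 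Combining this with the boundary-free identity $\int x_N^{2-2\ga} \pa_N(W_{1,0} Z_{1,0}^0) dx = -(2-2\ga) I_3$ yields $2 I_2 = 2(1-\ga) A_{1-2\ga}$, hence $I_2 = (1-\ga) A_{1-2\ga}$.

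For the first integral, I would integrate by parts in the tangential variables to rewrite $I_1 = -\int x_N^{3-2\ga} \pa_j W_{1,0} \pa_j Z_{1,0}^0 dx$ and expand $\pa_j Z_{1,0}^0$ via the decomposition of $Z_{1,0}^0$. The three resulting pieces evaluate by standard Pohozaev-type manipulations (exploiting that the weight $x_N^{3-2\ga}$ is constant in $\bx$) to $-\frac{n-2\ga+2}{2} C_{3-2\ga}$, $\frac{n}{2} C_{3-2\ga}$, and $(2-\ga) C_{3-2\ga}$, where $C_a := \int_{\mr^N_+} x_N^a |\nabla_{\bx} W_{1,0}|^2 dx$, and these sum cleanly to $I_1 = C_{3-2\ga}$. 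Two further identities then convert $C_{3-2\ga}$ into a multiple of $A_{1-2\ga}$: multiplying the extension equation by $W_{1,0}$ weighted by $x_N^{3-2\ga}$ and integrating by parts gives $C_{3-2\ga} + B_{3-2\ga} = 2(1-\ga) A_{1-2\ga}$ with $B_a := \int x_N^a (\pa_N W_{1,0})^2 dx$, while computing $\int x_N^{4-2\ga} \pa_N W_{1,0} \Delta_{\bx} W_{1,0} dx$ in two ways (once via the PDE together with one $x_N$-integration by parts, once via two $\bx$-integrations by parts) yields $(1+\ga) B_{3-2\ga} = (2-\ga) C_{3-2\ga}$. Solving this linear system produces $C_{3-2\ga} = \frac{2(1-\ga^2)}{3} A_{1-2\ga}$, and the choice $\mcc_0 := C_{3-2\ga}$ delivers all three stated formulas via the already-established values of $I_2$ and $I_3$.

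The main obstacle will be the careful bookkeeping of boundary traces at $x_N = 0$: the weight $x_N^{1-2\ga}$ is singular when $\ga > 1/2$, and the Caffarelli-Silvestre boundary condition forces $\pa_N W_{1,0}$ to behave like $c(\bx) x_N^{2\ga-1}$ near $\mr^n$, so every integration by parts requires checking that the surface contribution actually vanishes. The hypothesis $n > 2 + 2\ga$ together with the sharp decay estimates in Lemma \ref{lemma_W}(2) guarantees both integrability at infinity and the vanishing of every relevant boundary term; in each of the computations above the boundary integrand carries a factor of at least $x_N^2$ after combination, which is enough to conclude.
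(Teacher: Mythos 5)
Your proof is correct---I checked every identity and the bookkeeping---but it follows a genuinely different route from the paper. The paper works on the Fourier side: using the Bessel representation $\whw_{1,0}(\xi,x_N)=\hw_{1,0}(|\xi|)\,\vp(|\xi|x_N)$ and the recursion formulas for the quantities $\mca_\alpha,\mcb_\beta$ (Lemma \ref{lemma_KMW}, imported from \cite{GQ,KMW}), it evaluates the nine integrals $\mci_1,\dots,\mci_9$ of Lemma \ref{lemma_W_int_2} and then assembles the three quantities in \eqref{eq_W_int}, with $\mcc_0=|\ms^{n-1}|\mca_3\mcb_2$. You stay entirely in physical space: the dilation identity $\int_{\mr^N_+}x_N^aW_{\lambda,0}^2\,dx=\lambda^{a+1+2\ga}A_a$ differentiated at $\lambda=1$ gives the third integral as $-A_{1-2\ga}$; the fact that both $W_{1,0}$ and $Z_{1,0}^0$ are $x_N^{1-2\ga}$-harmonic yields the symmetry $\int x_N^{2-2\ga}Z_{1,0}^0\,\pa_NW_{1,0}\,dx=\int x_N^{2-2\ga}W_{1,0}\,\pa_NZ_{1,0}^0\,dx$, whence the second integral equals $(1-\ga)A_{1-2\ga}$; and your Pohozaev-type manipulations reduce the first integral to $C_{3-2\ga}:=\int x_N^{3-2\ga}|\nabla_{\bx}W_{1,0}|^2\,dx$, which the two auxiliary relations $C_{3-2\ga}+B_{3-2\ga}=2(1-\ga)A_{1-2\ga}$ and $(1+\ga)B_{3-2\ga}=(2-\ga)C_{3-2\ga}$ convert into $C_{3-2\ga}=\tfrac{2(1-\ga^2)}{3}A_{1-2\ga}$; all of these check out, and your normalization $\mcc_0=C_{3-2\ga}$ even coincides with the paper's constant, since by radial symmetry $C_{3-2\ga}=\mci_6=\mcc_0$. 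What your approach buys is elementarity and self-containedness (only the extension equation, scaling, and the decay estimates of Lemma \ref{lemma_W}(2) are used, and positivity of $\mcc_0$ is immediate); what the paper's approach buys is an explicit Bessel-integral formula for $\mcc_0$ and a reusable table of the nine integrals. Two small points to repair in the write-up: your closing claim that every boundary integrand at $x_N=0$ carries at least a factor $x_N^2$ is not literally true for the terms $x_N^{2-2\ga}W_{1,0}Z_{1,0}^0$ and $x_N^{2-2\ga}W_{1,0}^2$ (they carry $x_N^{2-2\ga}$, which still suffices because $2-2\ga>0$), and in the symmetry identity you should record that $x_N^{1-2\ga}\pa_NZ_{1,0}^0$ is bounded up to $\{x_N=0\}$, which follows from the linearized Neumann condition in Lemma \ref{lemma_W}(4), or equivalently by differentiating the family $W_{\lambda,0}$ in $\lambda$.
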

\noindent Its proof is based on the Fourier transform technique which was introduced by Gonz\'alez and Qing \cite{GQ} and soon improved by Gonz\'alez and Wang \cite{GW} and Kim et al. \cite{KMW, KMW2},
where the authors studied the existence and non-compactness characteristics of the solution set of \eqref{eq_Yamabe}.
We first need to remind a lemma obtained in \cite[Section 7]{GQ} and \cite[Subsection 4.3]{KMW}.
\begin{lemma}\label{lemma_KMW}
(1) Assume that $n \ge 3$ and $\ga \in (0,1)$.
Let $\whw_{1,0}(\xi,x_N)$ be the Fourier transform of $W_{1,0}(\bx,x_N)$ in $\bx \in \mr^n$ for each fixed $x_N > 0$
and $K_{\ga}$ the modified Bessel function of the second kind of order $\ga$.
Also choose appropriate numbers $d_1,\, d_2 > 0$ depending only on $n$ and $\ga$ so that the functions $\vp(t) = d_1 t^{\ga}K_{\ga}(t)$ and $\hw_{1,0}(t) = d_2 t^{-\ga}K_{\ga}(t)$ solve
\[\phi''(t) + {1-2\ga \over t} \phi'(t) - \phi(t) = 0, \quad \phi(0) = 1 \text{ and } \phi(\infty) = 0\]
and
\[\phi''(t) + {1+2\ga \over t} \phi'(t) - \phi(t) = 0 \quad \text{and} \quad  \lim_{t \to 0} t^{2\ga}\phi(t) + \lim_{t \to \infty} t^{\ga+{1 \over 2}} e^t \phi(t) \le C\]
for some $C > 0$, respectively. Then
\[\whw_{1,0}(\xi,x_N) = \hw_{1,0}(\xi)\, \vp(|\xi|x_N) \quad \text{for every } \xi \in \mr^n \text{ and } x_N > 0.\]

\medskip \noindent (2) Let
\[\mca_{\alpha} = \int_0^{\infty} t^{\alpha-2\ga}\vp^2(t)\, dt, \quad \mca_{\alpha}' = \int_0^{\infty} t^{\alpha-2\ga} \vp(t)\, \vp'(t)\, dt, \quad
\mca_{\alpha}'' = \int_0^{\infty} t^{\alpha-2\ga} (\vp'(t))^2\, dt,\]
\[\mcb_{\beta} = \int_0^{\infty} t^{-\beta+2\ga} \hw_{1,0}^2(t) t^{n-1} dt, \quad
\mcb_{\beta}' = \int_0^{\infty} t^{-\beta+2\ga} \hw_{1,0}(t)\, \hw_{1,0}'(t) t^{n-1} dt\]
and
\[\mcb_{\beta}'' = \int_0^{\infty} t^{-\beta+2\ga} (\hw_{1,0}'(t))^2 t^{n-1} dt\]
for $\alpha,\, \beta \in \mn \cup \{0\}$. Then we have
\begin{align*}
\mca_{\alpha} &= \({\alpha+2 \over \alpha+1}\) \cdot \left[\({\alpha+1 \over 2}\)^2 - \ga^2\right]^{-1} \mca_{\alpha+2} = -\({\alpha+1 \over 2} - \ga\)^{-1} \mca_{\alpha+1}' \\
&= \({\alpha+1 \over 2} - \ga\) \({\alpha-1 \over 2} + \ga\)^{-1} \mca_{\alpha}''
\end{align*}
for $\alpha$ odd, $\alpha \ge 1$ and
\[\mcb_{\beta} = {4(n-\beta+1) \mcb_{\beta-2} \over (n-\beta)(n+2\ga-\beta)(n-2\ga-\beta)} = - {2 \mcb_{\beta-1}' \over n+2\ga-\beta},
\quad \mcb_{\beta-2} = {(n-2\ga-\beta) \mcb_{\beta-2}'' \over n+2\ga-\beta+2}\]
for $\beta$ even, $\beta \ge 2$.
\end{lemma}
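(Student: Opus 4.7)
The plan is to pass to the tangential Fourier transform and exploit the product structure $\whw_{1,0}(\xi,x_N) = \hw_{1,0}(|\xi|)\vp(|\xi|x_N)$ from Lemma~\ref{lemma_KMW}(1). Writing $Z_{1,0}^0 = \frac{n-2\ga}{2}W_{1,0} + \bx\cdot\nabla_{\bx}W_{1,0}+x_N\pa_N W_{1,0}$ and using $\mathcal{F}_{\bx}[x_i\pa_i W_{1,0}] = -\whw_{1,0}-\xi_i\pa_{\xi_i}\whw_{1,0}$, the two terms of the form $|\xi|x_N\hw_{1,0}(|\xi|)\vp'(|\xi|x_N)$ (one arising from $x_N\pa_N$, one from $\xi\cdot\nabla_\xi$) cancel, yielding the clean identity
\[
\mathcal{F}_{\bx}[Z_{1,0}^0](\xi,x_N) = -\left[\frac{n+2\ga}{2}\hw_{1,0}(|\xi|) + |\xi|\hw_{1,0}'(|\xi|)\right]\vp(|\xi|x_N).
\]

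Applying Plancherel slice-by-slice and noting that $\mathcal{F}_{\bx}[\Delta_{\bx}W_{1,0}] = -|\xi|^2\whw_{1,0}$, $\mathcal{F}_{\bx}[\pa_N W_{1,0}] = |\xi|\hw_{1,0}(|\xi|)\vp'(|\xi|x_N)$, and $\mathcal{F}_{\bx}[W_{1,0}] = \whw_{1,0}$, each integrand factors as a function of $|\xi|$ times a function of $|\xi|x_N$. After switching to polar coordinates in $\xi$ and substituting $t=|\xi|x_N$, the $x_N$-integral in each of the three cases decouples and yields $\mca_3$, $\mca_2'$, and $\mca_1$ respectively (matched to the weights $x_N^{3-2\ga}$, $x_N^{2-2\ga}$, $x_N^{1-2\ga}$), while the remaining $s=|\xi|$-integral is the same in all three cases:
\[
I := \int_0^\infty s^{n-3+2\ga}\hw_{1,0}(s)\!\left[\tfrac{n+2\ga}{2}\hw_{1,0}(s) + s\hw_{1,0}'(s)\right]ds.
\]
Integration by parts on the cross term, justified by the hypothesis $n>2+2\ga$ (which, combined with $\hw_{1,0}(s)\sim c s^{-2\ga}$ near $s=0$ and the exponential decay of $K_\ga$ at infinity, makes the boundary term $s^{n-1+2\ga}\hw_{1,0}^2$ vanish at both endpoints), collapses this to
\[
I = \tfrac{n+2\ga}{2}\mcb_2 - \tfrac{n-2+2\ga}{2}\mcb_2 = \mcb_2.
\]

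Setting $\mcc_0 := (2\pi)^{-n}|\ms^{n-1}|\mca_3\mcb_2 > 0$ (all factors are positive) establishes the first identity. The other two follow from the $\alpha=1$ cases of Lemma~\ref{lemma_KMW}(2): namely $\mca_1 = \frac{3}{2(1-\ga^2)}\mca_3$, and then $\mca_2' = -(1-\ga)\mca_1 = -\frac{3}{2(1+\ga)}\mca_3$. Tracking signs—the second integral acquires two minus signs (one from $\mathcal{F}_{\bx}[Z_{1,0}^0]$, one from $\vp'<0$ reflected in $\mca_2'<0$), while the third acquires only one—yields exactly the claimed coefficients $\frac{3}{2(1+\ga)}$ and $-\frac{3}{2(1-\ga^2)}$. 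The main obstacle will be the algebraic bookkeeping: producing the Fourier transform of $Z_{1,0}^0$ in the clean form above so that the $|\xi|x_N$-cancellation is visible, and then verifying that the three \emph{a priori} different $s$-integrals all collapse to the single quantity $\mcb_2$ via a uniform integration by parts—this uniformity is exactly what allows the three disparate-looking integrals to share the common factor $\mcc_0$.
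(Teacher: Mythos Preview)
Your write-up does not prove the stated Lemma~\ref{lemma_KMW} at all: that lemma records (i) the product structure $\whw_{1,0}(\xi,x_N)=\hw_{1,0}(|\xi|)\vp(|\xi|x_N)$ and (ii) the recursion relations among the quantities $\mca_\alpha,\mca_\alpha',\mca_\alpha'',\mcb_\beta,\mcb_\beta',\mcb_\beta''$, and the paper simply cites it from \cite{GQ,KMW}. What you have written is instead a proof of Proposition~\ref{prop_W_int} (the three integrals in \eqref{eq_W_int}), \emph{using} Lemma~\ref{lemma_KMW} as a black box. If Lemma~\ref{lemma_KMW} was truly the target, nothing in your proposal addresses it.

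Assuming the intended target was Proposition~\ref{prop_W_int}, your argument is correct and is genuinely more economical than the paper's. The paper proceeds in two stages: it first evaluates nine auxiliary integrals $\mci_1,\dots,\mci_9$ one at a time via Plancherel and the relations of Lemma~\ref{lemma_KMW}(2) (this is Lemma~\ref{lemma_W_int_2}), and only afterwards assembles the three desired integrals as linear combinations of the $\mci_j$. You instead compute $\mathcal{F}_{\bx}[Z_{1,0}^0]$ directly and observe the key cancellation of the two $|\xi|x_N\,\hw_{1,0}\vp'$ terms, which forces the radial $|\xi|$-integral to be the \emph{same} quantity $I=\mcb_2$ in all three cases; the three answers then differ only through the $x_N$-integrals $\mca_3,\mca_2',\mca_1$, whose ratios are read off from the $\alpha=1$ case of Lemma~\ref{lemma_KMW}(2). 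This explains structurally why a single constant $\mcc_0$ appears, whereas in the paper that fact emerges only after the nine separate computations are recombined. Your boundary-term check (using $\hw_{1,0}(s)\sim cs^{-2\ga}$ and $n>2+2\ga$) is exactly what is needed. The only cosmetic discrepancy is the $(2\pi)^{-n}$ in your $\mcc_0$ versus the paper's $\mcc_0=|\ms^{n-1}|\mca_3\mcb_2$, which reflects a different Plancherel normalization and is immaterial since $\mcc_0$ is merely asserted to be some positive constant depending on $n,\ga$.
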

\noindent With the help of the previous lemma, we evaluate the following nine integrals.
\begin{lemma}\label{lemma_W_int_2}
Assume that $\ga \in (0,1)$ and $n > 2 + 2\ga$.
If we set $\mcc_0 = |\ms^{n-1}|\mca_3\mcb_2$, then
\begin{align*}
\mci_1 &= \int_{\mr^N_+} x_N^{1-2\ga} W_{1,0}^2 dx = \left[{3 \over 2\(1-\ga^2\)}\right] \mcc_0,\\
\mci_2 &= \int_{\mr^N_+} x_N^{1-2\ga} r W_{1,0} (\pa_r W_{1,0})\, dx = - \left[{3n \over 4\(1-\ga^2\)}\right] \mcc_0,\\
\mci_3 &= \int_{\mr^N_+} x_N^{2-2\ga} W_{1,0} (\pa_N W_{1,0})\, dx = - \left[{3 \over 2(1+\ga)}\right] \mcc_0,\\
\mci_4 &= \int_{\mr^N_+} x_N^{2-2\ga} r(\pa_r W_{1,0})(\pa_N W_{1,0})\, dx = \left[{3n-2(1+\ga) \over 4(1+\ga)}\right] \mcc_0,\\
\mci_5 &= \int_{\mr^N_+} x_N^{3-2\ga} W_{1,0} (\Delta_{\bx} W_{1,0})\, dx = - \mcc_0,\\
\mci_6 &= \int_{\mr^N_+} x_N^{3-2\ga} (\pa_r W_{1,0})^2 dx = \mcc_0,\\
\mci_7 &= \int_{\mr^N_+} x_N^{3-2\ga} (\pa_N W_{1,0})^2 dx = \({2-\ga \over 1+\ga}\) \mcc_0,\\
\mci_8 &= \int_{\mr^N_+} x_N^{3-2\ga} r (\pa_r W_{1,0})(\pa_{rr} W_{1,0})\, dx = -{n \over 2}\, \mcc_0,\\
\mci_9 &= \int_{\mr^N_+} x_N^{4-2\ga} (\pa_N W_{1,0}) (\Delta_{\bx} W_{1,0})\, dx = (2-\ga)\, \mcc_0
\end{align*}
for $r = |\bx|$.
\end{lemma}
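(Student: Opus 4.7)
The strategy is to use Plancherel's theorem in the tangential variable $\bx$ together with the separation-of-variables Fourier representation $\whw_{1,0}(\xi,x_N) = \hw_{1,0}(|\xi|)\,\vp(|\xi|x_N)$ recorded in Lemma~\ref{lemma_KMW}(1). Tangential derivatives $\pa_j W_{1,0}$ correspond to the Fourier multipliers $i\xi_j\,\whw_{1,0}$, while $\pa_N W_{1,0}$ corresponds to $|\xi|\hw_{1,0}(|\xi|)\vp'(|\xi|x_N)$. After passing to spherical coordinates $\xi = t\omega$ over $\ms^{n-1}$ and rescaling $s = t\, x_N$, the double integrals factor cleanly: every integrand splits as a product of the form $t^{n-1-\beta+2\ga}\hw_{1,0}^{(\cdot)}\hw_{1,0}^{(\cdot)}\cdot s^{\alpha-2\ga}\vp^{(\cdot)}\vp^{(\cdot)}$, so each $\mci_k$ collapses to a finite linear combination of the symbols $|\ms^{n-1}|\,\mca_\alpha^{(\cdot)}\mcb_\beta^{(\cdot)}$ appearing in Lemma~\ref{lemma_KMW}(2). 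The recursion identities of that lemma then rewrite every such product as a numerical multiple of $\mca_3\mcb_2 = \mcc_0/|\ms^{n-1}|$.

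The pilot cases $\mci_1, \mci_2, \mci_3$ fall out directly. A single Plancherel step gives $\mci_1 = |\ms^{n-1}|\mca_1\mcb_2$, and the recursion $\mca_1 = \tfrac{3}{2(1-\ga^2)}\mca_3$ furnishes the stated value. For $\mci_2$, I rewrite $r\pa_r W_{1,0} = \bx\cdot\nabla_{\bx}W_{1,0}$, pair with $W_{1,0}$, and integrate by parts via the identity $\nabla_{\bx}\cdot(\bx W_{1,0}^2) = nW_{1,0}^2 + 2(\bx\cdot\nabla_{\bx}W_{1,0})W_{1,0}$, yielding $\mci_2 = -\tfrac{n}{2}\mci_1$. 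For $\mci_3$ the identity $W_{1,0}\pa_NW_{1,0} = \tfrac{1}{2}\pa_NW_{1,0}^2$ followed by one integration by parts in $x_N$ against the weight $x_N^{2-2\ga}$ delivers $\mci_3 = -(1-\ga)\mci_1$. For the six remaining integrals, the cleanest route supplements Plancherel with global integration-by-parts identities stemming from the PDEs satisfied by $W_{1,0}$: multiplying the bulk equation $-\text{div}(x_N^{1-2\ga}\nabla W_{1,0}) = 0$ and the trace equation \eqref{eq_CS} against the test multipliers $W_{1,0}$, $\bx\cdot\nabla_{\bx}W_{1,0}$, $x_N\pa_NW_{1,0}$, and $Z_{1,0}^0$ produces linear relations that, together with $\mci_1, \mci_2, \mci_3$ and the $\mca$- and $\mcb$-recursions of Lemma~\ref{lemma_KMW}(2), pin down $\mci_4, \ldots, \mci_9$ with minimal Plancherel bookkeeping.

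The main technical obstacle is verifying that every boundary term at $x_N = 0^+$ and at spatial infinity vanishes. At $x_N = 0^+$, the nontrivial contributions involve $x_N^{1-2\ga}\pa_NW_{1,0}$, which by \eqref{eq_CS} converges to $-\kappa_\ga^{-1}w_{1,0}^{(n+2\ga)/(n-2\ga)} \in L^{\infty}(\mr^n)$; the supplementary weights $x_N^{2-2\ga}$ or $x_N^{3-2\ga}$ then annihilate the remaining boundary contribution. At spatial infinity, the decay estimate \eqref{eq_W_dec} combined with the dimension hypothesis $n > 2 + 2\ga$ makes $x_N^{3-2\ga}|\nabla W_{1,0}|^2$ integrable, which kills the corresponding half-spherical boundary integrals on $\pa_I B^N_+(0,R)$ as $R \to \infty$. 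The same hypothesis is precisely what guarantees convergence of $\mcb_2$ near $t = 0$, given the small-$t$ behaviour $\hw_{1,0}(t) = d_2 t^{-\ga}K_{\ga}(t) \sim c\, t^{-2\ga}$. Once these vanishing checks are confirmed, the algebraic reductions via the recursion identities of Lemma~\ref{lemma_KMW}(2) are mechanical and collapse every $\mci_k$ to the prescribed multiple of $\mcc_0$.
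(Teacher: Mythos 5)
Your proposal is correct, and it takes a genuinely lighter route than the paper. The paper's proof is Parseval-based throughout: it cites \cite[Lemma 7.2]{GQ} for $\mci_1,\mci_6,\mci_7$ and \cite[Lemma B.4]{KMW2} for $\mci_8$ (whose method also yields $\mci_2,\mci_3$), and then computes $\mci_4,\mci_5,\mci_9$ directly, e.g.\ $\mci_4=-|\ms^{n-1}|(n\mca_2\mcb_2'+\mca_1'\mcb_2'+\mca_3''\mcb_3)$ and $\mci_9=-|\ms^{n-1}|\mca_4'\mcb_2$, before invoking the recursions of Lemma \ref{lemma_KMW}(2). You instead use Plancherel only once, to get $\mci_1=|\ms^{n-1}|\mca_1\mcb_2=\frac{3}{2(1-\ga^2)}\mcc_0$, and generate everything else by integration by parts: your identities $\mci_2=-\frac n2\mci_1$ and $\mci_3=-(1-\ga)\mci_1$ are correct, and your plan of testing $\textnormal{div}(x_N^{1-2\ga}\nabla W_{1,0})=0$ against suitable multipliers does close up, with one caveat: the multipliers you list ($W_{1,0}$, $\bx\cdot\nabla_{\bx}W_{1,0}$, $x_N\pa_NW_{1,0}$, $Z^0_{1,0}$) must carry extra powers of $x_N$ (namely $x_N^2W_{1,0}$, $x_N^3\pa_NW_{1,0}$, $x_N^2\,\bx\cdot\nabla_{\bx}W_{1,0}$) to reach the weights $x_N^{3-2\ga}$ and $x_N^{4-2\ga}$ in $\mci_4$--$\mci_9$; as written they only produce identities at the weights $x_N^{1-2\ga}$, $x_N^{2-2\ga}$. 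With that adjustment one gets $\mci_6+\mci_7=-2\mci_3$, $(2-\ga)\mci_6=(1+\ga)\mci_7$, $\mci_4=\frac{n-2}{4}\mci_6+\frac n4\mci_7$, together with the purely tangential/radial identities $\mci_5=-\mci_6$, $\mci_8=-\frac n2\mci_6$, $\mci_9=(2-\ga)\mci_6$, which reproduce exactly the stated constants. What your approach buys is self-containedness (no appeal to \cite{GQ}, \cite{KMW2}) and minimal Fourier bookkeeping: only the recursion $\mca_1=\frac{3}{2(1-\ga^2)}\mca_3$ is ever used, so the $\mca''$- and $\mcb$-recursions are not needed at all. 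Your boundary-term and convergence checks ($x_N^{1-2\ga}\pa_NW_{1,0}$ bounded on $\mr^n$ via \eqref{eq_CS}, decay \eqref{eq_W_dec}, and $n>2+2\ga$ for integrability of $\mcb_2$ and of the far field) are exactly the right ones. To turn the sketch into a proof you would still need to display the three PDE-tested identities explicitly, since they carry the entire content for $\mci_4$--$\mci_9$.
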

\begin{proof}
The quantities $\mci_1,\, \mci_6,\, \mci_7$ were computed in \cite[Lemma 7.2]{GQ}.
Besides, \cite[Lemma B.4]{KMW2} provides the value of $\mci_8$, and its proof suggests a way to calculate $\mci_2,\, \mci_3$.
Accordingly we only take into account the others.
Throughout the proof, we agree that the variable of $\hw_{1,0}$ and $\hw_{1,0}'$ is $|\xi|$ and that of $\vp$ and $\vp'$ is $|\xi|x_N$.
Also, $'$ is used to represent the differentiation in the radial variable $|\xi|$.

\medskip
It follows from Parseval's theorem that
\begin{align*}
\mci_4 &= \int_0^{\infty} x_N^{2-2\ga} \(\int_{\mr^n} \widehat{x_i \pa_i W_{1,0}} \pa_N \whw_{1,0} d\xi\) dx_N\\
&= - \int_0^{\infty} x_N^{2-2\ga} \left[ \int_{\mr^n} \(n\whw_{1,0} + \xi_i \pa_i\whw_{1,0}\) \pa_N \whw_{1,0} d\xi \right] dx_N \\
&= - \int_0^{\infty} x_N^{2-2\ga} \left[ \int_{\mr^n} \(n\hw_{1,0}\vp + |\xi| \hw_{1,0}'\vp + x_N|\xi| \hw_{1,0}\vp' \) |\xi| \hw_{1,0} \vp' d\xi \right] dx_N \\
&= - |\ms^{n-1}| \(n \mca_2\mcb_2' + \mca_1'\mcb_2' + \mca_3''\mcb_3\).
\end{align*}
Therefore Lemma \ref{lemma_KMW} (2) gives the value in the statement.
On the other hand, we observe by applying the integration by parts that
\[\mci_5 = - \int_{\mr^N_+} x_N^{3-2\ga} |\nabla_{\bx} W_{1,0}|^2\, dx = - \mci_6.\]
Also one can verify
\[\mci_9 = -\int_0^{\infty} x_N^{4-2\ga} \left[\int_{\mr^n} |\xi|^2 \whw_{1,0} \(\pa_N \whw_{1,0}\) d\xi \right] dx_N = - |\ms^{n-1}| \mca_4'\mcb_2,\]
finishing the proof.
\end{proof}

\begin{proof}[Proof of Proposition \ref{prop_W_int}]
We have
\[Z_{1,0}^0 = r\, \pa_r W_{1,0} + x_N\, \pa_N W_{1,0} + \({n-2\ga \over 2}\) W_{1,0} \quad \text{and} \quad \Delta_{\bx} W_{1,0} = \pa_{rr} W_{1,0} + (n-1) r^{-1} \pa_r W_{1,0}\]
for $r = |\bx|$. Hence
\begin{align*}
\int_{\mr^N_+} x_N^{3-2\ga} \Delta_{\bx} W_{1,0} Z_{1,0}^0 dx &= \mci_8 + (n-1) \mci_6 + \mci_9 +\({n-2\ga \over 2}\) \mci_5,\\
\int_{\mr^N_+} x_N^{2-2\ga} \pa_N W_{1,0} Z_{1,0}^0 dx &= \mci_4 + \mci_7 + \({n-2\ga \over 2}\) \mci_3
\intertext{and}
\int_{\mr^N_+} x_N^{1-2\ga} W_{1,0} Z_{1,0}^0 dx &= \mci_2 + \mci_3 + \({n-2\ga \over 2}\) \mci_1.
\end{align*}
Now an easy application of Lemma \ref{lemma_W_int_2} completes the proof.
\end{proof}

\noindent \textbf{Acknowledgement}
S. Kim was supported by FONDECYT Grant 3140530 during his stay in the Pontifical Catholic University of Chile.
He is currently supported by Basic Science Research Program through the National Research Foundation of Korea(NRF) funded by the Ministry of Education (NRF2017R1C1B5076384).
M. Musso has been supported by Fondecyt grant 1160135.
The research of J. Wei is partially supported by NSERC of Canada.
Part of the paper was written when S. Kim was visiting the University of British Columbia and Wuhan University.
He appreciates the institutions for their hospitality and financial supports.

{\footnotesize
}

\begin{thebibliography}{1}
\bibitem{Al2}
S. d. M. Almaraz, \emph{An existence theorem of conformal scalar-flat metrics on manifolds with boundary}, Pacific J. Math. \textbf{248} (2010), 1--22.

\bibitem{Al}
\bysame, \emph{A compactness theorem for scalar-flat metrics on manifolds with boundary}, Calc. Var. Partial Differential Equations \textbf{41} (2011), 341--386.

\bibitem{Al3}
\bysame, \emph{Blow-up phenomena for scalar-flat metrics on manifolds with boundary}, J. Differential Equations \textbf{251} (2011), 1813--1840.

\bibitem{Au}
T. Aubin, \emph{\'{E}quations diff\'{e}rentielles non lin\'{e}aires et probl\`{e}me de Yamabe concernant la courbure scalaire}, J. Math. Pures Appl. \textbf{55} (1976), 269--296.

\bibitem{Ba}
A. Bahri, \emph{Another proof of the Yamabe conjecture for locally conformally flat manifolds}, Nonlinear Anal. \textbf{20} (1993), 1261--1278.

\bibitem{Br}
S. Brendle, \emph{Blow-up phenomena for the Yamabe equation}, J. Amer. Math. Soc. \textbf{21} (2008), 951--979.

\bibitem{BM}
S. Brendle, F. Marques, \emph{Blow-up phenomena for the Yamabe equation II}, J. Differential Geom. \textbf{81} (2009), 225--250.

\bibitem{CaS}
L. Caffarelli, L. Silvestre, \emph{An extension problem related to the fractional {L}aplacian}, Comm. Partial Differential Equations \textbf{32} (2007), 1245--1260.

\bibitem{Cs}
J. S. Case, \emph{Some energy inequalities involving fractional GJMS operators}, Anal. PDE \textbf{10} (2017), 253--280.

\bibitem{CC}
J. S. Case, S.-Y. A. Chang, \emph{On fractional GJMS operators}, Comm. Pure Appl. Math. \textbf{69} (2016), 1017--1061.

\bibitem{CG}
S.-Y. A. Chang, M. d. M. Gonz\'{a}lez, \emph{Fractional {L}aplacian in conformal geometry}, Adv. Math. \textbf{226} (2011), 1410--1432.

\bibitem{Ch}
S. S. Chen, \emph{Conformal deformation to scalar ﬂat metrics with constant mean curvature on the boundary in higher dimensions}, preprint, arXiv:0912.1302.

\bibitem{CK}
W. Choi, S. Kim, \emph{On perturbations of the fractional Yamabe problem},
Calc. Var. Partial Differential Equations \textbf{56}:14 (2017), 46 pages.

\bibitem{DSV}
P. Daskalopoulos, Y. Sire, J.-L. V\'azquez, \emph{Weak and smooth solutions for a fractional Yamabe flow: The case of general compact and locally conformally flat manifolds}, Comm. Partial Differential Equations \textbf{42} (2017), 1481--1496.

\bibitem{DDS}
J. D\'avila, M. del Pino, Y. Sire, \emph{Nondegeneracy of the bubble in the critical case for nonlocal equations}, Proc. Amer. Math. Soc. \textbf{141} (2013), 3865--3870.

\bibitem{DKP}
S. Deng, S. Kim, A. Pistoia, \emph{Linear perturbations of the fractional Yamabe problem on the minimal conformal infinity}, to appear in Comm. Anal. Geom.

\bibitem{DPV}
E. Di Nezza, G. Palatucci, E. Valdinoci, \emph{Hitchhiker's guide to the fractional Sobolev spaces.} Bull. Sci. Math. \textbf{136} (2012), 521--573.

\bibitem{DMV}
S. Dipierro, M. Medina, E. Valdinoci, \emph{Fractional elliptic problems with critical growth in the whole of $\mr^n$}, Appunti. Scuola Normale Superiore di Pisa (Nuova Serie) [Lecture Notes. Scuola Normale Superiore di Pisa (New Series)], 15. Edizioni della Normale, Pisa, 2017. viii+152 pp.

\bibitem{DK}
M. M. Disconzi, M. A. Khuri, \emph{Compactness and non-compactness for the Yamabe problem on manifolds with boundary}, J. Reine Angew. Math. \textbf{724} (2017), 145--202.

\bibitem{Dr}
O. Druet, \emph{Compactness for Yamabe metrics in low dimensions}, Int. Math. Res. Not. \textbf{23} (2004), 1143--1191.

\bibitem{Es}
J. F. Escobar, \emph{Conformal deformation of a Riemannian metric to a scalar flat metric with constant mean curvature on the boundary}, Ann. of Math. \textbf{136} (1992), 1--50.

\bibitem{Es2}
\bysame, \emph{Conformal metrics with prescribed mean curvature on the boundary}, Calc. Var. Partial Differential Equations \textbf{4} (1996), 559--592.

\bibitem{FKS}
E. B. Fabes, C. E. Kenig, R. P. Serapioni, \emph{The local regularity of solutions of degenerate elliptic equations}, Comm. Partial Differential Equations \textbf{7} (1982), 77--116.

\bibitem{FF}
M. M. Fall, V. Felli, \emph{Unique continuation properties for relativistic Schr\"odinger operators with a singular potential}, Discrete Contin. Dyn. Syst. \textbf{35} (2015), 5827--5867.

\bibitem{FG}
Y. Fang,  M. d. M. Gonz\'{a}lez, \emph{Asymptotic behavior of Palais-Smale sequences associated with fractional Yamabe-type equations}, Pacific J. Math. \textbf{278} (2015), 369--405.

\bibitem{FO}
V. Felli, A. Ould Ahmedou, \emph{Compactness results in conformal deformations of Riemannian metrics on manifolds with boundaries}, Math. Z. \textbf{244} (2003), 175--210.

\bibitem{FO2}
\bysame, \emph{A geometric equation with critical nonlinearity on the boundary}, Pacific J. Math. \textbf{218} (2005), 75--99.

\bibitem{GQ}
M. d. M. Gonz\'{a}lez, J. Qing, \emph{Fractional conformal {L}aplacians and fractional {Y}amabe problems}, Anal. PDE \textbf{6} (2013), 1535--1576.

\bibitem{GW}
M. d. M. Gonz\'{a}lez, M. Wang, \emph{Further results on the fractional Yamabe problem: the umbilic case}, J. Geom. Anal. \textbf{28} (2018), 22--60.

\bibitem{GJMS}
C. R. Graham, R. Jenne, L. J. Mason, G. A. J. Sparling, \emph{Conformally invariant powers of the Laplacian. I. Existence}, J. London Math. Soc. \textbf{46} (1992), 557--565.

\bibitem{GZ}
C. R. Graham, M. Zworski, \emph{Scattering matrix in conformal geometry}, Invent. Math. \textbf{152} (2003), 89--118.

\bibitem{GM}
M. J. Gursky, A. Malchiodi, \emph{A strong maximum principle for the Paneitz operator and a non-local flow for the $Q$-curvature}, J. Eur. Math. Soc. (JEMS) \textbf{17} (2015), 2137--2173.

\bibitem{HL}
Z.-C. Han, Y. Y. Li, \emph{The Yamabe problem on manifolds with boundary: existence and compactness results}, Duke Math. J. \textbf{99} (1999), 485--542.

\bibitem{HY}
F. Hang, P. Yang, \emph{$Q$-Curvature on a class of manifolds with dimension at least 5}, Comm. Pure Appl. Math. \textbf{69} (2016), 1452--1491.

\bibitem{HR}
E. Hebey, F. Robert, \emph{Compactness and global estimates for the geometric Paneitz equation in high dimensions}, Electron. Res. Announc. AMS \textbf{10} (2004), 135--141.

\bibitem{JLX}
T. Jin, Y. Y. Li, J. Xiong, \emph{On a fractional Nirenberg problem, part I: blow up analysis and compactness of solutions}, J. Eur.
Math. Soc. \textbf{16} (2014), 1111--1171.

\bibitem{JLX2}
\bysame, \emph{The Nirenberg problem and its generalizations: A unified approach}, Math. Ann. \textbf{369} (2017), 109--151.

\bibitem{JS}
M. S. Joshi, A. S\'a Barreto, \emph{Inverse scattering on asymtotically hyperbolic manifolds}, Acta Math. \textbf{184} (2000), 41--86.

\bibitem{KMS}
M. Khuri, F. Marques, R. Schoen, \emph{A compactness theorem for the Yamabe problem}, J. Differential Geom. \textbf{81} (2009), 143--196.

\bibitem{Ki}
S. Kim, \emph{Conformal metrics with prescribed fractional scalar curvature on conformal infinities with positive fractional Yamabe constants}, preprint, arXiv:1707.01929.

\bibitem{KMW}
S. Kim, M. Musso, J. Wei, \emph{A non-compactness result on the fractional Yamabe problem in large dimensions}, J. Funct. Anal. \textbf{273} (2017), 3759--3830.

\bibitem{KMW2}
\bysame, \emph{Existence theorems of the fractional Yamabe problem}, Anal. PDE \textbf{11} (2018), 75--113.

\bibitem{LP}
J. M. Lee, T. H. Parker, \emph{The Yamabe problem}, Bull. Amer. Math. Soc. \textbf{17} (1987), 37--91.

\bibitem{Li}
G. Li, \emph{A compactness theorem on Branson's $Q$-curvature equation}, preprint, arXiv:1505.07692.

\bibitem{Li2}
Y. Y. Li, \emph{Prescribing scalar curvature on $\mathbb{S}^n$ and related problems. I.}, J. Differential Equations \textbf{120} (1995), 319--410.

\bibitem{LX}
Y. Y. Li, J. Xiong, \emph{Compactness of conformal metrics with constant $Q$-curvature. I}, preprint, arXiv:1506.00739.

\bibitem{LZ}
Y. Y. Li, L. Zhang, \emph{Compactness of solutions to the Yamabe problem II}, Calc. Var. and Partial Differential Equations \textbf{25} (2005), 185--237.

\bibitem{LZ2}
\bysame, \emph{Compactness of solutions to the Yamabe problem III}, J. Funct. Anal. \textbf{245} (2006), 438--474.

\bibitem{LZhu2}
Y. Y. Li, M. Zhu, \emph{Sharp Sobolev trace inequalities
on Riemannian manifolds with boundaries}, Comm. Pure. Appl. Math. \textbf{50} (1997), 427--465.

\bibitem{LZhu}
\bysame, \emph{Yamabe type equations on three dimensional Riemannian manifolds}, Comm. Contemp. Math. \textbf{1} (1999), 1--50.

\bibitem{Ma}
F. C. Marques, \emph{A priori estimates for the Yamabe problem in the non-locally conformally flat case}, J. Differential Geom. \textbf{71}
(2005), 315--346.

\bibitem{Ma2}
\bysame, \emph{Existence results for the Yamabe problem on manifolds with boundary}, Indiana Univ. Math. J. \textbf{54} (2005), 1599--1620.

\bibitem{Ma3}
\bysame, \emph{Conformal deformations to scalar-flat metris with constant mean curvature on the boundary}, Comm. Anal. Geom. \textbf{15} (2007), 381--405.

\bibitem{MN2}
M. Mayer, C. B. Ndiaye, \emph{Barycenter technique and the Riemann mapping problem of Cherrier-Escobar}, J. Differential Geom. \textbf{107} (2017), 519--560.

\bibitem{MN}
\bysame, \emph{Fractional Yamabe problem on locally flat conformal infinities of Poincar\'e-Einstein manifolds}, preprint, arXiv:1701.05919.

\bibitem{MM}
R. R. Mazzeo, R. B. Melrose, \emph{Meromorphic extension of the resolvent on complete spaces with asymptotically constant negative curvature}, J. Funct. Anal. \textbf{75} (1987), 260--310.

\bibitem{NPX}
M. Niu, Z. Peng, J. Xiong, \emph{Compactness of solutions to nonlocal elliptic equations}, preprint, arXiv:1707.01198.

\bibitem{Ob}
M. Obata, \emph{The conjectures on conformal transformations of Riemannian manifolds}, J. Differential Geom. \textbf{6} (1972), 247--258.

\bibitem{Po}
D. Pollack, \emph{Nonuniqueness and high energy solutions for a conformally invariant scalar curvature equation}, Comm. Anal. Geom. \textbf{1} (1993), 347--414.

\bibitem{QR}
J. Qing, D. Raske, \emph{Compactness for conformal metrics with constant $Q$ curvature on locally conformally flat manifolds}, Calc. Var. Partial Differential Equations \textbf{26} (2006), 343--356.

\bibitem{QR2}
\bysame, \emph{On positive solutions to semilinear conformally invariant equations on locally conformally flat manifolds}, Int. Math. Res. Not. \textbf{2006} (2006), Article ID 94172, 1--20.

\bibitem{Sc3}
R. Schoen, \emph{Conformal deformation of a Riemannian metric to constant scalar curvature}, J. Differential Geom. \textbf{20} (1984), 479--495.

\bibitem{Sc}
\bysame, \emph{Course notes on `Topics in differential geometry' at Stanford University}, (1988), available at \verb"https://www.math.washington.edu/~pollack/research/Schoen-1988-notes.html"

\bibitem{Sc4}
\bysame, \emph{Variational theory for the total scalar curvature functional for Riemannian metrics and related topics},
in `Topics in Calculus of Variations', Lecture Notes in Mathematics, Springer-Verlag, New York, 1365, 1989.

\bibitem{Sc2}
\bysame, \emph{On the number of constant scalar curvature metrics in a conformal class}, Differential Geometry: A symposium in honor of Manfredo Do Carmo (H.B. Lawson and K. Tenenblat, eds.), Wiley (1991), 311--320.

\bibitem{SZ}
R. Schoen, D. Zhang, \emph{Prescribed scalar curvature on the $n$-sphere}, Calc. Var. Partial Differential Equations \textbf{4} (1996), 1--25.

\bibitem{TX}
J. Tan, J. Xiong, \emph{A Harnack inequality for fractional Laplace equations with lower order terms}, Discrete Contin. Dyn. Syst. \textbf{31} (2011), 975--983.

\bibitem{Tr}
N. Trudinger, \emph{Remarks concerning the conformal deformation of Riemannian structures on compact manifolds}, Ann. Scuola Norm. Sup. Pisa Cl. Sci. \textbf{22} (1968), 265--274.

\bibitem{WZ}
J. Wei, C. Zhao, \emph{Non-compactness of the prescribed $Q$-curvature problem in large dimensions}, Calc. Var. Partial Differential Equations \textbf{46} (2013), 123--164.

\bibitem{Ya}
H. Yamabe, \emph{On a deformation of Riemannian structures on compact manifolds}, Osaka Math. J. \textbf{12} (1960), 21--37.
\end{thebibliography}
\end{document}